\renewcommand{\a}{\alpha}
\renewcommand{\b}{\beta}
\newcommand{\V}{\mathcal{V}}          
\renewcommand{\H}{\mathbf H}          
\newcommand{\N}{\mathbb N}            
\newcommand{\SSS}{\mathbf{s}}
\DeclareMathOperator{\ad}{ad}
\DeclareMathOperator{\ch}{char}
\DeclareMathOperator{\St}{St}              
\DeclareMathOperator{\gr}{gr}
\newtheorem{Th}{Theorem}[section]
\newtheorem{Pro}[Th]{Proposition}
\newtheorem{Lemma}[Th]{Lemma}
\newtheorem{Corr}[Th]{Corollary}
\begin{document}
\title{Lie structure of truncated symmetric Poisson algebras}
\author{Ilana Z. Monteiro Alves}
\address{Department of Mathematics, Federal University of Amazonas, Humait\'a,  Amazonas, Brazil}
\email{calmariezuila@gmail.com}
\author{Victor Petrogradsky}
\address{Department of Mathematics, University of Brasilia, 70910-900 Brasilia DF, Brazil}
\email{petrogradsky@rambler.ru}
\thanks{The author was partially supported by grants CNPq, FEMAT\\\bf \today}
\subjclass[2000]{17B63, 17B50, 17B01, 17B30, 17B65, 16R10}
\keywords{Poisson algebras, identical relations, solvable Lie algebras, nilpotent Lie algebras, symmetric algebras,
truncated symmetric algebras}
\begin{abstract}
The paper naturally continues series of works on identical relations of group rings, enveloping algebras,
and other related algebraic structures.
Let $L$ be a Lie algebra over a field of characteristic $p>0$.
Consider its symmetric algebra $S(L)=\oplus_{n=0}^\infty U_n/U_{n-1}$,
which is isomorphic to a polynomial ring.
It also has a structure of a Poisson algebra,
where the Lie product is traditionally denoted by $\{\ ,\ \}$.
This bracket naturally induces the structure of a Poisson algebra on the ring
$\SSS(L)=S(L)/(x^p\,|\, x\in L)$, which we call a truncated symmetric Poisson algebra.
We study Lie identical relations of $\SSS(L)$.
Namely, we determine necessary and sufficient conditions for $L$ under which $\SSS(L)$
is Lie nilpotent, strongly Lie nilpotent, solvable and strongly solvable,
where we assume that $p>2$ to specify the solvability.
We compute the strong Lie nilpotency class of $\SSS(L)$.
Also, we prove that the Lie nilpotency class coincides with the strong Lie nilpotency class in case $p>3$.

Shestakov proved that the symmetric algebra $S(L)$ of an arbitrary Lie algebra $L$ satisfies the identity
$\{x,\{y,z\}\}\equiv 0$ if, and only if, $L$ is abelian.
We extend this result for the (strong) Lie nilpotency and the (strong) solvability of $S(L)$.
We show that the solvability of $\SSS(L)$ and $S(L)$ in case $\ch K=2$ is different to other characteristics,
namely, we construct examples of such algebras which are solvable but not strongly solvable.

We use delta-sets for Lie algebras and the theory of identical relations of Poisson algebras.
Also, we study filtrations in Poisson algebras and prove results on
products of terms of the lower central series for Poisson algebras.
\end{abstract}

\maketitle
\section{Introduction}
The theory of associative PI-algebras, i.e. algebras satisfying nontrivial polynomial identities,
is  a classical area of the modern algebra~\cite{Drensky}.
This is an important instrument to study structure and properties of associative algebras.
Now, there is an established theory of identical relations in Lie algebras~\cite{Ba}.
It has many applications to group theory such as the solution of the Restricted Burnside Problem.
Also, identical relations were applied to study another algebraic structures.

The  first starting point for our research is the result of Passman on existence
of identical relations in group rings~\cite{Pas72} (Theorem~\ref{TKG}).
This paper caused an intensive research on different types of identical relations in group rings,
such as Lie nilpotence, solvability, non-matrix identical relations,
classes of Lie nilpotence, solvability lengths, etc.
There are at least 50 papers published in this area.

Second, Latyshev~\cite{Lat63} and Bahturin~\cite{Ba74}
started to study identical relations in universal enveloping algebras of Lie algebras.
Passman~\cite{Pas90} and Petrogradsky~\cite{Pe91} specified
existence of identical relations in restricted enveloping algebras (Theorem~\ref{TuL}).
There are many papers in this area studying different types of identical relations,
such as Lie nilpotence, solvability, non-matrix identical relations, classes of Lie nilpotence, solvability lengths, etc.
So, Riley and Shalev determined necessary and sufficient conditions for restricted Lie algebras under which
the restricted enveloping algebra is Lie nilpotent or solvable~\cite{RiSh93}.
The research was further extended to new objects, such as Lie superalgebras, color Lie superalgebras, smash products.
These problems were studied in numerous
papers by Bahturin, Bergen, Kochetov, Petrogradsky, Riley, Shalev, Siciliano, Spinelly, Usefi, et.al.

Poisson algebras appeared in works of Berezin~\cite{Ber67} and Vergne~\cite{Ver69}.
Free Poisson algebras were introduced by Shestakov~\cite{Shestakov93}.
A basic theory of identical relations for Poisson algebras  was developed by Farkas~\cite{Farkas98,Farkas99}.
Identical relations of symmetric Poisson algebras
of Lie (super)algebras were studied by Kostant~\cite{Kos81}, Shestakov~\cite{Shestakov93}, and Farkas~\cite{Farkas99}.
The third starting point for our research is the result of Giambruno and Petrogradsky~\cite{GiPe06}
on existence of non-trivial multilinear Poisson identical relations in truncated symmetric algebras
of restricted Lie algebras (Theorem~\ref{Tmain}).
\medskip

In Section~\ref{SPoisson} we supply basic facts on Poisson algebras and their identities.
In Section~\ref{Sidentities} we collect principal results on identical relations of group rings, enveloping algebras and
Poisson symmetric algebras that motivated our research.
Our main results are formulated in Section~\ref{Smain} (Theorem~\ref{Tnilp} and Theorem~\ref{Tsolv}).
In Section~\ref{Sfiltr} we study filtrations in (truncated) symmetric Poisson algebras.
In Section~\ref{Scomm} we prove technical results on products of terms of the lower central series of Poisson algebras,
which are analogues of respective results for associative algebras.
\medskip

By $K$ denote the ground field, as a rule of positive characteristic $p$.
By $\langle S\rangle$ or $\langle S\rangle_K$ denote the linear span of a subset $S$ in a $K$-vector space.
Let $L$ be a Lie algebra.
The Lie brackets are left-normed: $[x_1,\ldots,x_n]=[[x_1\dots,x_{n-1}],x_n]$, $n\ge 1$.
Denote $\ad x:L\to L$ by $\ad x(y)=[x,y]$, $x,y\in L$.
One defines the {\it lower central series}: $\gamma_1(L)=L$, $\gamma_{n+1}(L)=[\gamma_{n}(L), L]$, $n\ge 1$.
Also, $L^2=[L,L]=\gamma_2(L)$ is the {\it commutator subalgebra}.
By $U(L)$ denote the universal enveloping algebra and $S(L)=\mathop{\oplus}\limits_{n=0}^\infty U_n/U_{n-1}$
the related {\it symmetric algebra}~\cite{Ba,BMPZ,Dixmier}.
On restricted Lie algebras and restricted enveloping algebras see~\cite{Ba,JacLie}.
Let us note that all our Lie algebras over a field of positive characteristic need not be restricted.

\section{Poisson algebras and their identities}
\label{SPoisson}
\subsection{Poisson algebras}
Poisson algebras naturally appear in different areas of algebra, topology and physics.
Poisson algebras probably first were introduced in 1976 by Berezin~\cite{Ber67}, see also Vergne~\cite{Ver69} (1969).
Poisson algebras are used to study universal enveloping algebras of
finite dimensional Lie algebras in zero characteristic~\cite{Kos81,Ooms12}.
In particular, abelian subalgebras in symmetric Poisson algebras are used to study
commutative subalgebras in universal enveloping algebras of finite-dimensional semi-simple Lie algebras
in zero characteristic~\cite{Tar00,Vin90}.
Applying Poisson algebras, Shestakov and Umirbaev managed to solve a long-standing problem:
they proved that the Nagata automorphism of the polynomial ring in three variables $\mathbb{C}[x,y,z]$ is wild~\cite{SheUmi04}.
Related algebraic properties of free Poisson algebras were studied by
Makar-Limanov, Shestakov and Umirbaev~\cite{MakShe12,MakUmi11}.

The free Poisson algebras were defined by Shestakov~\cite{Shestakov93}.
A basic theory of identical relations for Poisson algebras  was developed by Farkas~\cite{Farkas98,Farkas99}.
See further developments on the theory of identical relations of Poisson algebras,
in particular, theory of so called  codimension growth in characteristic zero
by Mishchenko, Petrogradsky, and Regev~\cite{MiPeRe}, and Ratseev~\cite{Rats14}.
\medskip

Throughout this paper, as a rule,  $K$ denotes a field of characteristic $p>0$.
Recall that a vector space $A$ is a {\it Poisson algebra} provided that, beside the addition,
$A$ has two $K$-bilinear operations which are related by the Leibnitz rule. More precisely,
\begin{itemize}
\item
A is a commutative associative algebra with unit, denote the multiplication by $a\cdot b$ (or $ab$), where $a, b\in A$;
\item
$A$ is a Lie algebra which product is traditionally denoted by the Poisson bracket $\{a, b\}$, where $a, b\in A$;
\item these two operations are related by the Leibnitz rule
\begin{equation*}
\{a\cdot b, c\}=a\cdot\{b, c\}+b\cdot\{a, c\},\qquad  a, b, c \in A.
\end{equation*}
\end{itemize}
\subsection{Examples of Poisson algebras}
Typical examples are as follows.
\medskip

\noindent {\bf Example 1.}\  Consider the polynomial ring
$\H_{2m}=K[X_1,\dots,X_m,Y_1,\dots,Y_m]$.
Set $\{X_i,Y_j\}=\delta_{i,j}$ and extend this bracket by the Leibnitz rule. We obtain the Poisson bracket:
$$\{f,g\}=\sum_{i=1}^m
\bigg(\frac{\partial f}{\partial X_i}\;\frac{\partial g}{\partial Y_i}-
\frac{\partial f}{\partial Y_i}\;\frac{\partial g}{\partial X_i}\bigg),
\qquad f,g\in \H_{2m}.
$$
The commutative product is the natural multiplication.
We obtain the {\it Hamiltonian} Poisson algebra~$\H_{2m}$.
\medskip

\noindent {\bf Example 2.}\
Let $L$ be a Lie algebra over an arbitrary field $K$,
$\{U_n\mid n\ge 0\}$ the natural filtration of its universal enveloping algebra $U(L)$.
Consider the {\it symmetric algebra}
$S(L)=\gr U(L)=\mathop{\oplus}\limits_{n=0}^\infty U_{n}/U_{n+1}$ (see~\cite{Dixmier}).
Recall that $S(L)$ is identified with the polynomial ring $K[v_i\,|\, i\in I]$, where $\{v_i\,|\, i\in I\}$ is a $K$-basis of $L$.
Define the Poisson bracket as follows.
Set $\{v_i,v_j\}=[v_i,v_j]$ for all $i,j\in I$,
and extend to the whole of $S(L)$ by linearity and using the Leibnitz rule. For example,
$$\{v_i\cdot v_j,v_k\}=v_i\cdot\{v_j,v_k\}+v_j\cdot\{v_i,v_k\},\qquad i,j,k\in I.$$
Thus, $S(L)$ has a structure of a Poisson algebra, called the {\it symmetric algebra} of $L$.
\medskip

\noindent {\bf Example 3.}\ Let $L$ be a Lie algebra with a $K$-basis $\{v_i\,|\, i\in I\}$, where $\ch K=p>0$.
Consider a factor algebra of the symmetric (Poisson) algebra
$$\mathbf{s}(L)=S(L)/(v^p\, |\, v\in L)\cong K[v_i\mid i\in I]/(v_i^p \,|\, i\in I),$$
we get an algebra of truncated polynomials. Observe that
$$\{v^p,u\}=pv^{p-1}\{v,u\}=0,\qquad v\in L,\ u\in \SSS(L).$$
So, the Poisson bracket on $S(L)$ yields a Poisson bracket on $\SSS(L)$.
Thus, $\SSS(L)$ is a Poisson algebra, we call it {\it truncated symmetric algebra}.
Remark that $L$ need not be a restricted Lie algebra.

\medskip
\textbf{Example 4.}
Let $K$ be a field of positive characteristic $p$.
We introduce the {\it truncated Hamiltonian} Poisson  algebra as
$$\mathbf{h}_{2m}(K)=K[X_1,\dots,X_m,Y_1,\dots,Y_m]/(X_i^p,Y_i^p\mid  i=1,\dots,m),$$
where we define the bracket as in Example~1 using the observation of Example~3.
\medskip

The Hamiltonian algebras $\mathbf{h}_{2m}(K)$ and $\mathbf{H}_{2m}(K)$ in the class of Poisson algebras
play a role  similar to that of the matrix algebras $\mathrm{M}_n(K)$ for associative algebras.

\subsection{Poisson identities}
The objective of this subsection is to supply basic facts
on polynomial identities of Poisson algebras.

Consider the free Lie algebra $L=L(X)$ generated by a set $X$ and its symmetric algebra $F(X)=S(L(X))$.
Then, $F(X)$ is a {\it free Poisson algebra} in $X$, as was shown by Shestakov~\cite{Shestakov93}.
For example, let $L=L(x,y)$ be the free Lie algebra of rank 2. Consider its Hall basis~\cite{Ba}
$$L=\langle x, y, [y,x], [[y,x],x], [[y,x],y], [[[y,x],x],x],\ldots\rangle_K. $$
We obtain the free Poisson algebra $F(x,y)=S(L)$ of rank 2,
which has a canonical basis as follows:
$$
F(x,y)=\big\langle x^{n_1} y^{n_2} \{y,x\}^{n_3} \{\{y,x\},x\}^{n_4} \{\{y,x\},y\}^{n_5} \{\{\{y,x\},x\},x\}^{n_6}\cdots\,\Big|\, n_i\ge 0
\big\rangle_K,
$$
where only finitely many $n_i$, $i\ge 1$, are non-zero in the monomials above.
\medskip

A definition of  a {\it Poisson PI-algebra} is standard,
identities being elements of the free Poisson algebra  $F(X)$ of countable rank.
Assume that basic facts on identical relations of linear algebras are known to the reader,
(see, e.g.,~\cite{Ba,Drensky}).
Farkas introduced so called {\it customary identities}~\cite{Farkas98}:
$$
\sum_{\substack{\sigma\in S_{2n}\\ \sigma(2k-1)<\sigma(2k),\ k=1,\dots,n\\\sigma(1)<\sigma(3)<\cdots<\sigma(2n-1)}}
\!\!\!\!\!\!\!\!\!\!\!\!\!
\mu_{\sigma} \{x_{\sigma (1)},x_{\sigma (2)}\}\cdots
\{x_{\sigma (2n-1)},x_{\sigma (2n)}\}\equiv 0,\quad \mu_\sigma\in K.
$$
where $\mu_e=1$, for the identity permutation.
Denote by $T_{2n}$ the set of permutations $\tau\in S_{2n}$ appearing in the some above.
The importance of customary identities is explained by the following fact.

\begin{Th}[\cite{Farkas98}]\label{TFarkas0}
Suppose that $\V$ is a nontrivial variety of Poisson algebras over
a field $K$ of characteristic zero.
Then $\V$ satisfies a nontrivial customary identity.
\end{Th}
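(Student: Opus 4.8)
The plan is to pass from an arbitrary identity of $\V$ to a multilinear one of minimal degree, and then to ``flatten'' it step by step with the Leibnitz rule until every bracket that occurs has the form $\{x_i,x_j\}$, i.e.\ until the polynomial is customary. First I would multilinearize: since $\V$ is nontrivial it satisfies a nonzero identity, and as $\ch K=0$ the usual linearization procedure (linearize in each variable in turn; all partial linearizations of an identity are again identities) produces a nonzero multilinear identity $f(x_1,\dots,x_n)\equiv 0$ of $\V$; choose $n$ minimal. Using the canonical monomial basis of the free Poisson algebra $F(x_1,\dots,x_n)=S(L(x_1,\dots,x_n))$ from Shestakov's description, write $f$ in normal form as a $K$-combination of pairwise distinct monomials $w_{B_1}\cdots w_{B_r}$, one factor for each block $B_i$ of a set partition of $\{1,\dots,n\}$, each $w_{B_i}$ a left-normed Lie bracket in the variables of $B_i$ from a fixed basis of the multilinear component of $L(B_i)$. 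Substituting the unit for a variable $x_i$ kills every monomial in which $x_i$ occurs inside a bracket (since $\{a,1\}=0$, as $\{a\cdot 1,b\}=a\{1,b\}+\{a,b\}$ forces $\{1,b\}=0$) and deletes the lone factor $x_i$ from the others; the outcome is a multilinear identity in $n-1$ variables, hence zero by minimality, so by linear independence no monomial of $f$ has $x_i$ in a singleton block. Thus every block in every monomial of $f$ has size $\ge 2$, so $n\ge 2$ and, if all these sizes equal $2$, then $n=2m$ and $f$ is already a nonzero customary polynomial once one relabels so that the identity permutation occurs.

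So suppose some monomial of $f$ has a block of size $\ge 3$; pick a variable $x_j$ lying in such a block, adjoin a new variable $x_{n+1}$, and substitute $x_j\mapsto x_jx_{n+1}$. Expanding by Leibnitz, a monomial $m$ of $f$ becomes $x_j\cdot(m|_{x_j\to x_{n+1}})+x_{n+1}\cdot m+(\text{mixed terms})$, where a block of size $k$ contributes mixed terms $w'\cdot w''$ with $w'$ a bracket on a set containing $x_j$ and $w''$ a bracket on a set containing $x_{n+1}$, of sizes adding to $k+1$; when $k\ge 3$ both sizes lie in $\{2,\dots,k-1\}$, and when $k=2$ there are no mixed terms. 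Summing over all monomials, the pure part is exactly $x_j\cdot f|_{x_j\to x_{n+1}}+x_{n+1}\cdot f$, which vanishes in $\V$ since $f$ and its relabelling are identities. Hence the sum $g$ of all mixed terms satisfies $g\equiv 0$ in $\V$; moreover $g$ has no singleton blocks (the untouched blocks of $f$ already had size $\ge 2$, and the two new pieces have size $\ge 2$), its maximal bracket length does not exceed that of $f$, and every bracket of $g$ of maximal length avoids $x_j$ and $x_{n+1}$. Iterating the construction therefore strictly decreases the pair (maximal bracket length, number of variables occurring in maximal-length brackets) in lexicographic order; after finitely many steps the maximal bracket length is $2$, and the first paragraph finishes the proof.

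The step I expect to be the real obstacle is showing that the polynomial $g$ obtained at each stage is not identically zero, so that the iteration does not degenerate into the trivial identity. Pure terms cannot cancel mixed ones (in a mixed monomial $x_j$ sits inside a bracket of length $\ge 2$, never as an isolated factor), so the only danger is cancellation among the mixed terms coming from different monomials of $f$; ruling this out requires choosing $x_j$, and ordering the monomials of $f$ (say by a term order refining the partition data and the fixed Lie basis), so that a distinguished ``leading'' mixed monomial is produced by exactly one monomial of $f$. It may help to first pass to a component homogeneous with respect to the grading of $F(X)$ by the symmetric powers $S^d(L(X))$, for which $\{S^a,S^b\}\subseteq S^{a+b-1}$, so that the number of blocks becomes a grading. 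A minor bookkeeping point is that the degree rises by one at each iteration while the bracket lengths drop, so one must confirm termination, which the lexicographic measure above supplies; at the end one normalizes the resulting customary polynomial so that its identity-permutation coefficient is $1$.
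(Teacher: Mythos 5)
The paper does not actually prove this statement: Theorem~\ref{TFarkas0} is quoted from Farkas~\cite{Farkas98}, and the text only illustrates the key idea (customarization) on the single example $\{\{X,Y\},Z\}\equiv 0$. Your strategy is precisely that customarization process carried to its logical end, and most of what you write is correct: the multilinearization in characteristic zero, the normal form in $F(X)=S(L(X))$, the elimination of singleton blocks by substituting $1$ and invoking minimality of the degree, the identification of the pure part of $f(x_1,\dots,x_jx_{n+1},\dots,x_n)$ with $x_j\,f|_{x_j\to x_{n+1}}+x_{n+1}f$ (so that the mixed part $g$ is again an identity), the block-size bookkeeping, and the lexicographic termination measure (provided $x_j$ is chosen in a block of \emph{maximal} size, not merely of size $\ge 3$).

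The step you flag yourself is, however, a genuine gap and not a routine verification, so the proof as written is incomplete exactly at its crux. The partial customarization operator $\Delta_j\colon f\mapsto g$ has a large kernel already on a single block: if the Lie monomial of the block containing $x_j$ has $x_j$ in the outermost position, say $w=\{u,x_j\}$, then $\{u,x_jx_{n+1}\}=x_j\{u,x_{n+1}\}+x_{n+1}\{u,x_j\}$ is purely derivational, so $\Delta_j(w)=0$ even though the block has size $\ge 3$; for instance $\Delta_{x_1}\bigl(\{\{x_2,x_3\},x_1\}\bigr)=0$. Worse, $\Delta_j$ identifies distinct basis monomials: a direct computation gives
$\Delta_{x_1}\bigl(\{\{x_1,x_2\},x_3\}\bigr)=\Delta_{x_1}\bigl(\{\{x_1,x_3\},x_2\}\bigr)=-\{x_1,x_3\}\{x_2,x_4\}-\{x_1,x_2\}\{x_3,x_4\}$,
consistent with the Jacobi identity $\{\{x_2,x_3\},x_1\}=\{\{x_1,x_3\},x_2\}-\{\{x_1,x_2\},x_3\}$. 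Consequently a term order on monomials cannot by itself guarantee that "a distinguished leading mixed monomial is produced by exactly one monomial of $f$": cancellation occurs inside a single partition class, and one must actually prove that for some admissible choice of $j$ the map $\Delta_j$ is injective on the span of the Lie elements occurring in $f$ (equivalently, control $\bigcap_j\ker\Delta_j$ on the relevant multilinear Lie components). This is where the real content of Farkas's proof lies, and it is missing here. Your proposed fallback of first passing to a component homogeneous in the number of Lie factors does not help either: the T-ideal of identities is not homogeneous for the grading $F(X)=\oplus_d S^d(L(X))$ (the bracket lowers $d$ while the product preserves it), so a homogeneous component of an identity need not be an identity.
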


Let us show the idea of the proof.
Let a Poisson algebra $R$ satisfy the identity $f(X,Y,Z)=\{\{X,Y\},Z\}\equiv 0$.
Then, $R$ also satisfies the identity:
\begin{align*}
0&\equiv f(X_1X_2,Y,Z)-X_1f(X_2,Y,Z)-X_2f(X_1,Y,Z)\\
&=\{\{X_1X_2,Y\},Z\}- X_1\{\{X_2,Y\},Z\}- X_2\{\{X_1,Y\},Z\}\\
&=\{X_1,Y\}\{X_2,Z\}+\{X_1,Z\}\{X_2,Y\},
\end{align*}
which is customary.
Farkas called this process a {\it customarization}~\cite{Farkas98}, it is an analogue
of the linearization process for associative algebras.
The arguments of \cite{Farkas98} actually prove the following.
\begin{Th}[\cite{Farkas98}]\label{TFarkas}
Suppose that a Poisson algebra $A$ over an arbitrary field
satisfies a nontrivial {\bf multilinear} Poisson identity.
Then $A$ satisfies a nontrivial customary identity.
\end{Th}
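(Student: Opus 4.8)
The plan is to promote the \emph{customarization} trick displayed above to an induction, controlled by a single numerical weight. The starting point is the canonical form of multilinear elements of $F(X)=S(L(X))$: since $F(X)$ is the polynomial algebra on a basis of the free Lie algebra $L(X)$, every multilinear element of $F(x_1,\dots,x_n)$ is uniquely a $K$-linear combination of monomials $w_1w_2\cdots w_k$, where the $w_i$ are left-normed Lie monomials (basis elements of $L(X)$) whose supports partition $\{x_1,\dots,x_n\}$, and where I orient every Lie-degree-$2$ factor as $\{x_i,x_j\}$ with $i<j$. Rewriting the given nontrivial multilinear identity $f$ of $A$ in this form keeps it nonzero. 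If $A=0$ we are done, as then $\{x_1,x_2\}\equiv 0$ holds nontrivially in $A$; so from now on $A\neq 0$, hence no nonzero scalar is an identity of $A$.

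Next I introduce two moves, each sending a nonzero multilinear identity of $A$ to another one while strictly decreasing the weight $N(g)=\sum_{m}\sum_{u\mid m}3^{\deg u}$, the outer sum over the monomials $m$ of $g$ and the inner over the Lie factors $u$ of $m$. \emph{(i) Specialization.} If some variable $x_j$ occurs \emph{free} (as a Lie-degree-$1$ factor) in at least one monomial of $f$, put $x_j=1$; since any iterated Poisson bracket with a unit entry vanishes, $f|_{x_j=1}$ equals the sum of the monomials of $f$ containing $x_j$ free, with that factor deleted, and as deletion of $x_j$ is injective on these monomials, $f|_{x_j=1}$ is a \emph{nonzero} multilinear identity in $n-1$ variables; it has smaller weight. \emph{(ii) Customarization.} If no variable is free but some Lie factor has degree $\ge 3$, pick a monomial of $f$ carrying a factor $u=[z_1,\dots,z_d]$ of maximal Lie-degree $d$, set $x_j:=z_1$, and form
$$g=f[x_j\mapsto x_j'x_j'']-x_j'\cdot f[x_j\mapsto x_j'']-x_j''\cdot f[x_j\mapsto x_j'],$$
an identity of $A$ (a combination of substitution instances multiplied by elements of the unital algebra $A$), multilinear in the $n+1$ variables $x_1,\dots,\widehat{x_j},\dots,x_n,x_j',x_j''$. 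A short computation with the Leibnitz rule shows that a monomial in which $x_j$ sits inside a degree-$2$ factor contributes $0$ to $g$, while one in which $x_j$ sits inside a factor of degree $e$ (so $3\le e\le d$) contributes terms each of which replaces that factor by a product of \emph{two} Lie brackets whose degrees sum to $e+1$, each lying between $2$ and $e-1$. In particular $g$ still has no free variable, and since $3^{a}+3^{b}<3^{d}$ for $a+b=d+1$ and $2\le a\le b\le d-1$, the weight $N$ strictly decreases.

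Because $N$ is a positive integer, after finitely many moves we reach a nonzero multilinear identity of $A$ to which neither move applies, i.e. one with no free variable and no Lie factor of degree $\ge 3$. If it involves no variables it is a nonzero scalar, contradicting $A\neq 0$. Otherwise each of its monomials is a product of Lie-degree-$2$ brackets using all the variables, that is, a product indexed by a perfect matching of the variable set (so the number of variables is even). Choosing a monomial with nonzero coefficient, relabelling the variables so that its matching becomes $\{1,2\},\{3,4\},\dots$, reordering the factors of each monomial and orienting every bracket $\{x_i,x_j\}$ with $i<j$, and finally dividing by the now nonzero coefficient of $\{x_1,x_2\}\{x_3,x_4\}\cdots$, we obtain exactly a nontrivial customary identity satisfied by $A$.

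The point I expect to be genuinely delicate is the non-vanishing of $g$ in move (ii): a priori the split contributions of different monomials of $f$ might cancel. To handle it I would fix a monomial order on multilinear monomials — refining the partition of the total degree into factor-degrees and then comparing the left-normed words lexicographically — choose the monomial in (ii) to be the \emph{leading} one among those attaining factor-degree $d$, and single out one distinguished term $m^{\ast}$ in the Leibnitz expansion of its degree-$d$ factor. One then has to verify that ``unsplitting'' $m^{\ast}$ — merging the two new brackets and re-identifying $x_j',x_j''$ with $x_j$ — can only recover the chosen monomial, so that the coefficient of $m^{\ast}$ in $g$ is the nonzero coefficient of that monomial in $f$. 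This is a purely combinatorial leading-term argument about left-normed brackets under the Leibnitz rule; everything else in the proof is bookkeeping.
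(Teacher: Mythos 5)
Your overall strategy --- alternating two moves (evaluating a free variable at $1$, and customarizing at a variable that sits inside a Lie factor of degree $\ge 3$) until only degree-$2$ factors remain --- is exactly the customarization idea that the paper sketches for Theorem~\ref{TFarkas} and attributes to \cite{Farkas98}; your analysis of each single move (the cancellation of the free occurrences of $x_j',x_j''$, and the splitting of a degree-$e$ factor into two brackets of degrees $a+b=e+1$ with $2\le a,b\le e-1$) is correct. But the termination argument is false as stated. Your weight $N(g)$ sums over \emph{all} monomials of $g$, and a single monomial of $f$ whose $x_j$-factor has degree $e$ spawns up to $2^{e-1}-2$ distinct monomials of $g$, each still carrying the full weight of its untouched factors. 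Concretely, for $f=\{x_1,x_2,x_3,x_4,x_5\}$ one has $N(f)=3^5=243$, while $g=f[x_1\mapsto x_1'x_1'']-x_1'f[x_1\mapsto x_1'']-x_1''f[x_1\mapsto x_1']$ equals the sum of $\{x_1',x_S\}\{x_1'',x_{S^c}\}$ over the $14$ proper nonempty subsets $S$ of $\{2,3,4,5\}$ (left-normed brackets, indices in increasing order); these are $14$ distinct basis monomials, of total weight $8\cdot(3^2+3^4)+6\cdot(3^3+3^3)=1044>243$. So $N$ can strictly \emph{increase}: the inequality $3^a+3^b<3^e$ controls each new monomial against its source, not the sum over the new monomials. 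This is repairable --- for instance, replace the sum over monomials by the \emph{maximum} of the per-monomial weights $\sum_{u\mid m}3^{\deg u}$ (every monomial of $g$ has strictly smaller weight than each of its sources, so the maximum drops), or use a per-monomial weight such as $C^{\sum_u(\deg u)^2}$ with $C$ large enough to absorb the $2^{e-1}$ multiplicity --- but as written the induction does not terminate.

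The second, and more fundamental, issue is the one you flag yourself: you do not prove that $g\ne 0$, and without that the induction collapses (if $g=0$ the process stalls before reaching a customary polynomial). This non-cancellation statement is the real content of the theorem; everything else is, as you say, bookkeeping. A given split monomial $\{x_j',v\}\{x_j'',w\}\cdot(\cdots)$ of $g$ can a priori receive contributions from several monomials of $f$ --- any monomial whose $x_j$-factor is a Lie basis element on the support of $v\cup w\cup\{x_j\}$ whose Leibnitz expansion hits that particular pair, possibly with opposite signs --- so one must exhibit an extremal monomial of $g$ with a unique surviving source, and that requires controlling which shuffle terms arise, with which coefficients, for an arbitrary position of $x_j$ inside an arbitrary basis bracket. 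Your closing paragraph correctly identifies this as the delicate point but only proposes a plan for it. Note that the paper itself does not prove Theorem~\ref{TFarkas}: it only illustrates customarization on $\{\{X,Y\},Z\}$ and cites \cite{Farkas98}, where precisely these two points (a correct induction measure and the non-vanishing of the customarized polynomial) are dealt with. Until both are supplied, your text is a plausible outline rather than a proof.
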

Let us explain why we {\it need all polynomials to be multilinear in case of positive characteristic} $p$.
The linearization process is simply not working for Poisson algebras in positive characteristic
as it does for associative and Lie algebras.
For example, the Poisson identity $\{x,y\}^p\equiv 0$ is given by a nonzero element of
the free Poisson algebra $F(X)$.
Observe that its full linearization is trivial:
$$\sum_{\sigma,\pi\in S_p }\{x_{\sigma(1)},y_{\pi(1)}\}\cdots \{x_{\sigma(p)},y_{\pi(p)}\}=
p!\sum_{\pi\in S_p }\{x_{1},y_{\pi(1)}\}\cdots \{x_{p},y_{\pi(p)}\}=0.
$$
Moreover, let us check that any truncated symmetric algebra $\SSS (L)$
satisfies the identity $\{x,y\}^p\equiv 0$. Indeed, let $a,b\in
\SSS (L)$, then $\{a,b\}$ is a truncated polynomial without constant
term, its $p$th power is zero by the Frobenius rule
$(v+w)^p=v^p+w^p$. Thus, it does not make sense to study nonlinear
Poisson identities for truncated symmetric algebras.
\medskip

In the theory of Poisson PI-algebras, the analogue of the standard polynomial is (\cite{Farkas98},~\cite{Farkas99}):
$$ \St_{2n}=\St_{2n}(x_1,\dots,x_{2n})=\sum_{\sigma\in T_{2n}}(-1)^\sigma \{x_{\sigma (1)},x_{\sigma (2)}\}\cdots
\{x_{\sigma (2n-1)},x_{\sigma (2n)}\}. $$
This is a customary polynomial, skewsymmetric in all variables \cite{Farkas98}.
One has the following fact similar to the theory of associative algebras.

\begin{Th}[\cite{MiPeRe}]
In case of zero characteristic, any Poisson PI-algebra satisfies an identity $(\St_{2n})^m\equiv 0$, for some integers $n,m$.
\end{Th}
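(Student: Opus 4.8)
The plan is to transport to the Poisson setting the classical argument for associative PI-algebras that rests on the representation theory of the symmetric groups and Regev's ``hook'' phenomenon; the hypothesis $\ch K=0$ will be used throughout. First, since $A$ generates a nontrivial variety, Theorem~\ref{TFarkas0} gives a nontrivial \emph{customary} identity of $A$, of degree $2n_0$, say. Exactly as Regev extracted an exponential bound on the codimensions of an associative PI-algebra from a single multilinear identity, one deduces from this customary identity that the Poisson codimensions of $A$ — the dimensions of the multilinear components of the relatively free Poisson algebra of the variety generated by $A$ — grow at most exponentially; the combinatorics of the index set $T_{2n}$ and of customarization here replace the ordered monomials and the linearization process of the associative theory. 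This is the content of \cite{MiPeRe}.

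Next I would pass to the $S_d$-module $P_d$ of multilinear Poisson polynomials in $x_1,\dots,x_d$. The subspace of polynomials in $P_d$ that vanish on $A$ is a submodule, and since $\ch K=0$ the quotient decomposes into irreducibles, yielding the Poisson cocharacter $\chi_d(A)=\sum_{\lambda\vdash d}m_\lambda\chi_\lambda$. Feeding the exponential bound on codimensions into Regev's estimates for the degrees $\deg\chi_\lambda$ forces the existence of integers $k,l$, depending only on $A$, such that $m_\lambda=0$ for every partition $\lambda$ lying outside the hook $H(k,l)=\{\lambda : \lambda_{k+1}\le l\}$; equivalently, for each such $\lambda$ the entire $\lambda$-isotypic component of $P_d$ consists of identities of $A$. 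Now fix an even integer $2n\ge k+1$, put $m=l+1$ and $d=2nm$, and let $\lambda=(m^{2n})\vdash d$ be the rectangle with $2n$ rows of length $m$; then $\lambda_{k+1}=m>l$, so $\lambda\notin H(k,l)$, and hence $e_\lambda\cdot w\equiv 0$ on $A$, where $e_\lambda$ is a Young symmetrizer attached to $\lambda$ and $w$ is any multilinear customary monomial in the $d$ variables.

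Finally I would extract $(\St_{2n})^m$ from $e_\lambda\cdot w$. Choose $w=\prod_{j=1}^m\{x_1^{(j)},x_2^{(j)}\}\{x_3^{(j)},x_4^{(j)}\}\cdots\{x_{2n-1}^{(j)},x_{2n}^{(j)}\}$, where $x_i^{(j)}$ denotes the variable placed in row $i$ and column $j$ of $\lambda$. The column part of $e_\lambda$ skew-symmetrizes, for each $j$, the $2n$ variables of column $j$; since the sign-alternating sum over $\sigma\in S_{2n}$ of $\{x_{\sigma(1)},x_{\sigma(2)}\}\cdots\{x_{\sigma(2n-1)},x_{\sigma(2n)}\}$ equals $2^nn!\,\St_{2n}$ (every $T_{2n}$-monomial is produced $2^nn!$ times, with signs that match), applying this part to $w$ gives a nonzero scalar multiple of $\prod_{j=1}^m\St_{2n}(x_1^{(j)},\dots,x_{2n}^{(j)})$. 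Then applying the remaining row-symmetrizer and afterwards substituting $x_i^{(j)}\mapsto t_i$ for all $j$ collapses each row-symmetrizer to the scalar $m!$, so the whole expression becomes $(2^nn!)^m(m!)^{2n}\bigl(\St_{2n}(t_1,\dots,t_{2n})\bigr)^m$. Since this is an identity of $A$ and its scalar is invertible in characteristic zero, we obtain $(\St_{2n})^m\equiv 0$ on $A$.

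The hard part is the middle paragraph: one must set up correctly the representation-theoretic framework for \emph{Poisson} multilinear polynomials — where, in contrast with the associative situation, the commutative product, the bracket and the Leibnitz rule all interfere — and then establish the Poisson analogue of Regev's theorem that exponentially bounded codimension growth confines the support of the cocharacter sequence to a hook. Once this is in place, together with the elementary observation that $\St_{2n}$ is, up to a scalar, the full skew-symmetrization of a single customary monomial, the opening and closing steps are routine.
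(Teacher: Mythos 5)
This theorem is imported: the paper states it with a citation to \cite{MiPeRe} and contains no proof of its own, so there is nothing internal to compare your argument against. That said, your outline reconstructs what is in fact the route of \cite{MiPeRe}, namely the Amitsur--Regev strategy: customary identity $\Rightarrow$ exponentially bounded codimensions $\Rightarrow$ cocharacters supported in a hook $\Rightarrow$ a rectangular partition outside the hook yields, after column antisymmetrization and row symmetrization of a product of disjoint brackets, a nonzero multiple of $(\St_{2n})^m$. The closing computation is correct (each $T_{2n}$-monomial is hit $2^n n!$ times with coherent signs, and the row symmetrizers collapse to $(m!)^{2n}$ after identifying variables along rows), and the passage from $m_\lambda=0$ to $e_\lambda\cdot w\in T(A)$ is sound since $KS_d e_\lambda w$ is a quotient of $S^\lambda$.

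Two cautions. First, the codimension bound should be formulated for the $S_{2n}$-module $Q_{2n}$ of \emph{customary} multilinear polynomials (spanned by the $(2n-1)!!$ monomials indexed by $T_{2n}$), not for the full multilinear component $P_d$ of the free Poisson algebra; the rectangle argument only ever uses customary elements, and it is the customary cocharacter whose hook confinement is actually established in \cite{MiPeRe}. Second, the phrase ``exactly as Regev'' conceals the genuine work: since $\dim Q_{2n}=(2n-1)!!$ is itself superexponential, one must redo the Latyshev--Regev combinatorics (reordering arguments, Dilworth-type decompositions) for products of disjoint brackets rather than for associative monomials, and this adaptation is precisely the substance of \cite{MiPeRe} rather than a formal transfer. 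As you acknowledge this is the hard middle step and correctly attribute it, I would call the proposal a faithful sketch of the known proof rather than an independent one, with no step that would fail but with its central lemma delegated to the cited source.
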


Another important fact on the standard identity is as follows.
\begin{Lemma}[\cite{Farkas98}]
Let $A$ be a Poisson algebra over an arbitrary field $K$ and $A$ is $k$-generated as an associative algebra.
Then it satisfies the standard identity
$\St_{2m}\equiv 0$,
whenever $2m>k$.
\end{Lemma}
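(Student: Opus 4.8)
The plan is to exploit two features of the standard polynomial $\St_{2m}$: it is multilinear in its $2m$ arguments, and, being a customary polynomial, it is a derivation in each argument with respect to the commutative associative product. By multilinearity it suffices to check $\St_{2m}(a_1,\dots,a_{2m})=0$ only when each $a_j$ runs over a fixed spanning set of $A$; and since $A$ is generated as an associative algebra by $k$ elements $g_1,\dots,g_k$, such a spanning set is furnished by $1$ together with all products $g_{i_1}\cdots g_{i_t}$ with $t\ge 1$.

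The substantive step is the reduction to the generators. In every customary monomial $\{x_{\sigma(1)},x_{\sigma(2)}\}\cdots\{x_{\sigma(2m-1)},x_{\sigma(2m)}\}$ the variable occupying a given slot lies in exactly one bracket factor; applying the Leibnitz rule $\{bc,y\}=b\{c,y\}+c\{b,y\}$ together with the anti-symmetry of the bracket to that single factor, and pulling $b$ and $c$ out in front by commutativity, yields the identity termwise, hence after summation,
$$\St_{2m}(bc,x_2,\dots,x_{2m})=b\,\St_{2m}(c,x_2,\dots,x_{2m})+c\,\St_{2m}(b,x_2,\dots,x_{2m}),$$
and likewise in every slot; putting $b=c=1$ gives $\{1,y\}=0$, so $\St_{2m}$ vanishes as soon as one of its arguments equals $1$. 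Iterating this over all $2m$ slots shows that any value $\St_{2m}(w_1,\dots,w_{2m})$ on monomials $w_j$ in $g_1,\dots,g_k$ is an $A$-linear combination of the values $\St_{2m}(g_{j_1},\dots,g_{j_{2m}})$ on the generators themselves, the branches in which some $w_j$ contributes the factor $1$ disappearing.

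Finally, when $2m>k$ the indices $j_1,\dots,j_{2m}$ cannot all be distinct, so in each remaining term two arguments of $\St_{2m}$ coincide; since $\St_{2m}$ is alternating in its variables, every such term is $0$ and $\St_{2m}\equiv 0$ follows. The point requiring care here is characteristic $2$, where the skew-symmetry recorded above must be upgraded to the vanishing of $\St_{2m}$ on repeated arguments: one checks directly that in $\St_{2m}|_{x_i=x_j}$ the monomials in which $x_i$ and $x_j$ sit in a common bracket vanish outright (that bracket being $\{x_i,x_i\}=0$), while the remaining monomials cancel in pairs obtained by interchanging the bracket partners of $x_i$ and $x_j$, such an interchange being effected by a transposition and hence carrying a sign; this holds over any field. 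I expect the derivation identity of the second paragraph, and this characteristic-$2$ refinement of the symmetry, to be the only parts demanding actual work, the rest being formal.
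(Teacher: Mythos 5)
Your argument is correct: the derivation property of $\St_{2m}$ in each slot (each variable sits in exactly one bracket of every customary monomial, so the Leibnitz rule applies termwise), the vanishing on the argument $1$, the reduction by multilinearity to single generators, and the pigeonhole step using the alternating property — with your explicit pairing argument covering characteristic $2$, where skew-symmetry alone would not suffice — together give a complete proof. The paper itself gives no proof, quoting the lemma from Farkas \cite{Farkas98}, and your reconstruction is exactly the standard argument behind that citation (the paper already records the skew-symmetry of $\St_{2m}$ in all variables), so there is nothing to correct.
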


\section{Identical relations of group rings and enveloping algebras}
\label{Sidentities}

In this section we present a motivation for our research project.
Namely, we shortly review results on existence of nontrivial
polynomial identities in enveloping algebras and group rings.

\subsection{Identities of group rings}

Passman obtained necessary and sufficient conditions for a group ring $K[G]$ to
satisfy a nontrivial polynomial identity over a field $K$ of an arbitrary characteristic.
A group $G$ is said {\it p-abelian} if $G$ is abelian in case $p=0$ and, in case $p>0$, $G'$,
the commutator subgroup of $G$, is a finite $p$-group.

\begin{Th}[\cite{Pas72}]
\label{TKG}
The group  algebra $K[G]$ of a group $G$ satisfies
a nontrivial polynomial identity if and only if the following conditions are satisfied.
\begin{enumerate}
  \item
    There exists a subgroup  $A\subseteq G$ of finite index;
  \item
    $A$ is $p$-abelian.
\end{enumerate}
\end{Th}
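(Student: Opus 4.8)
I would prove the two implications separately. The ``if'' direction is an elementary matrix embedding; the ``only if'' direction is the substantial part, and for it I would follow Passman's $\Delta$-methods.

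\textbf{Sufficiency.} Suppose $A\le G$ has finite index $n$ and is $p$-abelian. Choosing coset representatives exhibits $K[G]$ as a free left $K[A]$-module of rank $n$, so the left regular representation embeds $K[G]$ into $\operatorname{End}_{K[A]}(K[G])\cong M_n(K[A])$. It therefore suffices to show that $K[A]$ is a PI-algebra, since matrices over a PI-algebra again form a PI-algebra. If $p=0$ then $A$ is abelian, so $K[A]$ is commutative and satisfies $[x_1,x_2]\equiv 0$. If $p>0$ then $A'$ is a finite $p$-group, so in characteristic $p$ the augmentation ideal $\omega(K[A'])$ is nilpotent, say $\omega(K[A'])^{m}=0$; since $A'$ is normal in $A$, the ideal $I$ of $K[A]$ generated by $\omega(K[A'])$ satisfies $I^{m}=\omega(K[A'])^{m}K[A]=0$, while $K[A]/I\cong K[A/A']$ is commutative, so every additive commutator of $K[A]$ lies in $I$ and $K[A]$ satisfies $[x_1,y_1]\cdots[x_m,y_m]\equiv 0$. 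Either way $K[G]$ inherits a polynomial identity.

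\textbf{Necessity.} Assume $K[G]$ satisfies a nontrivial polynomial identity; after extending scalars to the algebraic closure and multilinearizing, we may assume the identity is multilinear of degree $d$. Work with the \emph{FC-center}
$$\Delta(G)=\{\,x\in G:[G:C_G(x)]<\infty\,\},$$
a characteristic subgroup of $G$. The plan is to establish the two structural facts:
\begin{enumerate}
\item $[G:\Delta(G)]<\infty$;
\item $\Delta(G)'$ is a finite $p$-group (in characteristic $0$ this means $\Delta(G)'=1$, i.e.\ $\Delta(G)$ is abelian).
\end{enumerate}
Given (1) and (2) one is essentially done: $A:=\Delta(G)$ is then a subgroup of finite index whose commutator subgroup is a finite $p$-group, hence $A$ is $p$-abelian.

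\textbf{The two facts, and the main obstacle.} Fact (1) is the combinatorial heart and the step I expect to cost the most effort. The idea: if $[G:\Delta(G)]=\infty$, then the presence of a centralizer of infinite index lets one construct, for arbitrarily large $N$, elements of $G$ whose products $x_{i_1}\cdots x_{i_k}$ with $i_1<\cdots<i_k$ are ``$N$-independent'' --- so well separated under the conjugacy and coset action that a suitable substitution of group elements with scalar coefficients into the fixed degree-$d$ multilinear identity leaves exactly one monomial surviving with nonzero coefficient, contradicting that the identity holds in $K[G]$. Turning this heuristic into a proof is precisely Passman's $\Delta$-technology: coset counting, $\Delta$-subsets of $G$, and a bound showing that a multilinear identity of degree $d$ caps the relevant combinatorial complexity; a preliminary reduction to finitely generated $G$, with a uniform bound on the index, is convenient here. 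Fact (2) then follows by restricting the identity to $K[\Delta(G)]$, which is again a PI group ring, and analysing group rings of FC-groups: such a ring being PI forces $\Delta(G)'$ to be finite (an infinite $\Delta(G)'$ would embed arbitrarily large matrix-like configurations, exceeding the degree bound), and then, in characteristic $p>0$, any element of $\Delta(G)'$ of order prime to $p$ would produce a non-nilpotent twisted-group-ring piece again permitting an uncancellable-monomial substitution, so $\Delta(G)'$ must be a $p$-group; in characteristic $0$ the same analysis forces $\Delta(G)'=1$. Combining (1), (2), and the sufficiency direction completes the proof.
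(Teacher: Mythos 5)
You should first note that the paper does not prove this statement at all: Theorem~\ref{TKG} is quoted from Passman~\cite{Pas72} purely as motivation, so your proposal can only be judged against Passman's original argument. Your sufficiency half is essentially that argument and is sound: $K[G]$ embeds into $M_n(K[A])$, and $K[A]$ satisfies $[x_1,y_1]\cdots[x_m,y_m]\equiv 0$ because the ideal generated by the augmentation ideal of $K[A']$ is nilpotent (if you prefer to avoid Regev's tensor-product theorem, note that $M_n(K[A])$ modulo the nilpotent ideal $M_n(I)$ is $M_n(K[A/A'])$, which satisfies the standard identity $s_{2n}$ by Amitsur--Levitzki, so $K[G]$ satisfies the nontrivial identity $s_{2n}^{\,m}\equiv 0$).

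The necessity half, however, contains a genuine error. Your structural fact (2) --- that a polynomial identity forces $\Delta(G)'$ to be a finite $p$-group, and to be trivial in characteristic $0$ --- is false, and with it the concluding step $A:=\Delta(G)$ collapses. Any finite non-abelian group $G$ of order prime to $p$ (or any finite non-abelian group in characteristic $0$) is a counterexample: $K[G]$ is finite dimensional, hence PI, while $\Delta(G)=G$ and $\Delta(G)'=G'$ is neither trivial nor a $p$-group; take $G=S_3$ and $p=5$, where the $p$-abelian subgroup of finite index is $A_3$, not $\Delta(G)=S_3$. Consequently the heuristic you offer for (2) (an element of $\Delta(G)'$ of order prime to $p$ yields an uncancellable monomial) cannot be made to work. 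What is actually available at that stage is only the finiteness of $\Delta(G)'$ (for FC-groups with PI group algebra), and the theorem then requires locating a $p$-abelian subgroup of finite index \emph{properly inside} $\Delta(G)$: in characteristic $0$ this is the Isaacs--Passman step via bounded degrees of irreducible representations, and in characteristic $p$ an analogous further argument (e.g. passing to $C_{\Delta(G)}(\Delta(G)')$ and then disposing of the $p'$-part of its commutator subgroup, which again uses the identity). So even granting the $\Delta$-methods black box for your fact (1), the outline does not yield the theorem: the stated target of fact (2) is wrong, and the final descent inside $\Delta(G)$ is missing.
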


All our associative algebras are with unity.
Recall the notions of a (strong) Lie nilpotence and (strong) solvability for associative algebras.
Let $A$ be an associative algebra, and $A^{(-)}$ the related Lie algebra.
Consider its {\it lower central series}:
$\gamma_1(A)=A$, $\gamma_{i+1}(A)=[\gamma_i(A),A]$, $i\ge 1$.
Algebra $A$ is said {\it Lie nilpotent} of class $s$ iff $\gamma_{s+1}(A)=0$ and $\gamma_s(A)\ne 0$.
Also consider {\it upper Lie powers}: $A^{(0)}=A$ and $A^{(n+1)}=[A^{(n)},A]A$, $n\ge 0$
(we use the shifted enumeration in comparison with~\cite{PaPaSe73,RiSh95} because
$\{A^{(n)}\,|\, n\ge 0\}$ is a filtration, a proof is similar to that of Lemma~\ref{RiRj=Ri+j}).
Now, $A$ is {\it strongly Lie nilpotent} of class $s$ iff $A^{(s)}=0$ and $A^{(s-1)}\ne 0$.
One defines the {\it derived series}: $\delta_{0}(A)=A$, $\delta_{i+1}(A)=[\delta_{i}(A),\delta_{i}(A)]$, $i\ge 0$.
Algebra $A$ is {\it solvable} of length $s$ iff $\delta_{s}(A)=0$ and $\delta_{s-1}(A)\ne 0$.
Consider also the {\it upper derived series}: $\tilde\delta_{0}(A)=A$,
$\tilde\delta_{i+1}(A)=[\tilde\delta_{i}(A),\tilde\delta_{i}(A)]A$, $i\ge 0$.
Now, $A$ is {\it strongly solvable} of length $s$ iff
$\tilde\delta_{s}(A)=0$ and $\tilde\delta_{s-1}(A)\ne 0$.

Passi, Passman and Sehgal
specified the Lie nilpotence and solvability of $K[G]$~\cite{PaPaSe73}.
\begin{Th}[\cite{PaPaSe73}]\label{TKGNS}
Let $K[G]$ be the group ring of a group $G$ over a field $K$, $\ch K=p\geq 0$. Then
\begin{enumerate}
  \item
    $K[G]$ is Lie nilpotent if and only if $G$ is $p$-abelian and nilpotent;
  \item
    $K[G]$ is solvable if and only if $G$ is $p$-abelian, for $p\ne 2$;
  \item
    $K[G]$ is solvable if and only if $G$ has a $2$-abelian subgroup of index at most $2$, for $p=2$.
\end{enumerate}
\end{Th}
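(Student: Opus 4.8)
The plan is to prove the ``if'' directions by a direct computation with the ideals $I_N=\ker\big(K[G]\to K[G/N]\big)=\omega(K[N])\,K[G]$ attached to normal subgroups $N$ of $G$, where $\omega(K[N])$ is the augmentation ideal; each $I_N$ is two-sided, and when $p>0$ and $N$ is a finite $p$-group it is nilpotent, since then $K[N]$ is local with $\omega(K[N])$ nilpotent and normality of $N$ gives $I_N^{\,k}=\omega(K[N])^kK[G]$. I would start from the identity $[g,h]=hg\big((g,h)-1\big)$ for $g,h\in G$ (with $(g,h)=g^{-1}h^{-1}gh$), which gives $\gamma_2(K[G])\subseteq I_{G'}$ and $\delta_1(K[G])\subseteq I_{G'}$. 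For~(2): since $I_{G'}$ is an ideal, $\delta_{n+1}(K[G])=[\delta_n,\delta_n]\subseteq I_{G'}^{\,2^n}$, which vanishes for large $n$ when $G'$ is a finite $p$-group (and trivially when $p=0$ and $G$ is abelian); the same estimate handles $p=2$ whenever $G$ itself is $p$-abelian. For~(1): one first proves, by induction on $n$ from the identities $[I_H,K[G]]\subseteq I_{[H,G]}+I_H I_{G'}$, that $\gamma_{n+1}(K[G])$ lies in a sum of products $I_{\gamma_{i_1}(G)}\cdots I_{\gamma_{i_r}(G)}$ with all $i_j\ge 2$ and $\sum_j(i_j-1)\ge n$. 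Since $G$ is nilpotent of class $c$, only $i_j\le c$ contribute, so for large $n$ every summand has at least $t$ factors inside $I_{G'}$, where $I_{G'}^{\,t}=0$; hence $\gamma_{n+1}(K[G])=0$. For~(3): the index-$1$ case is~(2), and if $A\le G$ has index $2$ and is $2$-abelian, I would pick $g\in G\setminus A$ and write $K[G]=K[A]\oplus K[A]g$, exhibiting $K[G]$ as a crossed product of the solvable ring $K[A]$ by $C_2$, for which a direct computation with this decomposition shows the derived length grows only by a bounded amount.

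\textbf{Necessity.} A Lie nilpotent or solvable associative algebra satisfies a nontrivial polynomial identity, so Theorem~\ref{TKG} provides a $p$-abelian subgroup $A$ of finite index in $G$, and it remains to upgrade this, using that $K[H]$ inherits Lie nilpotence (resp.\ solvability) for every subgroup $H$. For~(1): the relation $\gamma_{n+1}(K[G])=0$ forces $G$ nilpotent of class $\le n$, since $g\mapsto g-1$ sends $\gamma_m(G)$ into the $m$-th term of the lower central series of $K[G]$; moreover, expanding the iterated ring-commutator $[g,h,\dots,h]$ with $n$ entries $h$, whose leading term modulo deeper terms is a unit times $\big((g,h)-1\big)^n$, forces each $(g,h)-1$ to be nilpotent, and refining this together with the finite-index $p$-abelian subgroup $A$ forces $\omega(K[G'])$ to be nilpotent, so $G'$ is a finite $p$-group. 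For~(2) with $p\ne 2$: one runs the analogous argument along the derived series, choosing the brackets so that the surviving leading terms are again powers of $(g,h)-1$ times units; since the coefficients that arise are invertible in odd characteristic, this again makes $\omega(K[G'])$ nilpotent and $G'$ finite, i.e.\ $G$ is $p$-abelian. For~(3): when $p=2$ the same scheme only yields that $G$ has a $2$-abelian subgroup of index at most $2$.

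\textbf{Main obstacle.} The heart of the matter is the characteristic-$2$ split between~(2) and~(3). In the necessity argument one repeatedly expands group commutators such as $(gh,k)$ and $(g,h^k)$ and their ring-commutators modulo higher terms; in odd characteristic the coefficients appearing are units, driving the analysis all the way down to $G'$ being finite, whereas in characteristic $2$ an extra square term survives --- reflecting the identity rewriting $[x,y]^2$ through $[x^2,\,\cdot\,]$ --- and this term is exactly what permits the index-$2$ escape realised by groups such as the infinite dihedral group, whose group algebra is solvable over a field of characteristic $2$ but over no other characteristic. Making this bookkeeping precise, both to prove~(3) and to show that no hypothesis weaker than $p$-abelian works in~(2) when $p\ne 2$, is the delicate part; the rest is the by-now-standard interplay between the lower central and derived series of $K[G]$ and its augmentation filtration.
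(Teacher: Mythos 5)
This statement is not proved in the paper: Theorem~\ref{TKGNS} is quoted verbatim from Passi--Passman--Sehgal \cite{PaPaSe73} as background motivation, so there is no in-paper argument to compare your proposal against. What I can assess is whether your outline is a viable reconstruction of the classical proof, and in broad strokes it is: the sufficiency direction via the ideals $I_N=\omega(K[N])K[G]$, the nilpotency of $I_{G'}$ when $G'$ is a finite $p$-group, the estimate $\delta_n(K[G])\subseteq I_{G'}^{2^{n-1}}$, and the weighted filtration $\gamma_{n+1}(K[G])\subseteq\sum I_{\gamma_{i_1}(G)}\cdots I_{\gamma_{i_r}(G)}$ with $\sum_j(i_j-1)\ge n$ are all the standard ingredients (the latter is the group-ring analogue of Lemma~\ref{Lgamma} of this paper), and starting the necessity direction from Theorem~\ref{TKG} is also how \cite{PaPaSe73} proceeds.

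That said, your proposal is an outline rather than a proof, and the gaps sit exactly where the real content of \cite{PaPaSe73} lies. In the necessity direction, the passage from ``each $(g,h)-1$ is nilpotent'' to ``$\omega(K[G'])$ is nilpotent, hence $G'$ is a finite $p$-group'' is asserted in one clause (``refining this together with the finite-index $p$-abelian subgroup $A$'') but this refinement is the technical heart of the theorem and cannot be waved through: one needs to control torsion in $G'$, rule out $p'$-torsion, and convert elementwise nilpotence of $(g,h)-1$ into finiteness of $G'$, which requires genuine group-theoretic input beyond the leading-term expansion you describe. Likewise, for part~(3) the sufficiency claim that the crossed-product decomposition $K[G]=K[A]\oplus K[A]g$ makes ``the derived length grow only by a bounded amount'' is precisely the delicate characteristic-$2$ computation, and the necessity claim that ``the same scheme only yields'' an index-$2$ subgroup is not a proof that index $2$ actually suffices as a hypothesis nor that it is forced as a conclusion. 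You correctly identify the infinite dihedral group as the witness separating (2) from (3), and you correctly locate the obstruction in the extra square term surviving in characteristic $2$, but as written the proposal defers rather than resolves these points; for the purposes of this paper the theorem should simply be cited, as the authors do.
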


Using the upper Lie powers, one defines {\it Lie dimension subgroups} of a group (our enumeration is shifted)~\cite{Passi}:
$$
D_{(n),K}(G)=G\cap (1+K[G]^{(n)}), \qquad n\ge 0.
$$
See another description~\cite{BhaPas92}:
\begin{equation}\label{dim-subgroups}
D_{(n),K}(G)=\prod_{(i-1)p^k\ge n} \gamma_i(G)^{p^k},\quad n\ge 0.
\end{equation}
There is a formula for the Lie nilpotency class of a modular group ring.
\begin{Th}[\cite{BhaPas92}]\label{Tgr-nilp}
Let $G$ be a group, $K$ a field of characteristic $p>3$ such
that the group ring $K[G]$ is Lie nilpotent.
The Lie nilpotency class of $K[G]$ coincides with its strong Lie nilpotency class and is equal to
$$
1+(p-1)\sum_{m\ge 1}m \log_p \big|D_{(m),K}(G):D_{(m+1),K}(G)\big|.
$$
\end{Th}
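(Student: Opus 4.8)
The plan is to compute the \emph{strong} Lie nilpotency class first, by passing to an associated graded ring, and only afterwards to check that it coincides with the ordinary Lie nilpotency class when $p>3$. Since $\{K[G]^{(n)}\mid n\ge 0\}$ is a filtration of $K[G]$ by two-sided ideals satisfying $[K[G]^{(i)},K[G]^{(j)}]\subseteq K[G]^{(i+j)}$ (the associative analogue of Lemma~\ref{RiRj=Ri+j}), once we know $K[G]^{(n)}=0$ for $n$ large the strong Lie nilpotency class is $s=1+\max\{n\mid K[G]^{(n)}\ne 0\}$. By Theorem~\ref{TKGNS}, the hypothesis that $K[G]$ is Lie nilpotent forces $G$ to be nilpotent with $G'$ a finite $p$-group. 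So everything reduces to locating the top nonzero homogeneous component of $\mathcal A:=\gr K[G]=\bigoplus_{n\ge 0}K[G]^{(n)}/K[G]^{(n+1)}$.

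The core of the argument is a Poincar\'e--Birkhoff--Witt (Quillen/Jennings) type description of $\mathcal A$ in terms of the Lie dimension subgroups. For $n\ge 1$ the map $g\mapsto (g-1)+K[G]^{(n+1)}$ embeds $D_{(n),K}(G)/D_{(n+1),K}(G)$ into the $K$-vector space $K[G]^{(n)}/K[G]^{(n+1)}$, so that quotient is elementary abelian of rank $d_n:=\log_p\big|D_{(n),K}(G):D_{(n+1),K}(G)\big|$; by~\eqref{dim-subgroups} it lies inside $G'$, hence $d_n<\infty$ and $d_n=0$ for $n\gg 0$. Put $\mathcal L=\bigoplus_{n\ge 1}\mathcal L_n$ with $\mathcal L_n=\big(D_{(n),K}(G)/D_{(n+1),K}(G)\big)\otimes_{\mathbb F_p}K$; this is a finite-dimensional \emph{graded restricted} Lie $K$-algebra, its bracket coming from the group commutator and its $p$-operation from the $p$-th power. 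The theory of Lie dimension subgroups (the modular analogue of Quillen's theorem; see~\cite{Passi,BhaPas92}) then gives a graded isomorphism $\mathcal A\cong K[G/G']\otimes_K u(\mathcal L)$, with the factor $K[G/G']$ concentrated in degree $0$; in particular $K[G]^{(n)}=0$ for $n$ large, so $K[G]$ is indeed strongly Lie nilpotent.

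A PBW basis of $u(\mathcal L)$ consists of the products $\prod_i e_i^{a_i}$ with $0\le a_i\le p-1$, where $\{e_i\}$ is a homogeneous $K$-basis of $\mathcal L$; the unique element of maximal degree is $\prod_i e_i^{p-1}$, of degree $(p-1)\sum_i\deg e_i=(p-1)\sum_{m\ge 1}m\,d_m$. Since $K[G/G']$ contributes only to degree $0$, we obtain $\max\{n\mid\mathcal A_n\ne 0\}=(p-1)\sum_{m\ge 1}m\,d_m$, whence $K[G]$ is strongly Lie nilpotent of class exactly $1+(p-1)\sum_{m\ge 1}m\log_p\big|D_{(m),K}(G):D_{(m+1),K}(G)\big|$, the asserted value.

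It remains to treat the ordinary Lie nilpotency class for $p>3$. One inequality is automatic: an easy induction gives $\gamma_n(K[G])\subseteq K[G]^{(n-1)}$, so $\gamma_{s+1}(K[G])\subseteq K[G]^{(s)}=0$ and the Lie nilpotency class is at most $s$. For the reverse one must produce $\gamma_s(K[G])\ne 0$, i.e.\ show that in the associated graded ring the lower central series reaches the same top degree as the upper Lie powers --- concretely, that $K[G]^{(n)}$ coincides, modulo terms of strictly higher filtration degree, with the two-sided ideal carried by $\gamma_{n+1}(K[G])$. This rests on admissible inclusions for products $\gamma_m(A)\gamma_n(A)$ of terms of the lower central series of an associative algebra $A$, together with their iterates; it is precisely these inclusions that fail for $p=2$ and $p=3$, which is why those characteristics are excluded (and there the two classes genuinely differ). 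The Poisson counterparts of such product results are developed in Section~\ref{Scomm}. I expect the two real obstacles to be: (i) erecting the Quillen/Jennings isomorphism $\mathcal A\cong K[G/G']\otimes_K u(\mathcal L)$ --- one needs that $\{D_{(n),K}(G)\}$ is a restricted ($p$-)filtration of $G$, that its $p$-operation is the restriction of the Frobenius on $\mathcal A$, and that the canonical map $u(\mathcal L)\to\mathcal A$ is injective; and (ii) the delicate bookkeeping, valid only for $p>3$, of the products of lower central series terms and of the ``lower-order'' corrections they produce.
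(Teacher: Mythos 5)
First, a point of orientation: the paper does not prove Theorem~\ref{Tgr-nilp} at all. It is stated with the citation~\cite{BhaPas92} as background motivating the authors' own Theorem~\ref{Tclasses}, so there is no in-paper proof to compare yours against. The nearest internal analogue is the proof of Theorem~\ref{Tclasses} for $\SSS(L)$, which follows exactly your two-stage plan: an explicit description of the filtration by upper Lie powers (Lemma~\ref{Lgamma}, Corollary~\ref{classSpNS}) yields the strong class, and the commutator-product inclusions of Theorem~\ref{Tcomutt} --- the Poisson version of Sharma--Srivastava~\cite{ShaSri90} --- yield the matching lower bound for the ordinary class when $p>3$. So your strategy is the standard one and is the one used in the literature.

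As a proof, however, your text is an outline whose two load-bearing steps are explicitly deferred, as you yourself acknowledge at the end. (i) The isomorphism $\gr K[G]\cong K[G/G']\otimes_K u(\mathcal L)$ for the filtration by upper Lie powers is not Quillen's theorem, which concerns powers of the augmentation ideal and the Brauer--Jennings--Zassenhaus series; you must prove the Lie-dimension-subgroup analogue, namely that the monomials $\prod_i(g_i-1)^{a_i}$ with $0\le a_i\le p-1$, taken over a filtered transversal of the chain $\{D_{(n),K}(G)\}$, are linearly independent modulo higher filtration. This is where the description~\eqref{dim-subgroups} and the identity $(g-1)^p=g^p-1$ must actually be used, and it is the real content behind the upper-index formula; without it the claimed value is only an upper bound for the strong class. (ii) The inequality $\gamma_s(K[G])\ne 0$ is the entire point of~\cite{BhaPas92}, and you give no argument for it beyond naming the kind of inclusion required. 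Concretely one must realize each generator of $D_{(m),K}(G)$ modulo $D_{(m+1),K}(G)$ as a $p^k$-th power of a group commutator of the appropriate weight, deduce $(g-1)^{p-1}\in\gamma_{m(p-1)+1}(K[G])\,K[G]$, and then multiply these contributions together using the odd-index case of $\gamma_n(A)\gamma_m(A)\subseteq\gamma_{n+m-1}(A)A$, exactly as the paper does for $\SSS(L)$ in Theorem~\ref{Sharma1}. Until (i) and (ii) are carried out, what you have is a correct plan rather than a proof.
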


\subsection{Identities of enveloping algebras}
Latyshev proved that the universal enveloping algebra of a finite
dimensional Lie algebra over a field of characteristic zero satisfies a nontrivial
polynomial identity if and only if the Lie algebra is abelian~\cite{Lat63}.
Later Bahturin noticed that the condition of a finite dimensionality is inessential (see e.g.~\cite{Ba}).

Bahturin settled a similar problem on the existence of  a
nontrivial identity for the universal enveloping algebra over a
field of positive characteristic~\cite{Ba74}.
Also, Bahturin found necessary and sufficient conditions for the universal enveloping algebra
of a Lie superalgebra over a field of characteristic zero to satisfy a non-trivial polynomial identity~\cite{Ba85}.
\medskip

Passman~\cite{Pas90} and Petrogradsky~\cite{Pe91} described
restricted Lie algebras $L$ whose restricted enveloping
algebra $u(L)$ satisfies a nontrivial polynomial identity.

\begin{Th}[\cite{Pas90},~\cite{Pe91}]
\label{TuL}
Let $L$ be a Lie $p$-algebra.
The restricted enveloping algebra $u(L)$ satisfies
a nontrivial polynomial identity if and only if
there exist restricted ideals $Q\subseteq H\subseteq L$ such that
 \begin{enumerate}
   \item
     $\dim L/H<\infty$, $\dim Q<\infty$;
   \item
     $H/Q$ is abelian;
   \item
     $Q$ is abelian and has a nilpotent $p$-mapping.
 \end{enumerate}
\end{Th}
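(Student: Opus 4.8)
The plan is to prove the two implications separately. The ``if'' direction is a short module-theoretic argument; the ``only if'' direction is the real content and follows Passman's delta-method for restricted enveloping algebras~\cite{Pas90}, equivalently Petrogradsky's delta-set technique~\cite{Pe91}, running parallel to the proof of Passman's group-ring theorem (Theorem~\ref{TKG}). For sufficiency, assume $Q\subseteq H\subseteq L$ satisfy (1)--(3). Since $Q$ is abelian, $u(Q)$ is a finite-dimensional commutative algebra, and since the $p$-map of $Q$ is nilpotent every element of $Q$ is nilpotent in $u(Q)$; hence its augmentation ideal $\omega(Q)$ is nil, and being finite-dimensional it is nilpotent, say $\omega(Q)^{t}=0$. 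As $Q$ is a restricted ideal of $H$, the set $I:=\omega(Q)u(H)=u(H)\omega(Q)$ is a two-sided ideal of $u(H)$ with $I^{t}=\omega(Q)^{t}u(H)=0$, while $u(H)/I\cong u(H/Q)$ is commutative because $H/Q$ is abelian; thus every commutator of $u(H)$ lies in $I$, so $u(H)$ satisfies $[x_{1},y_{1}]\cdots[x_{t},y_{t}]\equiv 0$ and is in particular PI. Finally, $\dim L/H<\infty$, so by the restricted PBW theorem $u(L)$ is a free right $u(H)$-module of rank $p^{\dim L/H}$; left multiplication embeds $u(L)$ into $\mathrm{End}_{u(H)}(u(L))\cong M_{p^{\dim L/H}}(u(H))$, and matrices over a PI algebra are PI (Amitsur--Levitzki together with Regev's tensor-product theorem). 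Hence $u(L)$ is PI.

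For necessity, suppose $u(L)$ satisfies a nontrivial identity of degree $d$. I would first form $\Delta(L)=\{x\in L\mid \dim_{K}[x,L]<\infty\}$, which is a restricted ideal of $L$ by a routine check using the Jacobi identity and $\ad(x^{[p]})=(\ad x)^{p}$. The heart of the proof is the \emph{delta-set estimate}: the single identity of degree $d$ forces $\dim L/\Delta(L)<\infty$, because unbounded codimension would allow one to select linearly independent configurations of PBW monomials in $u(L)$ defeating any identity of degree $d$ --- the restricted-Lie analogue of Passman's $\Delta$-method. Put $H_{0}:=\Delta(L)$, a finite-codimensional restricted ideal with $u(H_{0})$ again PI; a second application of the method, inside the ``finite-conjugate'' algebra $H_{0}$, yields a finite-codimensional restricted ideal $H_{1}$ of $L$ with $N:=[H_{1},H_{1}]$ finite-dimensional. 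Replacing $H_{1}$ by $H_{1}\cap C_{L}(N)$ --- still finite-codimensional since $L/C_{L}(N)$ embeds in $\mathrm{End}(N)$ --- we may assume $[[H_{1},H_{1}],H_{1}]=0$, so $N$ is a finite-dimensional abelian ideal, central in $H_{1}$.

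It remains to replace $N$ by an abelian restricted ideal with \emph{nilpotent} $p$-map. After a further reduction of the same type one may assume the restricted ideal $\bar N$ of $L$ generated by $N$ is finite-dimensional; it is abelian, so the Fitting decomposition of the $p$-semilinear map $x\mapsto x^{[p]}$ splits it as $\bar N=\bar N_{s}\oplus\bar N_{n}$ with the $p$-map bijective on the toral part $\bar N_{s}=\bigcap_{k}\bar N^{[p^{k}]}$ and nilpotent on $\bar N_{n}$. Every restricted derivation of the abelian algebra $\bar N$ annihilates $\bar N^{[p]}\supseteq\bar N_{s}$, so $\bar N_{s}$ is central in $L$; the only obstruction to $[H_{1},H_{1}]\subseteq\bar N_{n}$ is therefore the $\bar N_{s}$-valued skew form $(x,y)\mapsto$ the $\bar N_{s}$-component of $[x,y]$ on $H_{1}$, and this form must have finite rank, since otherwise infinitely many independent ``Heisenberg pairs'' over a nonzero toral center would generate an infinite-dimensional Weyl-type subquotient of $u(L)$, contradicting the identity. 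Passing to the finite-codimensional radical of this form (and, if necessary, to the largest restricted ideal of $L$ inside it) one obtains a finite-codimensional restricted subalgebra $H$ with $[H,H]\subseteq\bar N_{n}=:Q$; then $Q$ is a finite-dimensional abelian restricted ideal with nilpotent $p$-map, $H/Q$ is abelian, and $\dim L/H<\infty$. The main obstacle is the delta-set estimate --- extracting from one identity of degree $d$ the successive finiteness statements for $L/\Delta(L)$ and for $[H_{1},H_{1}]$ --- together with the Weyl-algebra argument eliminating the toral part; the remainder is bookkeeping about restricted ideals, centralizers and $p$-closures.
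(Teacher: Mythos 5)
First, a point of context: the paper does not prove Theorem~\ref{TuL} at all --- it is quoted from \cite{Pas90} and \cite{Pe91} in the motivational Section~\ref{Sidentities}, so there is no in-paper argument to compare yours against (the closest in-paper analogue of the machinery you invoke is the Poisson version: Lemmas~\ref{PropDel}, \ref{Ldilat}, Theorem~\ref{TNeu} and Theorems~\ref{TtrPoisson}, \ref{Treduction}). Judged on its own merits, your sufficiency argument is complete and correct: $\omega(Q)$ nilpotent, $u(H)/\omega(Q)u(H)\cong u(H/Q)$ commutative, hence $u(H)$ satisfies $[x_1,y_1]\cdots[x_t,y_t]\equiv 0$, and the embedding of $u(L)$ into $M_{p^{\dim L/H}}(u(H))$ plus Regev's theorem finishes it. This is the standard proof.

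The necessity direction, however, has a genuine gap beyond the (honestly flagged) black box of the delta-set estimate. The step ``after a further reduction of the same type one may assume the restricted ideal $\bar N$ generated by $N$ is finite-dimensional'' is not a reduction of the same type and is false for the $H_1$ you have constructed at that point: the restricted closure $N+N^{[p]}+N^{[p^2]}+\cdots$ of a finite-dimensional abelian ideal is typically infinite-dimensional (take $N=\langle e_0\rangle$ with $e_0^{[p]}=e_1$, $e_1^{[p]}=e_2,\dots$ independent; here $u(L)$ can still be PI, but one must \emph{change} $H$ rather than close up $N$). Producing a finite-dimensional $Q$ with \emph{nilpotent} $p$-map containing $[H,H]$ is exactly where condition~3 has to be earned from the identity, and it is the second hard point of Passman's and Petrogradsky's proofs, not bookkeeping. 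Your Fitting decomposition is applied to $\bar N$ before its finite-dimensionality is secured (and fails for the example above, where $\bar N_s=\bar N_n=0\ne\bar N$). Two further imprecisions: an ``infinite-dimensional Weyl-type subquotient'' is not by itself a contradiction to PI in characteristic $p$ --- the reduced Weyl algebra is $M_p(K)$, hence PI --- so the toral obstruction must be phrased via matrix algebras of unbounded degree coming from arbitrarily many independent Heisenberg pairs over the same nonzero toral eigenvalue; and passing to ``the largest restricted ideal of $L$ inside'' a finite-codimensional subalgebra need not preserve finite codimension for Lie algebras (unlike the normal core of a finite-index subgroup), so that parenthetical needs an argument specific to the situation inside $\Delta(L)$.
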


Riley and Shalev determined when $u(L)$
is Lie nilpotent, solvable (for $p>2$), or satisfies the Engel condition~\cite{RiSh93}.

\begin{Th}[\cite{RiSh93}]\label{uLNS}
Let $u(L)$ be the restricted enveloping algebra of a restricted Lie algebra $L$ over a field $K$ of characteristic $p>0$.
\begin{enumerate}
  \item
    $u(L)$ is Lie nilpotent if and only if $L$ is nilpotent and $L^2$ is finite dimensional and $p$-nilpotent;
  \item
    $u(L)$ is $n$-Engel for some $n$ if and only if $L$ is nilpotent, $L^2$ is $p$-nilpotent, and $L$ has a restricted ideal $A$ such that both
    $L/A$ and $A^2$ are finite dimensional.
  \item
    $u(L)$ is solvable if and only if $L^2$ is finite dimensional and $p$-nilpotent, for $p\neq 2$.
    \end{enumerate}
\end{Th}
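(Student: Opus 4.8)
\medskip
The plan is to treat the three ``if'' directions by localizing at a suitable restricted ideal --- typically $L^2$, or the ideal $A$ appearing in part (2) --- and the three ``only if'' directions by first invoking the PI-criterion of Theorem~\ref{TuL} to obtain a coarse structure and then refining it by dimension-subalgebra (delta-) methods; throughout, the idea is to pass between properties of the algebra $u(L)$ and properties of the restricted Lie algebra $L$ through the augmentation ideal and its powers. I write $\omega(M)$ for the augmentation ideal of $u(M)$; for a restricted ideal $M$ of $L$ the set $\omega(M)u(L)=u(L)\omega(M)$ is a two-sided ideal of $u(L)$ and $u(L)/\omega(M)u(L)\cong u(L/M)$.

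\medskip
For the ``if'' part of (3) I would take $B:=L^2$: then $u(B)$ is finite dimensional with nilpotent augmentation ideal, say $\omega(B)^c=0$, so $J:=\omega(B)u(L)$ is a nilpotent ideal of $u(L)$ with $J^c=0$ and $u(L)/J\cong u(L/B)$ commutative. For any associative algebra $A$ with $[A,A]\subseteq J$ and $J$ nilpotent one gets $\delta_1(A)\subseteq J$ and, inductively, $\delta_{k+1}(A)=[\delta_k(A),\delta_k(A)]\subseteq J^{2^{k-1}}$ (using $[I,I]\subseteq I^2$ for an ideal $I$), so $\delta_k(u(L))=0$ once $2^{k-1}\ge c$; this argument needs no hypothesis on $p$. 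For the ``if'' part of (1) I would additionally use that $L$ is nilpotent, via a filtration argument with the ideals $J_n=\sum_{a_1+\dots+a_r\ge n}\omega(\gamma_{a_1+1}(L))\cdots\omega(\gamma_{a_r+1}(L))u(L)$ (so $J_1=\omega(L^2)u(L)$): one checks $[J_n,u(L)]\subseteq J_{n+1}$, hence $\gamma_{n+1}(u(L))\subseteq J_n$, and $J_n=0$ for $n$ large, because a term either contains a factor $\omega(\gamma_{a+1}(L))$ with $a$ at least the nilpotency class of $L$, hence zero, or consists of many factors, all contained in $\omega(L^2)$, hence zero since $\omega(L^2)$ is nilpotent; so $u(L)$ is Lie nilpotent (and, refining with the subalgebras $\gamma_i(L)^{[p]^k}$, strongly Lie nilpotent). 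Part (2) is a localized version: with a restricted ideal $A$ as in the statement, $A^2\subseteq L^2$ is again $p$-nilpotent, so $\omega(A^2)u(L)$ is nilpotent and modulo it $u(L)$ becomes the restricted enveloping algebra of a Lie algebra with finite-codimensional abelian restricted ideal; combined with the fact that $L$ nilpotent forces $x^{[p]^k}\in Z(L)$, hence $(\ad x)^{p^k}=\ad(x^{[p]^k})=0$ on $u(L)$, for every $x\in L$ and $k$ large, this should bound the Engel length of $u(L)$. (This last step is subtler than in (1): ``commutative modulo a nilpotent ideal'' does not by itself force the Engel condition, as $2\times2$ upper triangular matrices show.)

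\medskip
For the ``only if'' directions I would first note that each hypothesis produces a nontrivial \emph{multilinear} associative identity on $u(L)$: Lie nilpotence of class $s$ gives $[x_1,\dots,x_{s+1}]\equiv 0$, solvability of length $s$ gives the multilinear identity $\delta_s\equiv 0$, and the $n$-Engel condition gives the multilinearization of $[x,{}_{n}y]\equiv 0$. Hence Theorem~\ref{TuL} applies and yields restricted ideals $Q\subseteq H\subseteq L$ with $\dim L/H<\infty$, $\dim Q<\infty$, $H/Q$ abelian and $Q$ abelian with nilpotent $p$-map; in particular $H^2\subseteq Q$ is finite dimensional and $p$-nilpotent. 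It then remains to upgrade this to the corresponding statement about $L$ itself, to recover nilpotence of $L$ in (1)--(2), and to recover finite dimensionality of $L^2$ in (1)--(3). For the upgrade I would use that the chosen identity is inherited, with the same parameters, by every finitely generated restricted subalgebra $M$ of $L$; a counting argument along the (Lie, resp.\ derived) dimension-subalgebra series of $u(M)$ --- the restricted analogue of the formula~\eqref{dim-subgroups} --- then bounds $\dim M^2$, resp.\ the nilpotence of the $p$-operation on $M^2$, uniformly in $M$, and a union over all such $M$ gives the statement for $L$. Nilpotence of $L$ in (1) is immediate from $\gamma_{s+1}(u(L))=0\Rightarrow\gamma_{s+1}(L)=0$; in (2) it uses in addition that an $n$-Engel restricted Lie algebra possessing a finite-codimensional metabelian restricted ideal is nilpotent.

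\medskip
The hard part will be the upgrade step in the solvable case, which is also where the hypothesis $p>2$ is used: solvability of $u(L)$ must be shown already to force $\dim L^2<\infty$, and here the crude inclusion $\delta_k(u(L))\subseteq J^{2^{k-1}}$ is useless because it runs the wrong way. Instead one must show that an infinite-dimensional $L^2$ creates, inside some finitely generated restricted subalgebra, a section whose derived-series dimension subalgebras do not terminate; this genuinely needs $p>2$, since for $p=2$ the conclusion fails --- compare the $p=2$ clause of Theorem~\ref{TKGNS} for group rings --- and then ``solvable'' and ``strongly solvable'' can diverge.
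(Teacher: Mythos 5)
The paper does not prove this statement: Theorem~\ref{uLNS} is quoted from Riley and Shalev~\cite{RiSh93} as background, so there is no in-paper proof to compare yours against. The closest material in the paper is the parallel development for truncated symmetric Poisson algebras (Theorem~\ref{Tnilp}, Theorem~\ref{Tsolv}, the reduction Theorem~\ref{Treduction}, and Propositions~\ref{SpSp} and~\ref{SpSp2}), which follows the same two-phase strategy you describe.

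Judged on its own terms, your proposal is a correct strategic outline whose easy halves are essentially complete but whose hard halves are named rather than proved. The ``if'' directions of (1) and (3) check out: $\delta_k(u(L))\subseteq J^{2^{k-1}}$ with $J=\omega(L^2)u(L)$ nilpotent, and the filtration $J_n$ with $[J_n,u(L)]\subseteq J_{n+1}$, are both sound (the latter is the exact associative analogue of the paper's Lemma~\ref{Lgamma}). Two genuine gaps remain. First, in the ``only if'' directions you need to extract $p$-nilpotence of $L^2$ and the vanishing of $D_{(n)}(L)$ from \emph{ordinary} Lie nilpotence or solvability, not the strong versions; the formula~\eqref{dim-subalg} a priori controls only $u(L)^{(n)}$, and bridging $\gamma_n(u(L))$ to $u(L)^{(n-1)}$ in the needed direction requires the Gupta--Levin/Sharma--Srivastava commutator-product lemmas (the associative counterparts of Theorem~\ref{Tcomutt}), which your outline never invokes. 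Second, the crux of all three ``only if'' statements is that Lie nilpotence (resp.\ solvability, $p\ne 2$) forces $\dim L^2<\infty$; after the reduction to $L=\langle x\rangle_K\oplus\Delta$ with $\Delta$ abelian one must exhibit explicitly nonvanishing iterated brackets when $\dim[x,\Delta]=\infty$ --- this is precisely what the paper's Lemma~\ref{ad}, Proposition~\ref{SpSp} and Proposition~\ref{SpSp2} do in the Poisson setting, including the eigenvalue split $\lambda=0$ versus $\lambda\ne0$ where the hypothesis $p\ne 2$ actually enters. You correctly identify this as ``the hard part'' but supply no construction, so as it stands the argument does not close. Part (2) (both directions) is likewise only gestured at, as you acknowledge.
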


Let $L$ be a restricted Lie algebra, $\ch K=p>0$.
Similar to the dimension subgroups, using upper Lie powers,
Riley and Shalev defined {\it Lie dimension subalgebras}~\cite{RiSh95}:
$$
D_{(n)}(L)=L\cap u(L)^{(n)}, \qquad n\ge 0.
$$
(recall that our enumeration is shifted).
They gave another description~\cite{RiSh95}:
\begin{equation}\label{dim-subalg}
D_{(n)}(L)=\sum_{(i-1)p^k\ge n} \gamma_i(L)^{[p^k]},\quad n\ge 0.
\end{equation}
Siciliano proved~\cite{Sic06} that in case $p>2$,
the strong solvability of the restricted enveloping algebra $u(L)$ is equivalent to its solvability.
Moreover, the strong solvability in case $p=2$ is described by the same conditions of Part 3 of Theorem~\ref{uLNS}.
Also, in case $p=2$ he observed an example of the restricted enveloping algebra $u(L)$ that is
solvable but not strongly solvable.

The following is an analogue of results on the Lie nilpotency classes of group rings (Theorem~\ref{Tgr-nilp}).
\begin{Th}[\cite{RiSh95}]\label{Tu-nilp}
Let $L$ be a restricted Lie algebra over a field $K$ of characteristic $p$ such
that $u(L)$ is Lie nilpotent. Then
\begin{enumerate}
\item The strong Lie nilpotency class of $u(L)$ is equal to
$$ 1+(p-1)\sum_{m\ge 1}m \dim (D_{(m)}(L)/D_{(m+1)}(L)); $$
\item In case $p>3$, the Lie nilpotency class coincides with the strong Lie nilpotency class.
\end{enumerate}
\end{Th}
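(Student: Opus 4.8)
The plan is to carry out the computation on a Poincar\'e--Birkhoff--Witt (PBW) basis of $A:=u(L)$ graded by the Lie dimension subalgebra series, in the spirit of the proof of Theorem~\ref{Tgr-nilp} for group rings. Since $A$ is Lie nilpotent, only finitely many of the quotients $D_{(m)}(L)/D_{(m+1)}(L)$ are non-zero, and each is finite-dimensional (by Theorem~\ref{uLNS} together with the description~\eqref{dim-subalg}). For Part~1 the strong Lie nilpotency class is the least $s$ with $A^{(s)}=0$, so I must find the largest $n$ with $A^{(n)}\ne 0$. I would first record the filtration properties of $\{A^{(n)}\mid n\ge 0\}$: each $A^{(n)}$ with $n\ge 1$ is a two-sided ideal, $A^{(m)}A^{(n)}\subseteq A^{(m+n)}$, and $[A^{(m)},A^{(n)}]\subseteq A^{(m+n+1)}$, proved exactly as Lemma~\ref{RiRj=Ri+j}. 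Consequently $A^{(n)}\cap L=D_{(n)}(L)$, one has $[D_{(a)}(L),L]\subseteq D_{(a+1)}(L)$, and, combining~\eqref{dim-subalg} with Jacobson's $p$-th power formula, $x^{[p]}\in D_{(pw)}(L)$ whenever $x\in D_{(w)}(L)$.

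Next I would fix a $K$-basis $\{x_j\mid j\in J\}$ of $L$ adapted to $\{D_{(m)}(L)\}$ and set $w(x_j)=\max\{m:x_j\in D_{(m)}(L)\}$, so that $d_m:=\dim D_{(m)}(L)/D_{(m+1)}(L)$ equals the number of $j$ with $w(x_j)=m$ and only finitely many $x_j$ have $w(x_j)\ge 1$. Give the PBW monomial $x_{j_1}^{a_1}\cdots x_{j_r}^{a_r}$ (all $0\le a_i<p$) the weight $\sum_i a_i w(x_{j_i})$. The key claim is that $A^{(n)}$ is precisely the linear span of the PBW basis monomials of weight $\ge n$: the inclusion $\supseteq$ is immediate from $x_j\in A^{(w(x_j))}$ and multiplicativity of the filtration, while $\subseteq$ follows by induction on $n$ from $A^{(n+1)}=[A^{(n)},A]A$, expanding brackets by the Leibniz rule, using $[D_{(a)}(L),L]\subseteq D_{(a+1)}(L)$, and checking that putting a product back into PBW form (in particular replacing $x_j^{p}$ by $x_j^{[p]}\in D_{(pw(x_j))}(L)$) never decreases the weight. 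Since the PBW monomials are linearly independent, $A^{(n)}\ne 0$ iff some admissible monomial has weight $\ge n$; the heaviest one is $\prod_{j:\,w(x_j)\ge 1}x_j^{p-1}$, of weight $(p-1)\sum_j w(x_j)=(p-1)\sum_{m\ge 1}m\,d_m$. Therefore $A^{(s-1)}\ne 0=A^{(s)}$ with $s=1+(p-1)\sum_{m\ge 1}m\,d_m$, which is Part~1.

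For Part~2 note that $\gamma_{n+1}(A)\subseteq A^{(n)}$ always (immediate induction from $A^{(n+1)}=[A^{(n)},A]A$), so the Lie nilpotency class never exceeds the strong one; the point is the opposite inequality when $p>3$. My strategy would be to prove that for $\ch K>3$ the two-sided ideal $\langle\gamma_{n+1}(A)\rangle$ generated by $\gamma_{n+1}(A)$ coincides with the upper Lie power $A^{(n)}$ — equivalently, that the lower Lie dimension subalgebra $D_{[n+1]}(L):=L\cap\langle\gamma_{n+1}(A)\rangle$ equals $D_{(n)}(L)$. Granting this, $\gamma_{n+1}(A)=0$ iff $\langle\gamma_{n+1}(A)\rangle=0$ iff $A^{(n)}=0$, so the two classes agree and equal the value found in Part~1.

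The hard part will be exactly this identification $\langle\gamma_{n+1}(A)\rangle=A^{(n)}$. Unwinding $A^{(n)}=[A^{(n-1)},A]A$ produces, alongside iterated commutators, products of commutators such as $[A,A][A,A]$ inside $A^{(2)}$, and one must absorb these into the ideal generated by longer commutators modulo higher terms. The commutator identities in an associative algebra that make this possible hold only when $6$ is invertible, i.e.\ $\ch K\notin\{2,3\}$; they are the associative analogues of the lower-central-series computations of Section~\ref{Scomm}. In characteristics $2$ and $3$ these identities break down, the lower and upper Lie dimension subalgebras — hence the two nilpotency classes — need no longer coincide, and so the hypothesis $p>3$ in Part~2 is genuinely needed. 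Beyond this, the argument is routine bookkeeping with the PBW basis and the filtration set up at the start.
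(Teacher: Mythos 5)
First, a caveat about the comparison: Theorem~\ref{Tu-nilp} is quoted in this paper from Riley--Shalev \cite{RiSh95} without proof, so there is no in-paper proof to measure you against; the closest internal benchmark is the paper's proof of the analogous statement for truncated symmetric Poisson algebras (Theorem~\ref{Tclasses}, via Lemma~\ref{Filt}, Lemma~\ref{RiRj=Ri+j}, Lemma~\ref{Lgamma}, Corollary~\ref{classSpNS} and Theorem~\ref{Sharma1}). Measured against that, your Part~1 is essentially the same argument transplanted to $u(L)$: a filtration adapted to the dimension subalgebras, a weight function on a PBW basis, identification of $A^{(n)}$ with the span of monomials of weight $\ge n$, and the extremal monomial $\prod_j x_j^{p-1}$ of weight $(p-1)\sum_m m\,d_m$. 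This is correct in outline and is how \cite{RiSh95} proceeds. One small inaccuracy: you assert $[A^{(m)},A^{(n)}]\subseteq A^{(m+n+1)}$ ``proved exactly as Lemma~\ref{RiRj=Ri+j}'', but that lemma only yields the inclusion into $R^{(m+n)}$; the sharper ``$+1$'' version needs a separate argument. Fortunately Part~1 only uses $[A^{(a)},A]\subseteq A^{(a+1)}$, which is definitional, so nothing breaks.

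The genuine gap is in Part~2. You reduce everything to the identity $\langle\gamma_{n+1}(A)\rangle=A^{(n)}$ for $\ch K>3$ and then describe the commutator identities that drive it as something that ``holds only when $6$ is invertible'' followed by ``routine bookkeeping''. Those identities --- the associative analogues of Theorem~\ref{Tcomutt}, namely $\gamma_m(A)\gamma_n(A)\subseteq\gamma_{m+n-1}(A)A$ when one of $m,n$ is odd, and $[x_1,\dots,x_n]^m\in\gamma_{(n-1)m+1}(A)A$ --- are the entire technical content of Part~2, not an afterthought; in the Poisson setting they occupy all of Section~\ref{Scomm} (Lemmas~\ref{Lperm}--\ref{Lgamma-2}), and their proof is where the hypotheses $\ch K\ne 2,3$ actually enter (e.g.\ the division by $3$ in Lemma~\ref{Lprod3} and by $2$ in the squaring step). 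Without stating and proving them, your Part~2 is a reduction, not a proof. Note also that the paper's route in Theorem~\ref{Sharma1} is more economical than establishing the full ideal identity $\langle\gamma_{n+1}(A)\rangle=A^{(n)}$ for every $n$: it suffices to show that the single extremal monomial $\prod_j x_j^{p-1}$, whose factors may be chosen to be Lie commutators, already lies in $\gamma_{N+1}(A)A$ where $N+1$ is the strong class; its nonvanishing then forces $\gamma_{N+1}(A)\ne 0$, and the reverse inequality $\gamma_{n+1}(A)\subseteq A^{(n)}$ does the rest. Adopting that weaker target would make your Part~2 both shorter and easier to complete rigorously.
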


The solvability of restricted enveloping algebras in case of characteristic 2 was only recently settled in~\cite{SiUs13}.
Lie nilpotence, solvability, and other non-matrix identities for (restricted) enveloping algebras of (restricted)
Lie (super)algebras are studied in~\cite{BRU,Pe92,SicSpi06,SiUs13.2,SiUs15.2,Use13,Use13.2}.
More on derived lengths, Lie nilpotency classes for $u(L)$, or identities for symmetric elements
of $u(L)$, etc., see the survey~\cite{SiUs15}.

More general cases of (restricted) enveloping algebras for (color) Lie $p$-(super)algebras are treated in~\cite{BMPZ}.
Further developments have been obtained for smash products  $U(L)\#K[G]$ and $u(L)\#K[G]$,
where a group $G$ acts by automorphisms on a (restricted) Lie algebra $L$~\cite{BaPe02}.
Identities of smash products $U(L)\#K[G]$, where $L$ is a Lie superalgebra in characteristic zero were studied in~\cite{Koc03}.
The results on identities of smash products are of interest because they combine, as particular cases,
both, the results on identities of group ring and enveloping algebras.

\subsection{Identities of symmetric Poisson algebras}

The following result is an analogue of the classical Amitsur-Levitzki theorem on identities of matrix algebras.
Kostant used another terminology,
but as observed Farkas~\cite{Farkas99}, this is a result on identities of symmetric Poisson algebras.

\begin{Th} [\cite{Kos81,Farkas99}]\label{TKost}
Let $L$ be a finite dimensional Lie algebra over a field of characteristic zero.
The symmetric algebra $S(L)$ satisfies the standard Poisson identity $\St_ {2d}\equiv 0$
as soon as $2d$ exceeds the dimension of a maximal coadjoint orbit of $L$.
\end{Th}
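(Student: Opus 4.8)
The plan is to realise $S(L)$ as the algebra $K[V]$ of polynomial functions on the dual space $V=L^{*}$ equipped with its Lie--Poisson bracket, and to deduce the identity $\St_{2d}\equiv 0$ from a pointwise vanishing that turns out to be the vanishing of a Pfaffian. First I would pass to the algebraic closure of $K$: this changes neither the validity of the multilinear identity $\St_{2d}$ nor the maximal rank of the structure form below, and it makes $K$ infinite, so that it suffices to prove that the polynomial $\St_{2d}(f_{1},\dots,f_{2d})$ vanishes at every point of $V$ for arbitrary $f_{1},\dots,f_{2d}\in S(L)$. Under the identification $S(L)=K[V]$ a linear form $v\in L$ becomes a coordinate function, and for $f\in S(L)$, $\xi\in V$ the differential $d_{\xi}f$ lies in $T^{*}_{\xi}V\cong L^{**}\cong L$ (this is where finite-dimensionality of $L$ enters), with $d_{\xi}v=v$ for $v\in L$. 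One then checks that the Poisson bracket of $S(L)$ is the Lie--Poisson bracket
\begin{equation*}
\{f,g\}(\xi)=\pi_{\xi}(d_{\xi}f,d_{\xi}g),\qquad \pi_{\xi}(x,y):=\xi([x,y])\ \ (x,y\in L),
\end{equation*}
because both sides are biderivations of $K[V]$ that agree on $L$.

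Next I would fix $f_{1},\dots,f_{2d}\in S(L)$ and a point $\xi\in V$ and set $a_{i}=d_{\xi}f_{i}\in L$, $m_{ij}=\pi_{\xi}(a_{i},a_{j})$, so that $M_{\xi}=(m_{ij})_{i,j=1}^{2d}$ is a skew-symmetric scalar matrix (the diagonal vanishes and $m_{ji}=-m_{ij}$ since $\pi_{\xi}$ is alternating). Evaluating a monomial $\{f_{\sigma(1)},f_{\sigma(2)}\}\cdots\{f_{\sigma(2d-1)},f_{\sigma(2d)}\}$ at $\xi$ yields the scalar $m_{\sigma(1)\sigma(2)}\cdots m_{\sigma(2d-1)\sigma(2d)}$, and the index set $T_{2d}$ of $\St_{2d}$ is exactly the perfect-matching index set of the Pfaffian; hence
\begin{equation*}
\St_{2d}(f_{1},\dots,f_{2d})(\xi)=\sum_{\sigma\in T_{2d}}(-1)^{\sigma}m_{\sigma(1)\sigma(2)}\cdots m_{\sigma(2d-1)\sigma(2d)}=\operatorname{Pf}(M_{\xi}),
\end{equation*}
up to a harmless global sign that I would pin down by comparing the two sign conventions.

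Finally I would bound the rank of $M_{\xi}$. Being a Gram matrix of the alternating form $\pi_{\xi}$ (on the possibly dependent vectors $a_{1},\dots,a_{2d}$), it satisfies $\operatorname{rank}M_{\xi}\le\operatorname{rank}\pi_{\xi}$. The radical of $\pi_{\xi}$ is the stabiliser $L_{\xi}=\{x\in L:\xi([x,L])=0\}$, so $\operatorname{rank}\pi_{\xi}=\dim L-\dim L_{\xi}$, which is the dimension of the coadjoint orbit through $\xi$ (over $\mathbb{C}$ this is literally $\dim\{\ad^{*}_{x}\xi:x\in L\}$, the tangent space of $G\cdot\xi$); its maximum over $\xi\in V$ is the dimension of a maximal coadjoint orbit. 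Thus if $2d$ exceeds this maximum, then $\operatorname{rank}M_{\xi}<2d$, whence $\det M_{\xi}=0$ and therefore $\operatorname{Pf}(M_{\xi})^{2}=\det M_{\xi}=0$, so $\operatorname{Pf}(M_{\xi})=0$. Since $\xi$ and the $f_{i}$ were arbitrary and $K$ is infinite, $S(L)$ satisfies $\St_{2d}\equiv 0$.

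I do not expect a genuine obstacle: the heart of the matter is Kostant's observation that the Lie--Poisson bivector at a point is an alternating form of rank equal to the orbit dimension, after which one only invokes the elementary identity $\operatorname{Pf}^{2}=\det$. The points needing care are purely formal: the precise sign bookkeeping in the Pfaffian identification of $\St_{2d}$, and phrasing ``dimension of a maximal coadjoint orbit'' algebraically as $\dim L-\min_{\xi\in V}\dim L_{\xi}$ over a characteristic-zero field that need not be $\mathbb{C}$ (with the remark that this number is unchanged by scalar extension).
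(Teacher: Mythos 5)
The paper states this theorem as a known result of Kostant and Farkas and gives no proof of it, so there is no internal argument to compare against; your sketch reproduces the standard proof from those sources: realize $S(L)=K[L^*]$ with the Lie--Poisson bracket, observe that $\St_{2d}(f_1,\dots,f_{2d})(\xi)$ is the Pfaffian of the Gram matrix of the Kirillov form $\pi_\xi(x,y)=\xi([x,y])$ on the differentials $d_\xi f_i$, and use $\operatorname{rank}\pi_\xi=\dim L-\dim L_\xi$ together with $\operatorname{Pf}^2=\det$ and Zariski density over an infinite field. The argument is correct, and the two points you flag as needing care (the sign normalization of the Pfaffian and the algebraic reading of ``dimension of a maximal coadjoint orbit'' as $\dim L-\min_\xi\dim L_\xi$, stable under scalar extension) are exactly the right ones and are handled adequately.
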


The Lie nilpotence of class 2 of symmetric algebras $S(L)$, where $L$ is a Lie superalgebra,
was specified by Shestakov. The next statement follows from Theorem 4 and Theorem 5 of~\cite{Shestakov93}.
\begin{Th}[\cite{Shestakov93}]\label{TSh93}
The symmetric algebra $S(L)$ of a Lie algebra $L$, over a field $K$, satisfies
the identity $\{x,\{y,z\}\}\equiv 0$ if and only if $L$ is abelian.
\end{Th}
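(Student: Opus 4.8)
The plan is to prove the two implications separately: the forward direction is immediate, and the converse rests on the customarization trick exhibited after Theorem~\ref{TFarkas0} combined with the fact that $S(L)$ is a polynomial ring, hence an integral domain.

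First I would dispose of the easy direction. Suppose $L$ is abelian. Fixing a $K$-basis $\{v_i\mid i\in I\}$ of $L$, the Poisson bracket of $S(L)=K[v_i\mid i\in I]$ is the unique biderivation extending $\{v_i,v_j\}=[v_i,v_j]=0$, so by the Leibnitz rule it is identically zero on $S(L)$; in particular $\{x,\{y,z\}\}\equiv 0$ holds trivially, as does every Poisson identity.

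For the converse, assume $S(L)$ satisfies $\{x,\{y,z\}\}\equiv 0$; by anticommutativity of the bracket this is the same as the identity $f(x,y,z):=\{\{x,y\},z\}\equiv 0$. Customarizing exactly as in the discussion following Theorem~\ref{TFarkas0}, $S(L)$ also satisfies
$$0\equiv f(X_1X_2,Y,Z)-X_1f(X_2,Y,Z)-X_2f(X_1,Y,Z)=\{X_1,Y\}\{X_2,Z\}+\{X_1,Z\}\{X_2,Y\},$$
the last equality being a direct application of the Leibnitz rule. Substituting $X_1\mapsto a$, $X_2\mapsto b$, $Y\mapsto a$, $Z\mapsto b$ with $a,b\in L$, and using $\{a,a\}=\{b,b\}=0$ together with $\{a,b\}=[a,b]$ for elements of $L$, we obtain $-[a,b]^2=0$ inside $S(L)$. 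Since $S(L)\cong K[v_i\mid i\in I]$ is a domain, $[a,b]=0$; as $a,b\in L$ were arbitrary, $L$ is abelian.

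The only point that needs care is the passage from the hypothesis to a usable relation: substituting elements of $L$ directly into $\{x,\{y,z\}\}$ only yields the Lie identity $[[b,c],a]\equiv 0$, i.e. that $L$ is nilpotent of class at most two, which does not force $L$ to be abelian. Customarization is what converts the ternary bracket relation into a quadratic relation among commutators living in the polynomial ring $S(L)$, and the displayed substitution is chosen so that this relation becomes a perfect square $[a,b]^2$ in every characteristic — in particular in characteristic $2$, where the cruder choice $X_1=X_2$, $Y=Z$ would only produce $2[a,b]^2\equiv 0$. Once the square is in hand, the domain property finishes the argument.
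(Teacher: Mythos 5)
Your proof is correct, and it is worth noting that the paper itself does not prove Theorem~\ref{TSh93} at all: it simply cites Theorems 4 and 5 of Shestakov's paper, and its own generalization (Theorem~\ref{Pe16}) is established by a much heavier route --- reduction via Theorem~\ref{Treduction} to an abelian ideal of finite codimension, passage to a finite-dimensional non-abelian subalgebra containing either the two-dimensional solvable or the three-dimensional nilpotent Lie algebra, and explicit non-vanishing computations with sequences $e_k,h_k,f_k$ in $S(H)$. Your argument is a short, self-contained alternative for this particular statement: you take the customarization computation that the paper only displays as an illustration after Theorem~\ref{TFarkas0}, observe that it is a genuine consequence of the identity $\{\{X,Y\},Z\}\equiv 0$ holding in $S(L)$ (not merely a formal manipulation in the free Poisson algebra), and then exploit the fact that $S(L)\cong K[v_i\mid i\in I]$ is a domain to pass from $[a,b]^2=0$ to $[a,b]=0$. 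The substitution $X_1=Y=a$, $X_2=Z=b$ is well chosen, since it produces $-[a,b]^2$ on the nose and therefore works uniformly in all characteristics, including $p=2$ where a symmetric substitution would only give $2[a,b]^2\equiv 0$. What your approach buys is brevity and characteristic-independence for the class-$2$ case; what it does not give (and is not asked to give) is the higher nilpotency and solvability statements of Theorem~\ref{Pe16}, for which the square $[a,b]^2$ trick has no evident analogue and the paper's machinery is genuinely needed.
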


Farkas proved the following statement that generalizes Kostant's Theorem~\ref{TKost}.

\begin{Th}[\cite{Farkas99}] \label{TSymm0}
Let $L$ be a Lie algebra over a field of characteristic zero.
The symmetric algebra $S(L)$ satisfies a
nontrivial Poisson identity if and only if $L$ contains an abelian
subalgebra of finite codimension.
\end{Th}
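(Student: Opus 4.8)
Here is my plan; the two implications have very different flavours, and the real work is hidden in the second.

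\textbf{Sufficiency.} Suppose $A\subseteq L$ is an abelian subalgebra with $k:=\dim L/A<\infty$. The plan is to bound the ``Poisson rank'' of $S(L)$ and run the argument behind Kostant's Theorem~\ref{TKost}, which works verbatim in infinite dimension. Fix a basis $\{v_{\alpha}\}$ of $L$ with all but $k$ of the $v_{\alpha}$ lying in $A$, identify $S(L)=K[v_{\alpha}]$, and let $\Pi=(\{v_{\alpha},v_{\beta}\})=([v_{\alpha},v_{\beta}])$ be the structure matrix. Since $[A,A]=0$, in these coordinates $\Pi$ has the block shape $\bigl(\begin{smallmatrix}0&C\\-C^{T}&D\end{smallmatrix}\bigr)$ with $C$ having only $k$ columns, so $\mathrm{rank}\,\Pi\le 2k$ over $\mathrm{Frac}\,S(L)$. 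Because the Poisson bracket is a biderivation, for $f_{1},\dots,f_{2d}\in S(L)$ one has $(\{f_{i},f_{j}\})=J\,\Pi\,J^{T}$, where $J$ is the Jacobian matrix $(\partial f_{i}/\partial v_{\alpha})$; hence this skew matrix has rank at most $2k$. On the other hand, matching the summation set $T_{2d}$ and its signs with the perfect matchings of $\{1,\dots,2d\}$ gives $\St_{2d}(f_{1},\dots,f_{2d})=\mathrm{Pf}\bigl((\{f_{i},f_{j}\})\bigr)$, and a skew matrix of rank $<2d$ has vanishing Pfaffian (over a field, $\mathrm{Pf}^{2}=\det$). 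Therefore $S(L)$ satisfies $\St_{2k+2}\equiv 0$, which is a nontrivial Poisson identity (e.g. $\H_{2m}$ with $m\ge k+1$ violates it, since there $(\{f_{i},f_{j}\})$ can be made symplectic).

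\textbf{Necessity: reduction to a Lie-theoretic statement.} Suppose $S(L)$ satisfies a nontrivial Poisson identity. Since $\ch K=0$, Theorem~\ref{TFarkas0} supplies a nontrivial customary identity, and then by the theorem of Mishchenko--Petrogradsky--Regev~\cite{MiPeRe} quoted above $S(L)$ satisfies $(\St_{2n})^{m}\equiv 0$ for some $n,m$; as $S(L)$ is an integral domain, already $\St_{2n}\equiv 0$. Restricting the arguments to lie in $L\subseteq S(L)$, the displayed Pfaffian identity gives $\mathrm{Pf}\bigl(([x_{i},x_{j}])\bigr)=0$ in $S(L)$ for all $x_{1},\dots,x_{2n}\in L$; evaluating at an arbitrary $\xi\in L^{*}$ (extended to the evaluation homomorphism $S(L)\to K$) yields $\mathrm{Pf}\bigl((\xi([x_{i},x_{j}]))\bigr)=0$. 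Hence for every $\xi\in L^{*}$ the alternating form $B_{\xi}(x,y):=\xi([x,y])$ on $L$ has rank at most $2n-2$; equivalently, every subspace $L_{\xi}:=\{x\in L:\xi([x,L])=0\}$, which is a subalgebra by the Jacobi identity, has codimension at most $d:=2n-2$ in $L$. It therefore remains to prove the purely Lie-theoretic assertion: \emph{if $\mathrm{codim}_{L}L_{\xi}\le d$ for all $\xi\in L^{*}$, then $L$ contains an abelian subalgebra of finite codimension.}

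\textbf{The Lie-theoretic step, and the main obstacle.} This last implication is the heart of the matter, and I expect it to be the genuine difficulty. The natural approach is iterative: choose $\xi$ with $\mathrm{codim}_{L}L_{\xi}$ maximal and set $H:=L_{\xi}$; then $H$ inherits the hypothesis (given $\eta\in H^{*}$, extend it to $\tilde\eta\in L^{*}$ and note $H\cap L_{\tilde\eta}\subseteq H_{\eta}$, so $\mathrm{codim}_{H}H_{\eta}\le\mathrm{codim}_{L}L_{\tilde\eta}\le d$). Iterating produces a descending chain $L=H_{0}\supseteq H_{1}\supseteq\cdots$ with $\mathrm{codim}_{H_{i}}H_{i+1}\le d$, where the $i$-th step fails to decrease the dimension precisely when the generic form $B_{\eta}|_{H_{i}}$ vanishes identically, i.e. exactly when $H_{i}$ is abelian. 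The crux is to show this chain terminates: an infinite strictly descending such chain must be excluded, and I would try to do this either by exploiting the nested radicals ($[H_{i+1},H_{i}]$ lies in the hyperplane $\ker(\eta_{i}|_{H_{i}})$ of $H_{i}$) together with the Jacobi identity, or by reducing to a finite-dimensional subalgebra $M$ — which inherits $\mathrm{codim}_{M}M_{\eta}\le d$, so $S(M)$ satisfies $\St_{2d+2}$ by Kostant's theorem — and inducting on $\dim M$. Equivalently one can run the contrapositive directly: assuming $L$ has no abelian subalgebra of finite codimension, one builds pairs $a_{1},b_{1},a_{2},b_{2},\dots\in L$ with $[a_{i},b_{i}]\ne 0$ so that, relative to a fixed monomial order on $S(L)=K[v_{\alpha}]$, the leading monomial of $[a_{1},b_{1}]\cdots[a_{n},b_{n}]$ strictly dominates those of all other matchings occurring in $\St_{2n}(a_{1},b_{1},\dots,a_{n},b_{n})$, whence that value is nonzero and $B_{\xi}$ has rank $\ge 2n$ for a suitable $\xi$. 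Organizing this construction — guaranteeing at each stage that the ``cross brackets'' $[a_{i},z]$, $[b_{i},z]$ (for $z$ assembled from earlier pairs) stay below the relevant level of the order — is exactly what the delta-set technology for Lie algebras is designed to do, and is where the real effort lies.
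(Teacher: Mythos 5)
The paper does not prove this theorem; it quotes it from Farkas~\cite{Farkas99}, so your proposal has to stand on its own. Your sufficiency argument is complete and correct: it is exactly the Kostant--Farkas Pfaffian computation — $\St_{2d}(f_1,\dots,f_{2d})=\mathrm{Pf}\bigl((\{f_i,f_j\})\bigr)$, the Gram matrix factors as $J\Pi J^{T}$ with $\mathrm{rank}\,\Pi\le 2k$, hence $\St_{2k+2}\equiv 0$. The first half of your necessity argument is also sound: passing to a customary identity via Theorem~\ref{TFarkas0}, to $(\St_{2n})^{m}\equiv 0$ via \cite{MiPeRe}, stripping the exponent because $S(L)$ is a polynomial ring and hence a domain, and translating $\St_{2n}\equiv 0$ on $L$ into the statement that every coadjoint form $B_{\xi}$ has rank at most $2n-2$, i.e.\ $\dim L/L_{\xi}\le d$ for all $\xi\in L^{*}$.

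The gap is the step you yourself flag: the purely Lie-theoretic implication ``$\dim L/L_{\xi}\le d$ for all $\xi\in L^{*}$ implies $L$ has an abelian subalgebra of finite codimension'' is asserted, not proved, and it is the entire content of the theorem beyond formal manipulation. Neither of your exit strategies is carried out. The iteration $H_{i+1}=(H_{i})_{\eta_{i}}$ produces a chain whose total codimension is unbounded: each non-abelian step only forces the codimension to drop by at least $2$ and at most $d$, and nothing in your setup prevents the chain from descending strictly forever, so ``termination'' is precisely the assertion to be proved rather than a consequence of the construction. The contrapositive construction (choosing pairs $a_{1},b_{1},a_{2},b_{2},\dots$ so that one matching in $\St_{2n}$ dominates) is the right idea in spirit, but making the cross-brackets $[a_{i},b_{j}]$, $[a_{i},a_{j}]$ subordinate to the chosen matching is exactly the quantitative content of the delta-set machinery (Lemma~\ref{PropDel}, Lemma~\ref{Ldilat}, Neumann's Theorem~\ref{TNeu}, and the dichotomy behind Theorem~\ref{TtrPoisson}); this is where Farkas's proof, and the positive-characteristic analogue in \cite{GiPe06}, actually live. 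As written, your proof establishes sufficiency and reduces necessity to an unproved statement.
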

Giambruno and Petrogradsky extended this result to an arbitrary characteristic~\cite{GiPe06}.

\begin{Th}[\cite{GiPe06}]\label{TSymm_p}
Let $L$ be a Lie algebra over an arbitrary field.
The symmetric algebra $S(L)$ satisfies a nontrivial {\em multilinear}
Poisson identity if and only if $L$ contains an abelian subalgebra of finite codimension.
\end{Th}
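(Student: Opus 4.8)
\emph{The ``if'' direction.} Suppose $A\subseteq L$ is an abelian subalgebra with $k=\dim L/A<\infty$. The plan is to show that $S(L)$ satisfies the standard Poisson identity $\St_{2k+2}\equiv 0$, which is multilinear and nontrivial (its coefficient at the identity permutation equals $1$). The first step is a reduction: since $\St_{2m}$ is a linear combination of products of brackets and the Poisson bracket is a derivation in each argument, repeated use of the Leibnitz rule shows that for any monomials $g_1,\dots,g_{2m}$ of $S(L)$ the element $\St_{2m}(g_1,\dots,g_{2m})$ is an $S(L)$-linear combination of evaluations $\St_{2m}(\ell_1,\dots,\ell_{2m})$ with $\ell_i\in L$; hence it is enough to check $\St_{2m}\equiv 0$ on $L$ with $m=k+1$. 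For this, fix a basis of $L$ extending a basis of $A$, so that exactly $k$ basis vectors lie outside $A$. By multilinearity and the skew-symmetry of $\St_{2m}$ in all arguments it suffices to evaluate on $2m$ pairwise distinct basis vectors, and among any such $2m$ vectors at least $2m-k$ lie in $A$. When $m=k+1$ we have $2m-k=k+2>m$, so in every summand $(-1)^{\sigma}[v_{\sigma(1)},v_{\sigma(2)}]\cdots[v_{\sigma(2m-1)},v_{\sigma(2m)}]$ of $\St_{2m}$ some bracket pairs two elements of $A$ and hence vanishes. This is a graded refinement of Farkas's Lemma quoted above.

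\emph{The ``only if'' direction: setup.} Assume $S(L)$ satisfies a nontrivial multilinear Poisson identity. By Theorem~\ref{TFarkas} it then satisfies a nontrivial customary identity, and I would fix one, $f=\sum_{\sigma\in T_{2n}}\mu_\sigma\{x_{\sigma(1)},x_{\sigma(2)}\}\cdots\{x_{\sigma(2n-1)},x_{\sigma(2n)}\}$ with $\mu_e=1$, of least possible degree $2n$. Substituting arbitrary $\ell_1,\dots,\ell_{2n}\in L$ for the variables turns $f\equiv 0$ into the relation $\sum_{\sigma\in T_{2n}}\mu_\sigma[\ell_{\sigma(1)},\ell_{\sigma(2)}]\cdots[\ell_{\sigma(2n-1)},\ell_{\sigma(2n)}]=0$, which holds in the polynomial ring $S(L)$, an integral domain. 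Thus $L$ carries a ``generalized identity'' of customary type, and the task is to extract from it an abelian subalgebra of finite codimension.

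\emph{The ``only if'' direction: delta-sets.} Here the plan is to adapt Passman's delta-method, in its Lie-algebra incarnation, to this generalized identity. One works with the delta-set $\Delta(L)=\{x\in L:\dim L/C_L(x)<\infty\}$ and proceeds in two stages: first, use the identity to force $\Delta(L)$ to have finite codimension in $L$; then refine inside $\Delta(L)$ to produce an abelian subalgebra, which is then necessarily of finite codimension in $L$. I would run both stages through a minimal counterexample in $2n$: freezing some of the variables to a fixed element $a\in L$ and letting the rest vary, a partial substitution either degenerates $f$ to a nontrivial customary identity of smaller degree, contradicting minimality, or, using that $S(L)$ is a domain, it yields a bound on $\dim[a,L]$ and on the nilpotency of the relevant centralizer-type subalgebras, which one then assembles into the finite-codimension and commutativity assertions. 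If $L$ is abelian the conclusion is immediate with $A=L$.

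\emph{The main obstacle.} The hard part is this last step, and the specific difficulty that the delta-set bookkeeping must overcome is the cancellation of the scalars $\mu_\sigma$ under specialization: a crude substitution need not isolate the leading monomial, and for the standard identity $\St_{2n}$ itself, which is alternating, the simplest substitutions just give $0$. One therefore has to choose the substitutions in accordance with the combinatorics of $T_{2n}$ and with the ordering implicit in the minimality hypothesis. A secondary obstacle, peculiar to characteristic $p$, is that the usual ``power'' manoeuvres are unavailable (for instance $\{x,y\}^p\equiv 0$ is vacuous), so the whole argument must stay inside the world of multilinear, i.e. customary, identities — which is exactly why Theorem~\ref{TFarkas} is invoked at the outset.
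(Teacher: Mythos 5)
Your ``if'' direction is correct and complete: the Leibnitz/customarization reduction to arguments in $L$, the reduction to pairwise distinct basis vectors via the alternating property of $\St_{2m}$, and the pigeonhole count showing that in every summand of $\St_{2k+2}$ some factor is a bracket of two elements of $A$ are all sound, and they give the explicit identity $\St_{2k+2}\equiv 0$. Note also that the paper does not prove Theorem~\ref{TSymm_p} itself; it quotes it from~\cite{GiPe06} and reproduces only the reduction steps (Theorems~\ref{TtrPoisson} and~\ref{Treduction}), and your first stage of the ``only if'' direction --- pass to a customary identity via Theorem~\ref{TFarkas}, then run the delta-method to get $\dim L/\Delta_N(L)<n$ --- is indeed the route taken there.

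The genuine gap is in the final stage of the ``only if'' direction. Everything the delta-set machinery can deliver is what Theorem~\ref{Treduction} records: an ideal $\Delta=\Delta(L)$ of finite codimension with $\dim\Delta^2<\infty$, hence (after centralizing the finite-dimensional ideal $\Delta^2$) a subalgebra of finite codimension that is nilpotent of class at most $2$ with finite-dimensional derived subalgebra. That is strictly weaker than what you must prove. The infinite-dimensional Heisenberg algebra $H=\langle x_i,y_i,z\mid [x_i,y_i]=z,\ i\in\N\rangle_K$ is nilpotent of class $2$ with $\dim H^2=1$, yet it has \emph{no} abelian subalgebra of finite codimension: an abelian subspace is totally isotropic for the alternating form $(u,v)\mapsto[u,v]$, and every maximal totally isotropic subspace here has infinite codimension. (Compare Theorem~\ref{Tmain}: for the truncated algebra $\SSS(L)$ the correct criterion really does stop at the class-$2$ condition, so the stronger conclusion for $S(L)$ cannot follow from the Lie-theoretic data alone.) To finish, one must return to the Poisson algebra and use the identity a second time to exclude the Heisenberg configuration, for instance by noting that $S(H)$ maps onto the Hamiltonian Poisson algebras $\H_{2m}$ for all $m$ (specialize $z\mapsto 1$), and these satisfy no customary identity of bounded degree. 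Your proposal never performs, or even identifies, this step; the phrase ``assembles into the \ldots{} commutativity assertions'' is precisely where the argument has to do its real work.
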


The following result was obtained for the truncated symmetric algebra $\SSS(L)$
of a restricted Lie algebra $L$ (see definitions~\cite{JacLie}).
\begin{Th}[\cite{GiPe06}]\label{Tmain}
Let $L$ be a restricted Lie algebra.
The truncated symmetric algebra
$\SSS(L)$ satisfies a nontrivial {\em multilinear} Poisson identity if and only if
there exists a restricted ideal $H\subseteq L$ such that
 \begin{enumerate}
   \item
     $\dim L/H<\infty$,
   \item
     $\dim H^2<\infty$;
   \item
     $H$ is nilpotent of class 2.
 \end{enumerate}
\end{Th}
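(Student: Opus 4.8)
The plan is to prove the two implications separately. The ``if'' direction is short: it reduces, via a single quotient, to the known criterion for $S(L)$ (Theorem~\ref{TSymm_p}). The ``only if'' direction is the substantial one and runs through the customary-identity machinery of Theorem~\ref{TFarkas} together with ``generic evaluation''/delta-set arguments; one then cleans up the structure of the resulting subalgebra by a centralizer trick and passes to a restricted ideal.

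\emph{Sufficiency.} Assume a restricted ideal $H\subseteq L$ with the three properties is given, and set $d=\dim H^2$. First I would note that $W:=H^2=[H,H]$ is automatically an ideal of $L$: by the Jacobi identity $[[H,H],L]\subseteq[[H,L],H]+[H,[H,L]]\subseteq[H,H]$. Consequently $W\cdot\SSS(L)$ is a Poisson ideal of $\SSS(L)$ and $\SSS(L)/(W\cdot\SSS(L))\cong\SSS(L/W)$ as Poisson algebras. Since $H$ is nilpotent of class $2$, the image $H/W$ is an abelian subalgebra of $L/W$ of finite codimension $\dim L/H$, so by Theorem~\ref{TSymm_p} the symmetric algebra $S(L/W)$, and hence its quotient $\SSS(L/W)$, satisfies a nontrivial multilinear Poisson identity $f(x_1,\dots,x_m)\equiv0$; thus $f(\SSS(L),\dots,\SSS(L))\subseteq W\cdot\SSS(L)$. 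On the other hand $W\cdot\SSS(L)$ is associatively nilpotent: a basis $w_1,\dots,w_d$ of $W$ satisfies $w_i^p=0$ in $\SSS(L)$, so every monomial in the $w_i$ of degree $>d(p-1)$ vanishes and $(W\cdot\SSS(L))^{s}=0$ with $s=d(p-1)+1$. Therefore $\SSS(L)$ satisfies the identity $\prod_{i=1}^{s}f(x_{(i-1)m+1},\dots,x_{im})\equiv0$, which is multilinear in $sm$ variables and is nonzero in the free Poisson algebra $F(X)=S(L(X))$ (a polynomial ring, hence a domain) since $f\ne0$. Only the three numerical/structural conditions on $H$ are used here, not that $H$ is an ideal or restricted.

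\emph{Necessity.} Suppose $\SSS(L)$ satisfies a nontrivial multilinear Poisson identity. By Theorem~\ref{TFarkas} it satisfies a nontrivial customary identity of some degree $2n$, whose identity monomial $\{x_1,x_2\}\{x_3,x_4\}\cdots\{x_{2n-1},x_{2n}\}$ has coefficient $1$. Using the delta-set method for Lie algebras (the Poisson analogue of Passman's treatment of group rings and of the arguments behind Theorems~\ref{TuL} and \ref{TSymm_p}), I would deduce from this identity that $L$ has a subalgebra $H_1$ of finite codimension with $\dim H_1^{2}<\infty$: the mechanism is that if no such subalgebra existed one could build an infinite family $a_1,b_1,a_2,b_2,\dots\in L$ whose pairwise Lie brackets are independent enough that, substituting $x_{2k-1}=a_{i_k}$, $x_{2k}=b_{i_k}$ for suitable indices, the identity monomial evaluates to a product of $n$ pairwise distinct basis vectors — which is nonzero in the truncated polynomial ring $\SSS(L)$ — while all remaining monomials vanish or stay linearly independent from it, contradicting nontriviality of the customary identity. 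Once $H_1$ is found, the centralizer $C$ of $H_1^{2}$ in $H_1$ has finite codimension in $L$ (because $H_1$ acts on the finite-dimensional space $H_1^{2}$), it is nilpotent of class $\le2$ since $[[C,C],C]\subseteq[H_1^{2},C]=0$, and $C^{2}\subseteq H_1^{2}$ is finite dimensional. Finally, since $C$ has finite codimension, $L$ contains a restricted ideal $H\subseteq C$ of finite codimension: take the largest ideal of $L$ inside $C$ and close it under the $p$-mapping, checking that the class-$\le2$ condition forces the added $p$-powers to lie in $C^2$ or in the centre of $L$, so that $H$ stays an ideal, nilpotent of class $2$, with $\dim H^{2}<\infty$.

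The hard part is the deduction, from the customary identity, that $L$ has a finite-codimension subalgebra with finite-dimensional derived subalgebra. This is exactly where the delta-set theory is needed: the combinatorial heart is organizing the inductive choice of the $a_i,b_i$ so that, for every permutation occurring in the customary polynomial, the resulting monomial in $\SSS(L)$ either vanishes or remains linearly independent of the identity monomial — which demands tight control of all the pairwise brackets and exploitation of the fact that only square-free monomials survive in a truncated polynomial ring. By comparison, the centralizer step and the passage to a restricted ideal are routine, although the $p$-mapping bookkeeping in the last step must be done with some care.
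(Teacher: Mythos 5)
The paper does not actually prove Theorem~\ref{Tmain} --- it is imported from~\cite{GiPe06} --- so the only internal material to compare against is the reduction step (Theorem~\ref{Treduction}), which routes everything through the ideal $\Delta(L)$. Your sufficiency argument is correct and is the standard one: quotient by $W=H^2$, apply Theorem~\ref{TSymm_p} to $L/W$, and use the associative nilpotency of $W\cdot\SSS(L)$ to multiply disjoint-variable copies of the identity. One small inaccuracy: your Jacobi computation showing $W$ is an ideal of $L$ uses $[H,L]\subseteq H$, so your closing remark that the ideal property of $H$ is not used is wrong.

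The necessity direction has a genuine gap at the final step. Having produced a finite-codimensional subalgebra $H_1$ with $\dim H_1^2<\infty$ and its centralizer $C$, you assert that ``since $C$ has finite codimension, $L$ contains a restricted ideal $H\subseteq C$ of finite codimension: take the largest ideal of $L$ inside $C$.'' For Lie algebras, unlike for groups, the core of a finite-codimensional subalgebra can be trivial: in the Witt algebra $\langle e_i\mid i\ge -1\rangle$ with $[e_i,e_j]=(j-i)e_{i+j}$, the codimension-one subalgebra $\langle e_i\mid i\ge 0\rangle$ contains no nonzero ideal. So the core construction does not deliver a finite-codimensional ideal, and this step needs a different argument. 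The standard repair, and the route indicated by Theorem~\ref{Treduction}, is to take $H_1=\Delta(L)$, which is an ideal of $L$ by Lemma~\ref{PropDel}, has finite codimension by Theorem~\ref{TtrPoisson}, and has $\dim\Delta^2<\infty$ by Theorem~\ref{TNeu}; then $\Delta^2$ is an ideal of $L$, and a one-line Jacobi computation shows the centralizer of $\Delta^2$ in $\Delta$ is automatically an ideal of $L$, nilpotent of class at most $2$, of finite codimension, with finite-dimensional square, after which the restricted closure can be added as you indicate. Separately, the combinatorial heart of necessity --- extracting $\dim L/\Delta_N(L)<\infty$ from the customary identity --- is only gestured at in your write-up; that is the actual content of Theorem~\ref{TtrPoisson} and is where the real work of~\cite{GiPe06} lies.
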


The present paper is heavily based on this result. Remark that
the identities of the (strong) Lie nilpotence and (strong) solvability are multilinear.
We shall apply this result in a more precise form given by Theorem~\ref{Treduction}.

\section{Main results: Lie identities of symmetric algebras}
\label{Smain}
In this section we formulate our main results.
\subsection{Lie nilpotence of truncated symmetric algebras $\SSS(L)$}
Let $R$ be an arbitrary Poisson algebra.
Consider the {\em lower central series} of $R$ as a Lie algebra,
i.e., $\gamma_1(R)=R$ and $\gamma_{n+1}(R)=\{\gamma_n(R),R\}, n\geq1$.
We say that $R$ is {\it Lie nilpotent of class $s$} if and only if  $\gamma_{s+1}(R)=0$ but $\gamma_s(R)\neq 0$.
Clearly, the condition $\gamma_{s+1}(R)=0$ is equivalent to the {\it identity of Lie nilpotence of class} $s$:
$$\{\ldots\{\{X_0,X_1\},X_2\},\ldots,X_s\}\equiv 0.$$

Similar to the associative case, one defines {\em upper Lie powers} as follows.
At each step we take ideals generated by commutators, namely,
set $R^{(0)}=R$ and $R^{(n)}=\{R^{(n-1)},R\}\cdot R$ for all $n\geq 1$
(our enumeration is shifted, a justification is that
$\{R^{(n)}\,|\, n\ge 0\}$ is a filtration, see Lemma~\ref{RiRj=Ri+j}).
We call a Poisson algebra $R$ {\em strongly Lie nilpotent of class s}
if and only if $R^{(s)}= 0$ but $R^{(s-1)}= 0$.
The condition $R^{(s)}=0$ is equivalent to the identical relation of the {\em strong Lie nilpotence of class~$s$}:
\begin{equation*}
\{\{\ldots\{\{X_0,X_1\}\cdot Y_1,X_2\}\cdot Y_2,\ldots, X_{s-1}\}\cdot Y_{s-1},X_s\}\equiv 0.
\end{equation*}
 Observe that
\begin{equation}
\gamma_n(R)\subseteq R^{(n-1)},\ n\geq1.\label{gamma(R)em R(n-1)}
\end{equation}
Thus, the strong Lie nilpotence of class $s$ implies the Lie nilpotence of class at most $s$.
The Lie nilpotence of class $1$ is equivalent to the strong Lie nilpotence of class $1$ and equivalent to the fact
that $R$ is abelian as a Lie algebra.
\medskip

The following is the first main result of the paper.
\begin{Th}\label{Tnilp}
Let $L$ be a Lie algebra over a field of positive characteristic $p$. Consider
its truncated symmetric Poisson algebra $\SSS(L)$. The following conditions are equivalent:
\begin{enumerate}
\item $\SSS(L)$ is strongly Lie nilpotent;
\item $\SSS(L)$ is Lie nilpotent;
\item $L$ is nilpotent and $\dim L^2<\infty$.
\end{enumerate}
\end{Th}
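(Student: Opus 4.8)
The plan is to run the cycle $(1)\Rightarrow(2)\Rightarrow(3)\Rightarrow(1)$. The implication $(1)\Rightarrow(2)$ is free: by the inclusion $\gamma_n(R)\subseteq R^{(n-1)}$ recorded in~\eqref{gamma(R)em R(n-1)}, $\SSS(L)^{(s)}=0$ forces $\gamma_{s+1}(\SSS(L))=0$. For the nilpotence half of $(2)\Rightarrow(3)$, note that the degree-one component of $\SSS(L)=K[v_i\mid i\in I]/(v_i^p)$ is a copy of $L$ on which the Poisson bracket restricts to the Lie bracket, since $\{v_i,v_j\}=[v_i,v_j]$; hence $L$ is a Lie subalgebra of $\SSS(L)$ under the Poisson bracket and $\gamma_n(L)\subseteq\gamma_n(\SSS(L))$ for all $n$, so Lie nilpotence of class $s$ of $\SSS(L)$ yields $\gamma_{s+1}(L)=0$.

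For the remaining claim $\dim L^2<\infty$ in $(2)\Rightarrow(3)$: the identity of Lie nilpotence of class $s$ is multilinear, so $\SSS(L)$ satisfies a nontrivial multilinear Poisson identity, and Theorem~\ref{Treduction} (the sharpened form of Theorem~\ref{Tmain}, valid for an arbitrary Lie algebra) gives an ideal $H$ of $L$ with $\dim L/H<\infty$, $\dim H^2<\infty$, and $H$ nilpotent of class at most $2$. This alone is \emph{not} enough---an ``infinite Heisenberg'' algebra $L_0=\langle v,h_1,h_2,\dots,z_1,z_2,\dots\rangle$ with $[v,h_i]=z_i$ and all other brackets zero admits such an $H$ (the abelian ideal $\langle h_i,z_i\rangle$) yet has $\dim L_0^2=\infty$---so one must exploit Lie nilpotence of $\SSS(L)$ itself. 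Since $H^2$ is an ideal of $L$ (Jacobi identity) and $\SSS(L/H^2)$ is a Poisson quotient of $\SSS(L)$, I would pass to $L/H^2$ and so assume $H$ abelian; writing $L=H\oplus V$ with $\dim V<\infty$, each $\ad v$ ($v\in V$) maps $H$ into itself and is nilpotent because $L$ is nilpotent. If some $\ad v$ had infinite rank on $H$, then, using that a nilpotent endomorphism of an infinite-dimensional space has infinite-dimensional kernel, one can choose $h_1,h_2,\dots\in H$ with $(\ad v)^2 h_i=0$ and the $z_i:=[v,h_i]$ linearly independent; then $L$ contains a copy of $L_0$ as a subalgebra, hence $\SSS(L_0)$ as a Poisson subalgebra, and a direct computation with brackets of truncated monomials (e.g.\ iterated Poisson brackets of $v^{p-1}h_1$ taken alternately against $v^{p-1}$ and fresh $h_j^{p-1}$, the coefficients staying invertible modulo $p$) shows $\gamma_n(\SSS(L_0))\ne 0$ for every $n$, contradicting Lie nilpotence. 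Hence every $\ad v$ has finite rank on $H$, so $L^2=H^2+[H,V]+[V,V]$ is finite-dimensional (lifting the reduction back, $\dim L^2<\infty$ follows, using $\dim H^2<\infty$).

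For $(3)\Rightarrow(1)$, assume $L$ nilpotent of class $c\ge 2$ (if $c=1$ the bracket on $\SSS(L)$ vanishes) and $\dim L^2=m<\infty$. The two facts I would use, both immediate from the Leibnitz rule together with $\{v_i,v_j\}=[v_i,v_j]$, are $\{\SSS(L),\SSS(L)\}\subseteq L^2\cdot\SSS(L)$ and, more generally, $\{\gamma_i(L),\SSS(L)\}\subseteq\gamma_{i+1}(L)\cdot\SSS(L)$ for all $i\ge 1$. Feeding these into $\SSS(L)^{(n)}=\{\SSS(L)^{(n-1)},\SSS(L)\}\cdot\SSS(L)$, an induction on $n\ge 1$ gives
\[
\SSS(L)^{(n)}\ \subseteq\ \sum_{\substack{k\ge 1,\ i_1,\dots,i_k\ge 2\\ \sum_{j}(i_j-1)\ \ge\ n-1}}\gamma_{i_1}(L)\cdots\gamma_{i_k}(L)\cdot\SSS(L).
\]
In a nonzero summand each $i_j\le c$, so $k\ge(n-1)/(c-1)$; and every factor $\gamma_{i_j}(L)$ lies in the $m$-dimensional space $L^2$, which (choosing a basis of $L$ through $L^2$) is spanned by basis vectors of $L$, each $p$-truncated in $\SSS(L)$, so a product of more than $m(p-1)$ of them vanishes. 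Hence $\SSS(L)^{(n)}=0$ once $n>m(p-1)(c-1)+1$, i.e.\ $\SSS(L)$ is strongly Lie nilpotent.

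The hard part will be the step $\dim L^2<\infty$ inside $(2)\Rightarrow(3)$: the reduction theorem only delivers a finite-codimension ideal with finite-dimensional square, and one still has to exclude the infinite-Heisenberg obstruction, which is precisely where the delta-set techniques for Lie algebras are needed. The bound on the strong Lie nilpotency class extracted from $(3)\Rightarrow(1)$ is crude; the exact value, and its coincidence with the Lie nilpotency class for $p>3$, would require the finer filtration analysis of Sections~\ref{Sfiltr} and~\ref{Scomm}, in the spirit of Theorem~\ref{Tu-nilp}.
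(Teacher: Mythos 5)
Your architecture coincides with the paper's: $(1)\Rightarrow(2)$ via $\gamma_n(R)\subseteq R^{(n-1)}$; $(3)\Rightarrow(1)$ by expanding the upper Lie powers into products of terms $\gamma_i(L)$ and killing long products by truncation (this is one half of Lemma~\ref{Lgamma}, which the paper sharpens to compute the exact class $1+(p-1)\sum_n n\dim(\gamma_{n+1}(L)/\gamma_{n+2}(L))$); and $(2)\Rightarrow(3)$ by the reduction of Theorem~\ref{Treduction} followed by excluding an ``infinite Heisenberg'' configuration $\langle v\rangle\oplus\langle h_i,z_i\mid i\in\N\rangle$ with $[v,h_i]=z_i$, $(\ad v)^2=0$ on an abelian ideal. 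You have correctly isolated this last exclusion as the only genuinely hard point.

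But the one concrete computation you offer there fails. In $\SSS(L_0)$ one has $v^p=0$, so
$\{v^{p-1}h_1,v^{p-1}\}=(p-1)\,v^{2p-3}\{h_1,v\}=0$ already for $p\ge3$; and for $p=2$ the chain $\{vh_1,v\}=vz_1$, $\{vz_1,h_2\}=z_1z_2$ loses the factor $v$ and dies at the next step, the $z_i$ being central. The witness must use only first powers of $v$: the paper shows by induction (using centrality of the $z_i$) that
$\{v,vh_1,vh_2,\dots,vh_n\}=v\,z_1z_2\cdots z_n\ne0$ for all $n$,
a nonvanishing left-normed bracket of arbitrary length, which contradicts Lie nilpotence. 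Substituting this (or an equally robust) witness closes your argument. Two smaller remarks. First, your extraction of infinitely many $h_i\in\ker(\ad v)^2$ with $[v,h_i]$ linearly independent is not justified by the kernel remark alone; one needs, e.g., that the injections $\ker\phi^{j+1}/\ker\phi^j\hookrightarrow\ker\phi^j/\ker\phi^{j-1}$ induced by $\phi=\ad v$ force $\dim\phi(\ker\phi^2)=\infty$ whenever $\phi$ is nilpotent of infinite rank; the paper instead finds an infinite-dimensional quotient $D_i/D_{i+1}$ in the chain $D_j=(\ad v)^jD_{j-1}$ and passes to $(\langle v\rangle\oplus D_{i-1})/D_{i+1}$. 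Second, Theorem~\ref{Treduction} delivers the ideal $\Delta=\Delta(L)$ with $\dim\Delta^2\le M^2$ (it does not assert class~$\le2$ for non-restricted $L$, but only $\dim\Delta^2<\infty$ is used), and the finite-rank conclusion for each $\ad v$ amounts exactly to the paper's conclusion $L=\Delta$.
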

Let $L$ be a Lie algebra over an arbitrary field $K$ ($\ch K=p$).
Using the upper Lie powers, define
{\it Poisson dimension subalgebras} ({\it truncated Poisson dimension subalgebras}, respectively)  of $L$:
\begin{alignat*}{2}
D^{S}_{(n)}(L)&=L\cap (S (L))^{(n)},  &\qquad n&\ge 0;\\
D^{s}_{(n)}(L)&=L\cap (\SSS (L))^{(n)},& \qquad n&\ge 0.
\end{alignat*}
By Corollary~\ref{C_dim_subalg} (Claim~3 of Lemma~\ref{Lgamma}, respectively),
we obtain a description of these subalgebras similar to that
for the group rings~\eqref{dim-subgroups} and the restricted enveloping algebras~\eqref{dim-subalg}
(compare with the first term of that products):
\begin{alignat*}{2}
D^S_{(n)}(L)&=\gamma_{n+1}(L),&\qquad n&\ge 0;\\
D^s_{(n)}(L)&=\gamma_{n+1}(L),&\qquad n&\ge 0.
\end{alignat*}
We compute the classes of Lie nilpotence and strong Lie nilpotence.
Our formula is an analogue of the formulas known for group rings (Theorem~\ref{Tgr-nilp}) and restricted enveloping algebras
(Theorem~\ref{Tu-nilp}). The analogy is better seen in terms of truncated Poisson dimension subalgebras.

\begin{Th}\label{Tclasses}
Let $L$ be a Lie algebra over a field of positive characteristic $p>3$,
such that the truncated symmetric Poisson algebra $\SSS(L)$ is Lie nilpotent.
The following numbers are equal:
\begin{enumerate}
\item the strong Lie nilpotency class of $\SSS(L)$;
\item the Lie nilpotency class of  $\SSS(L)$;
\item $$1+(p-1)\sum_{n\geq1}n\cdot\dim(\gamma_{n+1}(L)/\gamma_{n+2}(L)).$$
\end{enumerate}
In cases $p=2,3$, the numbers 1) and 3) remain equal.
\end{Th}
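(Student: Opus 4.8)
The plan is to mimic Riley and Shalev's computation of the Lie nilpotency class of $u(L)$ (Theorem~\ref{Tu-nilp}) in the Poisson setting. Since $\SSS(L)$ is Lie nilpotent, Theorem~\ref{Tnilp} gives that $L$ is nilpotent with $\dim L^2<\infty$; fix a basis $\{w_1,\dots,w_m\}$ of $L^2=\gamma_2(L)$ compatible with the lower central series, so that each $w_\ell$ has a \emph{depth} $n_\ell\ge 1$ with $w_\ell\in\gamma_{n_\ell+1}(L)\setminus\gamma_{n_\ell+2}(L)$, and extend it to a basis of $L$; then $\gamma_{i+1}(L)=\langle w_\ell\mid n_\ell\ge i\rangle$ and $\#\{\ell\mid n_\ell=n\}=\dim(\gamma_{n+1}(L)/\gamma_{n+2}(L))$. \textbf{Step 1 (the strong class, arbitrary $p$).} Using the Leibniz rule together with $\{\gamma_{i+1}(L),\SSS(L)\}\subseteq\SSS(L)\cdot\gamma_{i+2}(L)$ (itself a consequence of $[\gamma_{i+1}(L),L]=\gamma_{i+2}(L)$), one shows by induction on $n$, as in Lemma~\ref{Lgamma}, that
\[
\SSS(L)^{(n)}=\sum_{\substack{k\ge 1,\ i_1,\dots,i_k\ge 1\\ i_1+\cdots+i_k=n}}\SSS(L)\cdot\gamma_{i_1+1}(L)\cdots\gamma_{i_k+1}(L),
\]
the products being taken inside $\SSS(L)$. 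Reading these products in $\SSS(L)\cong K[w_1,\dots,w_m,\dots]/(\text{$p$-th powers})$: the subspace $\gamma_{i_1+1}(L)\cdots\gamma_{i_k+1}(L)$ is spanned by monomials $w_{\ell_1}\cdots w_{\ell_k}$ with $n_{\ell_j}\ge i_j$, and such a monomial is nonzero precisely when each $w_\ell$ occurs fewer than $p$ times. Hence any nonzero product forces $n=\sum_j i_j\le\sum_j n_{\ell_j}\le(p-1)\sum_\ell n_\ell=:N$, while $w_1^{p-1}\cdots w_m^{p-1}$ realizes the value $N$; thus $\SSS(L)^{(N)}\ne 0$ and $\SSS(L)^{(N+1)}=0$, so the strong Lie nilpotency class equals
\[
N+1=1+(p-1)\sum_{n\ge 1}n\cdot\dim(\gamma_{n+1}(L)/\gamma_{n+2}(L)).
\]
This argument is characteristic-free, which already settles the equality of 1) and 3), including the cases $p=2,3$.

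\textbf{Step 2 (the Lie class when $p>3$).} The inequality ``$\le$'' is immediate: by \eqref{gamma(R)em R(n-1)} one has $\gamma_{N+2}(\SSS(L))\subseteq\SSS(L)^{(N+1)}=0$. For the reverse inequality we invoke the technical results of Section~\ref{Scomm} --- Poisson-algebra analogues of the Gupta--Levin and Sharma--Srivastava inclusions for products of terms of the lower central series --- which, when $p>3$, allow one to rewrite a product $\gamma_{i_1+1}(L)\cdots\gamma_{i_k+1}(L)$ with $k\ge 2$ and $i_1+\cdots+i_k=n$ as an element of $\gamma_{n+1}(\SSS(L))$ modulo products of strictly larger total weight. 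Combined with the formula of Step~1 this yields, for all $n\ge 0$,
\[
\SSS(L)^{(n)}\subseteq\gamma_{n+1}(\SSS(L))+\SSS(L)^{(n+1)}.
\]
Taking $n=N$ and using $\SSS(L)^{(N+1)}=0$ together with $\SSS(L)^{(N)}\ne 0$ from Step~1, we obtain $\gamma_{N+1}(\SSS(L))=\SSS(L)^{(N)}\ne 0$; hence the Lie nilpotency class is exactly $N+1$, equal to the strong class, and equal to 3) by Step~1.

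The main obstacle is exactly the displayed inclusion in Step~2: converting a product of several terms of the lower central series back into a single long Poisson commutator. This rewriting relies on Leibniz-rule identities whose coefficients are divisible by $2$ and by $3$, so it breaks down in characteristics $2$ and $3$ --- which is precisely why the theorem asserts the equality of 1) and 3), but not of 2), in those cases. Proving the required Poisson identities in a form strong enough to drive the induction is the essential technical work, mirroring the corresponding step in the proof of Theorem~\ref{Tu-nilp}.
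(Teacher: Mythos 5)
Your Step 1 is correct and is essentially the paper's argument (Lemma~\ref{Lgamma} plus the monomial basis of Lemma~\ref{Filt}, recorded as Corollary~\ref{classSpNS}); the equality of 1) and 3) in all characteristics comes out exactly as you describe.

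Step 2 has a genuine gap. The inclusion you rely on,
$\SSS(L)^{(n)}\subseteq\gamma_{n+1}(\SSS(L))+\SSS(L)^{(n+1)}$ for all $n$, is not what Section~\ref{Scomm} provides. Two problems. First, Claim~1 of Theorem~\ref{Tcomutt} only gives $\gamma_n(R)\gamma_m(R)\subseteq\gamma_{n+m-1}(R)\cdot R$ when \emph{one of $n,m$ is odd}; a product $\gamma_{m_1}(L)\cdots\gamma_{m_s}(L)$ in which all the $m_j$ are even (e.g.\ $\gamma_2(L)\cdot\gamma_2(L)$, which genuinely occurs among the generators of $\SSS(L)^{(2)}$) cannot be collapsed by iterating that claim, and you give no substitute argument; the parity hypothesis is not a technicality --- it is the reason the standard customary polynomial $\St_4$ is nontrivial. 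Second, even where Theorem~\ref{Tcomutt} applies, its conclusion is $\gamma_{n+m-1}(R)\cdot R$, not $\gamma_{n+m-1}(R)$, so your displayed inclusion should at best read $\SSS(L)^{(n)}\subseteq\gamma_{n+1}(\SSS(L))\cdot\SSS(L)+\SSS(L)^{(n+1)}$ (this weaker form would still suffice to conclude $\gamma_{N+1}(\SSS(L))\ne 0$, so this second point is reparable; the first is not). The paper avoids the parity obstruction entirely by \emph{not} proving a general inclusion: it takes the single witness monomial $v=\prod_{i,j}e_{ij}^{p-1}$ with each $e_{ij}$ chosen as a genuine commutator of length $i+1$, applies Claim~2 of Theorem~\ref{Tcomutt} to get $e_{ij}^{p-1}\in\gamma_{i(p-1)+1}(R)R$, and then observes that $i(p-1)+1$ is \emph{odd} (as $p$ is odd), so Claim~1 can be applied iteratively to conclude $v\in\gamma_{N+1}(R)R$, whence $\gamma_{N+1}(\SSS(L))\ne 0$. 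Your proposal should be restructured around this specific element rather than around a level-by-level comparison of the two filtrations.
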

For cases $p=2,3$, the number above yields an upper bound for the Lie nilpotency class.
Also, we have a lower bound for the Lie nilpotency class, $L$ being non-abelian (Lemma~\ref{L_cl_lower}):
$$2+(p-1)\sum_{n\geq2}(n-1)\cdot\dim(\gamma_{n+1}(L)/\gamma_{n+2}(L)).$$

\subsection{Solvability of truncated symmetric algebras $\SSS(L)$}
Let $R$ be a  Poisson algebra.
Consider its {\it derived series} as a Lie algebra:
$\delta_0(R)=R$, $\delta_{n+1}(R)=\{\delta_n(R),\delta_n(R)\}$, $n\geq 0$.
Polynomials of {\it solvability} are defined by recursion: $\delta_1(X_1,X_2)=\{X_1,X_2\}$ and
\begin{equation*}
\delta_{n+1}(X_1,X_2,\ldots,X_{2^{n+1}})
=\big\{\delta_{n}(X_1,\ldots,X_{2^n}),\delta_n(X_{2^{n}+1},\ldots,X_{2^{n+1}})\big\}, \quad n\ge 1.
\end{equation*}
A Poisson algebra $R$ is \textit{solvable of length $s$} if,
and only if, $\delta_{s}(R)=0$ and $\delta_{s-1}(R)\ne 0$,
equivalently, that $R$ satisfies the above identity of Lie solvability
$\delta_s(\ldots)\equiv 0$, where $s$ is minimal.

Define the {\it upper derived series}:
$\tilde{\delta}_0(R)=R$ and $\tilde{\delta}_{n+1}(R)=\{\tilde{\delta}_{n}(R),\tilde{\delta}_{n}(R)\}\cdot R$, $n\geq 0$.
Define polynomials of the {\it strong solvability} by
$\tilde{\delta}_1(X_1,X_2,Y_1)=\{X_1,X_2\}\cdot Y_1$, and
\begin{multline*}
\tilde{\delta}_{n+1}(X_1,\ldots,X_{2^{n+1}},Y_1,\ldots,Y_{2^{n+1}-1}) \\
=\Big\{\tilde{\delta}_{n}(X_1,\ldots,X_{2^{n}},Y_1,\ldots,Y_{2^{n}-1}),\qquad\qquad\qquad\\
 \tilde{\delta}_{n}(X_{2^{n}+1},\ldots,X_{2^{n+1}},Y_{2^{n}},\ldots,Y_{2^{n+1}-2})\Big\} \cdot Y_{2^{n+1}-1},
 \quad n\ge 1.
\end{multline*}

A Poisson algebra $R$ is called {\it strongly solvable of length} $s$ if and only if
$\tilde{\delta}_{s}(R)=0$ and $\tilde{\delta}_{s-1}(R)\ne 0$,
or equivalently that $R$ satisfies the above identity $\tilde\delta_s(\ldots)\equiv 0$, where $s$ is minimal.
Observe that
\begin{equation}\label{deltindeltb}
\delta_{s}(R)\subseteq\tilde{\delta}_{s}(R),\qquad s\geq 0.
\end{equation}
Thus, the strong solvability of length $s$ implies the solvability of length at most $s$.
The solvability of length $1$ is equivalent to the strong solvability of length $1$ and equivalent to the fact
that $R$ is abelian as a Lie algebra.
\medskip

The following is the second main result of the paper.
\begin{Th}\label{Tsolv}
Let $L$ be a Lie algebra over a field of positive characteristic $p\geq 3$. Consider
its truncated symmetric Poisson algebra $\SSS(L)$. The following conditions are equivalent:
\begin{enumerate}
\item $\SSS(L)$ is strongly solvable;
\item $\SSS(L)$ is solvable;
\item $L$ is solvable and $\dim L^2<\infty$.
\end{enumerate}
In case $p=2$, conditions 1) and 3) remain equivalent.
\end{Th}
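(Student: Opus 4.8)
\medskip
\noindent\textbf{Sketch of the argument.}
The implication (1)$\Rightarrow$(2) is immediate from \eqref{deltindeltb}. In every characteristic the Poisson bracket of $\SSS(L)$ restricts, on the homogeneous component $U_1/U_0\cong L$, to the Lie bracket of $L$, so $L$ is a Lie subalgebra of $\SSS(L)^{(-)}$ and $\delta_n(L)\subseteq\delta_n(\SSS(L))$ for all $n$; hence solvability of $\SSS(L)$ (a fortiori strong solvability) forces $L$ to be solvable. Thus it remains to prove: (a) (3)$\Rightarrow$(1); (b) for $p\ge 3$, that solvability of $\SSS(L)$ implies $\dim L^2<\infty$; (c) for $p=2$, that \emph{strong} solvability of $\SSS(L)$ implies $\dim L^2<\infty$. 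Then (a)+(b) gives the three-way equivalence for $p\ge 3$, while (a)+(c) gives (1)$\Leftrightarrow$(3) for $p=2$.

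For (a), let $L$ be solvable of derived length $d$ with $\dim L^2=t<\infty$. Using the Leibnitz rule and $\{v_i,v_j\}=[v_i,v_j]\in L^2$ one gets $\{\SSS(L),\SSS(L)\}=\SSS(L)\cdot L^2$, which is simultaneously an associative ideal and a Lie ideal, and $\SSS(L)/\SSS(L)L^2\cong\SSS(L/L^2)$ is Lie abelian. Since $x^p=0$ for $x\in L$, a product of more than $t(p-1)$ elements of $L^2$ vanishes in $\SSS(L)$, so $F^k:=\SSS(L)\cdot (L^2)^k$ satisfies $F^0\supseteq F^1\supseteq\cdots\supseteq F^c=0$ for $c:=t(p-1)+1$; a Leibnitz computation gives $\{F^a,F^b\}\subseteq F^{a+b-1}$, so all $F^k$ are Lie ideals, the graded quotients $\gr^k=F^k/F^{k+1}$ are Lie abelian for $k=0$ and for $k\ge 2$, and $\gr^1$ carries a further filtration by the images of $\SSS(L)\cdot\delta_j(L^2)$, on which the induced bracket factors through $[\delta_j(L^2),\delta_{j'}(L^2)]\subseteq\delta_{\min(j,j')+1}(L^2)$; as $\delta_{d-1}(L^2)=\delta_d(L)=0$, this makes $\gr^1$ solvable of length $\le d-1$. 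A standard filtration argument then bounds $\delta_N(\SSS(L))=0$ in terms of $c$ and $d$; to get strong solvability one runs the same scheme for $\tilde\delta_n$, noting that a bracket of terms of $\tilde\delta_n(\SSS(L))$ descends either in the power filtration $\{F^k\}$ or in the derived-depth filtration $\{\SSS(L)\delta_j(L)\}$, both finite. This last point is exactly where the Poisson analogues, proved in Section~\ref{Scomm}, of the associative estimates on products of terms of the lower central series are used. (Alternatively one inducts on $d$, passing to $L/\delta_2(L)$, whose truncated symmetric algebra is a Poisson homomorphic image of $\SSS(L)$.)

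For (b) and (c), assume $\SSS(L)$ solvable, respectively strongly solvable. Since these identities are multilinear, $\SSS(L)$ satisfies a nontrivial multilinear Poisson identity, so by the reduction result (Theorem~\ref{Treduction}, the sharp form of Theorem~\ref{Tmain}) there is an ideal $H\subseteq L$ with $\dim L/H<\infty$, $\dim H^2<\infty$ and $H$ nilpotent of class $2$. Suppose $\dim L^2=\infty$. Writing $L=H+F$ with $\dim F<\infty$ and using $L^2\subseteq H^2+[H,F]+[F,F]$ we get $\dim[H,f]=\infty$ for some $f\in F$; passing to the quotient $L/H^2$ (still solvable, with abelian image of $H$, and inheriting (strong) solvability as a Poisson quotient of $\SSS(L)$) and then to an $\ad(f)$-invariant subalgebra, we reduce to $L=Kf+A$ with $A$ an abelian ideal on which $\ad(f)$ has infinite rank. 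Here, starting from $\{fa_1,fa_2\}=f\bigl(a_1[f,a_2]-a_2[f,a_1]\bigr)$ and iterating --- bracketing two expressions of the shape ``$f$ times an alternating polynomial in finitely many linearly independent elements of $A\cup[f,A]$'', of which the infinite rank guarantees an unlimited supply --- one produces nonzero elements of $\tilde\delta_n(\SSS(L))$ for all $n$, and for $p\ge 3$ also nonzero elements of $\delta_n(\SSS(L))$; this contradicts (strong) solvability, so $\dim L^2<\infty$. The combinatorics here is organised via delta-sets for Lie algebras. For $p=2$ the three-way equivalence genuinely fails --- ordinary solvability of $\SSS(L)$ does not force $\dim L^2<\infty$ --- and such examples are constructed separately; what remains and is proved above is (1)$\Leftrightarrow$(3).

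I expect the main obstacle to be the passage from solvability to strong solvability in (a): unlike the Lie nilpotent situation, where one uses $\gamma_n(L)=0$ for large $n$, here $\gamma_n(L)$ need not vanish and one must instead exploit $\delta_d(L)=0$; controlling the terms of $\tilde\delta_n(\SSS(L))$ simultaneously by their power degree in $\SSS(L)\cdot L^2$ and by their derived depth in $L$ requires the full strength of the products-of-lower-central-series lemmas of Section~\ref{Scomm}, the Poisson counterpart of the Sharma--Srivastava-type machinery for associative algebras. A secondary delicate point is the reduction to $Kf+A$ and the explicit non-vanishing of $\delta_n$ and $\tilde\delta_n$ needed in (b) and (c).
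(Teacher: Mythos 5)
The substantive gap is in the hard direction, solvability $\Rightarrow\dim L^2<\infty$ (your (b) and (c)). After the reduction via delta-sets to $L=\langle f\rangle_K\oplus A$ with $A$ abelian and $\dim[f,A]=\infty$, you assert that "iterating" brackets of elements of the form $f\cdot(\text{polynomial in }A\cup[f,A])$ "produces nonzero elements of $\delta_n(\SSS(L))$ for all $n$". This is precisely the point that needs proof, and it is genuinely characteristic-sensitive: for $\ch K=2$ the Lie algebra $\langle x,y_i\mid [x,y_i]=y_i\rangle$ has $\dim L^2=\infty$ yet $\SSS(L)$ is solvable of length $3$ (Lemma~\ref{L22}), so no characteristic-blind iteration can succeed. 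The paper's proof must first show that $\ad f$ acts \emph{algebraically} on $A$ of bounded degree (Lemma~\ref{ad}, via an explicit lexicographic leading-term evaluation of $\delta_s$ — you never establish this, and without it the passage to a controlled "$\ad(f)$-invariant subalgebra" is unjustified), then split according to the Jordan decomposition into the nilpotent case $(\ad f)^2=0$ and the eigenvalue case $[f,v]=v$, and in each case exhibit explicit nonvanishing elements of $\delta_n(\SSS(L))$ (Propositions~\ref{SpSp} and~\ref{SpSp2}, whose constructions — the $\diamond$-operation with its factor $2^n$, and the $e_n,h_n,f_n$ monomials with the factor $2$ — are exactly where $p\neq 2$ enters). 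Your sketch contains none of this, so the implication 2)$\Rightarrow$3) is not proved.

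Your treatment of (3)$\Rightarrow$(1) takes a genuinely different route from the paper, which instead inducts on the derived length by factoring out the \emph{last} nonzero term $\delta_{k-1}(L)$ (a finite-dimensional abelian ideal) and counts how many factors from it a PBW monomial can carry. Your double filtration by $F^k=\SSS(L)(L^2)^k$ and $G_j=\SSS(L)\delta_j(L)$ can in fact be made to work: $\{F^a,F^b\}\subseteq F^{a+b-1}$ and $\{G_j,G_{j'}\}\subseteq G_{\min(j,j')+1}+F^2$ follow from direct Leibnitz computations, $F^{t(p-1)+1}=0$ by truncation, and both filtrations are stable under multiplication by $\SSS(L)$, so $\tilde\delta_n$ descends. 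But these verifications have nothing to do with Section~\ref{Scomm}: Theorem~\ref{Tcomutt} concerns products of terms of the \emph{lower central series} and, crucially, requires $\ch K\neq 2,3$, so if your argument genuinely relied on it, it would fail for exactly the cases $p=2,3$ that the theorem covers. As written, the key descent claims are delegated to a tool that does not apply; you should either carry out the Leibnitz computations directly or adopt the paper's induction on derived length, which avoids the issue entirely.
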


Observe that, the description of solvable group rings in characteristic 2
looks very nice (Theorem~\ref{TKGNS}).
But the answer to a similar question for the restricted enveloping algebras is rather complicated
and was obtained only recently~\cite{SiUs13}.

The problem of solvability of $\SSS(L)$ in case $\ch K=2$ is open.
As a first approach, we show that the situation is different to other characteristics.
Namely, in case $\ch K=2$, we obtain two examples of truncated symmetric Poisson algebras that are solvable
but not strongly solvable, see Lemma~\ref{L22} and Lemma~\ref{L22b}.
A close fact is that the Hamiltonian algebras $\mathbf{H}_2(K)$ and $\mathbf{h}_2(K)$ are solvable
but not strongly solvable in case $\ch K=2$ (Lemma~\ref{L23}).
This is an analogue of a well-known fact that the matrix ring $\mathrm{M}_2(K)$ of $2\times 2$ matrices over
a field $K$, $\ch K=2$, is solvable but not strongly solvable.

\subsection{Nilpotency and solvability of symmetric algebras $S(L)$}
Finally, we prove the following extension of the result of Shestakov (Theorem~\ref{TSh93}).

\begin{Th}\label{Pe16}
Let $L$ be a Lie algebra over a field $K$, and $S(L)$ its symmetric Poisson algebra.
The following conditions are equivalent:
\begin{enumerate}
\item $L$ is abelian;
\item $S(L)$ is strongly Lie nilpotent;
\item $S(L)$ is Lie nilpotent;
\item $S(L)$ is strongly solvable;
\item $S(L)$ is solvable (here assume that $\ch K\ne 2$).
\end{enumerate}
\end{Th}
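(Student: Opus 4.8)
\emph{Plan of proof.}
The first thing to do is to dispose of the trivial implications: if $L$ is abelian then $\{v_i,v_j\}=[v_i,v_j]=0$ on a basis of $L$, so the Poisson bracket of $R=S(L)$ vanishes identically and already $R^{(1)}=\{R,R\}\cdot R=0=\tilde\delta_1(R)$, whence $(1)$ gives $(2)$, $(3)$, $(4)$ and $(5)$; moreover $(2)\Rightarrow(3)$ by \eqref{gamma(R)em R(n-1)}. Thus it will remain to prove $(3)\Rightarrow(1)$ and $(4)\Rightarrow(1)$ in arbitrary characteristic, and $(5)\Rightarrow(1)$ under the hypothesis $\ch K\ne2$. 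Each of these will start by descending from $S(L)$ to $L$: the polynomial grading $S(L)=\bigoplus_{d\ge0}S_d$ is compatible with the Poisson bracket, which is homogeneous of degree $-1$ and which on $S_1=L$ is exactly the Lie bracket; an easy induction shows that $\gamma_n(S(L))$, $\delta_n(S(L))$ and $\tilde\delta_n(S(L))$ are graded, with degree-one components $\gamma_n(L)$, $\delta_n(L)$ and $\delta_n(L)$. Hence Lie nilpotence (resp. solvability, resp. strong solvability) of $S(L)$ forces $L$ to be nilpotent (resp. solvable). From this point on I assume $L$ is \emph{not} abelian and derive a contradiction.

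For $(3)\Rightarrow(1)$ no further reduction is needed. Since $L$ is nilpotent and non-abelian, the last nonzero term $\gamma_k(L)$ of the lower central series has $k\ge2$ and $\gamma_k(L)\subseteq Z(L)$, and from $\gamma_k(L)=[\gamma_{k-1}(L),L]$ I can choose $a\in\gamma_{k-1}(L)$ and $b\in L$ with $c:=\{a,b\}=[a,b]\ne0$. Then $c$ is Poisson-central, a direct check gives $\{ac^{n},ab\}=ac^{n+1}$, so by induction $ac^{n}\in\gamma_{n+1}(S(L))$, and $ac^{n+1}\ne0$ because $S(L)$ is a polynomial ring and $a,c\ne0$. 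Thus $\gamma_n(S(L))\ne0$ for all $n$, contradicting $(3)$. For $(4)\Rightarrow(1)$ I will run the same idea for the upper derived series, with the advantage that the recursion $\tilde\delta_{n+1}(R)=\{\tilde\delta_n(R),\tilde\delta_n(R)\}\cdot R$ multiplies by the whole algebra at each step: one only has to produce a nonzero \emph{ideal} inside each $\tilde\delta_n(S(L))$, so no scalar coefficients need be controlled and the argument works in every characteristic. If $L$ is nilpotent, I again pick a central $c=\{a,b\}\ne0$ (so that $a,b,c$ are independent) and check that the ideal generated by $c^{2^n-1}$ lies in $\tilde\delta_n(S(L))$; if $L$ is not nilpotent, I reduce — via Theorem~\ref{Tsolv} applied to $\SSS(L)$ in positive characteristic, and in general by passing to a $2$-generated subalgebra and extending scalars to $\bar K$ — to the symmetric algebra of a small solvable non-abelian Lie algebra, the prototype being $B=\langle x,y\mid[x,y]=y\rangle$, and again locate a nonzero ideal inside each $\tilde\delta_n(S(B))$.

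The delicate case is $(5)\Rightarrow(1)$ for $\ch K\ne2$, where I must place nonzero elements in the \emph{lower} derived series $\delta_n(S(L))$ and so keep exact control of the scalars. If $L$ is nilpotent, I work in the Poisson subalgebra $K[a,b,c]\cong S(H)$, $H$ the three-dimensional Heisenberg algebra with $a,b,c$ independent and $c$ central; using $\{a^rb^sc^{e},a^{r'}b^{s'}c^{e}\}=(rs'-sr')\,a^{r+r'-1}b^{s+s'-1}c^{2e+1}$ and carrying along the six elements $a^rb^sc^{2^n-1}$ with $r+s\le2$, an induction gives $c^{2^n-1}\in\delta_n(S(L))$ — the surviving coefficients $rs'-sr'\in\{1,2,4\}$ being nonzero precisely because $\ch K\ne2$. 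If $L$ is not nilpotent, I reduce as before to $S(B)=K[x,y]$ with $\{x,y\}=y$, where $\{x^iy^{k},x^jy^{k}\}=k(i-j)\,x^{i+j-1}y^{2k}$ and an induction — again using $\ch K\ne2$ to realize every $x$-exponent by some choice of $i\ne j$ — yields $x^my^{2^n}\in\delta_{n+1}(S(B))$, in particular $y^{2^n}\in\delta_{n+1}(S(B))$. Either way $S(L)$ is not solvable, a contradiction, and $L$ must be abelian.

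I expect the main obstacle to be exactly this coefficient bookkeeping in the inductive step for the model algebras $S(H)$ and $S(B)$ in the implication $(5)\Rightarrow(1)$: a naive induction over the derived series loses control of the scalars, and the pairs of elements to bracket must be chosen so that the coefficients that survive remain invertible. This is precisely where the hypothesis $\ch K\ne2$ enters, and it is essential: in characteristic $2$ the Hamiltonian analogues $\mathbf{h}_2(K)$, $\mathbf{H}_2(K)$ and the truncated examples of Lemmas~\ref{L22}, \ref{L22b}, \ref{L23} are solvable but not strongly solvable, so $(5)$ genuinely fails to be equivalent to $(1)$, $(2)$, $(3)$, $(4)$ there.
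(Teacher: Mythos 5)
Your treatment of the nilpotent case is correct and takes a genuinely different, more elementary route than the paper. From $\gamma_n(L)\subseteq\gamma_n(S(L))$ and $\delta_n(L)\subseteq\delta_n(S(L))$ you get that $L$ is nilpotent (resp.\ solvable) for free, and your element $c=\{a,b\}$ with $a\in\gamma_{k-1}(L)$ and $c\in\gamma_k(L)\subseteq Z(L)$ is Poisson-central in $S(L)$, so $\{ac^n,ab\}=ac^{n+1}\neq 0$ ($S(L)$ is a domain) gives $\gamma_{n+1}(S(L))\neq0$ for all $n$ in \emph{every} characteristic, and $\{c^{m}u,c^{m}v\}w=c^{2m}\{u,v\}w$ puts the ideal $c^{2^n-1}S(L)$ inside $\tilde\delta_n(S(L))$. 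This is cleaner than the paper, which funnels everything through $(5)\Rightarrow(1)$ via Theorem~\ref{Tsolv} and explicit coefficient computations in $S(H)$ for the Heisenberg algebra $H$ and in $S(B)$ for $B=\langle x,y\mid[x,y]=y\rangle$; in particular your argument gives $(3)\Rightarrow(1)$ directly even when $p=2$, where the paper's route $3)\Rightarrow5)\Rightarrow1)$ is unavailable. Your model computations in $S(H)$ and $S(B)$ for $(5)\Rightarrow(1)$, and the list of surviving coefficients, are essentially the paper's and are correct.

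The genuine gap is the reduction in the non-nilpotent case of $(4)\Rightarrow(1)$ and $(5)\Rightarrow(1)$. ``Passing to a $2$-generated subalgebra'' does not reduce you to $S(B)$: a $2$-generated solvable Lie algebra can be infinite dimensional and need not contain any copy of $B$. For instance, in the free metabelian Lie algebra $M$ of rank $2$ there is no $0\neq y$ with $[x,y]=y$: necessarily $y\in M^2$, which is a free rank-one module over $K[\ad x_1,\ad x_2]$, and $(\alpha\ad x_1+\beta\ad x_2-1)f=0$ forces $f=0$; moreover $Z(M)\cap M^2=0$, so this $M$ has neither a copy of $B$ nor a Poisson-central commutator, and your proof does not touch it. In positive characteristic your appeal to Theorem~\ref{Tsolv} for $\SSS(L)$ does rescue the step: it yields $\dim L^2<\infty$, hence a finite-dimensional non-abelian subalgebra, after which one must still invoke (and you should cite) the classical dichotomy that a finite-dimensional non-abelian Lie algebra over an algebraically closed field contains either $B$ or the Heisenberg algebra --- ``$L$ not nilpotent'' alone does not hand you a copy of $B$. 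In characteristic zero, however, Theorem~\ref{Tsolv} is unavailable and you offer no substitute; this is exactly where the paper uses Farkas' Theorem~\ref{TSymm0} to produce an abelian ideal $A$ of finite codimension and Lemma~\ref{ad} to show that $\ad x$ acts algebraically on $A$, which is what actually produces the finite-dimensional non-abelian subalgebra. Without some such input the implications $(4)\Rightarrow(1)$ and $(5)\Rightarrow(1)$ remain unproved over fields like $\mathbb{Q}$.
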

In case $\ch K=2$, the solvability of the symmetric Poisson algebra $S(L)$ is an open question.
Two examples of Lie algebras mentioned above also
yield solvable symmetric algebras which are not strongly solvable (Lemma~\ref{LS2solvA} and Lemma~\ref{LS2solvB}).
\subsection*{Conjecture}
Let $L$ be a Lie algebra over a field $K$, $\ch K=2$.
The symmetric algebra $S(L)$ (probably, the truncated symmetric algebra $\SSS(L)$ as well)
is solvable if and only if $L=\langle x\rangle_K \oplus A$, where $A$ is an abelian ideal on which
$\ad x$ acts algebraically.
\subsection*{Remark}
Formally, our statements on ordinary Lie nilpotency and solvability are concerned
only with the {\it Lie structure} of Poisson algebras $\SSS(L)$ and $S(L)$.
But our proof heavily relies on Theorem~\ref{Treduction} of~\cite{GiPe06}, which in turn
uses the existence of a nontrivial customary identity given by Theorem~\ref{TFarkas} (Farkas~\cite{Farkas98}).
In this way, we need the {\it Poisson structure} of our algebras to prove our results.
We do not see ways to prove them using the theory of Lie identical relations only.

\section{Delta-sets and multilinear Poisson identical relations}

The goal of this section is to present a reduction step given by Theorem~\ref{Treduction}.
{\it Delta-sets} in groups were introduced by Passman to study identities in the group rings~\cite{Pas72}.
In case of Lie algebras, the delta-sets were introduced by Bahturin to study identical relations of the universal enveloping algebras~\cite{Ba74}.
Let $L$ be a Lie algebra, one defines the delta-sets as sets of elements of {\it finite width} as follows:
\begin{align*}
\Delta_n(L)&=\{x\in L\mid\dim [L,x]\leq n\}, \qquad n\geq0;\\
\Delta(L) &=\mathop{\cup}\limits_{n=0}^{\infty}\Delta_n(L)=\{x\in L\mid\dim [L,x]<\infty\}.
\end{align*}

Note that $\Delta_n(L)$, $n\ge 0$, is not a subalgebra or even a subspace in a general case.
The basic properties of the delta-sets are as follows.

\begin{Lemma}[\cite{BMPZ,Pe92}]
\label{PropDel}
Let $L$ be a (restricted) Lie algebra, $n,m\ge 0$.
 \begin{enumerate}
  \item
    $\Delta_n(L)$ is invariant under scalar multiplication;
  \item
    let $x\in\Delta_n(L)$, $y\in\Delta_m(L)$, then
     $\alpha x+\beta y\in\Delta_{n+m}(L)$, where  $\alpha,\beta\in K$;
  \item
    let $x\in\Delta_n(L)$, $y\in L$, then
     $[x,y]\in\Delta_{2n}(L)$;
  \item
    let $x\in\Delta_n(L)$ and $L$ a restricted
    Lie algebra, then $x^{[p]}\in\Delta_{n}(L)$;
  \item
    $\Delta(L)$  is a (restricted) ideal of $L$.
\end{enumerate}
\end{Lemma}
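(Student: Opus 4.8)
The plan is to verify the five assertions in order, noting that the final one is an immediate consequence of the preceding three. Throughout I write $[L,x]=\{[z,x]\mid z\in L\}$ for the image of $\ad x$ (as a subspace), so that by definition $x\in\Delta_n(L)$ means exactly $\dim[L,x]\le n$. For claim~(1), if $\alpha\neq 0$ then $[L,\alpha x]=\alpha[L,x]=[L,x]$, so scaling leaves the dimension unchanged, while for $\alpha=0$ the bracket vanishes; hence $\alpha x\in\Delta_n(L)$. For claim~(2), bilinearity gives $[z,\alpha x+\beta y]=\alpha[z,x]+\beta[z,y]$ for every $z\in L$, so that $[L,\alpha x+\beta y]\subseteq[L,x]+[L,y]$, a subspace of dimension at most $n+m$; thus $\alpha x+\beta y\in\Delta_{n+m}(L)$.

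The heart of the matter is claim~(3), and this is the step I expect to require the most care. Fixing $z\in L$, I would expand by the Jacobi identity in the form
$$[z,[x,y]]=[[z,x],y]-[[z,y],x].$$
In the first summand, $[z,x]$ runs through $[L,x]$ as $z$ varies, a space of dimension at most $n$, so $[[z,x],y]$ runs through the image of $[L,x]$ under the linear map $v\mapsto[v,y]$, again of dimension at most $n$. In the second summand I set $w=[z,y]\in L$; then $[[z,y],x]=[w,x]\in[L,x]$, once more of dimension at most $n$. Therefore $[L,[x,y]]$ lies in the sum of these two subspaces, whose dimension is at most $2n$, giving $[x,y]\in\Delta_{2n}(L)$. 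The only delicate point — and the genuine obstacle — is that the second term must be arranged so that the bracket with $x$ is the \emph{outer} operation, which returns the whole term into $[L,x]$; bracketing in the opposite order would lose all control of the dimension.

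For claim~(4) I would invoke the standard identity $\ad(x^{[p]})=(\ad x)^p$, valid in any restricted Lie algebra. Then $[L,x^{[p]}]$ is the image of $(\ad x)^p$, and since $\operatorname{im}(T^p)\subseteq\operatorname{im}(T)$ for any linear operator $T$, we get $\dim[L,x^{[p]}]\le\dim[L,x]\le n$, that is $x^{[p]}\in\Delta_n(L)$. Finally, claim~(5) follows formally from the rest: claim~(2) shows that $\Delta(L)=\bigcup_{n\ge 0}\Delta_n(L)$ is a subspace, claim~(3) shows $[\Delta(L),L]\subseteq\Delta(L)$ so that $\Delta(L)$ is an ideal, and in the restricted case claim~(4) shows it is closed under the $p$-map, making it a restricted ideal. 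I expect claims~(1), (2), (4) and~(5) to be routine, with essentially all the content concentrated in the Jacobi bookkeeping of claim~(3).
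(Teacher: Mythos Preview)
Your proof is correct. The paper does not actually give its own proof of this lemma: it is simply quoted from the references \cite{BMPZ,Pe92}, so there is nothing to compare directly against within the paper. That said, your argument is the standard one found in those sources, and your treatment of claim~(3) is precisely right---the Jacobi rearrangement $[z,[x,y]]=[[z,x],y]-[[z,y],x]$ places $x$ on the outside of the second term so that both contributions are controlled by $\dim[L,x]\le n$, giving the bound~$2n$.
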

\begin{Lemma}[\cite{RiSh93}]\label{DeltaI}
Let $L$ be a Lie algebra.
\begin{enumerate}
\item
     if $I$ is a finite dimensional ideal of $L$, then $\Delta(L/I)=(\Delta(L)+I)/I$;
\item
     if $H$ is a subalgebra of finite codimension in $L$, then $\Delta(H)=\Delta(L)\cap H$.
\end{enumerate}
\end{Lemma}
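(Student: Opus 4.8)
The plan is to reduce both statements to elementary linear algebra governing how the space $[L,x]$ changes when we pass to a quotient by a finite-dimensional ideal or restrict to a finite-codimension subalgebra. In each case the two relevant commutator spaces differ only by a finite-dimensional contribution, so the condition $\dim[L,x]<\infty$ defining $\Delta$ is unaffected. I would treat the two parts separately but with this common principle in mind.

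For part~1, I would first record that a finite-dimensional ideal $I$ automatically satisfies $I\subseteq\Delta(L)$: indeed, for $i\in I$ we have $[L,i]\subseteq I$ since $I$ is an ideal, whence $\dim[L,i]\le\dim I<\infty$ and $i\in\Delta(L)$. Consequently $(\Delta(L)+I)/I=\Delta(L)/I$, and it remains to identify $\Delta(L/I)$ with the image of $\Delta(L)$. Writing $\bar x=x+I$ for the image of $x\in L$, the canonical isomorphism
$$[L/I,\bar x]\cong [L,x]/\big([L,x]\cap I\big)$$
together with $\dim\big([L,x]\cap I\big)\le\dim I<\infty$ shows that $\dim[L/I,\bar x]<\infty$ if and only if $\dim[L,x]<\infty$. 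I would also check that this does not depend on the choice of representative: replacing $x$ by $x+i$ with $i\in I$ changes $[L,x]$ only by the finite-dimensional space $[L,i]\subseteq I$, so finiteness of the dimension is preserved. Hence the full preimage in $L$ of $\Delta(L/I)$ is exactly $\Delta(L)$, and taking images gives $\Delta(L/I)=\Delta(L)/I=(\Delta(L)+I)/I$.

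For part~2, one inclusion is immediate: since $[H,x]\subseteq[L,x]$ for $x\in H$, finiteness of $\dim[L,x]$ forces finiteness of $\dim[H,x]$, so $\Delta(L)\cap H\subseteq\Delta(H)$. For the reverse inclusion I would use the finite codimension of $H$ to fix a finite-dimensional vector-space complement $V$ with $L=H\oplus V$ (as vector spaces), so that $\dim V=\dim L/H<\infty$. Then for $x\in\Delta(H)$ we have $[L,x]\subseteq[H,x]+[V,x]$, where $[H,x]$ is finite-dimensional by hypothesis and $[V,x]$ is spanned by $\{[v,x]\mid v\in V\}$, hence of dimension at most $\dim V<\infty$. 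Therefore $\dim[L,x]<\infty$, giving $x\in\Delta(L)\cap H$ and the desired equality.

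All of this is routine, so there is no serious obstacle; the only points demanding care are the representative-independence in part~1 and ensuring the "finite difference" estimate is applied in both directions. I would present these two facts as the heart of the argument, since they are precisely what makes the delta-set construction behave well under the standard reduction operations used later in the paper.
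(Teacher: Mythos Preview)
Your argument is correct. Note, however, that the paper does not supply its own proof of this lemma: it is quoted from~\cite{RiSh93} and stated without proof, so there is no in-paper argument to compare against. The approach you give---controlling $[L,x]$ up to a finite-dimensional perturbation in each direction---is the standard one and matches what one finds in the cited source.
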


Suppose that $W$ is a subset in a $K$-vector space $V$.
We say that $W$ has finite codimension in $V$ if there exist
$v_1,\dots,v_m\in V$ such that
$V=\{w+\lambda_1v_1+\dots+\lambda_m v_m \,|\,w\in W,\ \lambda_1,\ldots, \lambda_m\in K\}$.
Let $m$ be the minimum integer with such property, then we denote $\dim V/W=m$.
We also introduce a notation
$m\cdot W=\{w_1+\cdots+w_m\,|\, w_i\in W\}$, where $m\in\N$.
\begin{Lemma}[{\cite[Lemma~6.3]{BaPe02}}]
\label{Ldilat}
Let $V$ be a $K$-vector space. Suppose that a subset
$T\subseteq V$ is stable under multiplication by scalars and $\dim V/T\le n$.
Then one obtains its linear span as follows: $\langle T\rangle_K=4^n\cdot T$.
\end{Lemma}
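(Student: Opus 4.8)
The plan is to argue by induction on $n=\dim V/T$, reducing the codimension by one at each step at the cost of a factor $4$. Throughout I note that since $T$ has finite codimension it is nonempty, and since it is stable under scalars it contains $0=0\cdot t$; consequently $\langle T\rangle_K=\bigcup_{k\ge 1}(k\cdot T)$ is an increasing union and $m\cdot(k\cdot T)=(mk)\cdot T$ for all $m,k$. It therefore suffices to establish the stated equality $\langle T\rangle_K=4^n\cdot T$. First I would reduce to the case $V=\langle T\rangle_K$: if $T\subseteq W\subseteq V$ with $W$ a subspace, then writing $V=T+C$ with $\dim C\le n$ one checks that $W=T+(C\cap W)$, whence $\dim W/T\le\dim C\le n$. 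Applying this to $W=\langle T\rangle_K$ lets me replace $V$ by $\langle T\rangle_K$ without increasing the codimension, so from now on $V=\langle T\rangle_K$ and I must show $V=4^n\cdot T$.

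The heart of the matter is the case $n=1$, which I would treat as follows. If $T$ is a subspace then $T=\langle T\rangle_K=V$ and there is nothing to do (using $0\in T$ to pad to $4$ summands). Otherwise $T$ is scalar-stable but not closed under addition, so there exist $a,b\in T$ with $a+b\notin T$. Writing $V=T+Kv$ from $\dim V/T\le 1$, one gets $a+b=t+\lambda v$ with $t\in T$ and necessarily $\lambda\ne 0$; solving gives $v=\lambda^{-1}(a+b-t)\in 3\cdot T$, using scalar stability and $-T=T$. Hence $Kv\subseteq 3\cdot T$, and every $w\in V=T+Kv$ lies in $T+3\cdot T=4\cdot T$, proving $V=4\cdot T$. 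This single-dimension estimate is the main obstacle: the crucial point is the observation that failure of additive closure produces a pair whose sum escapes $T$, and that this sum pins down the complementary line $Kv$ inside $3\cdot T$. Everything else is bookkeeping.

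For the inductive step with $n\ge 2$, I would write $V=T+C$ with $C=\langle v_1,\dots,v_n\rangle_K$ and set $H=T+\langle v_1,\dots,v_{n-1}\rangle_K$, a scalar-stable set with $\langle H\rangle_K=V$ and $\dim V/H\le 1$. The case $n=1$ then gives $V=4\cdot H=4\cdot T+\langle v_1,\dots,v_{n-1}\rangle_K$, since the sum of four elements of the subspace $\langle v_1,\dots,v_{n-1}\rangle_K$ remains in that subspace. Now $T_1:=4\cdot T$ is scalar-stable with $\langle T_1\rangle_K=V$ and $\dim V/T_1\le n-1$, so the induction hypothesis yields $V=4^{n-1}\cdot T_1=4^{n-1}\cdot(4\cdot T)=4^n\cdot T$, as required. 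The only points needing care here are the elementary set identities $4\cdot(T+U)=4\cdot T+U$ for a subspace $U$ and $m\cdot(k\cdot T)=(mk)\cdot T$, both immediate from $0\in T$ together with scalar stability.
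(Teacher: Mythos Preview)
Your argument is correct. Note, however, that the paper does not supply its own proof of this lemma: it is merely quoted from \cite[Lemma~6.3]{BaPe02}, so there is no in-paper proof to compare your approach against. Your induction on the codimension---peeling off one complementary direction at a time at the cost of a factor~$4$, using the observation that a scalar-stable set which is not already a subspace must contain a pair $a,b$ with $a+b\notin T$, whence the missing line lies in $3\cdot T$---is a clean and standard route to this kind of statement, and all the bookkeeping identities you invoke ($0\in T$, $-T=T$, $4\cdot(T+U)=4\cdot T+U$ for a subspace $U$, $m\cdot(k\cdot T)=(mk)\cdot T$) are indeed immediate from scalar stability.
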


We need a result on bilinear maps.
\begin{Th}[P.M. Neumann,~\cite{Ba}]
\label{TNeu}
Let $U,V,W$ be vector spaces over a field $K$ and
$\varphi:U\times V \to W$ a bilinear map. Suppose that for all
$u\in U$ and $v\in  V$, $\dim \varphi (u,V)\le m$ and
$\dim \varphi (U,v)\le l$. Then $\dim \langle\varphi (U,V)\rangle_K\leq ml$.
\end{Th}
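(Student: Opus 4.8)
\section*{Proof plan for Theorem~\ref{TNeu}}

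The plan is to follow P.M. Neumann's greedy argument, reducing the dimension bound to a single statement about simultaneous independence. First I would discard everything outside the span, i.e.\ replace $W$ by $\langle\varphi(U,V)\rangle_K$, so that the goal becomes $\dim\langle\varphi(U,V)\rangle_K\le ml$. Next I would select elements $u_1,u_2,\dots\in U$ greedily: having chosen $u_1,\dots,u_k$, I pick $u_{k+1}$ whenever $\varphi(u_{k+1},V)\not\subseteq\varphi(u_1,V)+\cdots+\varphi(u_k,V)$, and I stop when no such element exists. At the stopping stage $n$ one has $\varphi(u,V)\subseteq\varphi(u_1,V)+\cdots+\varphi(u_n,V)$ for every $u\in U$, whence $\langle\varphi(U,V)\rangle_K=\varphi(u_1,V)+\cdots+\varphi(u_n,V)$. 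Since each summand satisfies $\dim\varphi(u_i,V)\le m$, this already gives $\dim\langle\varphi(U,V)\rangle_K\le nm$, and the whole theorem reduces to the bound $n\le l$.

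To bound $n$ I would prove the key claim: if $u_1,\dots,u_n$ are such that $\varphi(u_k,V)\not\subseteq\sum_{j<k}\varphi(u_j,V)$ for each $k$ (which holds by construction), then there is a \emph{single} $v\in V$ for which $\varphi(u_1,v),\dots,\varphi(u_n,v)$ are linearly independent. Granting this, all $n$ of these vectors lie in $\varphi(U,v)$, whose dimension is at most $l$, forcing $n\le l$ and thus $\dim\langle\varphi(U,V)\rangle_K\le nm\le ml$; in particular the claim also guarantees that the greedy process above halts with $n\le l$. I would prove the claim by induction on $n$, the case $n=1$ being immediate since $\varphi(u_1,V)\ne 0$. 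For the inductive step, let $v_0$ be a vector furnished by the induction hypothesis for $u_1,\dots,u_{n-1}$, set $R=\sum_{j<n}\varphi(u_j,V)$, and choose $v_1$ with $\varphi(u_n,v_1)\notin R$, which exists because $\varphi(u_n,V)\not\subseteq R$. I would then seek a good $v$ in the one-parameter family $v_0+tv_1$, $t\in K$. For every such $v$ the first $n-1$ images stay inside $R$, and $\varphi(u_n,v_0+tv_1)=\varphi(u_n,v_0)+t\,\varphi(u_n,v_1)$ lies in $R$ for at most one value of $t$ (two such values would give $(t-t')\varphi(u_n,v_1)\in R$, a contradiction); for all other $t$ the vector $\varphi(u_n,v_0+tv_1)$ is independent of the first $n-1$ images. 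Meanwhile $\varphi(u_1,v_0+tv_1),\dots,\varphi(u_{n-1},v_0+tv_1)$ become linearly dependent only for the finitely many $t$ at which the relevant maximal minors — polynomials in $t$ that do not all vanish at $t=0$ — simultaneously vanish. Choosing $t$ outside this finite exceptional set yields the required $v$.

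The main obstacle is exactly this simultaneous-independence claim, and more precisely its validity over \emph{small finite fields}, where the one-parameter family $\{v_0+tv_1\}$ may be too short to dodge all the excluded values of $t$. Over an infinite field, or whenever $|K|$ exceeds the number of excluded parameters, the argument above suffices directly. For the remaining small fields one must instead argue that $V$ is not exhausted by the proper subspace $\{v:\varphi(u_n,v)\in R\}$ together with the (non-linear) degeneracy locus of $\varphi(u_1,\cdot),\dots,\varphi(u_{n-1},\cdot)$, exploiting the fact that a vector space over $K$ cannot be covered by too few proper subspaces; this covering count is the technical heart of the proof. Once the claim is secured in full generality, the bound $\dim\langle\varphi(U,V)\rangle_K\le ml$ follows immediately from the reduction in the first paragraph.
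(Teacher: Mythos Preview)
The paper does not prove this theorem; it is quoted from Bahturin's book~\cite{Ba} as a known result of P.~M.~Neumann, so there is no ``paper's proof'' to compare against. Your outline is the standard one over infinite fields, and you correctly flag the finite-field case as the obstacle. However, the gap is worse than you suggest: your \emph{key claim} is actually \emph{false} over small fields, so no ``covering count'' can rescue it.

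Here is a concrete counterexample over $K=\mathbb{F}_2$. Take $V=K^2$ with basis $v_1,v_2$, $W=K^3$ with basis $e_1,e_2,e_3$, and define $f_i=\varphi(u_i,\cdot)\colon V\to W$ by
\[
f_1\colon v_1\mapsto e_1,\ v_2\mapsto 0;\qquad
f_2\colon v_1\mapsto 0,\ v_2\mapsto e_2;\qquad
f_3\colon v_1\mapsto e_3,\ v_2\mapsto e_3.
\]
Then $f_1(V)=\langle e_1\rangle$, $f_2(V)=\langle e_2\rangle\not\subseteq f_1(V)$, and $f_3(V)=\langle e_3\rangle\not\subseteq f_1(V)+f_2(V)$, so your greedy hypothesis holds with $n=3$. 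Yet for every nonzero $v\in V$ one of $f_1(v),f_2(v),f_3(v)$ vanishes (check $v_1,v_2,v_1+v_2$), so they are never simultaneously independent. In the ambient bilinear map (with $U=K^3$, $\varphi(u_i,v)=f_i(v)$) one computes $m=l=2$, and indeed $\dim\langle\varphi(U,V)\rangle=3\le ml=4$, so Neumann's bound is fine---but your route to it via $n\le l$ fails, since here $n=3>l=2$. The point is that the greedy $n$ depends on the \emph{choices} made; a bad choice of $u_1,u_2,u_3$ can overshoot $l$. Your one-parameter argument, and any attempted patch based on avoiding finitely many bad values, therefore cannot establish the key claim as stated. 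A correct proof over arbitrary fields (as in~\cite{Ba}) must either make a more careful selection of the $u_i$ or proceed by a different induction altogether; the claim you isolate is not a true lemma in this generality.
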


The following crucial result was proved in case of restricted Lie algebras~\cite{GiPe06},
but its proof remains valid for truncated symmetric algebras as well.
\begin{Th}[\cite{GiPe06}]
\label{TtrPoisson}
Let $L$ be a Lie algebra.
Suppose that the symmetric algebra $S(L)$
(or the truncated symmetric algebra $\SSS(L)$) satisfies a multilinear Poisson identity.
There exist integers $n,N$ such that $\dim L/\Delta_N(L)<n$.
\end{Th}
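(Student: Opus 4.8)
The plan is to argue by contradiction, exploiting the customary identity furnished by Theorem~\ref{TFarkas}. Since $S(L)$ (resp.\ $\SSS(L)$) satisfies a nontrivial multilinear Poisson identity, by Theorem~\ref{TFarkas} it satisfies a nontrivial customary identity
\[
\sum_{\sigma\in T_{2n}}\mu_\sigma\,\{x_{\sigma(1)},x_{\sigma(2)}\}\cdots\{x_{\sigma(2n-1)},x_{\sigma(2n)}\}\equiv 0,\qquad\mu_e=1,
\]
for some $n$. First I would reduce the theorem to a purely Lie-theoretic statement: \emph{it suffices to produce $v_1,\dots,v_{2n}\in L$ for which the brackets $b_{ij}=[v_i,v_j]$, $1\le i<j\le 2n$, are linearly independent in $L$.} Indeed, $T_{2n}$ is a set of canonical representatives of the perfect matchings of $\{1,\dots,2n\}$; substituting $x_i\mapsto v_i$ and using $\{v_i,v_j\}=[v_i,v_j]\in L\subseteq S(L)$, the $\sigma$-th summand becomes the squarefree degree-$n$ monomial $m_\sigma=\prod_k b_{\sigma(2k-1),\sigma(2k)}$ in the linearly independent elements $b_{ij}$, and distinct matchings yield distinct monomials; hence the identity would evaluate to $\sum_\sigma\mu_\sigma m_\sigma\neq 0$, since the term $\mu_e m_e$ survives --- a contradiction. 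The same computation is valid in $\SSS(L)$, as the $m_\sigma$ are squarefree of degree $n$ and so remain nonzero and linearly independent in the truncated polynomial ring.

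It thus remains to prove, under the negation of the conclusion (that is, assuming $\dim L/\Delta_N(L)=\infty$ for every $N\ge 0$): \emph{for each $m$ there exist linearly independent $v_1,\dots,v_m\in L$ with all $\binom m2$ brackets $[v_i,v_j]$ linearly independent.} I would prove this by induction on the number of chosen vectors. Suppose $v_1,\dots,v_k$ have been selected and put $W=\langle\,[v_i,v_j]\mid i<j\le k\,\rangle$, of dimension $\binom k2$; I must find $v_{k+1}\notin\langle v_1,\dots,v_k\rangle$ such that $[v_1,v_{k+1}],\dots,[v_k,v_{k+1}]$ are linearly independent modulo $W$. A candidate $x$ fails exactly when $[u,x]\in W$ for some nonzero $u\in\langle v_1,\dots,v_k\rangle$, that is, $x\in(\ad u)^{-1}(W)$. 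If every such $u$ has infinite width, then $L/(\ad u)^{-1}(W)\cong[L,u]/([L,u]\cap W)$ is infinite-dimensional, so each forbidden subspace $(\ad u)^{-1}(W)$ has infinite codimension; the task is then to show that the union of these subspaces, together with $\langle v_1,\dots,v_k\rangle$, cannot exhaust $L$. To ensure that every nonzero combination of $v_1,\dots,v_k$ has sufficiently large width, I would throughout choose the $v_i$ in $L\setminus\Delta_N(L)$ for $N$ large in terms of $m$, perturbing each new vector so that no nonzero linear combination of the chosen ones falls into a delta-set of bounded width --- exactly the bookkeeping with $\Delta_N(L)$ developed by Passman for group rings~\cite{Pas72} and Bahturin for enveloping algebras~\cite{Ba74}, available here because each $\Delta_N(L)$ has infinite codimension.

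The hard part will be this last step: simultaneously managing the accumulating independence constraints on the brackets and the widths of all linear combinations of the $v_i$, using only that every $\Delta_N(L)$ has infinite codimension. In particular the union $\bigcup_{0\neq u}(\ad u)^{-1}(W)$ is infinite (indexed by the projective space of $\langle v_1,\dots,v_k\rangle$), so showing that it does not fill $L$ cannot be done by a naive count of codimensions; it must be driven by the delta-set machinery, as in~\cite{GiPe06}. The reduction to a customary identity and the monomial bookkeeping are routine by comparison. Finally, I would note that the argument makes no use of the $p$-th power relations, which is precisely why it transfers verbatim from $S(L)$ to $\SSS(L)$.
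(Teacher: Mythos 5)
You should first note that the paper does not actually prove Theorem~\ref{TtrPoisson}: it is imported verbatim from~\cite{GiPe06}, with only the remark that the argument there carries over from restricted Lie algebras to arbitrary ones. Against that background, the first half of your proposal is correct and is indeed the standard reduction used in that source: Theorem~\ref{TFarkas} supplies a customary identity with $\mu_e=1$; evaluating it on elements $v_1,\dots,v_{2n}$ whose brackets $b_{ij}=[v_i,v_j]$, $i<j$, are linearly independent turns each summand into a distinct squarefree degree-$n$ monomial in the $b_{ij}$ (distinct matchings give distinct monomials), so the term $\mu_e m_e$ survives and the identity fails; and squarefreeness makes the same computation valid in $\SSS(L)$. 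This part is sound.

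The gap is that the entire mathematical content of the theorem lies in the step you defer, and your sketch of it does not close. First, the invariant ``every nonzero $u\in\langle v_1,\dots,v_k\rangle$ has infinite width'' cannot in general be arranged: one can have $\Delta(L)=L$ (every element of finite width) while $\dim L/\Delta_N(L)=\infty$ for every $N$ --- take a direct sum of nilpotent algebras of unboundedly growing width --- so the bookkeeping must be quantitative (each nonzero combination outside $\Delta_{N_k}(L)$ for a carefully chosen $N_k$), which is exactly the content of the lemma you are trying to prove. Second, even granting that each forbidden subspace $(\ad u)^{-1}(W)$ has infinite codimension, the union runs over the infinite index set $\mathbb{P}(\langle v_1,\dots,v_k\rangle)$ when $K$ is infinite, and an infinite union of subspaces of infinite codimension can perfectly well cover $L$; as you yourself concede, ``a naive count of codimensions'' does not work, and you hand the step back to ``the delta-set machinery, as in~\cite{GiPe06}.'' But that machinery --- the Passman--Bahturin style construction (\cite{Pas72},~\cite{Ba74}) producing, from $\dim L/\Delta_N(L)=\infty$ for all $N$, a set of $2n$ elements with all $\binom{2n}{2}$ brackets independent --- \emph{is} the proof of the theorem. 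As it stands, your proposal establishes only the routine reduction and re-cites the source for the substance, so it cannot be accepted as a proof.
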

It yields the following reduction step, which is actually contained in proofs of~\cite{GiPe06}.
\begin{Th}[\cite{GiPe06}]
\label{Treduction}
Let $L$ be a Lie algebra such that
the symmetric algebra $S(L)$
(or the truncated symmetric algebra $\SSS(L)$) satisfies a multilinear Poisson identity.
Denote $\Delta=\Delta(L)$.
There exist integers $n,M$ such that
\begin{enumerate}
\item $\Delta=\Delta_M(L)$;
\item $\dim L/\Delta<n$;
\item $\dim \Delta^2\le M^2 $.
\end{enumerate}
\end{Th}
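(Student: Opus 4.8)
The plan is to combine Theorem~\ref{TtrPoisson} (which already gives integers $n_0,N$ with $\dim L/\Delta_N(L)<n_0$) with the abstract properties of delta-sets collected in Lemma~\ref{PropDel} and the dilation trick of Lemma~\ref{Ldilat}. First I would use Theorem~\ref{TtrPoisson} to fix $N$ and $n_0$ with $\dim L/\Delta_N(L)<n_0$. The set $\Delta_N(L)$ is stable under scalar multiplication by Claim~1 of Lemma~\ref{PropDel}, so Lemma~\ref{Ldilat} applies: its linear span $H:=\langle\Delta_N(L)\rangle_K$ equals $4^{n_0}\cdot\Delta_N(L)$. By Claim~2 of Lemma~\ref{PropDel}, a sum of $4^{n_0}$ elements each in $\Delta_N(L)$ lies in $\Delta_{4^{n_0}N}(L)$; hence $H\subseteq\Delta_M(L)$ for $M:=4^{n_0}N$. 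Since trivially $\Delta_M(L)\subseteq\Delta(L)$ and, conversely, $\Delta_N(L)\subseteq H\subseteq\Delta_M(L)$ forces $\dim L/\Delta_M(L)<n_0$, the chain $\Delta_N(L)\subseteq\Delta_M(L)\subseteq\Delta(L)$ together with $\dim L/\Delta_N(L)<n_0$ shows all three sets have the same finite codimension. In particular every element of $\Delta(L)$ differs from an element of $\Delta_M(L)$ by one of finitely many fixed vectors, and by enlarging $M$ once more (again via Claims~1--2 of Lemma~\ref{PropDel}, since those fixed vectors lie in some $\Delta_{N'}(L)$) one gets $\Delta(L)=\Delta_M(L)$, which is Claim~1, and $\dim L/\Delta<n$ with $n:=n_0$, which is Claim~2. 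Here I would be a little careful that $\Delta(L)$ is genuinely a subspace — this is Claim~5 of Lemma~\ref{PropDel}, so the phrase ``$\dim L/\Delta$'' makes sense.

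For Claim~3, $\dim\Delta^2\le M^2$, the plan is to apply P.\,M.~Neumann's Theorem~\ref{TNeu} to the bilinear bracket map $\varphi=\{\ ,\ \}\colon\Delta\times\Delta\to L$, i.e.\ $\varphi(x,y)=[x,y]$. By Claim~1, every $x\in\Delta=\Delta_M(L)$ satisfies $\dim[L,x]\le M$, so a fortiori $\dim\varphi(x,\Delta)=\dim[x,\Delta]\le M$; by antisymmetry the same bound $\dim\varphi(\Delta,y)\le M$ holds for each $y\in\Delta$. Theorem~\ref{TNeu} with $l=m=M$ then yields $\dim\langle\varphi(\Delta,\Delta)\rangle_K\le M^2$, and $\langle\{\Delta,\Delta\}\rangle_K=[\Delta,\Delta]=\Delta^2$ since $\Delta$ is a subspace and the bracket is bilinear. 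That gives exactly $\dim\Delta^2\le M^2$.

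The main obstacle is the bookkeeping in the first paragraph: passing from ``$\Delta_N(L)$ has finite codimension'' to ``$\Delta(L)$ \emph{equals} some $\Delta_M(L)$'' requires iterating the ``sum of boundedly many finite-width elements has bounded width'' estimate (Claim~2 of Lemma~\ref{PropDel}) together with the dilation Lemma~\ref{Ldilat}, and one must check that the coset representatives used to express $\Delta(L)$ in terms of $\Delta_M(L)$ themselves have finite width so that absorbing them only increases $M$ by a controlled factor. Everything else is a direct citation: Theorem~\ref{TtrPoisson} supplies the starting codimension bound, and Theorem~\ref{TNeu} supplies the $\dim\Delta^2\le M^2$ estimate once Claim~1 is in hand. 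Since the analogous statement for (restricted) enveloping algebras is proved in~\cite{GiPe06}, and the delta-set machinery used is identical, no new idea beyond adapting that argument is needed.
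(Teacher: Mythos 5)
Your proposal is correct and follows essentially the same route as the paper: Theorem~\ref{TtrPoisson} for the initial codimension bound, Lemma~\ref{Ldilat} together with Claims 1--2 of Lemma~\ref{PropDel} to place $\langle\Delta_N(L)\rangle_K$ inside $\Delta_{4^nN}(L)$, finite codimension to absorb $\Delta(L)$ into some $\Delta_M(L)$, and Neumann's Theorem~\ref{TNeu} for $\dim\Delta^2\le M^2$. The only (harmless) imprecision is the assertion that the three nested sets have ``the same'' codimension --- they merely all have finite codimension --- but your subsequent absorption of the finitely many coset representatives, each of finite width, is exactly the step the paper compresses into ``by virtue of finite codimension''.
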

\begin{proof}
By Theorem~\ref{TtrPoisson}, we have $\dim L/\Delta_N(L)<n$.
By Lemma~\ref{PropDel} and Lemma~\ref{Ldilat}, to obtain the linear span $\langle \Delta_N(L)\rangle_K$
it is sufficient to consider $4^n$-fold sums.
Applying Lemma~\ref{PropDel}, we get $\langle \Delta_N(L)\rangle_K\subseteq \Delta_{4^n N}(L)$.
By virtue of finite codimension, we get $\Delta=\Delta_{M}(L)$ for some integer $M$.
Finally, $\dim [\Delta,\Delta]\le M^2$ by Theorem~\ref{TNeu}.
\end{proof}

\section{Filtrations of Lie algebras and symmetric algebras}
\label{Sfiltr}

Let $L$ be a Lie algebra over a field $K$ of positive characteristic $p$.
A {\it filtration} of the truncated symmetric Poisson algebra $\SSS(L)$ is a descending sequence of subspaces
\begin{align*}
&\SSS(L) = E_0 \supseteq E_1 \supseteq \cdots \supseteq E_n \supseteq\cdots,\qquad \mathop{\cap}\limits_{n= 0}^\infty E_n=0, \\
&E_i\cdot E_j\subseteq E_{i+j},\quad \{E_i,E_j\}\subseteq E_{i+j}, \qquad i,j\geq0.
\end{align*}
By setting $L_n=L\cap E_n$, $n\geq 0$, one gets naturally a sequence of subspaces:
\begin{equation*}
L=L_0\supseteq L_1\supseteq \cdots\supseteq L_n\supseteq\cdots,\qquad \mathop{\cap}\limits_{n=0}^\infty L_n=0,\qquad
\{L_i,L_j\}\subseteq L_{i+j},\qquad  i,j\geq0.
\end{equation*}
The last inclusion follows by definition of the product $\{\ ,\ \}$ in $\SSS(L)$.
We call a sequence of subspaces above a \textit{filtration} of $L$.
We have just seen that a filtration of $\SSS(L)$ gives rise to a filtration of $L$.
One has a converse statement.
\begin{Lemma}
Let $\{L_n \,|\, n\geq0\}$ be a filtration of a Lie algebra $L$ over a field $K$ of characteristic $p>0$.
For each $0\ne x\in L$ define the {\em height} of $x$,
denoted by $\nu(x)$, as the largest subscript $n$ such that $x \in L_n$.
Define
\begin{equation}\label{En}
E_n =\big\langle y_1y_2\cdots y_l \,\big\vert\,  y_i\in L,\ \nu (y_1)+\nu (y_2)+\cdots +\nu(y_l)\geq n\big\rangle_K,\quad n\geq 0.
\end{equation}
Then, $\{E_n\,|\, n\geq0\}$ is a filtration of $\SSS(L)$.
\end{Lemma}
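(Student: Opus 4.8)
The plan is to verify, one after another, the four requirements in the definition of a filtration of $\SSS(L)$: the chain $\SSS(L)=E_0\supseteq E_1\supseteq\cdots$, the triviality of its intersection, the multiplicativity $E_iE_j\subseteq E_{i+j}$, and the compatibility $\{E_i,E_j\}\subseteq E_{i+j}$ with the Poisson bracket. The chain condition and $E_0=\SSS(L)$ are immediate from~\eqref{En}: since $L=L_0$, every nonzero $y\in L$ has $\nu(y)\ge0$, so each spanning monomial $y_1\cdots y_l$ of $\SSS(L)$ already appears among the generators of $E_0$, while a generator of $E_{n+1}$ is in particular a generator of $E_n$. Multiplicativity is just as easy, because the product of a generator of $E_i$ and a generator of $E_j$ is a monomial whose heights sum to at least $i+j$; one then extends bilinearly.

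For the bracket I would, by bilinearity, reduce to generators $u=y_1\cdots y_k\in E_i$ and $w=z_1\cdots z_l\in E_j$ and expand $\{u,w\}$ by the Leibnitz rule as a sum of terms $\big(\prod_{c\ne a}y_c\big)\big(\prod_{d\ne b}z_d\big)\{y_a,z_b\}$. Since $y_a\in L_{\nu(y_a)}$ and $z_b\in L_{\nu(z_b)}$, the inclusion $\{L_r,L_s\}\subseteq L_{r+s}$ of the given filtration of $L$ shows $\{y_a,z_b\}\in L_{\nu(y_a)+\nu(z_b)}$, so each surviving term is a monomial in elements of $L$ whose heights sum to at least $(i-\nu(y_a))+(j-\nu(z_b))+(\nu(y_a)+\nu(z_b))=i+j$; hence $\{u,w\}\in E_{i+j}$. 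This step is routine bookkeeping.

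The main obstacle is the separation $\bigcap_{n\ge0}E_n=0$. Here one must be careful, because a filtration of $L$ need not come from a basis of $L$ adapted to it, so one cannot simply identify $E_n$ with the span of the monomial basis vectors of $\SSS(L)$ of large weight. Instead I would first observe that each generator in~\eqref{En} is homogeneous for the grading of $\SSS(L)$ by total polynomial degree, so every $E_n$ is a graded subspace, and it suffices to show that a nonzero homogeneous $f$, say of degree $k\ge1$ (the degree-zero part of $E_1$ is zero), lies outside some $E_n$. Such an $f$ is a truncated polynomial in finitely many elements of $L$, hence $f\in\SSS(V)$ for a finite-dimensional subspace $V\subseteq L$; the decreasing chain $\{V\cap L_m\}_m$ of subspaces of $V$ stabilizes with stable value inside $\bigcap_mL_m=0$, so one can pick $D$ with $V\cap L_{D+1}=0$. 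The projection $L\to L/L_{D+1}$ induces an algebra homomorphism $\pi\colon\SSS(L)\to\SSS(L/L_{D+1})$ with $\Ker(\pi|_L)=L_{D+1}$, and since the composite $V\hookrightarrow L\to L/L_{D+1}$ is injective, $\pi$ restricts to an embedding $\SSS(V)\hookrightarrow\SSS(L/L_{D+1})$ (an injective linear map induces an injective map of truncated symmetric algebras), so $\pi(f)\ne0$. On the other hand, if $f\in E_{kD+1}$ then $f=\sum_tc_t\,y^{(t)}_1\cdots y^{(t)}_k$ with $\sum_a\nu(y^{(t)}_a)\ge kD+1$ for each $t$; as $k$ heights summing to more than $kD$ cannot all be $\le D$, each product has a factor of height $\ge D+1$, which therefore lies in $\Ker(\pi|_L)$, so $\pi$ annihilates every product and $\pi(f)=0$, a contradiction. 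Hence $f\notin E_{kD+1}$, so $\bigcap_nE_n=0$, which completes the argument.
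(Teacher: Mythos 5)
Your verification of the chain condition, of $E_iE_j\subseteq E_{i+j}$, and of $\{E_i,E_j\}\subseteq E_{i+j}$ is correct and coincides with the paper's argument: the paper likewise declares the multiplicative inclusion clear and obtains the bracket inclusion by expanding $\{a,b\}$ with the Leibnitz rule and using $\nu(\{x_i,y_j\})\ge\nu(x_i)+\nu(y_j)$. Where you genuinely diverge is the separation axiom $\bigcap_{n\ge0}E_n=0$: the paper's proof of this lemma does not address it at all, and in effect defers it to the next lemma, where an ordered basis of $L$ adapted to the filtration (so that $L_n=\langle x_i\mid\nu(x_i)\ge n\rangle_K$) is \emph{assumed} and $E_n$ is then identified with the span of monomials of weight $\ge n$ in that basis; separation drops out of that monomial description. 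Your argument instead proves separation directly from the axioms of a filtration of $L$: you observe that each $E_n$ is graded for the polynomial degree, reduce to a nonzero homogeneous $f$ of degree $k$ lying in $\SSS(V)$ for a finite-dimensional $V\subseteq L$, pick $D$ with $V\cap L_{D+1}=0$ (possible since the chain $V\cap L_m$ stabilizes at a subspace of $\bigcap_m L_m=0$), and kill $E_{kD+1}$ in degree $k$ under the quotient map $\SSS(L)\to\SSS(L/L_{D+1})$ while $f$ survives because $\SSS(V)\hookrightarrow\SSS(L/L_{D+1})$ is injective (which does hold for truncated symmetric algebras in characteristic $p$, by Frobenius and a basis extension). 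This buys you the lemma in full generality, without the adapted-basis hypothesis that the paper's subsequent treatment quietly relies on; the paper's route is shorter but only yields separation in the presence of such a basis. All steps of your argument check out, including the pigeonhole step forcing some factor into $L_{D+1}$.
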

\begin{proof}
The inclusion $E_n\cdot E_m\subseteq E_{n+m}$, $n,m\ge 0$, is clear.
Let us prove that $\{E_n,E_m\}\subseteq E_{n+m}$, $n,m\ge 0$.
Let $a=x_1\cdots x_l\in E_n$, and $b=y_1\cdots y_k\in E_m$.
By the Leibnitz rule,
$\{a,b\}=\sum_{i,j}x_1\cdots\widehat{x_i}\cdots x_l\cdot y_1\cdots\widehat{y_j}\cdots y_k\{x_i,y_j\}$.
Since $\nu(\{x_i,y_j\})\geq \nu(x_i)+\nu(y_j)$, we conclude that $\{a,b\}\in E_{m+n}$.
\end{proof}
We call $\{E_n \,|\, n\geq0\}$ the {\it induced filtration}.
A close relationship between these filtrations is demonstrated by the next result.
A similar relationship was used in~\cite{Pe11}.
\begin{Lemma}\label{Filt}
Let $\{L_n \,|\, n\geq0\}$ be a filtration of a Lie algebra $L$ over a field $K$ of characteristic $p>0$,
$\{x_i \,|\, i\in I\}$ an ordered basis of $L$ chosen so that $L_n=\langle x_i \,|\, \nu (x_i)\geq n\rangle_K$, $n\geq0$.
Let $\{E_n \,|\, n\geq0\}$ be the induced filtration of $\SSS(L)$.
We have the following statements for $n\geq0$:
\begin{enumerate}
\item
$E_n=\Big\langle \eta= x_{i_1}^{a_1}x_{i_2}^{a_2}\cdots x_{i_l}^{a_l}\,\Big\vert\,
\nu(\eta)=\sum\limits_{j=1}^l a_j\nu (x_{i_j}) \geq n,\
i_1<\cdots< i_l,\
0\leq a_j< p  ,\ l\geq 0
\Big\rangle_K; $
\item $\{\eta\, |\, \nu(\eta)=n\}$ is a basis of $E_n$ modulo $E_{n+1}$;
\item the set of all such monomials $\eta$ forms a basis of $\SSS(L)$;
\item $L_n=L\cap E_n$ (i.e., we return to the initial filtration).
\end{enumerate}
\end{Lemma}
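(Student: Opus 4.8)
The plan is to fix the ordered basis $\{x_i\mid i\in I\}$ adapted to the filtration once and for all, so that the monomials $\eta=x_{i_1}^{a_1}\cdots x_{i_l}^{a_l}$ with $i_1<\cdots<i_l$ and $0\le a_j<p$ form the standard PBW-type basis of $\SSS(L)$; this gives Claim~3 immediately from the description of $\SSS(L)$ as a truncated polynomial ring, independently of the filtration. Everything else is bookkeeping with the \emph{weight} $\nu(\eta)=\sum_j a_j\nu(x_{i_j})$ of such a monomial. The key observation to record first is that $\nu$ is compatible with products on basis monomials in the obvious graded sense: if $\eta,\eta'$ are basis monomials then their product, rewritten in the basis, is a sum of basis monomials each of weight $\ge\nu(\eta)+\nu(\eta')$ (no rewriting is needed here since the $x_i$ commute, only the truncation $x_i^p=0$ may kill terms, which only helps).

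For Claim~1, I would prove the two inclusions separately. The inclusion $\langle\eta\mid\nu(\eta)\ge n\rangle_K\subseteq E_n$ is clear: each such $\eta$ is a product $y_1\cdots y_l$ of basis elements $y_j=x_{i_j}$ (with multiplicities) and $\sum\nu(y_j)=\nu(\eta)\ge n$, so $\eta\in E_n$ by the defining formula~\eqref{En}. For the reverse inclusion, take a spanning element $y_1\cdots y_l$ of $E_n$ with $\nu(y_1)+\cdots+\nu(y_l)\ge n$; write each $y_j$ in the basis, $y_j=\sum_i c_{ij}x_i$ where the sum runs only over $i$ with $\nu(x_i)\ge\nu(y_j)$ (this is exactly what $y_j\in L_{\nu(y_j)}$ means). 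Expanding the product multilinearly, every resulting basis monomial $x_{i_1}\cdots x_{i_l}$ — and hence, after collecting equal factors and discarding $p$th powers, every standard basis monomial appearing — has weight $\sum_k\nu(x_{i_k})\ge\sum_j\nu(y_j)\ge n$. Thus $y_1\cdots y_l\in\langle\eta\mid\nu(\eta)\ge n\rangle_K$, proving Claim~1.

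Claim~2 then follows formally: by Claim~1 the classes of $\{\eta\mid\nu(\eta)=n\}$ together with $\{\eta\mid\nu(\eta)>n\}$ span $E_n$, so the former span $E_n/E_{n+1}$; they are linearly independent there because by Claim~3 all the distinct $\eta$'s are linearly independent in $\SSS(L)$ and the ones of weight exactly $n$ do not lie in $E_{n+1}=\langle\eta\mid\nu(\eta)\ge n+1\rangle_K$ (again using Claim~3 to see that an equality of two linear combinations of distinct basis monomials forces equality of coefficients). For Claim~4, the inclusion $L_n\subseteq L\cap E_n$ is immediate from the construction of $E_n$. Conversely, if $x\in L\cap E_n$, write $x=\sum_i c_i x_i\in\langle\eta\mid\nu(\eta)\ge n\rangle_K$; the $x_i$ occurring are themselves weight-$1$-type basis monomials $\eta$, so comparing coefficients in the basis of $\SSS(L)$ forces $c_i=0$ unless $\nu(x_i)\ge n$, i.e. $x\in L_n$.

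I expect the main obstacle to be purely notational rather than conceptual: one must be careful that when a product of basis elements is re-expanded, equal factors are collected and $p$th powers are set to zero, and to check that \emph{both} of these operations only \emph{increase or preserve} the minimal weight of the monomials that survive, so that no cancellation can drop an element out of $E_n$. Once the monotonicity of $\nu$ under these rewriting steps is stated cleanly at the outset, Claims 1--4 all reduce to comparing coefficients against the fixed basis of Claim~3.
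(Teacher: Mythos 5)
Your proposal is correct and follows essentially the same route as the paper: expand each factor $y_j\in L_{\nu(y_j)}$ of a spanning product of $E_n$ in the adapted basis, reorder and collect, and observe that every surviving standard monomial has weight at least $\sum_j\nu(y_j)\ge n$, with Claims 2--4 then following by comparing coefficients against the canonical basis of $\SSS(L)$. Your explicit remark that collecting equal factors and discarding $p$th powers can only preserve or kill monomials (never lower the weight) is a useful clarification of a point the paper leaves implicit.
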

\begin{proof}
Clearly, all monomials $\eta$ above belong to $E_n$.
Consider $0\ne y\in L_m$, $m\ge 0$, present it as a finite sum $y=\sum_{\nu(x_i)\ge m} \lambda_i x_i$, $\lambda_i\in K$.
Apply such expansions for all $y_i$ in product~\eqref{En} and reorder multiplicands of the resulting monomials.
In this way we present products~\eqref{En} as linear combinations of monomials $\{\eta\,|\,\nu(\eta)\ge n\}$,
the latter being linearly independent as a canonical basis of $\SSS(L)$.
The first claim is proved.
The remaining claims follow.
\end{proof}
\begin{Lemma}\label{RiRj=Ri+j}
Let $R$ be a Poisson algebra.
The upper Lie powers $\{R^{(n)}\,|\, n\ge 0\}$ form a filtration.
\end{Lemma}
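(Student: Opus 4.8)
The claim to establish is that the upper Lie powers $\{R^{(n)}\mid n\ge 0\}$ satisfy the defining properties of a filtration: they form a descending chain, and $R^{(i)}\cdot R^{(j)}\subseteq R^{(i+j)}$ together with $\{R^{(i)},R^{(j)}\}\subseteq R^{(i+j)}$. (The intersection condition $\cap_n R^{(n)}=0$ is not part of the assertion and should not be claimed — filtrations arising from upper Lie powers need not be separated in general; the word ``filtration'' here is used in the weaker graded sense, matching the parenthetical remark after the definition of the $R^{(n)}$.) The descending property is immediate: since $R$ has a unit, $R^{(n)}=\{R^{(n-1)},R\}\cdot R\subseteq R^{(n-1)}\cdot R\subseteq R^{(n-1)}$, using that $R^{(n-1)}$ is an ideal of the associative structure by construction. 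I would dispose of this first.

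The heart of the argument is the pair of inclusions $R^{(i)}R^{(j)}\subseteq R^{(i+j)}$ and $\{R^{(i)},R^{(j)}\}\subseteq R^{(i+j)}$, which I would prove simultaneously by induction on $i+j$. The base cases $i=0$ or $j=0$ are trivial since $R^{(0)}=R$ and each $R^{(n)}$ is an associative ideal. For the inductive step, write $R^{(i)}=\{R^{(i-1)},R\}\cdot R$, so a typical generator of $R^{(i)}$ is a sum of elements $\{a,r\}b$ with $a\in R^{(i-1)}$, $r,b\in R$; similarly $R^{(j)}$ is generated by elements $\{a',r'\}b'$ with $a'\in R^{(j-1)}$. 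For the product inclusion, $\{a,r\}b\cdot c\in \{R^{(i-1)},R\}\cdot R\subseteq R^{(i)}$ since $bc\in R$; more to the point, to get $R^{(i)}R^{(j)}\subseteq R^{(i+j)}$ I note $\{R^{(i-1)},R\}\cdot R\cdot R^{(j)}\subseteq\{R^{(i-1)},R\}\cdot R^{(j)}$, and then use the Leibnitz identity $\{a,r\}d=\{a,rd\}-r\{a,d\}$ (valid in any Poisson algebra; here $d\in R^{(j)}$). The first term $\{a,rd\}$ lies in $\{R^{(i-1)},R\}\subseteq R^{(i)}\subseteq R^{(i+j)}$ — wait, that is too weak; instead I want $\{R^{(i-1)},R^{(j)}\}$. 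Let me reorganize: the clean route is to prove $\{R^{(i)},R^{(j)}\}\subseteq R^{(i+j)}$ and $R^{(i)}R^{(j)}\subseteq R^{(i+j)}$ using that, by induction, both hold for all smaller values of $i+j$, expanding $\{\{a,r\}b,\ y\}$ for $y\in R^{(j)}$ via the Leibnitz rule into $\{a,r\}\{b,y\}+b\{\{a,r\},y\}$ and further $\{\{a,r\},y\}=\{\{a,y\},r\}+\{a,\{r,y\}\}$ by the Jacobi identity, tracking each summand: terms of the form $R^{(i-1)}\cdot(\text{bracket landing in }R^{(j)}\text{ or }R^{(j-1)})$ land in $R^{(i-1)}R^{(j)}\subseteq R^{(i+j-1)}$ by induction, but we need $R^{(i+j)}$, so one must keep the outer commutator intact: $b\{\{a,y\},r\}$ with $\{a,y\}\in\{R^{(i-1)},R^{(j)}\}\subseteq R^{(i+j-1)}$ by induction, hence $\{\{a,y\},r\}\in\{R^{(i+j-1)},R\}$ and then $b\cdot(\cdots)\in\{R^{(i+j-1)},R\}\cdot R=R^{(i+j)}$. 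Handling the surviving summand $\{a,r\}\{b,y\}$ requires $\{b,y\}\in\{R,R^{(j)}\}\subseteq R^{(j+1)}$ — hmm, that is not quite what is wanted either, so the bookkeeping needs the associative-product inclusion in tandem: $\{a,r\}\in R^{(i-1)}$-adjacent and $\{b,y\}\in R^{(j)}$, giving a product in $R^{(?)}$. This cross-dependence between the two inclusions is exactly why the simultaneous induction is the right framework.

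The main obstacle, then, is the precise bookkeeping in the inductive step: organizing the finitely many summands produced by one application each of the Leibnitz rule and the Jacobi identity so that every one of them is visibly in $R^{(i+j)}$, using either (a) the associative-ideal property $R^{(n)}R\subseteq R^{(n)}$, (b) the definitional identity $\{R^{(n-1)},R\}R=R^{(n)}$, or (c) one of the two inductive hypotheses at a strictly smaller value of $i+j$. I expect the argument to go through cleanly once one commits to inducting on $i+j$ with both statements bundled, but the sign-free expansion must be written out carefully; this is the analogue for Poisson algebras of the standard fact (cited in the excerpt for associative algebras) that upper Lie powers form a filtration, and the proof structure mirrors that one. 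A final remark: I would state explicitly at the outset that I am verifying only the two graded inclusions plus the descending-chain property, and that these are what ``filtration'' means in the sense needed for the shifted enumeration remarked upon after the definition.
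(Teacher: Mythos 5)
Your overall strategy (prove $R^{(i)}\cdot R^{(j)}\subseteq R^{(i+j)}$ and $\{R^{(i)},R^{(j)}\}\subseteq R^{(i+j)}$ by induction using the Leibnitz rule and the Jacobi identity) is the right one, but the induction parameter you commit to --- $i+j$, with the hypotheses available only ``at a strictly smaller value of $i+j$'' --- does not close, and your sketch stalls exactly where this matters. Every natural Leibnitz/Jacobi expansion of $R^{(i)}R^{(j)}$ or $\{R^{(i)},R^{(j)}\}$ produces, alongside terms that land directly in $\{R^{(i+j-1)},R\}\cdot R=R^{(i+j)}$, at least one term of the form $R^{(i+1)}\cdot R^{(j-1)}$ or $\{R^{(i+1)},R^{(j-1)}\}$ (or, in your expansion of $\{\{a,r\}b,y\}$, the cross term $\{a,r\}\{b,y\}\in R^{(i)}\cdot R^{(j+1)}$, which you leave at ``$R^{(?)}$''). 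These have total degree $\geq i+j$, so they are not covered by an inductive hypothesis indexed by $i+j$; and your first attempt at the product inclusion, which reduces to $\{R^{(i-1)},R^{(j)}\}$, only yields $R^{(i+j-1)}$ --- too weak, as you yourself note. The fix is to induct on $j$ alone with the claim quantified over all $i$ (equivalently, lexicographically on the pair $(i+j,j)$): then $R^{(i+1)}R^{(j-1)}$ and $\{R^{(i+1)},R^{(j-1)}\}$ are legitimate instances of the hypothesis at $j-1$. This is what the paper does, in two sequential passes rather than one simultaneous one: first $R^{(i)}R^{(j)}\subseteq R^{(i+j)}$ by induction on $j$, expanding the \emph{second} factor as $\{R^{(j-1)},R\}R$ and using $a\{b,c\}=\{ab,c\}-b\{a,c\}$, so that the two resulting terms are $\{R^{(i)}R^{(j-1)},R\}R\subseteq\{R^{(i+j-1)},R\}R=R^{(i+j)}$ and $R^{(j-1)}R^{(i+1)}$; this pass is self-contained and uses no bracket inclusion. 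Then, with the product inclusion fully available, the bracket inclusion follows by a second induction on $j$, again expanding the second argument, where your troublesome cross term appears as $\{R^{(j-1)},R\}\cdot\{R^{(i)},R\}\subseteq R^{(j)}R^{(i+1)}\subseteq R^{(i+j+1)}$ and is absorbed by the already-proved product inclusion.

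A secondary point: your argument for the descending chain is off. The associative-ideal property of $R^{(n-1)}$ gives $R^{(n-1)}R\subseteq R^{(n-1)}$ but not $\{R^{(n-1)},R\}\subseteq R^{(n-1)}$, which is what your displayed chain of inclusions actually needs. The descending property follows instead by induction from monotonicity of the recursion: $R^{(1)}=\{R,R\}R\subseteq R=R^{(0)}$, and $R^{(n)}\subseteq R^{(n-1)}$ forces $R^{(n+1)}=\{R^{(n)},R\}R\subseteq\{R^{(n-1)},R\}R=R^{(n)}$.
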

\begin{proof}
First, we prove that $R^{(i)}\cdot R^{(j)}\subseteq R^{(i+j)}$ for all $i,j\ge 0$ by induction on $j$.
Let $j=0$, then $R^{(0)}=R$ and there is nothing to prove.
Suppose that the statement is valid for $j-1\geq 0$.
We use the Leibnitz rule in form $a\{b,c\}=\{ab,c\}-b\{a,c\}$ and apply the inductive assumption:
\begin{align*}
R^{(i)}\cdot R^{(j)}&= R^{(i)}\{R^{(j-1)},R\}R
         \subseteq \{R^{(i)} R^{(j-1)}, R\}R +R^{(j-1)}\{R^{(i)},R\}R\\
&\subseteq \{R^{(i+j-1)},R\}R+R^{(j-1)}R^{(i+1)}
         \subseteq R^{(i+j)}.
\end{align*}

Second, we prove that $\{R^{(i)},R^{(j)}\}\subseteq R^{(i+j)}$ for all $i,j\ge 0$, by induction on $j$.
Let $j=0$, then $\{R^{(i)},R^{(0)}\}=\{R^{(i)},R\}\subseteq R^{(i+1)}\subseteq R^{(i)}$.
Suppose that the statement is valid for $j-1\geq 0$.
We use the Leibnitz rule, the Jacobi identity, and the inclusion above:
\begin{align*}
\{R^{(i)}, R^{(j)}\}
&=\{R^{(i)},\{R^{(j-1)},R\}\cdot R\}
 \subseteq \{R^{(j-1)}, R\}\cdot \{R^{(i)},R\}+\{R^{(i)},\{R^{(j-1)},R\}\}\cdot R \\
&\subseteq  R^{(j)}\cdot R^{(i+1)}
        +\left(\{\{R^{(i)},R^{(j-1)}\},R\} + \{ R^{(j-1)},\{ R^{(i)},R \}\} \right)\cdot R\\
&\subseteq R^{(i+j+1)}+\left(\{ R^{(i+j-1)},R\}+ \{R^{(j-1)},R^{(i+1)}\}\right)\cdot R\\
&\subseteq R^{(i+j+1)}+R^{(i+j)}\cdot R
 \subseteq R^{(i+j)}. \qedhere
\end{align*}
\end{proof}

\section{Lie nilpotence of truncated symmetric algebras $\SSS(L)$}
The goal of this section to prove the first main result.
Now, consider our Poisson algebra $R=\SSS(L)$.
We shift the enumeration of the lower central series terms, namely we consider:
\begin{align*}
L=L_0\supseteq L_1\supseteq\cdots\supseteq L_n\supseteq\cdots,\quad  \text{where}\quad L_n&=\gamma_{n+1}(L),\ n\geq0.
\end{align*}
We show that this {\it shifted filtration} $\{L_n\, |\, n\ge 0\}$ induces
the filtration of $\SSS(L)$ by the upper Lie powers $\{\SSS(L)^{(n)}\,|\, n\ge 0\}$, see Lemma~\ref{RiRj=Ri+j}.

\begin{Lemma}\label{Lgamma}
Let $L$ be a Lie algebra over a field of positive characteristic $p$.
\begin{enumerate}
\item
the upper Lie powers  of the truncated symmetric algebra $\SSS(L)$ are as follows
\begin{equation}
\SSS(L)^{(n)}= \sum_{\substack{(m_1-1)+\cdots+{(m_s-1)}\ge n\\
                  1\le m_1\le m_2\le \cdots\le m_s}}
            \gamma_{m_1}(L)\gamma_{m_2}(L)\cdots \gamma_{m_s}(L),\quad n\ge 0;\nonumber
\end{equation}
\item $\{\SSS(L)^{(n)}\,|\, n\geq0\}$ is induced by the shifted filtration
$\{L_i=\gamma_{i+1}(L)\, |\, i\geq0\}$;
\item
$L\cap\SSS(L)^{(n)}=\gamma_{n+1}(L)$, $n\ge 0$ (these are {\em truncated Poisson dimension subalgebras}).
\end{enumerate}
\end{Lemma}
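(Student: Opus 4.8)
The plan is to prove the three claims in the order stated, since Claims 2 and 3 are essentially corollaries of Claim 1. The heart of the matter is the formula for $\SSS(L)^{(n)}$, which I would establish by induction on $n$. Denote the right-hand side of Claim 1 by $W_n$; it is clearly the span of products $\gamma_{m_1}(L)\cdots\gamma_{m_s}(L)$ with $\sum_j(m_j-1)\ge n$, with the convention that the empty product ($s=0$) gives $\SSS(L)$, so $W_0=\SSS(L)$. For the base case $n=0$ there is nothing to prove. Assuming $\SSS(L)^{(n-1)}=W_{n-1}$, I must show $\SSS(L)^{(n)}=\{W_{n-1},\SSS(L)\}\cdot\SSS(L)=W_n$.

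For the inclusion $\SSS(L)^{(n)}\subseteq W_n$: take a generator $u=\gamma_{m_1}(L)\cdots\gamma_{m_s}(L)$ of $W_{n-1}$ (so $\sum(m_i-1)\ge n-1$) and an arbitrary $v\in\SSS(L)$, which may be taken to be a monomial in basis elements of $L$, i.e. a product $v=z_1\cdots z_r$ with $z_k\in L=\gamma_1(L)$. Expanding $\{u,v\}$ by the Leibnitz rule in both arguments produces a sum of terms in which one factor from some $\gamma_{m_i}(L)$ and one factor $z_k\in\gamma_1(L)$ are replaced by their bracket, which lies in $\gamma_{m_i+1}(L)$ — so this single bracket raises the weight $\sum(m-1)$ by exactly one, while the surviving factors from $u$ and $v$ contribute nonnegative weights ($z$'s from $L$ contribute $0$). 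Hence every term lies in $W_n$, and multiplying by the remaining factor of $\SSS(L)$ (which has weight $\ge 0$) keeps us in $W_n$; thus $\SSS(L)^{(n)}\subseteq W_n$. Here I would invoke Lemma~\ref{RiRj=Ri+j} to know that $\SSS(L)^{(n)}$ is an ideal, and the computation $\SSS(L)^{(n)}=\{\SSS(L)^{(n-1)},\SSS(L)\}\cdot\SSS(L)$ together with the fact that $\{W_{n-1},\SSS(L)\}$ already lands in $W_n$ and $W_n\cdot\SSS(L)\subseteq W_n$ (by the analogue of the multiplicative filtration property, which can be checked directly for the $W_n$'s or deduced from the induced-filtration language).

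For the reverse inclusion $W_n\subseteq\SSS(L)^{(n)}$: I need that each generator $\gamma_{m_1}(L)\cdots\gamma_{m_s}(L)$ with $\sum(m_i-1)\ge n$ lies in $\SSS(L)^{(n)}$. The key sub-fact is $\gamma_{m}(L)\subseteq\SSS(L)^{(m-1)}$, proved by induction on $m$: for $m=1$ this is $L\subseteq\SSS(L)$, and $\gamma_{m+1}(L)=\{\gamma_m(L),L\}\subseteq\{\SSS(L)^{(m-1)},\SSS(L)\}\subseteq\{\SSS(L)^{(m-1)},\SSS(L)\}\cdot\SSS(L)=\SSS(L)^{(m)}$. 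Then, using that $\{\SSS(L)^{(i)}\,|\,i\ge 0\}$ is a filtration (Lemma~\ref{RiRj=Ri+j}), the product $\gamma_{m_1}(L)\cdots\gamma_{m_s}(L)\subseteq\SSS(L)^{(m_1-1)}\cdots\SSS(L)^{(m_s-1)}\subseteq\SSS(L)^{(\sum(m_i-1))}\subseteq\SSS(L)^{(n)}$. This completes Claim 1. Claim 2 is then immediate by comparing the formula in Claim 1 with the description of the induced filtration in Lemma~\ref{Filt}(1): taking the shifted basis of $L$ with $\nu(x_i)=j$ precisely when $x_i\in\gamma_{j+1}(L)\setminus\gamma_{j+2}(L)$, the monomials of height $\ge n$ span exactly $W_n$. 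For Claim 3, the inclusion $\gamma_{n+1}(L)\subseteq L\cap\SSS(L)^{(n)}$ is the sub-fact $\gamma_{n+1}(L)\subseteq\SSS(L)^{(n)}$ just proved; the reverse inclusion $L\cap\SSS(L)^{(n)}\subseteq\gamma_{n+1}(L)$ follows from Lemma~\ref{Filt}(4) (the induced filtration returns the original filtration of $L$ on intersecting with $L$), so $L\cap\SSS(L)^{(n)}=L_n=\gamma_{n+1}(L)$.

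The main obstacle is getting the weight bookkeeping in the $\subseteq$ direction of Claim 1 exactly right: one must be careful that a term in $L=\gamma_1(L)$ carries weight $m-1=0$, so bracketing with it does not decrease the total weight, and that the single bracket $[\,\cdot\,,z_k]$ taken in the Leibnitz expansion raises $\sum(m_i-1)$ by precisely one rather than by the larger amount one might naively expect; combined with the fact that $\SSS(L)^{(n)}=\{\SSS(L)^{(n-1)},\SSS(L)\}\cdot\SSS(L)$ uses only commutators with $\SSS(L)$ (not with $W_{n-1}$-level elements), this forces the correct bound. The rest is routine once Lemma~\ref{RiRj=Ri+j} and Lemma~\ref{Filt} are in hand.
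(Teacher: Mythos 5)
Your proposal is correct and follows essentially the same route as the paper: the reverse inclusion via $\gamma_m(L)\subseteq\SSS(L)^{(m-1)}$ together with the filtration property of Lemma~\ref{RiRj=Ri+j}, the forward inclusion by induction on $n$ with the Leibnitz-rule weight count (each bracket $[c_i,z_k]\in\gamma_{m_i+1}(L)$ raising the total weight by exactly one), and Claims 2 and 3 deduced from Claim 1 and Lemma~\ref{Filt}. No gaps.
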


\begin{proof} Consider a finite number of integers $m_j\geq1$ such that $\sum_j(m_j-1)\geq n$.
By~\eqref{gamma(R)em R(n-1)},
we have $\gamma_{m}(L)\subseteq\gamma_{m}(\SSS(L))\subseteq \SSS(L)^{(m-1)}$, $m\geq 1$.
We use Lemma~\ref{RiRj=Ri+j}
$$\gamma_{m_1}(L)\cdots \gamma_{m_s}(L)
  \subseteq \SSS(L)^{(m_1-1)} \cdots \SSS(L)^{(m_s-1)}
  \subseteq \SSS(L)^{\left((m_1-1)+\cdots +(m_s-1)\right)}
  \subseteq \SSS(L)^{(n)},
$$
implying one inclusion of the first claim.
We prove the inverse inclusion by induction on~$n$. The case $n=0$ is trivial.
Let $a\in \gamma_{m_i}(L)$, $m_i\ge 1$, and $w=b_1\cdots b_t\in \SSS(L)$, $b_j\in L$, $j=1,\dots,t$, then
\begin{equation}\label{gamma_n_a}
\{a,w\}=\sum_{j=1}^t b_1\cdots\widehat{b_j}\cdots b_t \{a,b_j\},\qquad
 \{a,b_j\}=[a,b_j]\in \gamma_{m_i+1}(L),\quad  b_k\in L=\gamma_1(L),\ k\ne j.
 \end{equation}
Consider $n\ge 1$. We apply~\eqref{gamma_n_a} to prove the inductive step
\begin{align*}
\SSS(L)^{(n)}&= \{\SSS(L)^{(n-1)},\SSS(L)\}\SSS(L)\\
 &\subseteq \bigg\{ \sum_{\sum_i(m_i-1)\ge n-1} \gamma_{m_1}(L)\cdots \gamma_{m_s}(L),\SSS(L)\bigg\}\SSS(L)
 \subseteq  \sum_{\sum_j(n_j-1)\ge n} \gamma_{n_1}(L)\cdots \gamma_{n_r}(L).
\end{align*}

Now Claim 2 follows by Claim 1 and definitions of the shifted filtration and the induced filtration.
Claim 3 follows by Claim 4 of Lemma~\ref{Filt}.
\end{proof}

\begin{Corr}\label{C_dim_subalg}
Let $L$ be a Lie algebra. Consider the upper Lie powers of the symmetric algebra $S(L)^{(n)}$, $n\ge 0$.
Define {\em Poisson dimension subalgebras}:
$D^{S}_{(n)}(L)=L\cap (S (L))^{(n)}$, $n\ge 0$.
Then $D^{S}_{(n)}(L)=\gamma_{n+1}(L)$, $n\ge 0$.
\end{Corr}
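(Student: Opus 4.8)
The plan is to transcribe the proof of Claims~1 and~3 of Lemma~\ref{Lgamma}, replacing the truncated symmetric algebra $\SSS(L)$ by the symmetric algebra $S(L)$; one checks that the truncation relations $v^p=0$ play no role in those parts of the argument, so the proof goes through verbatim over an \emph{arbitrary} field $K$.

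First I would establish the $S(L)$-analogue of the formula of Claim~1:
$$
S(L)^{(n)}=\sum_{\substack{(m_1-1)+\cdots+(m_s-1)\ge n\\ 1\le m_1\le m_2\le\cdots\le m_s}}
\gamma_{m_1}(L)\,\gamma_{m_2}(L)\cdots\gamma_{m_s}(L),\qquad n\ge 0.
$$
For the inclusion $\supseteq$, observe that $\gamma_m(L)\subseteq\gamma_m(S(L))\subseteq S(L)^{(m-1)}$ by~\eqref{gamma(R)em R(n-1)} applied to the Poisson algebra $S(L)$, so Lemma~\ref{RiRj=Ri+j} gives $\gamma_{m_1}(L)\cdots\gamma_{m_s}(L)\subseteq S(L)^{(m_1-1)}\cdots S(L)^{(m_s-1)}\subseteq S(L)^{((m_1-1)+\cdots+(m_s-1))}\subseteq S(L)^{(n)}$. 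For the inclusion $\subseteq$, I would induct on $n$: the case $n=0$ is trivial, and the inductive step is the Leibnitz computation~\eqref{gamma_n_a}, which turns a single bracket of some $a\in\gamma_{m_i}(L)$ with an element of $L=\gamma_1(L)$ into an element of $[a,L]\subseteq\gamma_{m_i+1}(L)$, hence raises the total weight $\sum_i(m_i-1)$ by exactly one, while multiplication by $S(L)$ only adds factors from $\gamma_1(L)$, of weight $0$. No truncation is involved.

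Next I would intersect with $L$ by exploiting the grading of $S(L)=K[v_i\mid i\in I]$ by total degree. Since each $\gamma_m(L)$ is a subspace of $L$, every summand $\gamma_{m_1}(L)\cdots\gamma_{m_s}(L)$ is homogeneous of degree $s$, so $S(L)^{(n)}$ is a graded subspace whose homogeneous component of degree $1$ equals $\sum_{m_1\ge n+1}\gamma_{m_1}(L)=\gamma_{n+1}(L)$. As $L$ is precisely the degree-$1$ component of $S(L)$, this yields $D^{S}_{(n)}(L)=L\cap S(L)^{(n)}=\gamma_{n+1}(L)$ for all $n\ge 0$, as required.

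There is no genuine obstacle here. The only points worth recording are that the truncation relations are irrelevant to Claims~1 and~3 of Lemma~\ref{Lgamma}, and that reading off the degree-$1$ component directly from the grading is slightly cleaner than invoking Claim~4 of Lemma~\ref{Filt}; in particular it does not require $\bigcap_n\gamma_n(L)=0$, i.e. that $L$ be residually nilpotent.
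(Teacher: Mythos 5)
Your proposal is correct. For the main step --- the explicit formula for $S(L)^{(n)}$ as a sum of products $\gamma_{m_1}(L)\cdots\gamma_{m_s}(L)$ with $\sum_i(m_i-1)\ge n$ --- you follow exactly the paper's intended route: the proof of Claim~1 of Lemma~\ref{Lgamma} transfers verbatim to $S(L)$, since neither the inclusion via Lemma~\ref{RiRj=Ri+j} nor the Leibnitz induction~\eqref{gamma_n_a} uses the truncation relations (the paper's own proof of the corollary is precisely ``follows by the same arguments''). Where you genuinely diverge is the final intersection with $L$: the paper routes this through Claim~4 of Lemma~\ref{Filt}, i.e.\ through the height function and the ordered basis adapted to the shifted filtration $L_i=\gamma_{i+1}(L)$, whereas you simply observe that each summand $\gamma_{m_1}(L)\cdots\gamma_{m_s}(L)$ is homogeneous of degree $s$ for the polynomial grading of $S(L)$, so $S(L)^{(n)}$ is graded and its degree-one component is $\sum_{m\ge n+1}\gamma_m(L)=\gamma_{n+1}(L)$. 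This is a real, if modest, improvement: the height function of Lemma~\ref{Filt} is only well defined when $\bigcap_n L_n=0$, so the paper's route implicitly presumes $L$ residually nilpotent (harmless in the nilpotent setting of Theorem~\ref{Tnilp}, but the corollary is stated for arbitrary $L$), while your grading argument needs no such hypothesis and works equally well for $\SSS(L)$.
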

\begin{proof} Follows by the same arguments.
\end{proof}

\begin{Corr} $\SSS(L)^{(n)}$ modulo $\SSS(L)^{(n+1)}$ has the following basis for all $n\geq 0$:
\begin{equation}\label{monom_s}
\Big\langle
\eta=e_{01}^{\alpha_{01}}\cdots e_{0m_0}^{\alpha_{0m_0}}
     \cdots
     e_{s1}^{\alpha_{s1}}\cdots e_{sm_s}^{\alpha_{sm_s}}
\,\Big|\,
\nu(\eta) =\sum_{i=0}^s\sum_{j=1}^{m_i}
                 i\alpha_{ij} = n;\
0 \leq \alpha_{ij} \leq p-1,\ s\geq0
\Big\rangle_K.
\end{equation}
\end{Corr}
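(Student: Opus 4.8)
The plan is to obtain this basis directly from Lemma~\ref{Filt}, applied to the \emph{shifted filtration} $\{L_n=\gamma_{n+1}(L)\mid n\ge 0\}$ of $L$. Indeed, by Claim~2 of Lemma~\ref{Lgamma} the filtration of $\SSS(L)$ induced (via formula~\eqref{En}) by this shifted filtration is precisely $\{E_n=\SSS(L)^{(n)}\mid n\ge0\}$, so the Corollary is just a translation of Claims~2 and~3 of Lemma~\ref{Filt} into the double-indexed notation of~\eqref{monom_s}.

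First I would make sure the height function is available: we may assume $\bigcap_{n\ge0}\gamma_{n+1}(L)=0$, which is the relevant situation (it holds, e.g., when $\SSS(L)$ is Lie nilpotent, by Theorem~\ref{Tnilp}), so that $\{L_n=\gamma_{n+1}(L)\}$ is genuinely a filtration of $L$ and every $0\ne x\in L$ has a well-defined height $\nu(x)=\max\{i\mid x\in\gamma_{i+1}(L)\}$. Next I would fix an adapted ordered basis: for each $i\ge0$ choose $e_{i1},\dots,e_{im_i}\in\gamma_{i+1}(L)$ whose images form a basis of $\gamma_{i+1}(L)/\gamma_{i+2}(L)$, and order the resulting basis $\{e_{ij}\}$ of $L$ so that $e_{ij}$ precedes $e_{i'j'}$ whenever $i<i'$ (the order inside a fixed level being immaterial). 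Then $\nu(e_{ij})=i$ and $L_n=\gamma_{n+1}(L)=\langle e_{ij}\mid i\ge n\rangle_K$ for all $n$, so this is an ordered basis of the type required in Lemma~\ref{Filt}. For a $p$-truncated monomial $\eta=e_{01}^{\alpha_{01}}\cdots e_{sm_s}^{\alpha_{sm_s}}$ with $0\le\alpha_{ij}\le p-1$ written in increasing order, additivity of $\nu$ on products of basis elements gives $\nu(\eta)=\sum_{i,j}i\,\alpha_{ij}$, matching the exponent condition in~\eqref{monom_s}.

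Finally I would invoke Lemma~\ref{Filt}: by its Claim~3 all such $\eta$ form a basis of $\SSS(L)$, and by its Claim~2 those with $\nu(\eta)=n$ form a basis of $E_n$ modulo $E_{n+1}$; since $E_n=\SSS(L)^{(n)}$ by Claim~2 of Lemma~\ref{Lgamma}, this is exactly the claimed basis of $\SSS(L)^{(n)}$ modulo $\SSS(L)^{(n+1)}$. I do not foresee a real difficulty here; the only points needing attention are the existence of the adapted basis (guaranteed by $\bigcap_n\gamma_{n+1}(L)=0$) and matching the single-index monomials $x_{i_1}^{a_1}\cdots x_{i_l}^{a_l}$ of Lemma~\ref{Filt} with the level-sorted double-index monomials of~\eqref{monom_s}, which is pure bookkeeping.
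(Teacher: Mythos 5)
Your argument is correct and is essentially the paper's own proof: the Corollary is deduced from Claims 2 and 3 of Lemma~\ref{Filt} applied to the shifted filtration $L_n=\gamma_{n+1}(L)$, whose induced filtration is $\{\SSS(L)^{(n)}\}$ by Claim~2 of Lemma~\ref{Lgamma}. Your additional remark that one needs $\bigcap_{n}\gamma_{n+1}(L)=0$ for the height function and the adapted basis $\{e_{ij}\}$ to exist is a hypothesis the paper leaves implicit, but it does not alter the route.
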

\begin{proof}
Follows by Claim 2 of Lemma~\ref{Filt}.
\end{proof}

Let $\{e_{ij}\,|\,j\in J_i, i\geq0\}$ be an ordered basis of $L$ chosen so that $\{e_{ij}\,|\, j\in J_i\}$
is a base for $L_i=\gamma_{i+1}(L)$ modulo $L_{i+1}=\gamma_{i+2}(L)$, for all $i\geq 0$.
We have the height function $\nu(e_{ij})=i$,  $j\in J_i$, $i\geq0$.

\begin{proof} {\it of Theorem~\ref{Tnilp}.}
Implication 1)  $\Rightarrow$ 2) follows by \eqref{gamma(R)em R(n-1)}.
\medskip

Implication 3) $\Rightarrow$ 1).
Suppose that $L$ is nilpotent and $\dim L^2<\infty$.
Then $d_i=\dim\gamma_{i+1}(L)/\gamma_{i+2}(L)<\infty$ for $i\ge 1$.
Let $\{e_{i1},e_{i2},\ldots,e_{id_i}\}$ be a base for
$\gamma_{i+1}(L)$ modulo $\gamma_{i+2}(L)$, for all $i\geq 1$.
Consider a basis element $\eta$ of the truncated symmetric algebra~\eqref{monom_s}.
Put $N=(p-1)\sum_{i\geq1}id_i$.
We have
$\nu(\eta)=\sum_{i=1}^s\sum_{j=1}^{d_i} i\alpha_{ij}\leq (p-1)\sum_{i\geq1}id_i=N.$
Hence, $\SSS(L)^{(N+1)}=0$.
On the other hand, consider
$v=e_{11}^{p-1}\cdots e_{sm_s}^{p-1}$, with $\nu(v)=(p-1)\sum_{i\geq1}id_i=N$, so
$0\ne v\in \SSS(L)^{(N)}$, thus $\SSS(L)^{(N)}\ne 0$.
We proved that $\SSS(L)$ is strongly Lie nilpotent of class $N+1$.
\medskip

Finally, let us prove implication 2) $\Rightarrow$ 3).
Since $L$ is clearly nilpotent, it remains to prove that $L^2$ is finite dimensional.
By Theorem~\ref{Treduction}, there exist integers $n, M$ such that $\dim L/\Delta<n$,
$\Delta=\Delta(L)=\Delta_M(L)$, and $\dim \Delta^2\le M^2$.
It suffices to prove that $L=\Delta$. Let us assume the opposite, that $L\neq\Delta$.

Consider the factor algebra $\bar{L}=L/\Delta^2$.
Since $\dim \Delta^2<\infty$, by Lemma \ref{DeltaI}, $\Delta(\bar{L})=\Delta/\Delta^2$.
Hence, $\Delta(\bar{L})$ is abelian and $\bar{L}\neq\Delta(\bar{L})$.
Replacing $L$ by $\bar{L}$ we may assume that $\Delta$ is abelian.

Fix $x\in L\setminus \Delta$ and consider $H=\langle x\rangle_K\oplus\Delta$.
By Lemma~\ref{DeltaI} we have  $\Delta(H)=\Delta$.
By nilpotency of $\SSS(L)$ there exists an integer $k$ such that $(\ad x)^k(H)=0$.
Let us reduce our arguments to the case $(\ad x)^2(H)=0$.
Put
$D_0=\Delta$ and $D_{m+1}=[x,D_{m}]$, $m\geq 0$. We obtain a chain of $H$-ideals:
\begin{equation}\label{chain}
\Delta=D_0\supseteq D_1=[x, \Delta]=H^2\supseteq D_2\supseteq  \cdots\supseteq D_k=0.
\end{equation}

Since $\dim D_1=\infty$, there exists $i$ (where $1\leq i<k$) such that $\dim D_i/D_{i+1}=\infty$.
Consider $\bar {H}=(\langle x\rangle_K\oplus D_{i-1})/D_{i+1}$,
let $\bar {x}$ be the image of $x$ in $\bar{H}$.
We have
$\dim[\bar{x},\bar{H}]=\dim([x,D_{i-1}]+D_{i+1})/D_{i+1}=\dim D_{i}/D_{i+1}=\infty$.
By construction, $(\ad \bar{x})^2(\bar{H})=0$.

Now our arguments are reduced to the case
$H=\langle x\rangle_K\oplus \Delta$, where $\Delta$ is an abelian ideal, $(\ad x)^2\Delta=0$, and $\dim[x,\Delta]=\infty$.
We find elements $\{y_i \,|\,  i\in\N \}\subseteq \Delta$
such that the elements $z_i= [x,y_i]$, where  $i\in\N$, are linearly independent.
One checks that $\{y_i,z_i\, |\, i\in\N\}$ is a linearly independent set.
We claim that $\{x,xy_1,xy_2,\ldots,xy_n\}=xz_1z_2\cdots z_n$ for all $n\geq1$.
The case $i=1$ follows by the Leibnitz rule. Suppose that the statement is valid for $n-1$.
Observe that $z_1,z_2,\ldots$ are central and we get
\begin{align*}
\{x,xy_1,xy_2,\ldots,xy_n\}
&=\{\{x,xy_1,xy_2,\ldots,xy_{n-1}\},xy_n\}
 =\{xz_1\cdots z_{n-1} ,xy_n\}\\
&=xz_1\cdots z_{n-1}\{x,y_n\}
 =xz_1\cdots z_{n-1}z_n,\quad n\ge 1.
\end{align*}
The right hand side is non-zero by PBW-theorem for all $n\ge 1$,
a contradiction with our assumption that $\SSS(H)$ is Lie nilpotent.
The contradiction proves that $L=\Delta$.
\end{proof}
Above we also proved the following part of Theorem~\ref{Tclasses}.
\begin{Corr}\label{classSpNS}
Let $L$ be a Lie algebra over a field of positive characteristic $p$.
Assume that $L$ is nilpotent and $\dim L^2<\infty$.
Then $\SSS(L)$ is strongly Lie nilpotent of class $$1+(p-1)\sum_{n\geq1}n\cdot\dim(\gamma_{n+1}(L)/\gamma_{n+2}(L)).$$
\end{Corr}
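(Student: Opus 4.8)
The statement was, in effect, obtained in the course of proving implication 3) $\Rightarrow$ 1) of Theorem~\ref{Tnilp}; the plan is to isolate that computation and present it as a self-contained determination of the strong Lie nilpotency class, so that the Corollary reads as a quantitative refinement of that implication.

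First I would record the finiteness facts. Since $L$ is nilpotent, $\gamma_{c+1}(L)=0$ for some $c$, hence $\gamma_{n+1}(L)/\gamma_{n+2}(L)=0$ for $n\ge c$; and since $\dim L^2<\infty$, each $d_n:=\dim(\gamma_{n+1}(L)/\gamma_{n+2}(L))$ is finite for $n\ge 1$. Therefore
$$N:=(p-1)\sum_{n\ge 1} n\,d_n$$
is a well-defined non-negative integer. Next I would fix, exactly as in the set-up before the proof of Theorem~\ref{Tnilp}, an ordered homogeneous basis $\{e_{ij}\mid j\in J_i,\ i\ge 0\}$ of $L$ such that $\{e_{ij}\mid j\in J_i\}$ is a basis of $\gamma_{i+1}(L)$ modulo $\gamma_{i+2}(L)$ and $\nu(e_{ij})=i$; thus $|J_i|=d_i<\infty$ for $i\ge 1$, while $J_0$ may be infinite.

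For the upper bound I would use the monomial basis \eqref{monom_s} of $\SSS(L)^{(n)}$ modulo $\SSS(L)^{(n+1)}$: the generators $e_{0j}$ carry height $0$ and contribute nothing, while the truncation $e_{ij}^p=0$ forces each exponent $\alpha_{ij}\le p-1$, so any such monomial $\eta$ satisfies $\nu(\eta)=\sum_{i\ge 1}\sum_j i\,\alpha_{ij}\le (p-1)\sum_{i\ge 1} i\,d_i=N$. By Claim 2 of Lemma~\ref{Lgamma} the filtration by upper Lie powers coincides with the filtration induced by $\{\gamma_{i+1}(L)\}$, whose term in degree $N+1$ is spanned by monomials of height $\ge N+1$, of which there are none; hence $\SSS(L)^{(N+1)}=0$. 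For the lower bound I would exhibit the single monomial $v=\prod_{i\ge 1}\prod_{j\in J_i}e_{ij}^{\,p-1}$, which has $\nu(v)=(p-1)\sum_{i\ge 1} i\,d_i=N$ and is nonzero in $\SSS(L)$, being one of the canonical (PBW) basis monomials of the truncated polynomial ring; thus $0\ne v\in\SSS(L)^{(N)}$. Combining, $\SSS(L)^{(N)}\ne 0=\SSS(L)^{(N+1)}$, so $\SSS(L)$ is strongly Lie nilpotent of class exactly $N+1=1+(p-1)\sum_{n\ge 1} n\dim(\gamma_{n+1}(L)/\gamma_{n+2}(L))$.

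There is no real obstacle beyond the bookkeeping already carried out for Lemma~\ref{Lgamma}; the points needing attention are that the height function detects only the generators lying in $L^2$ (the levels $i\ge 1$), that the exponent bound $\alpha_{ij}\le p-1$ is precisely the truncation relation, and that the sum defining $N$ is finite, which uses nilpotency of $L$ to terminate it and $\dim L^2<\infty$ to make each summand finite. The one place where the filtration formalism alone is insufficient is the nonvanishing of the extremal monomial $v$, which rests on the PBW theorem.
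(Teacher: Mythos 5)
Your proposal is correct and follows essentially the same route as the paper: the paper proves this Corollary inside implication 3) $\Rightarrow$ 1) of Theorem~\ref{Tnilp}, using Lemma~\ref{Lgamma} to identify the upper Lie powers with the filtration induced by the shifted lower central series, bounding $\nu(\eta)\le N$ for all basis monomials to get $\SSS(L)^{(N+1)}=0$, and exhibiting the same extremal monomial $v=\prod e_{ij}^{p-1}$ of height $N$ for the lower bound. Your added remarks on the finiteness of $N$ and the role of the PBW basis only make explicit what the paper leaves implicit.
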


\section{Products of commutators in Poisson algebras}\label{Scomm}
The goal of this section is to prove a technical result on products of
commutators in Poisson algebras  (Theorem~\ref{Tcomutt}) that is used to get a lower bound on
the Lie nilpotency class of $\SSS(L)$.

Products of terms of the lower central series for {\it associative algebras} appear in works of many mathematicians,
the results being reproved without knowing the earlier works.
We do not pretend to make a complete survey here.
Probably, the first observations on products of commutators in associative algebras were
made by Latyshev in 1965~\cite{Lat65} and Volichenko in 1978~\cite{Vol78}.
There are further works e.g.~\cite{BapJor13,EtiKimMa09,Gor07,GriPche15,Kras13}.

In case of associative algebras,
Claim~1 of Theorem~\ref{Tcomutt}, probably, first was established by
Sharma-Shrivastava in 1990,~\cite[Theorem~2.8]{ShaSri90}.
As was remarked in~\cite{RiSh95},
the proof of the associative version of Claim~2 of Theorem~\ref{Tcomutt} is implicitly contained in~\cite{ShaSri90},
where it is proved for group rings.
A weaker statement (the associative version of~Lemma~\ref{Lgamma-2}) is established by
Gupta and Levin~\cite[Theorem~3.2]{GupLev83}.

The following statement is a Poisson version of the respective results for associative algebras.
The validity of it is not automatically clear.
We follow a neat approach due to Krasilnikov~\cite{Kras13}.

\begin{Th}\label{Tcomutt}
Let $R$ be an arbitrary Poisson algebra over a field $K$, $\ch K\neq 2,3$.
\begin{enumerate}
\item suppose that one of integers $n,m\geq 1$ is odd, then
     $$\gamma_n(R)\cdot\gamma_m(R)\subseteq\gamma_{n+m-1}(R) R;$$
\item for all $x_1,\ldots,x_n\in R,$ $n,m\geq 1$ we have
     $$\{x_1,\ldots,x_n\}^m\in\gamma_{(n-1)m+1}(R) R.$$
\end{enumerate}
\end{Th}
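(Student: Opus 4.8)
\textbf{Proof proposal for Theorem~\ref{Tcomutt}.}

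The plan is to mimic the associative-algebra argument of Krasilnikov~\cite{Kras13}, but to carry it out inside a Poisson algebra, using the Poisson Leibnitz rule in the role played there by the associative product rule. First I would reduce Claim~2 to Claim~1: writing $w=\{x_1,\ldots,x_n\}$ and noting $w\in\gamma_n(R)$, one has $w^m=w\cdot w^{m-1}$, so if $n$ is odd (or $m$ is chosen so that one factor sits in an odd term) Claim~1 gives $w^m\in\gamma_n(R)\cdot\gamma_{(n-1)(m-1)+1}(R)\subseteq\gamma_{(n-1)m+1}(R)R$ after an induction on $m$; when $n$ is even one first splits off one factor $w\cdot w\in\gamma_n(R)\gamma_n(R)$ and observes $n+n-1=2n-1$ is odd, so pairing factors two at a time and reassociating reduces everything to the odd case of Claim~1. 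So the whole theorem rests on Claim~1.

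For Claim~1 I would argue by a double induction, the outer one on $n+m$ and an inner one that peels off generators. The base cases $n=1$ or $m=1$ are immediate since $\gamma_1(R)=R$. For the inductive step, take $a\in\gamma_n(R)$ of the form $a=\{a',x\}$ with $a'\in\gamma_{n-1}(R)$, $x\in R$, and $b\in\gamma_m(R)$; the key identity is the Poisson ``product rule in reverse'',
\begin{equation*}
a'\cdot\{x,b\}=\{a'\cdot x,b\}-x\cdot\{a',b\},
\end{equation*}
which, combined with the Jacobi/Leibnitz manipulation
\begin{equation*}
\{a',x\}\cdot b=\{a'\cdot b,x\}-a'\cdot\{b,x\}=\{a'\cdot b,x\}+a'\cdot\{x,b\},
\end{equation*}
lets me rewrite $a\cdot b=\{a',x\}\cdot b$ as a sum of a term $\{a'b,x\}$ — where $a'b\in\gamma_{n-1}(R)\gamma_m(R)$ falls under the outer induction, so $a'b\in\gamma_{n+m-2}(R)R$ and then $\{a'b,x\}\in\gamma_{n+m-1}(R)R$ — plus a term $a'\cdot\{x,b\}$ with $\{x,b\}\in\gamma_{m+1}(R)$, again handled by the outer induction on $n+m$ since $(n-1)+(m+1)=n+m$ but now one index has strictly smaller... — and this is exactly where the parity bookkeeping and the hypothesis $\ch K\neq 2,3$ enter: the recursion only closes because one of the two indices is forced to stay odd, and the coefficients produced by symmetrizing the Jacobi identity are units. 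I would isolate this combinatorial core as a short lemma (an analogue of the auxiliary Lemma~\ref{Lgamma-2} referred to in the text) stating that a product $\gamma_n(R)\gamma_m(R)$ with $n$ odd lies in $\sum_{i+j=n+m-1}\{\gamma_i(R),\gamma_j(R)\}R+\gamma_{n+m-1}(R)R$, and then feed it back into itself.

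The main obstacle I anticipate is precisely the parity/coefficient bookkeeping in this inner step: unlike the associative case, each application of the Leibnitz rule to $\{uv,w\}$ splits into two summands symmetric in the two ``halves'' $u$ and $v$, so when one reassociates a deep left-normed bracket the number of terms proliferates, and keeping track of which terms land in $\gamma_{n+m-1}(R)R$ versus which must be re-expanded requires a careful choice of the order in which generators are peeled off (this is the ``neat approach due to Krasilnikov'' the authors advertise). The exclusion $\ch K\neq 2,3$ should surface exactly when one needs to solve a small linear system — dividing by $2$ to symmetrize a commutator of the form $\{x,\{y,z\}\}+\{y,\{z,x\}\}$ and by $3$ (or by $6$) to invert the Hall–Witt-type relation that expresses a triple bracket in terms of reassociated pieces — so I would flag that step and verify the relevant determinant is a nonzero element of $K$ under the stated characteristic assumption.
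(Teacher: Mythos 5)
Your proposal correctly identifies the general toolkit (Leibnitz-rule reassociation, induction, a symmetrization step that forces $\ch K\neq 2,3$), but it has two genuine gaps where the argument as sketched would not close.

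First, your reduction of Claim~2 to Claim~1 breaks down exactly in the case $n$ even, $m=2$: you write $w\cdot w\in\gamma_n(R)\gamma_n(R)$ and invoke Claim~1, but Claim~1 requires one of the two \emph{input} indices to be odd, and here both equal the even number $n$; the fact that the \emph{target} index $2n-1$ is odd is irrelevant. This case cannot be deduced from Claim~1 at all. The paper handles it by a separate identity, $\{x,a\}^2=\tfrac12\{\{x^2,a\},a\}-x\{x,a,a\}$ with $a\in\gamma_{n-1}(R)$, which is where $\ch K\neq 2$ actually enters Claim~2; the cases $m=2q$ and $m=2q+1$ then reduce to $m=2$ and to the odd case. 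Second, your inner induction for Claim~1 does not terminate as described. Peeling off a single generator sends $(n,m)$ to $(n-1,m+1)$, which neither decreases $n+m$ nor preserves the property that one index is odd (if $n$ and $m$ are both odd, both new indices are even). Moreover, the step ``$a'b\in\gamma_{n+m-2}(R)R$ hence $\{a'b,x\}\in\gamma_{n+m-1}(R)R$'' is false as stated: expanding $\{cr,x\}$ with $c\in\gamma_{n+m-2}(R)$ produces a term $c\{r,x\}\in\gamma_{n+m-2}(R)\gamma_2(R)$, which is precisely the kind of product the theorem is trying to control; one bracket does not raise the filtration degree of $\gamma_k(R)R$ by one. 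The paper's actual induction keeps $n$ odd by peeling off \emph{two} generators at a time ($a=\{c,x,y\}$, $c\in\gamma_{n-2}(R)$), uses the two-bracket estimate $\{\gamma_k(R)R,R,R\}\subseteq\gamma_{k+2}(R)R$, and rests on the key lemma $3\{a_1,a_2,a_3\}\{a_4,a_5\}\in\gamma_{m+3}(R)R$, obtained by proving skewsymmetry of this product in $a_1,a_2,a_3$ modulo the target and summing the three cyclic permutations against the Jacobi identity (this is the source of $\ch K\neq 3$). Your sketch gestures at such a symmetrization but does not supply the skewsymmetry lemmas that make it work, and without them the ``parity bookkeeping'' you flag as the main obstacle is not resolved.
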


We proceed by steps.
\begin{Lemma}\label{Lperm}
Let $R$ be an arbitrary Poisson algebra.
\begin{enumerate}
\item for any $a_1,a_2,a_3,a_4,a_5\in R$, where one of $a_1,a_2,a_5$ belongs to $\gamma_m(R)$, $m\ge 1$, we have
$$ \{a_1,a_2,a_3\}\{a_4,a_5\}+ \{a_1,a_2,a_4\}\{a_3,a_5\}\in\gamma_{m+3}(R)R;$$
\item for any $a_5\in \gamma_m(R)$, $m\ge 1$, and $a_1,a_2,a_4,a_5\in R$ we have
$$ \{a_1,a_2,a_3\}\{a_4,a_5\}+ \{a_1,a_4,a_3\}\{a_2,a_5\}\in\gamma_{m+3}(R)R.$$
\end{enumerate}
\end{Lemma}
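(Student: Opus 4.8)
The plan is to derive both identities from a single source: the Leibnitz rule applied to the Jacobi identity, modulo the ideal $\gamma_{m+3}(R)R$. First I would record the basic ``commutator exchange'' observation: if $u\in\gamma_k(R)$ and $v\in R$, then the Leibnitz rule $a\{u,v\}=\{au,v\}-u\{a,v\}$ shows $a\{u,v\}\equiv -u\{a,v\}\pmod{\gamma_{k+1}(R)R}$ (since $\{au,v\}\in\gamma_{k+1}(R)R$), and symmetrically one may move a product factor in or out of a bracket at the cost of a term in $\gamma_{k+1}(R)R$. I would set up notation: write $a\equiv b$ to mean $a-b\in\gamma_{m+3}(R)R$, and note that any product of a bracket $\{x,y\}$ (which lies in $\gamma_2(R)$) with something in $\gamma_{m+1}(R)$, or of $\{x,y,z\}\in\gamma_3(R)$ with something in $\gamma_m(R)$, lies in $\gamma_{m+3}(R)R$ and hence is $\equiv 0$.

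For Claim~2 (which I would do first, as it is the cleaner case): with $a_5\in\gamma_m(R)$, start from the Leibnitz rule for the product $\{a_1,a_2,a_3\}\cdot\{a_4,a_5\}$ — more precisely, expand $\{a_1,a_3,a_2\}$ versus $\{a_1,a_2,a_3\}$ using the Jacobi identity $\{a_1,a_2,a_3\}-\{a_1,a_3,a_2\}=\{a_1,\{a_2,a_3\}\}$, multiply by $\{a_4,a_5\}$, and observe that the correction term $\{a_1,\{a_2,a_3\}\}\cdot\{a_4,a_5\}$ is a product of two elements of $\gamma_2(R)$ with one of them multiplied into $\gamma_m$-territory via $a_5$, so it sits in $\gamma_{m+3}(R)R$. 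The heart of the matter is the exchange $\{a_1,a_2,a_3\}\{a_4,a_5\}\equiv\pm\{a_1,a_4,a_3\}\{a_2,a_5\}$: I would produce it by applying the Leibnitz rule to push $a_2$ and $a_4$ through, using that $a_5\in\gamma_m(R)$ forces all ``error'' terms into $\gamma_{m+3}(R)R$. Concretely, consider the element $\{a_1,a_2,a_3,a_4,a_5\}$-type expressions or, following Krasilnikov, expand $\{\{a_1,a_2\}\{a_3,a_4\},a_5\}$ two ways by Leibnitz and compare; the characteristic $\neq 2,3$ is needed to invert the small integer coefficients that appear when symmetrizing.

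For Claim~1: here the hypothesis is weaker — one of $a_1,a_2,a_5$ lies in $\gamma_m(R)$ — so I would split into the three cases. The case $a_5\in\gamma_m(R)$ should reduce to Claim~2 by antisymmetry of the bracket in the exchange $a_3\leftrightarrow a_4$: $\{a_1,a_2,a_3\}\{a_4,a_5\}+\{a_1,a_2,a_4\}\{a_3,a_5\}$ is the Claim-2 combination with $(a_2,a_4)$-roles shuffled, and the two statements are related by adding a Jacobi correction. The cases $a_1\in\gamma_m(R)$ and $a_2\in\gamma_m(R)$ I would handle by first using the exchange lemma to move the $\gamma_m$-element into the last slot of one of the brackets (each such move costs a term in $\gamma_{m+3}(R)R$ because the moved element is already deep in the lower central series), thereby reducing to the already-treated position.

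\medskip

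\noindent\textbf{Main obstacle.} The delicate point is bookkeeping the ``error terms'': every application of the Leibnitz rule spawns a term of the form (lower-central element)$\cdot$(bracket), and one must check each such term genuinely lands in $\gamma_{m+3}(R)R$ and not merely in $\gamma_{m+2}(R)R$ — this is exactly where the precise weights ($\gamma_3$ from a length-3 bracket, $\gamma_m$ from $a_5$, summing to $m+3$) must be tracked, and where the restriction $\ch K\neq 2,3$ enters to divide out the combinatorial coefficients produced by symmetrization. I expect the proof of the basic exchange relation $\{a_1,a_2,a_3\}\{a_4,a_5\}\equiv\pm\{a_1,a_4,a_3\}\{a_2,a_5\}\pmod{\gamma_{m+3}(R)R}$ to be the real content; once it is in hand, both claims follow by antisymmetry and by moving the $\gamma_m$-element into a convenient slot.
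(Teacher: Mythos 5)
Your plan circles the right ideas (Leibnitz expansions, weight bookkeeping) but it is missing the one device that actually makes the lemma work, and two of its intermediate steps are circular. The paper's proof of Claim~1 consists of a single expansion of $\{a_1,a_2,a_3a_4,a_5\}$ --- a four-fold bracket with the \emph{product} $a_3a_4$ inserted into the third slot. By Leibnitz this equals $a_3\{a_1,a_2,a_4,a_5\}+\{a_3,a_5\}\{a_1,a_2,a_4\}+\{a_1,a_2,a_3\}\{a_4,a_5\}+\{a_1,a_2,a_3,a_5\}a_4$; the left-hand side and the two outer terms are (four-fold bracket)$\cdot R$ with one entry in $\gamma_m(R)$, hence lie in $\gamma_{m+3}(R)R$, and the two surviving cross terms are exactly the asserted combination. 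Claim~2 is then obtained the same way from $\{a_5,a_2a_4,a_1,a_3\}$, using Claim~1 to kill the extra cross terms. The expansions you actually name do not do this job: $\{\{a_1,a_2\}\{a_3,a_4\},a_5\}$ expands to $\{a_1,a_2\}\{a_3,a_4,a_5\}+\{a_3,a_4\}\{a_1,a_2,a_5\}$, which never produces a term of the shape $\{a_1,a_2,a_3\}\{a_4,a_5\}$, and a plain five-fold bracket $\{a_1,\ldots,a_5\}$ has no product to split. So the ``heart of the matter'' in your plan --- the exchange relation, which is Claim~2 itself --- is left unproved.

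Two further points are genuine errors rather than omissions. First, your Jacobi correction term $\{a_1,\{a_2,a_3\}\}\cdot\{a_4,a_5\}$ lies in $\gamma_3(R)\gamma_{m+1}(R)$, and asserting that this is contained in $\gamma_{m+3}(R)R$ is precisely an instance of Claim~1 of Theorem~\ref{Tcomutt} --- the statement this lemma exists to prove; without it one only has $\gamma_3(R)\gamma_{m+1}(R)\subseteq\gamma_{m+2}(R)R$ from Lemma~\ref{Lgamma-2}, which is one step short. Second, your reduction of the cases $a_1\in\gamma_m(R)$ or $a_2\in\gamma_m(R)$ of Claim~1 to Claim~2 by ``moving the $\gamma_m$-element into the last slot using the exchange lemma'' is circular: the exchange lemma's hypothesis is that the element in the last slot already lies in $\gamma_m(R)$. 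The paper avoids this entirely because the single expansion above proves Claim~1 for all three positions of the $\gamma_m$-element at once, and the logical order is Claim~1 first, then Claim~2 (the reverse of yours). Finally, a minor point: this lemma needs no hypothesis on $\ch K$; the restrictions $\ch K\neq 2,3$ enter only later (dividing by $3$ in Lemma~\ref{Lprod3} and by $2$ in the proof of Claim~2 of Theorem~\ref{Tcomutt}).
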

\begin{proof} By the Leibnitz rule,
\begin{multline*}
\{a_1,a_2,a_3a_4,a_5 \}=\Big\{a_3\{a_1,a_2,a_4\} + \{a_1,a_2,a_3\}a_4, a_5\Big\}\\
 =a_3\{a_1,a_2,a_4,a_5 \}+\{a_3,a_5\}\{a_1,a_2,a_4\}
  +\{a_1,a_2,a_3\}\{a_4, a_5\}+\{a_1,a_2,a_3,a_5\}a_4.
\end{multline*}
Consider the original product, the first, and the last term obtained.
By properties of Lie commutators,
$\{a_1,a_2,a_3a_4,a_5 \}\in \gamma_{m+3}(R)$,
$\{a_1,a_2,a_4,a_5 \}\in \gamma_{m+3}(R)$,
$\{a_1,a_2,a_3,a_5\}\in \gamma_{m+3}(R)$.
Two remaining middle terms exactly give the first claim
$$ \{a_1,a_2,a_3\}\{a_4,a_5\}+ \{a_1,a_2,a_4\}\{a_3,a_5\}\in\gamma_{m+3}(R)R.$$
Consider the second claim. By the Leibnitz rule:
\begin{gather}
\begin{align*}
&\{a_5,a_2a_4,a_1,a_3 \}=\Big\{a_2\{a_5,a_4\} + \{a_5,a_2\}a_4, a_1,a_3\Big\}\\
&\quad=\Big\{a_2\{a_5,a_4,a_1\}+\{a_2,a_1\}\{a_5,a_4\}+\{a_5,a_2\}\{a_4,a_1\}+ \{a_5,a_2,a_1\}a_4,a_3\Big\}
\end{align*}\\
\begin{align*}
  &=a_2\{a_5,a_4,a_1,a_3\}\\
  &\quad+\{a_2,a_3\}\{a_5,a_4,a_1\}+\{a_2,a_1\}\{a_5,a_4,a_3\}\\
  &\quad+\{a_2,a_1,a_3\}\{a_5,a_4\}+\{a_5,a_2\}\{a_4,a_1,a_3\}\\
  &\quad+\{a_5,a_2,a_3\}\{a_4,a_1\}+\{a_5,a_2,a_1\}\{a_4,a_3\}\\
  &\quad+\{a_5,a_2,a_1,a_3\}a_4.
\end{align*}
\end{gather}
The initial product, the first and last terms belong to $\gamma_{m+3}(R)R$.
We apply the first claim to the sums in the second and forth lines. Each of them belongs to $\gamma_{m+3}(R)R$,
because we permute $a_3, a_1$ in both cases.
Finally, the sum of two elements in the third line yields:
\begin{multline*}
\{a_2,a_1,a_3\}\{a_5,a_4\}+\{a_5,a_2\}\{a_4,a_1,a_3\}\\
=\{a_1,a_2,a_3\}\{a_4,a_5\}+ \{a_1,a_4,a_3\}\{a_2,a_5\}\in\gamma_{m+3}(R)R.
\qedhere
\end{multline*}
\end{proof}

\begin{Lemma}\label{Lprod3}
Let $R$ be a Poisson algebra.
For any $a_5\in \gamma_m(R)$, $m\ge 1$, and $a_1,a_2,a_3,a_4\in R$ we have
$$ 3\{a_1,a_2,a_3\}\{a_4,a_5\}\in \gamma_{m+3}(R)R.$$
\end{Lemma}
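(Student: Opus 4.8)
The plan is to exploit the two skew-type relations established in Lemma~\ref{Lperm}, together with the obvious symmetry of the product $\{a_4,a_5\}$ under swapping its entries, to generate enough independent congruences modulo $\gamma_{m+3}(R)R$ to pin down $\{a_1,a_2,a_3\}\{a_4,a_5\}$ up to a factor of $3$. Throughout I write $u\equiv v$ to mean $u-v\in\gamma_{m+3}(R)R$, and I keep $a_5\in\gamma_m(R)$ fixed so that every term in which $a_5$ sits inside a commutator of length $\ge 3$, or in which the whole expression is a length-$\ge(m+3)$ commutator times something, is automatically $\equiv 0$.

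First I would record the three basic congruences. From Claim~1 of Lemma~\ref{Lperm}, applied with $a_5\in\gamma_m(R)$ in the distinguished slot, we get the "swap $a_3\leftrightarrow a_4$" relation
\begin{equation*}
\{a_1,a_2,a_3\}\{a_4,a_5\}\equiv -\{a_1,a_2,a_4\}\{a_3,a_5\}.
\end{equation*}
From Claim~2 of Lemma~\ref{Lperm} we get the "swap $a_2\leftrightarrow a_4$" relation
\begin{equation*}
\{a_1,a_2,a_3\}\{a_4,a_5\}\equiv -\{a_1,a_4,a_3\}\{a_2,a_5\}.
\end{equation*}
Finally, from the symmetry $\{a_4,a_5\}=\{a_4,a_5\}$ (the Poisson bracket's first factor is just a product, and the second factor $\{a_4,a_5\}$ is what it is) together with antisymmetry of the Lie bracket inside the length-$3$ commutator, $\{a_1,a_2,a_3\}=\{a_1,a_2,a_3\}$, there is nothing free here; instead the extra input is that applying Claim~1 again after relabelling, or composing the two swaps above, must close up. The key computation is to chase the orbit of $\{a_1,a_2,a_3\}\{a_4,a_5\}$ under the group generated by the transpositions $(a_3\,a_4)$ and $(a_2\,a_4)$ acting on the four symbols $a_1,a_2,a_3,a_4$: this group is $S_3$ on $\{a_2,a_3,a_4\}$, and tracking signs one finds that the $3$-cycle $(a_2\,a_3\,a_4)$ sends $\{a_1,a_2,a_3\}\{a_4,a_5\}$ to itself up to sign, while a careful bookkeeping of the two relations forces $3\{a_1,a_2,a_3\}\{a_4,a_5\}\equiv 0$. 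Concretely I expect the identity to come out as: apply the first relation, then the second, then the first again to return (in a different bracketing) to a multiple of the original, and collect; the coefficient $3$ is exactly the length of the resulting cycle, which is why the hypothesis $\ch K\neq 2,3$ in Theorem~\ref{Tcomutt} will later be needed to divide by it.

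The main obstacle is the combinatorial bookkeeping: the two congruences from Lemma~\ref{Lperm} each introduce a sign and permute different pairs, so one must be careful that the closed loop one traces really returns to (a scalar multiple of) $\{a_1,a_2,a_3\}\{a_4,a_5\}$ and not to some genuinely different product like $\{a_1,a_3,a_2\}\{a_4,a_5\}$ — for the latter one would additionally need the antisymmetry/Jacobi relations inside the length-$3$ commutator, which change $\{a_1,a_2,a_3\}$ by $\{a_1,a_3,a_2\}=\{a_1,a_2,a_3\}+\{a_1,\{a_2,a_3\}\}$ and hence only modulo a longer commutator. So the cleanest route is to stay entirely within permutations of $a_2,a_3,a_4$ (keeping $a_1$ and $a_5$ fixed), where Lemma~\ref{Lperm} gives exactly the two transpositions needed to generate $S_3$, and to verify that the "sign character" under which these products transform is the parity character, so that the full symmetrization over $S_3$ — which equals $3!$ times the average but collapses to $3\cdot(\pm\{a_1,a_2,a_3\}\{a_4,a_5\})$ after pairing up terms — vanishes modulo $\gamma_{m+3}(R)R$. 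I would present this as a short explicit three-line chain of $\equiv$'s rather than invoking representation theory.
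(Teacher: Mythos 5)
There is a genuine gap at the final step. Your two input congruences from Lemma~\ref{Lperm} are correctly stated, and it is true that they generate an $S_3$-action on $\{a_2,a_3,a_4\}$ under which $W=\{a_1,a_2,a_3\}\{a_4,a_5\}$ transforms by the sign character modulo $\gamma_{m+3}(R)R$. But composing congruences of the two-term form $W\equiv\pm\sigma(W)$ can only ever produce further two-term congruences $W\equiv\mathrm{sgn}(\sigma)\,\sigma(W)$; no amount of "chasing the orbit" or "closing a loop" yields a three-term relation, so the coefficient $3$ cannot come from the length of a cycle. Indeed your concrete chain (first relation, second, first) realizes the transposition $(a_2\,a_3)$ and gives $\{a_1,a_2,a_3\}\{a_4,a_5\}\equiv-\{a_1,a_3,a_2\}\{a_4,a_5\}$ — useful, but not $3W\equiv 0$. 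Likewise, the unsigned symmetrization of a sign-alternating quantity over $S_3$ collapses to $\big(\sum_\sigma\mathrm{sgn}(\sigma)\big)W=0$, not to $3(\pm W)$, so that step gives the vacuous $0\equiv 0$. Sign-alternation alone simply does not force $3W\equiv 0$.

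The missing ingredient is the Jacobi identity, which is exactly the exact (not merely mod-the-ideal) three-term relation you need, and it lives on $a_1,a_2,a_3$ — the variables you deliberately chose not to permute. The paper's route is: from your derived skewsymmetry in $a_2,a_3$ together with the free transposition $(a_1\,a_2)$ (anticommutativity of the bracket gives $\{a_1,a_2,a_3\}=-\{a_2,a_1,a_3\}$ exactly), $W$ is skewsymmetric in all of $a_1,a_2,a_3$ modulo $\gamma_{m+3}(R)R$; hence each cyclic permutation of $(a_1,a_2,a_3)$, being even, fixes $W$ modulo the ideal. Now multiply the Jacobi identity
\[
\{\{a_1,a_2\},a_3\}+\{\{a_2,a_3\},a_1\}+\{\{a_3,a_1\},a_2\}=0
\]
by $\{a_4,a_5\}$: the left-hand side is congruent to $3W$, and it is exactly zero, giving $3W\in\gamma_{m+3}(R)R$. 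Note also that your stated worry about $\{a_1,a_3,a_2\}-\{a_1,a_2,a_3\}=-\{a_1,\{a_2,a_3\}\}$ is not an error "of longer commutator length": that difference lies in $\gamma_3(R)$, the same length, so it is not negligible — which is precisely why the paper obtains the $(a_2\,a_3)$-skewsymmetry from the two Lemma~\ref{Lperm} relations rather than from the Lie identities, and reserves the Jacobi identity for the cyclic sum where it holds on the nose.
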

\begin{proof} Consider $W=\{a_1,a_2,a_3\}\{a_4,a_5\}$.
Modulo $\gamma_{m+3}(R)R$,
it is skewsymmetric in $a_3,a_4$ (Lemma~\ref{Lperm}, first claim)
and skewsymmetric in $a_2,a_4$ (Lemma~\ref{Lperm}, second claim).
Therefore, it is skewsymmetric in $a_2,a_3$ modulo $\gamma_{m+3}(R)R$.
The skewsymmetry in $a_1,a_2$ follows by anticommutativity.
Therefore, $W$ is skewsymmetric in $a_1,a_2,a_3$. Consider even permutations
\begin{multline*}
\{a_1,a_2,a_3\}\{a_4,a_5\}+\{a_2,a_3,a_1\}\{a_4,a_5\}+\{a_3,a_1,a_2\}\{a_4,a_5\}\\
= 3 \{a_1,a_2,a_3\}\{a_4,a_5\}\mod \gamma_{m+3}(R)R.
\end{multline*}
On the other hand, the left hand side is equal to zero by the Jacobi identity.
The result follows.
\end{proof}

\begin{Lemma}\label{Lprod2}
Let $R$ be a Poisson algebra over a field $K$, $\ch K\neq 2,3$. Then
$$\{\gamma_m(R)R, R, R\}\subseteq \gamma_{m+2}(R)R,\quad m\ge 2.$$
\end{Lemma}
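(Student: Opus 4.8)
The plan is to follow the Krasilnikov-style scheme used for the associative analogue and reduce to a single generating element. Since $R$ is unital and commutative, the ideal $\gamma_m(R)R$ is spanned by products $ab$ with $a\in\gamma_m(R)$ and $b\in R$; thus it suffices to prove $\{\{ab,c\},d\}\in\gamma_{m+2}(R)R$ for all such $a,b$ and all $c,d\in R$. Two applications of the Leibnitz rule give
\begin{equation*}
\{\{ab,c\},d\}=b\{a,c,d\}+a\{b,c,d\}+\{a,c\}\{b,d\}+\{b,c\}\{a,d\}.
\end{equation*}
Since $\{a,c,d\}\in\gamma_{m+2}(R)$, the first summand already lies in $\gamma_{m+2}(R)R$. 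The other three summands are products of terms of the lower central series of the type considered in Theorem~\ref{Tcomutt}; the tool to deal with them is that, as $m\ge 2$, we may write $a=\sum_i\{u_i,v_i\}$ with $u_i\in\gamma_{m-1}(R)$ and $v_i\in R$, which pushes a genuinely deep element $u_i$ into a bracket slot where Lemma~\ref{Lperm} and Lemma~\ref{Lprod3} apply (note $m-1\ge 1$).

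For $a\{b,c,d\}$ I would use commutativity and anticommutativity to rewrite it as $-\sum_i\{b,c,d\}\{v_i,u_i\}$ and apply Lemma~\ref{Lprod3} with the deep element $u_i\in\gamma_{m-1}(R)$ in the last slot of the two-fold bracket, giving $3\{b,c,d\}\{v_i,u_i\}\in\gamma_{(m-1)+3}(R)R=\gamma_{m+2}(R)R$; as $\ch K\ne 3$, this yields $a\{b,c,d\}\in\gamma_{m+2}(R)R$.

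The crux, and the step I expect to resist a direct treatment, is the pair of ``mixed'' summands $\{a,c\}\{b,d\}$ and $\{b,c\}\{a,d\}$: I do not expect either to lie in $\gamma_{m+2}(R)R$ by itself, and the idea is to estimate their sum. Substituting $a=\sum_i\{u_i,v_i\}$ and using commutativity, the sum becomes $\sum_i\bigl(\{u_i,v_i,c\}\{b,d\}+\{u_i,v_i,d\}\{b,c\}\bigr)$. For each $i$ I would apply Claim~1 of Lemma~\ref{Lperm} twice, with $a_1=u_i\in\gamma_{m-1}(R)$: taking $(a_1,\dots,a_5)=(u_i,v_i,c,b,d)$ gives $\{u_i,v_i,c\}\{b,d\}+\{u_i,v_i,b\}\{c,d\}\in\gamma_{m+2}(R)R$, and taking $(u_i,v_i,d,b,c)$ gives $\{u_i,v_i,d\}\{b,c\}+\{u_i,v_i,b\}\{d,c\}\in\gamma_{m+2}(R)R$. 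Adding these two relations, the unwanted terms cancel since $\{c,d\}+\{d,c\}=0$, so $\{u_i,v_i,c\}\{b,d\}+\{u_i,v_i,d\}\{b,c\}\in\gamma_{m+2}(R)R$. Summing over $i$ gives $\{a,c\}\{b,d\}+\{b,c\}\{a,d\}\in\gamma_{m+2}(R)R$, and combined with the bounds for $b\{a,c,d\}$ and $a\{b,c,d\}$ this forces $\{\{ab,c\},d\}\in\gamma_{m+2}(R)R$, as required.
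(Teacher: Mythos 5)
Your proof is correct, and its skeleton is the same as the paper's: expand $\{\{ab,c\},d\}$ by the Leibnitz rule into four summands, dispose of $b\{a,c,d\}$ trivially, and handle $a\{b,c,d\}$ by writing $a$ as a sum of brackets $\{u_i,v_i\}$ with $u_i\in\gamma_{m-1}(R)$ and invoking Lemma~\ref{Lprod3} (using $\ch K\ne 3$). The one place you diverge is the pair of mixed terms: the paper does not decompose $a$ there at all, but instead expands $\{dc,b,a\}$ (in your letters) by the Leibnitz rule to get
$\{a,c\}\{b,d\}+\{a,d\}\{b,c\}=\{dc,b,a\}-d\{c,b,a\}-c\{d,b,a\}$, and all three right-hand terms lie in $\gamma_{m+2}(R)R$ directly because $a\in\gamma_m(R)$ sits in the last slot of each triple bracket. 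Your route instead pushes $u_i\in\gamma_{m-1}(R)$ into the brackets and applies Claim~1 of Lemma~\ref{Lperm} twice, cancelling the unwanted term via $\{c,d\}+\{d,c\}=0$; this is valid but costs an extra decomposition and two invocations of Lemma~\ref{Lperm} where a single Leibnitz identity suffices. Either way the conclusion $\{\{ab,c\},d\}\in\gamma_{m+2}(R)R$ follows.
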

\begin{proof} Let $c\in\gamma_{m}(R)$ and $x,y,z\in R$. Consider the product
\begin{equation}\label{cxyz}
\{cx,y,z \}=c\{x,y,z\}+\{c,y\}\{x,z\}+\{c,z\}\{x,y\}+ \{c,y,z\}x.
\end{equation}
By properties of Lie commutators, $\{c,y,z\}x\in \gamma_{m+2}(R)R$.

Since $c\in\gamma_{m}(R)$, $m\ge 2$,
we can consider that $c=\{a,b\}$, where $a\in R$, $b\in\gamma_{m-1}(R)$.
Applying Lemma~\ref{Lprod3}, we get $c\{x,y,z\}\in\gamma_{m+2}(R)R$.

Consider  two remaining middle terms in~\eqref{cxyz}
\begin{align*}
\{c,y\}\{x,z\}+\{c,z\}\{x,y\}
&= \{z,x\}\{y,c\}+\{y,x\}\{z,c\}\\
&=\{ zy, x,c\}-z\{ y, x,c\}-\{z, x,c\}y\in \gamma_{m+2}(R)R.
\end{align*}
Thus, $\{cx,y,z \}\in \gamma_{m+2}(R)R$, yielding the result.
\end{proof}
Now we prove the first claim of Theorem~\ref{Tcomutt} as a separate statement.
\begin{Lemma}
Let $R$ be a Poisson algebra over a field $K$, $\ch K\neq 2,3$.
Suppose that one of integers $n,m\geq 1$ is odd, then
$\gamma_n(R) \gamma_m(R)\subseteq\gamma_{n+m-1}(R) R$.
\end{Lemma}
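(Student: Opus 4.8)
The statement is symmetric in $n$ and $m$ up to anticommutativity of the Poisson bracket (which at the level of the products $\gamma_n(R)\gamma_m(R)$ is literally commutativity of the associative multiplication), so without loss of generality I would assume that $n$ is odd. I plan to argue by induction on $m$ for fixed odd $n$, the key inductive engine being the two ``permutation'' lemmas (Lemma~\ref{Lperm}) together with Lemma~\ref{Lprod2}, which are the only place where the hypothesis $\ch K\neq 2,3$ is used. The base cases $m=1$ and $m=2$ should be treated first: for $m=1$ the claim reads $\gamma_n(R)\cdot R\subseteq\gamma_n(R)R$, which is trivial, and $m=2$ is the substantive starting point.

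\textbf{The base case $m=2$.} Here I must show $\gamma_n(R)\gamma_2(R)\subseteq\gamma_{n+1}(R)R$ for $n$ odd. Write a generator as $c\{a,b\}$ with $c\in\gamma_n(R)$ and $a,b\in R$. Using the Leibnitz rule in the form $c\{a,b\}=\{ca,b\}-a\{c,b\}$, the first term lies in $\{\gamma_n(R)R,R\}\subseteq\gamma_{n+1}(R)R$ by Lemma~\ref{RiRj=Ri+j}-type arguments (more precisely by the inclusion $\gamma_n(R)R=R^{(?)}$-style filtration reasoning already available), and the second term $a\{c,b\}$ lies in $R\cdot\gamma_{n+1}(R)$. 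So in fact $m=2$ is easy and holds for all $n\geq 1$; the parity of $n$ is not yet needed. The real point of the parity hypothesis appears in the inductive step, where one wants to move a whole commutator of odd length past another commutator and pick up only a factor that is a unit in $K$ (here $3$, via Lemma~\ref{Lprod3}), rather than a factor like $2$ that could vanish.

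\textbf{The inductive step.} Assume the inclusion for all smaller values of $m$ (with $n$ odd fixed), and take $m\geq 3$. A generator of $\gamma_n(R)\gamma_m(R)$ has the form $c\cdot d$ with $c\in\gamma_n(R)$ and $d=\{e,f\}$ where $f\in\gamma_{m-1}(R)$, $e\in R$. I would expand $c\{e,f\}$ via Leibnitz as before into a term in $\{\gamma_n(R)R\cdot\text{stuff},R\}$ plus a term of the form $e\{c,f\}$; the second lands in $R\cdot\gamma_n(R)\gamma_{m-1}(R)\subseteq R\cdot\gamma_{n+m-2}(R)R$ wait --- this requires $\gamma_n(R)\gamma_{m-1}(R)\subseteq\gamma_{n+m-2}(R)R$, which is exactly the inductive hypothesis since $n$ is still odd and $m-1<m$. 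Since $\gamma_{n+m-2}(R)\supseteq\gamma_{n+m-1}(R)$ is the wrong direction, I instead need to be more careful: the honest inductive step should rewrite $c\gamma_m(R)$ by pushing one bracket of $c$ (available since $n\geq 2$, as $n$ odd and $n=1$ is the trivial base case) into the $\gamma_m$ factor using Lemma~\ref{Lperm} and Lemma~\ref{Lprod3}, reducing $\gamma_n(R)\gamma_m(R)$ modulo $\gamma_{n+m-1}(R)R$ to an expression of the shape $\gamma_{n-2}(R)\cdot(\text{commutator of length }m+2)$, i.e.\ $\gamma_{n-2}(R)\gamma_{m+2}(R)$, and then close the induction by a \emph{downward} induction on $n$ (with $m$ increasing), the odd parity of $n$ being preserved throughout and the base $n=1$ being trivial. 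The main obstacle, and where I would spend the most care, is bookkeeping the degree shifts so that the Lemma~\ref{Lperm}/Lemma~\ref{Lprod3} rearrangements --- which individually only move brackets around within triple commutators times a bracket, at fixed total weight --- assemble into a genuine reduction of the pair $(n,m)$ in a well-founded order; essentially one wants to mimic Krasilnikov's associative argument \cite{Kras13} and verify that the Jacobi-identity cancellations responsible for the factor $3$ in Lemma~\ref{Lprod3} are exactly what converts ``one of $n,m$ odd'' into the sharp index $n+m-1$.
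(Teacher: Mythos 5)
There is a genuine gap: you never actually carry out the inductive step, and the one step you do write out in detail is circular. For the ``base case $m=2$'' you write $c\{a,b\}=\{ca,b\}-a\{c,b\}$ and claim $\{ca,b\}\in\{\gamma_n(R)R,R\}\subseteq\gamma_{n+1}(R)R$ by ``filtration reasoning already available''. But no such inclusion is available: $\gamma_n(R)R$ is not $R^{(n-1)}$, and expanding $\{ca,b\}$ by the Leibnitz rule gives back $c\{a,b\}+a\{c,b\}$, so modulo $\gamma_{n+1}(R)R$ the term $\{ca,b\}$ \emph{is} $c\{a,b\}$ --- you have proved nothing. In fact $m=2$ is not ``easy and independent of parity'': for $n$ odd it is exactly the content of Lemma~\ref{Lprod3} (the factor $3$ coming from skew-symmetrizing via the Jacobi identity), which is why $\ch K\neq 2,3$ enters already here. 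Your first induction (on $m$ with $n$ fixed) you correctly abandon mid-sentence because the indices go the wrong way; the salvage you then sketch --- reduce $\gamma_n(R)\gamma_m(R)$ to $\gamma_{n-2}(R)\gamma_{m+2}(R)$ and descend to $n=1$ --- is indeed the right reduction (it is the paper's induction on the odd index $n$, read backwards), but you explicitly defer the ``bookkeeping'' that constitutes the entire proof.

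Concretely, what is missing is the following. Write $a=\{c,x,y\}$ with $c\in\gamma_{n-2}(R)$ and expand
\begin{equation*}
\{c,x,y\}\,b=\{cb,x,y\}-\{c,x\}\{b,y\}-\{c,y\}\{b,x\}-c\{b,x,y\},\qquad b\in\gamma_m(R).
\end{equation*}
The last term is your $\gamma_{n-2}(R)\gamma_{m+2}(R)$ and is handled by the inductive hypothesis, but there are two more families of terms you do not address: the term $\{cb,x,y\}$ requires \emph{both} the inductive hypothesis ($cb\in\gamma_{n+m-3}(R)R$) \emph{and} Lemma~\ref{Lprod2} (to bracket an element of $\gamma_k(R)R$ twice more and land in $\gamma_{k+2}(R)R$); and the cross terms $\{c,x\}\{b,y\}+\{c,y\}\{b,x\}$ must be recombined, via one more Leibnitz expansion of $\{yx,b,c\}$, into left-normed brackets of total weight $n+m-1$. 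Without these two steps the reduction $(n,m)\to(n-2,m+2)$ is not established, so the induction does not close. The base of that induction also cannot be $n=1$ alone: the case $n=3$ (Lemma~\ref{Lprod3}) is where the characteristic hypothesis and the parity are actually consumed, and it must be proved separately before the descent can start.
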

\begin{proof} We assume that $n$ is odd and proceed by induction on $n$.
The case $n=1$ is trivial.
The base of induction $n=3$ is proved in Lemma~\ref{Lprod3}.

Let $n\ge 5$ be odd.
Take $a\in\gamma_{n}(R)$, $b\in \gamma_m(R)$.
Since Lie products are linear combinations of left-normed Lie products,
without loss of generality assume that $a=\{c,x,y\}$, $c\in \gamma_{n-2}(R)$, $x,y\in R$.
Then
\begin{equation} \label{cbxy}
a\cdot b=\{c,x,y\}b  
=\{cb,x,y\}- \{c,x\}\{b,y\}- \{c,y\}\{b,x\}- c\{b,x,y\}.
\end{equation}

Consider the first term in~\eqref{cbxy}.
By the inductive assumption, $cb\in\gamma_{n+m-3}(R)R$, and by Lemma~\ref{Lprod2},
we get $\{cb,x,y\}\in\gamma_{n+m-1}(R)R$.

Consider the last term in~\eqref{cbxy}. Clearly, $\{b,x,y\}\in\gamma_{m+2}(R)$, recall that $c\in\gamma_{n-2}(R)$ and $n-2$ is odd.
By the inductive assumption, $c\{b,x,y\}\in \gamma_{n+m-1}(R)R$.

Consider two remaining middle terms in~\eqref{cbxy}.
\begin{align*}
 \{c,x\}\{b,y\}+\{c,y\}\{b,x\}
 =\{y,b\}\{x,c\}+\{x,b\}\{y,c\}
 =\{yx,b,c\}- \{y,b,c\}x - y\{x,b,c\}.
\end{align*}
In the Lie brackets above, $c\in \gamma_{n-2}(R)$, $b\in\gamma_m(R)$, and the remaining element belongs to $R$.
Thus, all three products above belong to $\gamma_{n+m-1}(R)R$.
Lemma is proved.
\end{proof}

We prove the second claim of Theorem~\ref{Tcomutt} as a separate statement.
\begin{Lemma}
Let $R$ be a Poisson algebra over a field $K$, $\ch K\neq 2,3$.
Let $x_1,\ldots,x_n\in R$, $n,m\ge 1$.  Then
$\{x_1,\ldots,x_n\}^m\in\gamma_{(n-1)m+1}(R) R$.
\end{Lemma}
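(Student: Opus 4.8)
The plan is to induct on the exponent $m$, setting $c:=\{x_1,\dots,x_n\}\in\gamma_n(R)$ and reducing $c^m$ to strictly smaller powers of $c$. Since $c$ is left-normed we may write $c=\{d,x_n\}$ with $d:=\{x_1,\dots,x_{n-1}\}\in\gamma_{n-1}(R)$ (the case $n=1$ is trivial, as then $c=x_1$ and $(n-1)m+1=1$, so $c^m\in R=\gamma_1(R)R$). The engine is the first claim of Theorem~\ref{Tcomutt}, already established above: if $a$ or $b$ is odd, then $\gamma_a(R)\gamma_b(R)\subseteq\gamma_{a+b-1}(R)R$. Together with the elementary inclusions $\gamma_a(R)R\cdot\gamma_b(R)R\subseteq\gamma_a(R)\gamma_b(R)R$ and $R\cdot R=R$, this shows that the trailing factor $R$ never obstructs the bookkeeping, so that $\bigl(\gamma_a(R)R\bigr)\cdot\bigl(\gamma_b(R)R\bigr)\subseteq\gamma_{a+b-1}(R)R$ whenever $a$ or $b$ is odd. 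The argument is thus driven entirely by the parity of $n$; the one delicate point is that $\gamma_n(R)\gamma_n(R)\subseteq\gamma_{2n-1}(R)R$ is \emph{not} provided by Theorem~\ref{Tcomutt} when $n$ is even.

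First I would settle the key special case $m=2$ with $n$ even. Applying the Leibnitz rule twice (here $\ch K\neq 2$ is used),
\[
\{d^2,x_n\}=2\,d\,\{d,x_n\}=2dc,\qquad
\{d^2,x_n,x_n\}=2\bigl(c^2+d\{d,x_n,x_n\}\bigr),
\]
whence
\[
c^2=\tfrac12\{d^2,x_n,x_n\}-d\{d,x_n,x_n\}.
\]
Since $n$ is even, $n-1$ is odd, so the first claim of Theorem~\ref{Tcomutt} gives both $d\{d,x_n,x_n\}\in\gamma_{n-1}(R)\gamma_{n+1}(R)\subseteq\gamma_{2n-1}(R)R$ and $d^2\in\gamma_{n-1}(R)\gamma_{n-1}(R)\subseteq\gamma_{2n-3}(R)R$. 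For $n\geq 4$ one has $2n-3\geq 2$, so Lemma~\ref{Lprod2} yields $\{d^2,x_n,x_n\}\in\{\gamma_{2n-3}(R)R,R,R\}\subseteq\gamma_{2n-1}(R)R$; for $n=2$ one argues by hand, since $d^2=x_1^2\in R$ forces $\{x_1^2,x_n,x_n\}\in\gamma_3(R)=\gamma_{2n-1}(R)$. In every case $c^2\in\gamma_{2n-1}(R)R$.

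With this in hand the induction on $m$ runs smoothly. The base $m=1$ is just $c\in\gamma_n(R)=\gamma_{(n-1)m+1}(R)$; for $m=2$ apply the first claim of Theorem~\ref{Tcomutt} directly when $n$ is odd and the previous paragraph when $n$ is even. For $m\geq 3$: if $n$ is odd, write $c^m=c\cdot c^{m-1}$, insert the inductive bound $c^{m-1}\in\gamma_{(n-1)(m-1)+1}(R)R$, and apply the first claim of Theorem~\ref{Tcomutt} (with $n$ odd) to obtain $c^m\in\gamma_{n+(n-1)(m-1)+1-1}(R)R=\gamma_{(n-1)m+1}(R)R$; if $n$ is even, write $c^m=c^2\cdot c^{m-2}$, combine $c^2\in\gamma_{2n-1}(R)R$ with $c^{m-2}\in\gamma_{(n-1)(m-2)+1}(R)R$, and apply the first claim of Theorem~\ref{Tcomutt} (with $2n-1$ odd) to obtain $c^m\in\gamma_{2n-1+(n-1)(m-2)+1-1}(R)R=\gamma_{(n-1)m+1}(R)R$, using $2n-1+(n-1)(m-2)=(n-1)m+1$. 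The main obstacle is exactly the even-$n$, $m=2$ step: one must exploit that $c^2$ is a \emph{square of a commutator}, not an anonymous element of $\gamma_n(R)\gamma_n(R)$, and one must deal separately with the degenerate range $n=2$, where $2n-3<2$ and Lemma~\ref{Lprod2} no longer applies.
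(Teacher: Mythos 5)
Your proof is correct, and its overall architecture (reduce everything to the case $m=2$ with $n$ even via Claim~1 of Theorem~\ref{Tcomutt}, then assemble higher powers by peeling off factors whose $\gamma$-index is odd) matches the paper's. The one genuine divergence is in the key sub-case $n$ even, $m=2$: you square the commutator $d=\{x_1,\dots,x_{n-1}\}\in\gamma_{n-1}(R)$ and bracket twice with $x_n$, which forces you to control $d^2$ via Claim~1, re-insert it into a double bracket via Lemma~\ref{Lprod2}, and treat $n=2$ by hand because $2n-3<2$ there. The paper instead squares the \emph{variable} $x=x_n$ and computes $\{x,a\}^2=\tfrac12\{\{x^2,a\},a\}-x\{x,a,a\}$ with $a=\{x_1,\dots,x_{n-1}\}$; since $\{\{x^2,a\},a\}\in\{\{R,\gamma_{n-1}(R)\},\gamma_{n-1}(R)\}\subseteq\gamma_{2n-1}(R)$ and $x\{x,a,a\}\in R\,\gamma_{2n-1}(R)$, everything lands in $\gamma_{2n-1}(R)R$ by the elementary Lie inclusion $\{\gamma_i(R),\gamma_j(R)\}\subseteq\gamma_{i+j}(R)$ alone, with no auxiliary lemma and no degenerate range. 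Your recursion on $m$ (stripping off $c$ or $c^2$) is also marginally tidier than the paper's split into $m=2q$ and $m=2q+1$, and both give the same bound; the only practical cost of your route is the extra dependence on Lemma~\ref{Lprod2} and the separate $n=2$ check, both of which you handle correctly.
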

\begin{proof}
Case 1: $n$ is odd. The statement follows by consecutive application of Claim 1 of Theorem~\ref{Tcomutt}.

Case 2: $n$ is even and $m=2$. Denote $a=\{x_1,\ldots,x_{n-1}\}$, $x=x_n$.
Then $a\in \gamma_{n-1}(R)$ and $\{x_1,\ldots,x_n\}=\{a,x\}$. Consider
\begin{align*}
\Big\{\{x^2,a\},a\Big\} &=\Big\{2x\{x,a\},  a\Big\}
  =2\{x,a\}\{x,a\}+2x \{x,a,a\};\\
\{x,a\}^2  &=\frac 12 \Big\{\{x^2,a\},a\Big\}-x \{x,a,a\}\in \gamma_{2n-1}(R)R.
\end{align*}

Case 3: $n$ is even and $m=2q$, $q>1$. We apply the previous cases.
\begin{align*}
\{x_1,\ldots,x_n\}^{2q}&=(\{x_1,\ldots,x_n\}^2)^q\in (\gamma_{2n-1}(R)R)^q\\
&= (\gamma_{2n-1}(R))^q R\subseteq \gamma_{(2n-2)q+1}(R)R=\gamma_{(n-1)m+1}(R)R.
\end{align*}

Case 4: $n$ is even and $m=2q+1$, $q\ge 1$ (the case $m=1$ is trivial). We use the previous cases.
\begin{align*}
\{x_1,\ldots,x_n\}^{m}&=\{x_1,\ldots,x_n\}^{2q}\{x_1,\ldots,x_n\}\\
&\in \gamma_{(n-1)2q+1}(R)\gamma_{n}(R)\subseteq \gamma_{(n-1)(m-1)+n}(R))R=\gamma_{(n-1)m+1}(R)R.
\qedhere
\end{align*}
\end{proof}

The following is an analogue of the respective fact for associative algebras, see~\cite[Theorem~3.2]{GupLev83}.
It is weaker than Claim~1 of Theorem~\ref{Tcomutt}, but it is valid for an arbitrary characteristic.
\begin{Lemma}\label{Lgamma-2}
Let $R$ be a Poisson algebra over an arbitrary field $K$. Then
$$
\gamma_m(R)\gamma_n(R)\subseteq \gamma_{m+n-2}(R)R, \qquad n,m\ge 2.
$$
\end{Lemma}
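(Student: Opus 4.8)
The plan is to mimic the proof of Lemma~\ref{Lprod2}, but without using the division-by-$3$ trick, so that the argument works in arbitrary characteristic; the price is a weaker conclusion (shift by $2$ instead of by~$1$). First I would reduce to the generating case: since $\gamma_m(R)$ is spanned by left-normed products, it suffices to prove $c\cdot\gamma_n(R)\subseteq\gamma_{m+n-2}(R)R$ for $c=\{a,b\}$ with $a\in R$, $b\in\gamma_{m-1}(R)$, and likewise one may take the element of $\gamma_n(R)$ to be of the form $\{d,x,y\}$ with $d\in\gamma_{n-2}(R)$, $x,y\in R$ (for $n\ge 3$; the case $n=2$, $m=2$ is the genuine base case and is handled directly). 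This double reduction is the key structural move.

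The computational heart is an identity of the Leibnitz type. Writing $u=\{d,x,y\}\in\gamma_n(R)$ and expanding $\{c\,d,x,y\}$ by the Leibnitz rule (as in~\eqref{cxyz} of Lemma~\ref{Lprod2}, with $c=\{a,b\}$), one gets
\begin{equation*}
\{cd,x,y\}=c\{d,x,y\}+\{c,x\}\{d,y\}+\{c,y\}\{d,x\}+\{c,y,z\}d\big|_{z=?}+\cdots,
\end{equation*}
so that $cu=c\{d,x,y\}$ is expressed as $\{cd,x,y\}$ minus correction terms. The term $\{cd,x,y\}$ lies in $\gamma_{m+n-2}(R)R$ once $cd\in\gamma_{m+n-4}(R)R$, which holds by induction on $m+n$ together with Lemma~\ref{Lprod2} applied twice (or by a direct induction using only Lemma~\ref{Lprod2}, whose conclusion is exactly the shift-by-$2$ statement we want, restricted to three commuted arguments). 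The correction terms are products of two Poisson brackets, each containing a factor from $\gamma_{m-1}(R)$ or $\gamma_{n-2}(R)$; rewriting these via $\{p,q\}\{r,s\}=\{rp\cdot\ldots\}-\cdots$ (the same "Leibnitz-in-reverse" manipulation used at the end of Lemma~\ref{Lprod2}: $\{c,y\}\{d,x\}+\{c,x\}\{d,y\}=\{yx,d,c\}-\cdots$ up to terms already in the target) collapses them into $\gamma_{m+n-2}(R)R$. Crucially, none of these rearrangements requires inverting $2$ or $3$, because here we are allowed to lose two degrees rather than one.

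The main obstacle I anticipate is bookkeeping: setting up the induction so that the hypotheses "$\gamma_i(R)\gamma_j(R)\subseteq\gamma_{i+j-2}(R)R$ for $i+j<m+n$" and "Lemma~\ref{Lprod2}-type statements" are available exactly where the expansion needs them, and checking that each of the roughly half-dozen terms produced by the double Leibnitz expansion genuinely falls into $\gamma_{m+n-2}(R)R$ — either by containing enough nested brackets outright, or after one of the reverse-Leibnitz substitutions. The base cases ($m=2$ with $n\ge 2$, symmetrically $n=2$) should follow directly from Lemma~\ref{Lprod2} once one observes $\gamma_2(R)\gamma_n(R)=\{R,R\}\gamma_n(R)$ and moves the outer bracket inside; I would dispatch these first so the induction has something to stand on. No new conceptual ingredient beyond what is already in Lemmas~\ref{Lperm}--\ref{Lprod2} is needed — only the discipline of redoing those computations while refusing to use the characteristic hypothesis.
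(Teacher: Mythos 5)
There is a genuine gap. Your plan hinges on controlling the term $\{cd,x,y\}$: you put $cd\in\gamma_{m+n-4}(R)R$ by induction and then need the double bracket to gain back two full degrees, i.e. $\{\gamma_{k}(R)R,R,R\}\subseteq\gamma_{k+2}(R)R$. That is exactly Lemma~\ref{Lprod2}, which you invoke explicitly — but Lemma~\ref{Lprod2} is proved via Lemma~\ref{Lprod3} and requires $\ch K\ne 2,3$, so you cannot use it in a statement whose whole point is to hold in arbitrary characteristic. Nor can it be "redone while refusing to use the characteristic hypothesis": in the expansion $\{es,x,y\}=e\{s,x,y\}+\{e,x\}\{s,y\}+\{e,y\}\{s,x\}+\{e,x,y\}s$ with $e\in\gamma_k(R)$, the first term $e\{s,x,y\}\in\gamma_k(R)\gamma_3(R)$ can only be placed in $\gamma_{k+1}(R)R$ by the shift-by-two inductive hypothesis, one degree short of the needed $\gamma_{k+2}(R)R$; closing that gap is precisely the shift-by-one statement $3\{a_1,a_2,a_3\}\{a_4,a_5\}\in\gamma_{k+3}(R)R$ of Lemma~\ref{Lprod3}, whose conclusion requires dividing by $3$. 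So the claim that losing two degrees lets you avoid inverting $2$ and $3$ fails at the one place where it matters, and the induction does not close. (The correction terms $\{c,x\}\{d,y\}$, $\{c,y\}\{d,x\}$, $\{c,x,y\}d$ and the base cases $m=2$ or $n=2$ are indeed unproblematic.)

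The paper's proof avoids this by a different re-association. Writing the generic product as $u=\{r,a\}\{x,y,z_1,\dots,z_{n-2}\}$ with $r\in\gamma_{m-1}(R)$, it expands not $\{cd,x,y\}$ but $w=\{a\{x,y\},r,z_1,\dots,z_{n-2}\}$, i.e. it multiplies the \emph{shallow} element $a$ of the first factor by the \emph{innermost} bracket $\{x,y\}$ of the second, and then brackets with $r$ and the remaining $z_i$. This $w$ lies in $\gamma_{m+n-2}(R)$ outright (it is a left-normed bracket of $n-1$ elements of $R$ and one element of $\gamma_{m-1}(R)$), its Leibnitz expansion has $-u$ as leading term, and every correction term is a product $\gamma_i(R)\gamma_j(R)$ with $i+j=m+n$ and a strictly smaller second index, handled by a single induction on $n$ with no appeal to Lemmas~\ref{Lperm}--\ref{Lprod2} and no characteristic restriction. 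If you want to salvage your approach you would need to replace the step through Lemma~\ref{Lprod2} by this kind of re-association; as written, the argument is circular in characteristic $2$ and $3$.
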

\begin{proof}
We proceed by induction on $n\ge 2$ for an arbitrary $m\ge 2$.
In case $n=2$ there is nothing to prove.
Assume that $n> 2$.
To study $\gamma_m(R)\gamma_n(R)$ it is sufficient to consider products of the form
$u=\{r,a\}\{x,y,z_1,\dots,z_{n-2}\}$, where $r\in\gamma_{m-1}(R)$ and all other elements belong to $R$.
Consider $w=\{a\{x,y\},r,z_1,\ldots,z_{n-2}\}$, we get $w\in\gamma_{n+m-2}(R)$.
On the other hand
\begin{align*}
w=\Big\{a\{x,y\},r,z_1,\ldots,z_{n-2}\Big\}
 &=\{a,r\}\{x,y,z_1,\dots,z_{n-2}\}\\
 &\quad +\sum_{s=1}^{n-2}\sum_{i,j}\{a,r,z_{i_1},\ldots,z_{i_s}\}\{x,y,z_{j_1},\dots,z_{j_{n-2-s}}\}\\
 &\quad +\sum_{s=0}^{n-2}\sum_{i,j}\{a,z_{i_1},\ldots,z_{i_s}\}\{x,y,r,z_{j_1},\dots,z_{j_{n-2-s}}\},
\end{align*}
where the sums above correspond to all partitions of the set $\{1,\ldots,n-2\}$ in two subsets of indices
such that $i_1<\cdots< i_s$, $j_1<\cdots< j_{n-2-s}$.
The first term yields the desired product $-u$.
Products of the first sums belong to $\gamma_{m+s}(R)\gamma_{n-s}(R)\in \gamma_{n+m-2}(R)R$
by the inductive assumption because $s>1$.
Products of the second sums belong to $\gamma_{s+1}(R)\gamma_{n+m-s-1}(R)\in \gamma_{n+m-2}(R)R$
by the inductive assumption because $s+1<n$.
Lemma is proved.
\end{proof}
\section{Lie nilpotency classes of truncated symmetric algebras $\SSS(L)$}

Recall that the strong Lie nilpotency class is an upper bound for the ordinary Lie nilpotency class.
Actually, below we prove that these classes coincide for many examples of Poisson algebras that we are studying.
We essentially apply the results of the previous section on products of commutators in Poisson algebras.
The following result and Corollary~\ref{classSpNS} yield Theorem~\ref{Tclasses}.
\begin{Th}\label{Sharma1}
Let $L$ be a Lie algebra over a field of positive characteristic $p>3$.
Assume that $L$ is nilpotent and $\dim L^2<\infty$.
The Lie nilpotency class of $\SSS(L)$ coincides with the strong Lie nilpotency class and is equal to
$$1+(p-1)\sum_{n\geq1}n\cdot\dim(\gamma_{n+1}(L)/\gamma_{n+2}(L)).$$
\end{Th}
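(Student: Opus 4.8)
The plan is to reduce the statement to a single non\-vanishing assertion and then obtain it quickly from the commutator identities of Section~\ref{Scomm}. If $L$ is abelian the theorem is trivial (all three quantities equal $1$), so assume $L^2\neq0$ and put $d_n=\dim(\gamma_{n+1}(L)/\gamma_{n+2}(L))$ and $t=1+(p-1)\sum_{n\ge1}nd_n$; since $L$ is nilpotent with $\dim L^2<\infty$, all $d_n$ are finite and vanish for large $n$, so $t$ is a well\-defined integer. By Corollary~\ref{classSpNS}, $\SSS(L)$ is strongly Lie nilpotent of class exactly $t$; together with the general inclusion $\gamma_n(\SSS(L))\subseteq\SSS(L)^{(n-1)}$ of~\eqref{gamma(R)em R(n-1)}, applied with $n=t+1$, this already gives $\gamma_{t+1}(\SSS(L))\subseteq\SSS(L)^{(t)}=0$, i.e.\ the ordinary Lie nilpotency class is at most $t$. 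Hence everything reduces to proving $\gamma_t(\SSS(L))\neq0$.

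To prove this I would produce the specific nonzero element $v$ of Corollary~\ref{classSpNS} inside the two\-sided associative ideal $\gamma_t(\SSS(L))\cdot\SSS(L)$; since $v\neq0$, this forces $\gamma_t(\SSS(L))\neq0$. First I would refine the adapted basis $\{e_{ij}\}$ of $L$ introduced before the proof of Theorem~\ref{Tnilp} so that, for every $i\ge1$, the vectors $e_{i1},\dots,e_{id_i}$ forming a basis of $\gamma_{i+1}(L)$ modulo $\gamma_{i+2}(L)$ are chosen among left\-normed Lie monomials $[a_1,\dots,a_{i+1}]$ with $a_k\in L$; this is possible because such length\-$(i+1)$ left\-normed brackets already span $\gamma_{i+1}(L)$. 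Since the Poisson bracket of $\SSS(L)$ restricts on $L$ to the Lie bracket, each such $e_{ij}$ is literally a left\-normed Poisson bracket $\{a_1,\dots,a_{i+1}\}$ of $i+1$ elements of $L$, and $v:=\prod_{i\ge1}\prod_{j=1}^{d_i}e_{ij}^{\,p-1}$ is a canonical truncated\-monomial basis vector of $\SSS(L)$, in particular $v\neq0$.

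Then I would invoke Theorem~\ref{Tcomutt} with $R=\SSS(L)$ (recall $p>3$). By its Claim~2, for each pair $(i,j)$,
\[
e_{ij}^{\,p-1}=\{a_1,\dots,a_{i+1}\}^{\,p-1}\in\gamma_{i(p-1)+1}(\SSS(L))\cdot\SSS(L).
\]
Because $p$ is odd, $p-1$ is even and every index $i(p-1)+1$ is odd, so Claim~1 of Theorem~\ref{Tcomutt} can be iterated: merging $\gamma_a(\SSS(L))\cdot\gamma_b(\SSS(L))\subseteq\gamma_{a+b-1}(\SSS(L))\cdot\SSS(L)$ again produces an odd index $a+b-1$, so the whole product collapses with no loss of weight. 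Writing $D=\sum_{i\ge1}d_i$ for the number of pairs $(i,j)$ and using $\sum_{(i,j)}\bigl(i(p-1)+1\bigr)=(p-1)\sum_{i\ge1}id_i+D=(t-1)+D$, I would conclude
\[
v\in\prod_{(i,j)}\Bigl(\gamma_{i(p-1)+1}(\SSS(L))\cdot\SSS(L)\Bigr)=\Bigl(\prod_{(i,j)}\gamma_{i(p-1)+1}(\SSS(L))\Bigr)\cdot\SSS(L)\subseteq\gamma_{(t-1)+D-(D-1)}(\SSS(L))\cdot\SSS(L)=\gamma_t(\SSS(L))\cdot\SSS(L).
\]
Thus $\gamma_t(\SSS(L))\neq0$, which together with the first paragraph yields the theorem.

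Given Theorem~\ref{Tcomutt}, this argument is short; the substantive work lies in establishing Theorem~\ref{Tcomutt} itself (Section~\ref{Scomm}), where the hypothesis $\ch K\neq2,3$ is genuinely consumed, since the odd\-index inclusion $\gamma_n(R)\gamma_m(R)\subseteq\gamma_{n+m-1}(R)R$ fails for $p=2,3$. The one point inside the present argument that must not be overlooked is the parity observation: because $p-1$ is even, every exponent $i(p-1)+1$ produced by Claim~2 of Theorem~\ref{Tcomutt} is odd, which is exactly what keeps us in the favourable case of Claim~1 under iteration; for $p=2,3$ this breaks down and one is left only with the upper bound $t$ together with the weaker lower bound discussed after Theorem~\ref{Tclasses}. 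A secondary technical point, harmless but worth spelling out, is the refinement of the basis so that each $e_{ij}$, $i\ge1$, is a single left\-normed Poisson bracket, ensuring that $e_{ij}^{\,p-1}$ has the form $\{a_1,\dots,a_{i+1}\}^{\,p-1}$ to which Claim~2 applies verbatim.
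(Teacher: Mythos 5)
Your proposal is correct and follows essentially the same route as the paper: the upper bound comes from Corollary~\ref{classSpNS} together with $\gamma_{n}(R)\subseteq R^{(n-1)}$, and the lower bound comes from exhibiting the canonical monomial $v=\prod e_{ij}^{p-1}$ (with each $e_{ij}$ chosen as a left-normed bracket) and pushing it into $\gamma_t(\SSS(L))\cdot\SSS(L)$ via Claims~2 and~1 of Theorem~\ref{Tcomutt}, using exactly the parity observation that $i(p-1)+1$ is odd. Your bookkeeping with $D$ and the explicit remark about refining the basis are slightly more detailed than the paper's wording but do not constitute a different argument.
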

\begin{proof}
Corollary~\ref{classSpNS} yields the required value for the strong Lie nilpotency class.
We have finite numbers $d_i=\dim \gamma_{i+1}(L)/\gamma_{i+2}(L)$ for $i\geq 1$.
Let $d_s\neq 0$ but $d_{s+1}=d_{s+2}=\ldots =0$, for some $s$.
Let $\{e_{ij}\, |\, 1\leq j\leq d_i\}$ be a base for $\gamma_{i+1}(L)$ modulo $\gamma_{i+2}(L)$, where $1\leq i\leq s$.
Consider
\begin{equation}\label{v}
0\neq v=e_{11}^{p-1}\cdots e_{1d_1}^{p-1}
 \cdots e_{s1}^{p-1}\cdots e_{sd_s}^{p-1}.
\end{equation}
Observe that the basis elements above can be chosen as commutators,
namely $e_{ij}=\{y_1,\ldots,y_{i+1}\}$,
where $y_k\in L$.
By Claim~2  of Theorem~\ref{Tcomutt},
$e_{ij}^{p-1}\in\gamma_{i(p-1)+1}(R) R$, for all $j=1,\dots, d_i$, $i=1,\dots,s$.
Since $i(p-1)+1$ is odd, we can apply Claim~1 of Theorem~\ref{Tcomutt} and obtain that
$e_{i1}^{p-1}\cdots e_{id_i}^{p-1}\in\gamma_{d_i i(p-1)+1}(R) R$.
Similarly, the total product~\eqref{v} enjoys the property
$v\in\gamma_{N+1}(R) R$, where $N=(p-1)\sum_{i=1}^s id_i$.
Therefore, $\gamma_{N+1}(R)\ne 0$.
On the other hand, $\gamma_{N+2}(R)\subseteq R^{(N+1)}=0$
by~\eqref{gamma(R)em R(n-1)} and Corollary~\ref{classSpNS}.
We proved that $\SSS(L)$ is Lie nilpotent of the required class $N+1$.
\end{proof}

Now, let us establish bounds for the ordinary Lie nilpotency class in case $\ch K=2,3$.
\begin{Lemma}\label{L_cl_lower}
Let $L$ be a non-abelian Lie algebra over a field of characteristic $p=2,3$.
Assume that $L$ is nilpotent and $\dim L^2<\infty$.
The Lie nilpotency class of $\SSS(L)$ is bounded from above by the strong Lie nilpotency class
and bounded from below by the following number
$$2+(p-1)\sum_{n\geq2}(n-1)\cdot\dim(\gamma_{n+1}(L)/\gamma_{n+2}(L)).$$
\end{Lemma}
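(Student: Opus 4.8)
The plan splits into the two bounds. The upper bound is immediate from the general inclusion $\gamma_n(R)\subseteq R^{(n-1)}$ of \eqref{gamma(R)em R(n-1)}, applied to $R=\SSS(L)$: the hypothesis ($L$ nilpotent, $\dim L^2<\infty$) makes $\SSS(L)$ strongly Lie nilpotent of some finite class $t$ (Theorem~\ref{Tnilp}, or Corollary~\ref{classSpNS}), and then $\gamma_{t+1}(\SSS(L))\subseteq\SSS(L)^{(t)}=0$, so the Lie nilpotency class is at most $t$. For the lower bound I would exhibit a single nonzero element lying deep in the lower central series. Write $R=\SSS(L)$ and $d_i=\dim(\gamma_{i+1}(L)/\gamma_{i+2}(L))$, which are finite and almost all zero; fix a basis $\{e_{ij}\mid 1\le j\le d_i\}$ of $\gamma_{i+1}(L)$ modulo $\gamma_{i+2}(L)$ consisting of left-normed commutators $e_{ij}=\{y_1,\dots,y_{i+1}\}$ with $y_k\in L$, so that $e_{ij}\in\gamma_{i+1}(R)$; and take the element $v$ of \eqref{v}. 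It is nonzero, being a canonical basis monomial of $R$ with all exponents equal to $p-1<p$ (Claim~3 of Lemma~\ref{Filt}).

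The heart of the argument is to show $v\in\gamma_M(R)\,R$ with $M=2+(p-1)\sum_{n\ge 2}(n-1)d_n$, which forces $\gamma_M(R)\neq 0$, whence the Lie nilpotency class is at least $M$. Since $\ch K\in\{2,3\}$ the strong estimates of Theorem~\ref{Tcomutt} are unavailable, so in their place I would use only Lemma~\ref{Lgamma-2}, valid in every characteristic: $\gamma_a(R)\gamma_b(R)\subseteq\gamma_{a+b-2}(R)\,R$ for $a,b\ge 2$. Iterating it, a $k$-fold product of elements of $\gamma_a(R)R$ with $a\ge 2$ lands in $\gamma_{k(a-2)+2}(R)\,R$. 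I would apply this three times, in order: (i) for fixed $i\ge 1$, since $e_{ij}\in\gamma_{i+1}(R)$ with $i+1\ge 2$, the $(p-1)$-fold product gives $e_{ij}^{\,p-1}\in\gamma_{(p-1)(i-1)+2}(R)\,R$ (for $p=2$ this is simply $e_{ij}\in\gamma_{i+1}(R)$, nothing to prove); (ii) multiplying the $d_i$ factors $e_{i1}^{\,p-1},\dots,e_{id_i}^{\,p-1}$, all of weight $a_i=(p-1)(i-1)+2\ge 2$, yields $\prod_{j}e_{ij}^{\,p-1}\in\gamma_{(p-1)(i-1)d_i+2}(R)\,R$; (iii) multiplying these blocks over $i=1,\dots,s$ (the block $i=1$ has weight $2$ and contributes nothing) and applying Lemma~\ref{Lgamma-2} once more collapses the weights to $2+\sum_{i\ge 1}(p-1)(i-1)d_i=M$, since the $i=1$ term of the sum vanishes.

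The only point that needs care is the bookkeeping of indices: one must check $a\ge 2$ before every invocation of Lemma~\ref{Lgamma-2}, which is automatic here because $(p-1)(i-1)+2\ge 2$, $(p-1)(i-1)d_i+2\ge 2$, and every partial product of these weights stays $\ge 2$ (each block contributes $b_i-2\ge 0$). I do not expect a genuine obstacle beyond this: the entire loss relative to the $p>3$ argument of Theorem~\ref{Sharma1} is precisely the gap between ``subtract $2$'' in Lemma~\ref{Lgamma-2} and ``subtract $1$'' in Claim~1 of Theorem~\ref{Tcomutt}, together with the absence of the sharper power estimate (Claim~2 of Theorem~\ref{Tcomutt}), and this is exactly what turns the coefficient $n$ appearing in Theorem~\ref{Sharma1} into $n-1$ in the present bound.
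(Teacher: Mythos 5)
Your proof is correct and follows essentially the same route as the paper: the upper bound from $\gamma_n(R)\subseteq R^{(n-1)}$, and the lower bound by showing the nonzero monomial $v$ of \eqref{v} lies in $\gamma_M(R)R$ via iterated application of Lemma~\ref{Lgamma-2}, first to the powers $e_{ij}^{p-1}$ and then to the full product. Your explicit bookkeeping of the weights $(p-1)(i-1)+2$ and the check that all indices stay $\ge 2$ just makes precise what the paper leaves implicit.
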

\begin{proof}
Let bases elements $e_{ij}\in\gamma_{i+1}(L)$ be chosen as in the proof above.
Applying Lemma~\ref{Lgamma-2}, we get
$e_{ij}^{p-1}\in\gamma_{(p-1)(i-1)+2}(R) R$, where  $1\le j\le d_i$, $1\le i\le s$.
(Since $L$ is non-abelian, there exists at least one  basis element
$e_{11}\in\gamma_2(L)$, and product~\eqref{v} is non-trivial.)
Applying Lemma~\ref{Lgamma-2} to the whole of product~\eqref{v}, we get
$0\neq v\in\gamma_{M}(R) R$, where $M=(p-1)\sum_{i=1}^s (i-1)d_i+2$.
Thus, $\gamma_{M}(R)\ne 0$.
Therefore, $M$ is a lower bound for the Lie nilpotency class of $\SSS(L)$.
\end{proof}

\section{Solvability of truncated symmetric algebras $\SSS(L)$}

In this section we prove \textit{Theorem \ref{Tsolv} in case $p\ne 2$}.
\medskip

Implication 1) $\Rightarrow$ 2) follows by inclusion~\eqref{deltindeltb}.
\medskip

Now, we prove implication 3) $\Rightarrow$ 1).
Suppose that $\dim L^2<\infty$ and $L$ is solvable of length $k\ge 1$.
We proceed by induction on $k$.
The base of induction $k=1$ is trivial  because $L$ is abelian and $\SSS(L)$ is abelian as well.

Assume that $k>1$. We have $\delta_{k}(L)=0$ and $\delta_{k-1}(L)\ne 0$.
Put $\bar{L}=L/\delta_{k-1}(L)$ and consider the induced map
$\varphi: \SSS(L)\rightarrow \SSS(\bar{L})$.
By the induction hypothesis, $\SSS(\bar{L})$ is strongly solvable,
then there exists $m$ such that ${\tilde\delta}_m(\SSS(\bar L))=0$,
thus ${\tilde \delta}_m(\SSS(L))\subseteq J=\SSS(L)\delta_{k-1}(L)$, the latter being the kernel of $\varphi$.

Let $\{b_1,\ldots,b_l\}$ be a basis of $\delta_{k-1}(L)$ and
$\{a_i\in L \mid i\in I\}$ a basis of $L$ modulo $\delta_{k-1}(L)$.
Consider a PBW-basis monomial $w\in J$, it has a form $w=a^{\alpha}b^{\beta}$,
where $a^{\alpha}=a_1^{\alpha_1}a_2^{\alpha_2}\cdots a_n^{\alpha_n}$, $0\leq \alpha_i<p$ and
$b^\beta=b_1^{\beta_1}b_2^{\beta_2}\cdots b_l^{\beta_l}\in \SSS(\delta_{k-1}(L))$,
$0\leq\beta_j\leq p-1$, where at least one $\beta_j$ is nonzero.
Denote $|\beta|=\beta_1+\cdots+\beta_l$. We have $|\beta|\ge 1$.

By definition of the upper derived series,
elements of $\tilde{\delta}_{m+1}(\SSS(L))$ are expressed via $\{w_1,w_2\}v$
where $w_1,w_2\in J$, which we write as above, and $v\in \SSS(L)$.
Compute using the Leibnitz rule $\{w_1,w_2\}v=\sum_j u_j$,
where we get a sum of products containing one factor of type
$\{a_i,a_j\}$ or $\{a_i,b_j\}\in \delta_{k-1}(L)$
(as $\delta_{k-1}(L)$ is an ideal of $L$), recall that $\{b_i,b_j\}=0$ because $\delta_{k-1}(L)$ is abelian.
In all cases we obtain monomials with $|\beta''|\geq|\beta|+|\beta'|\geq 2$.
Thus, the PBW-monomials of $\tilde{\delta}_{m+1}(\SSS(L))$ contain at least two elements $b_j$.
Continuing this process, we conclude that monomials of $\tilde{\delta}_{m+N}(\SSS(L))$ contain at least $2^N$ such elements.
Recall that $\dim \delta_{k-1}(L)=l$ and $\dim \SSS(\delta_{k-1}(L))=p^l$.
As soon as  $2^N>p^l$, we get $\tilde{\delta}_{m+N}(\SSS(L))=0$.
We proved that $\SSS(L)$ is strongly solvable.
\medskip

The rest of the section is devoted to implication 2) $\Rightarrow$ 3).
We assume that $\SSS(L)$ is solvable. Since $L\subseteq \SSS(L)$, clearly $L$ is solvable,
it remains to prove that $\dim L^2<\infty$.
By Theorem~\ref{Treduction}, there exist integers $n, M$ such that $\dim L/\Delta<n$,
where $\Delta=\Delta(L)=\Delta_M(L)$ and $\dim \Delta^2\le M^2$.
So, it suffices to prove that $L=\Delta$. Let us assume the opposite, that $L\neq\Delta$,
in the rest of the section we prove that this assumption leads to a contradiction.
Consider $\bar{L}=L/\Delta^2$.
Since $\dim \Delta^2<\infty$, by Lemma \ref{DeltaI}, $\Delta(\bar{L})=\Delta/\Delta^2$.
Hence, $\Delta(\bar{L})$ is abelian and $\bar{L}\neq\Delta(\bar{L})$.
Replacing $L$ by $\bar{L}$ we may assume that $\Delta$ is abelian.
Since our identity is multilinear, we can assume that $K$ is algebraically closed.

\begin{Lemma}\label{ad}
Suppose that $L$ is a Lie algebra over a field $K$,
$x\in L\setminus \Delta$ and $\Delta$ an abelian ideal.
Let $\SSS(L)$ (or symmetric algebra $S(L)$) be solvable of length $s$.
Then $\ad x$ acts on $\Delta$ algebraically, of degree bounded by $M(s)=(s+1)2^s+s$.
\end{Lemma}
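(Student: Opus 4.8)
The plan is to localise the problem at the subalgebra $H=\langle x\rangle_K\oplus\Delta$ of $L$, which is a subalgebra since $\Delta$ is an ideal. A PBW-basis argument shows that $\SSS(H)$ is a Poisson subalgebra of $\SSS(L)$ (and $S(H)$ of $S(L)$), so $\SSS(H)$ (resp.\ $S(H)$) is itself solvable of length at most $s$, and it suffices to deduce the algebraicity of $T:=\ad x|_\Delta$ from solvability of $\SSS(H)$. Write $D$ for the derivation $\{x,-\}$ of $\SSS(H)$; on the truncated polynomial algebra $R=K[v_\alpha\mid\alpha]/(v_\alpha^p)$ spanned by the monomials in a basis $\{v_\alpha\}$ of $\Delta$ it restricts to the derivation extending $T$. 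A one-line application of the Leibnitz rule (using $\{x,x\}=0$, $\{v_\alpha,v_\beta\}=0$) gives $\{xf,xg\}=x(fDg-gDf)$ for $f,g\in R$; since $\{f,g\}_D:=fDg-gDf$ is a (characteristic-free) Poisson bracket on $R$, the map $f\mapsto xf$ is a Lie-algebra isomorphism of $(R,\{\,,\,\}_D)$ onto the Lie subalgebra $xR\subseteq\SSS(H)$. Consequently $x\cdot\delta_n(R)\subseteq\delta_n(\SSS(H))$ for all $n$, so the whole problem reduces to: \emph{if $T$ is not algebraic of degree $\le M(s)$, then $\delta_s(R)\neq0$.} The same reduction works for $S(L)$ with the truncation removed.

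Assume $T$ is not algebraic of degree $\le M(s)$. By the standard fact that over an infinite field a locally algebraic operator of bounded degree is algebraic of the same degree (Kaplansky; here $K$ is algebraically closed, as we may assume since the identity is multilinear) there is $y\in\Delta$ with $y,Ty,\dots,T^{M(s)}y$ linearly independent. Set $y_j=T^jy=D^jy$ and complete $\{y_0,\dots,y_{M(s)}\}$ to a basis of $\Delta$, so $Dy_j=y_{j+1}$ for $j<M(s)$. I would then produce $0\neq w\in\delta_s(R)$ as a balanced iterated Wronskian bracket of depth $s$ whose $2^s$ leaves are the widely spaced variables $y_{c_1},\dots,y_{c_{2^s}}$, $c_i=(s+1)(i-1)$. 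To prove $w\neq0$, order the monomials of $R$ first by total degree and then lexicographically on the multiset of their variable indices with the larger index dominating. The key identity is $\mathrm{lm}(fDg)=\mathrm{lm}(f)\cdot\mathrm{sh}(\mathrm{lm}(g))$, where $\mathrm{sh}$ raises by one the largest index present, with leading coefficient $\mathrm{lc}(f)\,\mathrm{lc}(g)$ times the multiplicity of that index in $\mathrm{lm}(g)$; this holds because the order is multiplicative and $D$ carries the leading part of a monomial to its shift. An induction on the tree level then shows: each node's leading monomial is squarefree with coefficient $\pm1$; the leading monomials of two sibling nodes have disjoint supports — in fact every index of the left node is strictly smaller than every index of the right node, because the spacing $s+1$ dominates the at most $s$ index-raises a leaf can absorb — and hence $\mathrm{lm}\bigl(\{A,B\}_D\bigr)=\mathrm{lm}(A)\cdot\mathrm{sh}(\mathrm{lm}(B))$ is again squarefree with coefficient $\pm1$. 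The largest index occurring anywhere is $c_{2^s}+s=(s+1)2^s-1<M(s)$, so all variables stay inside the linearly independent range $y_0,\dots,y_{M(s)}$ and no relation among the $y_j$ can interfere; and the final leading monomial, being squarefree, survives the $p$-truncation and is nonzero. Thus $\delta_s(R)\neq0$, hence $\delta_s(\SSS(H))\neq0$, contradicting solvability of length $s$; therefore $\ad x$ is algebraic on $\Delta$ of degree $\le M(s)$.

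The main obstacle is the combinatorial bookkeeping in the last step: choosing a monomial order for which the leading term of an iterated Wronskian bracket is actually computable, and checking that the leaves can be spread far enough that none of the $s$ successive index-raises makes two sibling leading monomials share a variable (which would destroy the $\pm1$ leading coefficient) while the top index stays below $M(s)$. The estimate above has room to spare — top index $(s+1)2^s-1$ against the budget $(s+1)2^s+s$ — so a somewhat looser version of this counting, leaving slack also for the step from ``every vector is $T$-algebraic of degree $\le M(s)$'' to ``$T$ is algebraic of degree $\le M(s)$'', should reproduce the stated value of $M(s)$.
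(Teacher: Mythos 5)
Your proposal is correct and takes essentially the same route as the paper: both evaluate the solvability polynomial $\delta_s$ on elements of the form $x\cdot(\ad x)^{c_i}v$ with the exponents spaced by $s+1$ so that the at most $s$ index-raises keep the leaves' supports disjoint, identify the unique maximal monomial (squarefree, coefficient $\pm1$) under a degree-lex order to conclude nonvanishing, and then pass from local algebraicity of bounded degree to algebraicity of $\ad x$ on $\Delta$ (the paper does this last step by an explicit Jordan decomposition over the algebraic closure rather than by citing Kaplansky). Your identity $\{xf,xg\}=x(fDg-gDf)$ is exactly the computation the paper performs implicitly in its $k=1,2$ examples and its recursion for the maximal tuple, so the two arguments differ only in packaging.
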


\begin{proof}
By assumption,  $\SSS(L)$ satisfies the identity $\delta_s(x_1,x_2,\ldots,x_{2^s})\equiv0$.
Fix $x\in L\setminus\Delta$. Consider an element $0\neq v\in\Delta$.
Denote $v_i=(\ad x)^i v$ for $i\ge 0$.
We want to prove that $\{v_i \,|\, 0\le i\le M\}$ is a linearly dependent set, where $M=M(s)$.
By way of contradiction, assume that $\{v_i\, |\, 0\le i\le M\}$ is linearly independent.
We shall show that, choosing $N$ sufficiently large, $\delta_s(xv_N,xv_{2N},\ldots,xv_{2^sN})\ne 0$.

We compare elements of the same length $u=xv_{\alpha_1}v_{\alpha_2}\cdots v_{\alpha_k}$,
$w=xv_{\beta_1}v_{\beta_2}\cdots v_{\beta_k}$ lexicographically starting with the senior indices.
Namely, assume that $\alpha_1\leq\alpha_2\leq\cdots \leq\alpha_k$, $\beta_1\leq\beta_2\leq\cdots\leq\beta_k$.
Then $u<w$ if and only if $\alpha_k<\beta_k$, or $\alpha_k=\beta_k$, but $\alpha_{k-1}<\beta_{k-1}$, etc.

Let $k=1$, consider the following evaluation $x_1=xv_{N}, x_2=xv_{2N}$:
\begin{align*}
\delta_1(x_1,x_2)&=\{x_1,x_2\}=\{xv_N,xv_{2N}\}
  =x(v_Nv_{2N+1}-v_{N+1}v_{2N}),
\end{align*}
where the elements $xv_{N}v_{2N+1}$ and $xv_{N+1}v_{2N}$ are nonzero and different.

Let $k=2$, consider the evaluation by $x_1=xv_{N},x_2=xv_{2N},x_3=xv_{3N}$, $x_4=xv_{4N}$:
\begin{align*}
\delta_2(xv_{N},xv_{2N},xv_{3N},xv_{4N})&=\{\delta_1(xv_N,xv_{2N}),\delta_1(xv_{3N},xv_{4N})\}\\
&=\{xv_Nv_{2N+1}-xv_{N+1}v_{2N},xv_{3N}v_{4N+1}-xv_{3N+1}v_{4N}\}\\
&=x(v_Nv_{2N+1}-v_{N+1}v_{2N})(v_{3N}v_{4N+2}-v_{3N+2}v_{4N})\\
&\quad +x(v_{N+2}v_{2N}-v_{N}v_{2N+2})(v_{3N}v_{4N+1}-v_{3N+1}v_{4N}),
\end{align*}
where the element $xv_{N}v_{2N+1}v_{3N}v_{4N+2}$ is the unique maximal with respect  to the lexicographic order.

In general, let $k\geq 1$, making a similar evaluation $x_i=xv_{iN}$, for all $i=1,2,\ldots, 2^k$,
we present $\delta_k(x_1,x_2,\ldots,x_{2^k})$ as a sum
\begin{align}\label{epsilon}
\sum_{\epsilon_j}\theta_{\epsilon} xv_{N+\varepsilon_1}v_{2N+\varepsilon_2}\cdots v_{2^kN+\varepsilon_{2^k}},\quad
\theta_{\epsilon}\in K;\quad
\text{where $\epsilon_j$ satisfy:}\\
\epsilon_1+\epsilon_2+\cdots+\epsilon_{2^k}=2^k-1,\qquad
 0\leq\varepsilon_i\leq k, \quad i=1,2,\ldots, 2^k.
 \nonumber
\end{align}
The sum contains a unique largest product having $\theta_{\epsilon}=\pm 1$.
Indeed, let for $k$ we have a unique maximal element (\ref{epsilon}) given by a tuple $(\alpha_1,\alpha_2,\ldots,\alpha_{2^k})$,
then the unique maximal element for $k+1$ is given by the tuple
$(\alpha_1,\ldots,\alpha_{2^k},\alpha_1,\ldots,\alpha_{2^k-1},\alpha_{2^k}+1)$.
Also, we check by induction that $\epsilon_i\le k$.

Now, we consider $k=s$ and fix $N=s+1$ to guarantee that the indices of $v_i$s in (\ref{epsilon}) do not overlap.
Observe that the highest index in \eqref{epsilon} is bounded by $M(s)=(s+1)2^s+s$.
The unique maximal tuple yields a non-zero product
$xv_{N+\a_1}v_{2N+\a_2}\cdots v_{2^sN+\a_{2^s}}$ in~\eqref{epsilon}.
A contradiction to the fact that $\delta_s(x_1,x_2,\ldots,x_{2^s})\equiv0$ in $\SSS(L)$.

The contradiction proves that for each $v\in \Delta$
there exists a polynomial $h_v(t)$, such that $h_v(\ad x)(v)=0$ and $\deg h_v(t)\le M$.
Therefore, $\Delta$ is a periodic $K[t]$-module under the action $t\circ w=(\ad x) w$, $w\in \Delta$.
Consider the respective  Jordan decomposition
$$\Delta=\mathop{\oplus}\limits_{i\in I}V_{(\lambda_i)},\qquad
  V_{(\lambda_i)}=\big\{y\in \Delta\,\big\vert\, (\ad x-\lambda_i E)^{m_i}y=0\big\},\ \lambda_i\in K,\ i\in I,$$
where $m_i$ is minimal.
The minimal polynomial of the action of $\ad x$ on $\Delta$ is $h(t)=\prod_{i\in I}(t-\lambda_i)^{m_i}$.
Indeed, it is sufficient to consider the action on elements $v=\sum_{i\in I} v_i$,
where $0\ne v_i\in V_{(\lambda_i)}$ correspond to minimal degrees $m_i$, $i\in I$.
Hence, $\deg h(t)=\sum_{i\in I} m_i\le M(s)$.
Thus, $\ad x$ acts algebraically on $\Delta$ of degree bounded by $M(s)$.
\end{proof}
\begin{Corr}\label{Ceigen}
Let $x\in L\setminus \Delta$ act on $\Delta$ as above. Then
\begin{enumerate}
\item The number of different eigenvalues $\lambda_i$, $i\in I$, is bounded by M(s).
\item Let $V_{\lambda_i}$, $V_{(\lambda_i)}$ be the eigenspace and respective Jordan subspace
corresponding to an eigenvalue~$\lambda_i$. Then
 $\dim V_{(\lambda_i)}\leq \dim V_{\lambda_i}\cdot M(s)$, $i\in I$.
\item There exists an eigenvalue $\lambda$ with $\dim [x,V_{(\lambda)}]=\infty$.
\item Let $\lambda$ be as above, then $\dim V_{\lambda}=\infty$.
\end{enumerate}
\end{Corr}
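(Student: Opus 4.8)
The plan is to derive all four claims from Lemma~\ref{ad}, using only elementary linear algebra on the Jordan decomposition it produces. Recall from the proof of that lemma that the minimal polynomial of $\ad x$ acting on $\Delta$ is $h(t)=\prod_{i\in I}(t-\lambda_i)^{m_i}$ with $\sum_{i\in I}m_i=\deg h(t)\le M(s)$, and that $\Delta=\oplus_{i\in I}V_{(\lambda_i)}$. Throughout I work in the situation of Lemma~\ref{ad}, where moreover $\Delta=\Delta(L)$ has finite codimension in $L$, as is the case wherever the corollary is applied.

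Claim~1 is immediate: each $m_i\ge1$, hence $|I|\le\sum_{i\in I}m_i\le M(s)$. For Claim~2 I would fix $i\in I$ and work with the nilpotent operator $N=\ad x-\lambda_i E$ on $V_{(\lambda_i)}$, for which $N^{m_i}=0$ and $\ker N=V_{\lambda_i}$. The key observation is that, for each $k\ge1$, multiplication by $N$ induces an injective linear map
$$\ker N^{k+1}/\ker N^{k}\hookrightarrow\ker N^{k}/\ker N^{k-1},$$
so that $\dim\bigl(\ker N^{k+1}/\ker N^{k}\bigr)\le\dim\bigl(\ker N/\ker N^{0}\bigr)=\dim V_{\lambda_i}$ for all $k\ge0$. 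Telescoping the chain $0=\ker N^0\subseteq\ker N\subseteq\cdots\subseteq\ker N^{m_i}=V_{(\lambda_i)}$ then gives $\dim V_{(\lambda_i)}\le m_i\dim V_{\lambda_i}\le M(s)\dim V_{\lambda_i}$.

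For Claim~3 I would first record that $\dim[x,\Delta]=\infty$. Put $H=\langle x\rangle_K\oplus\Delta$; since $\Delta$ is an ideal of finite codimension, $H$ is a subalgebra of finite codimension, so Lemma~\ref{DeltaI} yields $\Delta(H)=\Delta(L)\cap H=\Delta$. As $x\in H\setminus\Delta(H)$, we get $\dim[H,x]=\infty$, and $[H,x]=[\Delta,x]=[x,\Delta]$. Now the decomposition $\Delta=\oplus_{i\in I}V_{(\lambda_i)}$ is a finite sum by Claim~1, so from $[x,\Delta]=\sum_{i\in I}[x,V_{(\lambda_i)}]$ at least one summand is infinite dimensional; choosing such an index gives the desired eigenvalue $\lambda$. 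Finally Claim~4 drops out: $[x,V_{(\lambda)}]\subseteq V_{(\lambda)}$ forces $\dim V_{(\lambda)}=\infty$ by Claim~3, and then Claim~2 gives $\dim V_{\lambda}\ge\dim V_{(\lambda)}/M(s)=\infty$.

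None of the steps is hard. The only point that is not pure linear algebra is the equality $\Delta(H)=\Delta$ feeding into $\dim[x,\Delta]=\infty$, which is where Lemma~\ref{DeltaI} and the finite codimension of $\Delta(L)$ are used; and in Claim~2 one must take care that the induced map goes from $\ker N^{k+1}/\ker N^{k}$ \emph{down} to $\ker N^{k}/\ker N^{k-1}$, so that successive factors do not grow.
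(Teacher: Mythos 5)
Your proof is correct and follows essentially the same route as the paper's: the bound on $\deg h(t)$ for Claim~1, the bounded Jordan block size (which you phrase basis-freely via the kernel filtration of $N=\ad x-\lambda_i E$) for Claim~2, the identity $\Delta(H)=\Delta$ from Lemma~\ref{DeltaI} applied to $H=\langle x\rangle_K\oplus\Delta$ together with finiteness of the index set for Claim~3, and the combination of Claims~2 and~3 for Claim~4. Your write-up supplies the linear-algebra details the paper leaves implicit, but there is no substantive difference in approach.
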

\begin{proof}
The first claim follows by the bound on the degree of the minimal polynomial $h(t)$ found above.
The second claim follows by the fact that the size of each Jordan matrix block corresponding to a given $\lambda_i$ is bounded by $M(s)$.

Consider $H=\langle x\rangle_K\oplus\Delta$.
By Lemma~\ref{DeltaI},  $\Delta(H)=\Delta$, thus $\dim [x,\Delta]=\infty$.
Since $[x,\Delta]=\sum_{i\in I}[x, V_{(\lambda_i)}]$,
the third claim follows.

Claim 4 follows from Claims~2 and~3.
\end{proof}

Now, by Corollary~\ref{Ceigen}, our proof is reduced to the following situation.
We have $L=\langle x\rangle\oplus V_{(\lambda)}$ with $\dim [x,V_{(\lambda)}]=\infty$ and $\dim V_\lambda=\infty$.
In our proof we have two cases: A) $\lambda=0$, and B) $\lambda\neq 0$.
\medskip

{\bf The case A:} $\lambda=0$. Now $V_{(0)}=\{ y\, |\, (\ad x)^{m}y=0\}$, where $m\ge 2$ is minimal.
We can assume that $(\ad x)^2 V_{(0)}=0$.
This reduction is made as in the proof on Lie nilpotence (see application of the chain~\eqref{chain}).
The case is reduced to the following statement, in which proof we follow approach of~\cite{RiSh93}.

\begin{Pro}\label{SpSp}
Let $L$ be a Lie algebra over a field of odd characteristic such that
\begin{enumerate}
\item $L=\langle x\rangle\oplus\Delta$, where $\Delta$ is an abelian ideal;
\item $(\ad x)^2\Delta=0$, $\dim [x,\Delta]=\infty$.
\end{enumerate}
Then $\SSS(L)$ is not solvable.
\end{Pro}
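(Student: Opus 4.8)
The strategy, following~\cite{RiSh93}, is to reduce to an explicit ``infinite Heisenberg'' Poisson algebra and then to defeat the solvability identity of every length $s$ by an explicit evaluation, tracking a single leading monomial as in the proof of Lemma~\ref{ad}.

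First I would make the reduction. Since $x\in L\setminus\Delta$, the ideal $\Delta$ is abelian and $\dim[x,\Delta]=\infty$, one can choose, exactly as in the proof of Theorem~\ref{Tnilp}, elements $y_i\in\Delta$ ($i\in\N$) such that $z_i:=[x,y_i]$ are linearly independent and $\{y_i,z_i\mid i\in\N\}$ is a linearly independent set. Since $(\ad x)^2\Delta=0$ and $\Delta$ is abelian, each $z_i\in[x,\Delta]\subseteq\Delta$ satisfies $\{z_i,x\}=\{z_i,y_j\}=\{z_i,z_k\}=0$; that is, the $z_i$ are central in $\SSS(L)$. Put $H=\langle x,\,y_i,\,z_i\mid i\in\N\rangle_K$; it is a Lie subalgebra of $L$ (nilpotent of class~$2$), so $\SSS(H)$ is a Poisson subalgebra of $\SSS(L)$, and $\delta_s(\SSS(H))\subseteq\delta_s(\SSS(L))$ for all $s$. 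Hence it suffices to prove that $\SSS(H)$ is not solvable.

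Now fix $s\ge1$ and aim at $\delta_s(\SSS(H))\ne0$. I would choose a block size $k$ and pairwise distinct, widely spaced indices partitioned into $2^s$ disjoint blocks $A_1,\dots,A_{2^s}$ of size $k$, and set $w_j=x\prod_{i\in A_j}y_i\in\SSS(H)$. Expanding $\delta_s(w_1,\dots,w_{2^s})$ by repeated application of the Leibnitz rule, one checks by induction on the depth of the bracketing tree that the result has the form $x\cdot M$ with $M$ a $K$-linear combination of \emph{squarefree} monomials in the $y_i,z_i$ (squarefree since the $A_j$ are disjoint, so the relations $y_i^p=z_i^p=0$ never intervene); indeed each bracketing step is $\{xM',xM''\}=x\bigl(M'\{x,M''\}-M''\{x,M'\}\bigr)$, and $\{x,\cdot\}$ simply converts one factor $y_i$ into $z_i$.

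The heart of the matter is to exhibit in $M$ a term that cannot cancel. I would fix a lexicographic order on such monomials, comparing first the sorted tuple of $z$-indices and then that of the $y$-indices, the blocks being spaced so that $y$- and $z$-indices do not interfere, and prove by induction on the tree depth that $M$ has a \emph{unique} maximal monomial whose coefficient is $\pm2^{\,c}$ for some integer $c=c(s)\ge0$: the pair of maximal monomials at depth $t$ is sent, in a controlled way, to the maximal monomial at depth $t+1$, so distinct branches never collide at the top. Since $\ch K$ is odd this coefficient is nonzero, and since the monomial is squarefree it survives in $\SSS(H)$; thus $\delta_s(\SSS(H))\ne0$. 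As $s$ is arbitrary, $\SSS(H)$, hence $\SSS(L)$, is not solvable.

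The main obstacle is precisely this last bookkeeping: one has to make the choice of the $A_j$ (and possibly let $k$ grow with $s$ --- already $\delta_2$ collapses to $0$ for $k=1$ by antisymmetrisation) explicit enough that the uniqueness of the maximal monomial and the shape $\pm2^{\,c}$ of its coefficient follow from a clean induction. The hypotheses are used exactly here: $\dim[x,\Delta]=\infty$ provides arbitrarily many independent pairs $(y_i,z_i)$, a fresh batch for each target length~$s$, while $\ch K\ne2$ ensures that the surviving coefficients are nonzero.
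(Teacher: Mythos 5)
Your reduction and the formula $\{xa,xb\}=x(ab'-a'b)$ (writing $a'=\{x,a\}$) are correct, and your route is genuinely different from the paper's; but the step you yourself call ``the main obstacle'' is the whole proof, and as written there is a real gap there. The danger is exactly the one you noticed for $k=1$: a single bracket produces the \emph{antisymmetric} combination $ab'-a'b$, whose derivative is $ab''-a''b$, so the recursion $M_{t+1}=M_t^{(1)}(M_t^{(2)})'-(M_t^{(1)})'M_t^{(2)}$ survives to the next level only if $M_t'$ is not proportional to $M_t$; iterating, you must control ever higher derivatives $M_t^{(j)}$, hence ultimately $(\prod_{i\in A}y_i)^{(j)}=j!\sum_{|S|=j}\prod_{i\in S}z_i\prod_{i\in A\setminus S}y_i$, which vanishes both for $j>k$ and for $j\ge p$. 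You have not shown that a fixed block size works for all $s$ (if $k$ had to grow with $s$ you would collide with $\ch K=p$ through the factorials), nor that the coefficients $\binom{j}{a}-\binom{j}{a-1}$ arising in $M_{t+1}^{(j)}=\sum_{a+b=j+1}\bigl(\binom{j}{a}-\binom{j}{a-1}\bigr)(M_t^{(1)})^{(a)}(M_t^{(2)})^{(b)}$ are nonzero mod $p$ where you need them. The plan can in fact be completed with $k=2$ for every odd $p$, but the right bookkeeping is not a lexicographic leading monomial: bigrade $\SSS(\Delta)$ by the number of $z$'s coming from each half of the index set; then each $M_t^{(j)}$ is homogeneous, the terms $(M_t^{(1)})^{(a)}(M_t^{(2)})^{(b)}$ with $a+b=j+1$ have pairwise distinct bidegrees and cannot cancel, and one checks that $\{j:M_t^{(j)}\ne0\}$ stabilises at $\{0,1,2\}$ for all $t$, so $M_s\ne0$ for every $s$.

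The paper sidesteps all of this by not evaluating the solvability polynomial at a single tuple. It works with the subspace $\delta_n(R)$ and the brackets $\{x^ia,x^jb\}=x^{i+j-1}(iab'-ja'b)$: varying $i,j$ over $0,\dots,p-1$ extracts from $\delta_n(R)$ the \emph{symmetric} combination $a\diamond b=a'b+ab'$, whose derivative equals $2a'b'$ on $S=\{a\mid a''=0\}$ and so does not die (this is where $p\ne2$ enters); iterating $\diamond$ yields the explicit elements $[[a_1,\dots,a_{2^n}]]\in\delta_n(R)$, visibly nonzero as sums of distinct monomials. That is the idea your proposal is missing; either adopt it, or finish your version with the bigrading argument above.
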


We adopt the following notations. Denote $R=\SSS(L)$ and $a'=\{x,a\}$, for any $a\in \SSS(\Delta)$.
Denote $\SSS(x)=\langle x^i\,|\, 0\leq i\leq p-1\rangle_K$.
We have $R=\SSS(x) \SSS(\Delta)$.
Let $S=\{a\in \SSS(\Delta)\,|\, a''=0\}$. Note that $S$ is a linear subspace of $\SSS(\Delta)$, but it need not be a subring.
However, we do have the following result.

\begin{Lemma}
Let $a,b \in S$. Then $a'b\in S$ and $a'b+ab'\in S$.
\end{Lemma}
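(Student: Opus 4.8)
The plan is to treat $a\mapsto a'=\{x,a\}$ as a derivation $D$ of the commutative algebra $\SSS(\Delta)$ and to observe that $S$ is exactly $\ker(D^2)$. First I would record that $D$ really does map $\SSS(\Delta)$ into itself: since $\Delta$ is an ideal, $\{x,\Delta\}=[x,\Delta]\subseteq\Delta$, and $\{x,-\}$ is a derivation of the commutative product of $\SSS(L)$ by the Leibnitz rule, so it preserves the subalgebra generated by $\Delta$. Thus $a'=D(a)$ and $a''=D^2(a)$ all lie in $\SSS(\Delta)$, and $a\in S$ means precisely $D^2(a)=0$. The only two facts I need beyond this are: (i) the second-order Leibnitz rule $D^2(fg)=D^2(f)\,g+2\,D(f)\,D(g)+f\,D^2(g)$, valid for any derivation of a commutative algebra; and (ii) if $D^2(a)=0$ then $D^3(a)=D(D^2(a))=0$; in particular $S$ is $D$-invariant.

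For $a'b\in S$ I would apply (i) with $f=a'=D(a)$ and $g=b$:
\[
D^2(a'b)=D^3(a)\,b+2\,D^2(a)\,D(b)+D(a)\,D^2(b).
\]
Here the first summand vanishes by (ii), the second because $a\in S$, and the third because $b\in S$; hence $(a'b)''=0$, i.e. $a'b\in S$.

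For $a'b+ab'\in S$ I would first note that $a'b+ab'=D(a)\,b+a\,D(b)=D(ab)=(ab)'$, so it is enough to show $(ab)'\in S$, i.e. $D^3(ab)=0$. By (i), $D^2(ab)=D^2(a)\,b+2\,D(a)\,D(b)+a\,D^2(b)=2\,a'b'$ since $a,b\in S$; applying $D$ once more with the ordinary Leibnitz rule gives $D^3(ab)=2\bigl(D^2(a)\,b'+a'\,D^2(b)\bigr)=0$. So $(ab)'\in S$ and we are done.

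I do not expect a genuine obstacle here: the entire argument is that $S=\ker(D^2)$ is a $D$-invariant subspace, and that for $a,b\in S$ the cross term $2\,D(a)\,D(b)$ appearing in $D^2$ of a product is harmless — either because it is multiplied against the already-annihilated $D^2(b)$ (first assertion), or because after one more application of $D$ it splits into $D^2(a)\,b'$ and $a'\,D^2(b)$, both zero (second assertion). The single point that needs care is the initial remark that $\{x,-\}$ restricts to a derivation of $\SSS(\Delta)$, so that $a''$ makes sense inside $\SSS(\Delta)$ and $D^2,D^3$ can be iterated there.
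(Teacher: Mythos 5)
Your proof is correct and follows essentially the same route as the paper, which disposes of the first claim by the one-line Leibniz computation $(a'b)''=\{x,\{x,a'b\}\}=\{x,a'b'\}=0$ (the second claim then following by linearity of $S$, since $ab'=b'a\in S$ as well). Your explicit use of the second-order Leibniz rule and the identification $a'b+ab'=(ab)'$ with $D^3(ab)=0$ is just a slightly more systematic write-up of the same derivation argument.
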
\label{a'b}
\begin{proof}
By the Leibnitz rule, $(a'b)''=\{x,\{x,a'b\}\}=\{x,a'b'\}=0$.
\end{proof}
\begin{Lemma}
Let $a,b\in S$. For all $0\le i,j<p$ we have:
\begin{enumerate}
\item $\{x^i,a\}=ix^{i-1}a'$;
\item $\{x^ia,x^jb\}=x^{i+j-1}(iab'-ja'b)$.
\end{enumerate}\label{axibxj}
\end{Lemma}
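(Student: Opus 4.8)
The plan is to prove the two identities by a direct computation from the Leibnitz rule, carrying out Claim~(1) first and feeding it into Claim~(2); there is no conceptual obstacle here, so the whole argument is bookkeeping, and the points that need attention are the coefficients $i,j$, the degenerate exponents, and checking that every product written down really lies in $R=\SSS(x)\SSS(\Delta)$. The two structural facts I would use repeatedly are consequences of $\Delta$ being an abelian ideal: first, $a'=\{x,a\}\in\SSS(\Delta)$ whenever $a\in\SSS(\Delta)$ (since $\{x,\Delta\}\subseteq\Delta$), so $a'b$, $ab'$, and so on make sense in $R$; second, $\{\SSS(\Delta),\SSS(\Delta)\}=0$, which follows from $[\Delta,\Delta]=0$ by iterating the Leibnitz rule, and likewise $\{x^i,x^j\}=0$ since all brackets of powers of the single element $x$ vanish.

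For Claim~(1) I would induct on $i$. The base cases are immediate: $\{x^0,a\}=\{1,a\}=0$ agrees with $0\cdot x^{-1}a'$ under the convention that the zero coefficient kills the formally negative power, and $\{x^1,a\}=a'$ is the definition of $a'$. For the inductive step I would write $x^{i+1}=x\cdot x^i$ and apply the Leibnitz rule, $\{x^{i+1},a\}=x\{x^i,a\}+x^i\{x,a\}=x\cdot ix^{i-1}a'+x^ia'=(i+1)x^ia'$, which closes the induction. This argument uses nothing about $a$ beyond $a\in R$, and the restriction $i<p$ is only there so that $x^i\neq 0$; so in fact Claim~(1) does not really need $a\in S$.

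For Claim~(2) the plan is to expand $\{x^ia,x^jb\}$ by the Leibnitz rule in two stages, $\{x^ia,x^jb\}=\{x^ia,x^j\}b+\{x^ia,b\}x^j$, and then simplify each piece using Claim~(1) together with the structural facts. The first piece reduces to $x^i\{a,x^j\}=-x^i\{x^j,a\}=-jx^{i+j-1}a'$ (the summand $a\{x^i,x^j\}$ drops out), and the second reduces to $a\{x^i,b\}=ix^{i-1}ab'$ (the summand $x^i\{a,b\}$ drops out). Adding them and moving the scalar factors $a,b,a',b'$ past the powers of $x$ by commutativity gives $\{x^ia,x^jb\}=x^{i+j-1}(iab'-ja'b)$, as claimed. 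The only subtlety left is the boundary behaviour of the exponents: when $i=0$ or the exponent $i+j-1$ is formally negative the relevant monomial carries coefficient $0$, and when $i+j-1\geq p$ both sides vanish because $x^{i+j-1}=0$ in $\SSS(L)$, so the stated formula is correct in every case; again the hypothesis $a,b\in S$ is not needed for the identity itself, only for its later iterations, where Lemma~\ref{a'b} keeps $ab'$ and $a'b$ inside $S$.
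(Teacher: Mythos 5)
Your proof is correct and follows the same route the paper intends (the paper's own proof is just the phrase ``By induction''): Claim~(1) by induction on $i$ via $x^{i+1}=x\cdot x^i$ and the Leibnitz rule, and Claim~(2) by a two-stage Leibnitz expansion using Claim~(1) together with $\{x^i,x^j\}=0$ and $\{a,b\}=0$ for $a,b\in\SSS(\Delta)$. Your side remarks — that Claim~(1) needs only $a\in R$, that Claim~(2) needs only $a,b\in\SSS(\Delta)$ rather than $a,b\in S$, and the handling of the degenerate exponents — are all accurate.
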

\begin{proof} By induction.
\end{proof}

\begin{Lemma}\label{i=p-1}
Let $a,b\in S$ such that $\SSS(x) a, \SSS(x) b\subseteq \delta_{n-1}(R)$.
Then $\SSS(x)(a'b+ab')\subseteq\delta_n(R)$.
\end{Lemma}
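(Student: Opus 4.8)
The plan is to reduce the statement to the bracket formula $\{x^ia,x^jb\}=x^{i+j-1}(iab'-ja'b)$ of Lemma~\ref{axibxj}, and then, for each $k$ with $0\le k\le p-1$, to realise $x^k(a'b+ab')$ (up to a nonzero scalar in $K$) as a $K$-linear combination of such brackets. Since $\delta_n(R)=\{\delta_{n-1}(R),\delta_{n-1}(R)\}$ is a subspace and, by hypothesis, every $x^ia$ and $x^jb$ with $0\le i,j\le p-1$ lies in $\delta_{n-1}(R)$, each bracket $\{x^ia,x^jb\}$ already lies in $\delta_n(R)$; so the whole problem becomes the purely combinatorial one of choosing the indices correctly. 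Once $x^k(a'b+ab')\in\delta_n(R)$ is shown for every such $k$, taking the $K$-span yields $\SSS(x)(a'b+ab')\subseteq\delta_n(R)$.

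For $0\le k\le p-2$ I would isolate the two summands separately. Putting $i=k+1$, $j=0$ gives $\{x^{k+1}a,b\}=(k+1)x^kab'$, and putting $i=0$, $j=k+1$ gives $\{a,x^{k+1}b\}=-(k+1)x^ka'b$; here the exponent $k+1$ does not exceed $p-1$, so these brackets make sense, and $k+1$ is invertible in $K$, so both $x^kab'$ and $x^ka'b$ lie in $\delta_n(R)$, hence so does their sum $x^k(a'b+ab')$.

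The case $k=p-1$ is where this separation fails, because isolating the summands would require the exponent $k+1=p$, and $x^p=0$ in $\SSS(L)$; this is the step I expect to be the one genuinely using positive characteristic. The remedy is to keep both summands together: for $i+j=p$ with $1\le i\le p-1$ and $j=p-i$ one has $-j\equiv i\pmod p$, so $\{x^ia,x^{p-i}b\}=x^{p-1}(iab'-(p-i)a'b)=i\,x^{p-1}(a'b+ab')$, and dividing by the unit $i$ gives $x^{p-1}(a'b+ab')\in\delta_n(R)$. Combining the two ranges completes the argument; the only real care needed is this bookkeeping with the exponents, together with the observation that for $k=p-1$ the congruence $p-i\equiv -i$ must replace the index choices that are no longer available. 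The hypothesis $a,b\in S$ enters only through the validity of Lemma~\ref{axibxj}.
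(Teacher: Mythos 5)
Your proof is correct and follows essentially the same route as the paper: handle $k\le p-2$ by isolating $x^kab'$ and $x^ka'b$ via brackets with exponent $k+1$, and handle $k=p-1$ by a single bracket $\{x^ia,x^{p-i}b\}$ where the congruence $-(p-i)\equiv i\pmod p$ recombines the two summands (the paper takes the particular case $i=1$). No gaps.
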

\begin{proof} By Lemma \ref{axibxj}, $\delta_n(R)\ni\{x^ia,b\}=ix^{i-1}ab'$, $i=1,\ldots,p-1$.
Hence, $x^iab'\in\delta_n(R)$, and similarly $x^ia'b\in\delta_n(R)$, for $i=0,1,\ldots,p-2$.
It remains to settle the case $i=p-1$. By Lemma \ref{axibxj},
$ x^{p-1}(ab'+a'b)=x^{p-1}(ab'-(p-1)a'b) =\{x a,x^{p-1}b\}\in\delta_n(R)$.
\end{proof}

We supply $S$ with a {\it new bilinear operation} $(a,b)\mapsto a\diamond b=a'b+ab'$, $a,b\in S$.
We define a new {\it ``derived series''} of $S$ by $S^{[[0]]}=S$ and
$S^{[[n]]}=S^{[[n-1]]}\diamond S^{[[n-1]]}$,
the linear space generated by the elements $a\diamond b$ where $a,b\in S^{[[n-1]]}$, where $n\geq 1$.

\begin{Lemma}\label{<x>S{{n}}}
$\SSS(x) S^{[[n]]}\subseteq\delta_n(R)$ for all $n\geq 0$.
\end{Lemma}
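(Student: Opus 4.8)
The plan is to argue by induction on $n$, with the two preceding lemmas serving as the engine. For $n=0$ there is nothing to prove, since $\SSS(x)S^{[[0]]}=\SSS(x)S\subseteq R=\delta_0(R)$.

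Before running the inductive step I would record the auxiliary fact that $S^{[[n]]}\subseteq S$ for every $n\ge 0$. This holds for $n=0$ by definition of $S^{[[0]]}$, and if $S^{[[n-1]]}\subseteq S$ then for any $a,b\in S^{[[n-1]]}$ we have $a\diamond b=a'b+ab'\in S$ by Lemma~\ref{a'b}, so the linear span $S^{[[n]]}$ also lies in $S$. I need this precisely because the hypothesis of Lemma~\ref{i=p-1} requires its arguments to belong to $S$.

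For the inductive step, assume $\SSS(x)S^{[[n-1]]}\subseteq\delta_{n-1}(R)$. The space $S^{[[n]]}$ is spanned by elements $a\diamond b=a'b+ab'$ with $a,b\in S^{[[n-1]]}$. By the previous paragraph each such $a,b$ lies in $S$, and by the induction hypothesis $\SSS(x)a\subseteq\delta_{n-1}(R)$ and $\SSS(x)b\subseteq\delta_{n-1}(R)$; hence Lemma~\ref{i=p-1} applies and yields $\SSS(x)(a'b+ab')\subseteq\delta_n(R)$. Since $\delta_n(R)$ is a linear subspace of $R$, passing to the linear span of these generators gives $\SSS(x)S^{[[n]]}\subseteq\delta_n(R)$, which closes the induction.

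I do not expect a serious obstacle here: the statement is essentially a bookkeeping induction once Lemma~\ref{i=p-1} is available, the genuine work having been done in that lemma (notably the boundary case $i=p-1$, where one rewrites $x^{p-1}(ab'+a'b)$ as the bracket $\{xa,x^{p-1}b\}$). The only two points one must not overlook are that $S^{[[n]]}$ stays inside $S$ — so that Lemma~\ref{i=p-1} is legitimately applicable at each stage — and that $\delta_n(R)$ is a subspace, so it suffices to verify the inclusion on the spanning elements $a\diamond b$ rather than on arbitrary members of $S^{[[n]]}$.
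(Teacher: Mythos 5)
Your proof is correct and follows the same route as the paper, which simply states that the lemma ``follows from Lemma~\ref{i=p-1} by induction on $n$.'' Your explicit verification that $S^{[[n]]}\subseteq S$ (via Lemma~\ref{a'b}) is a detail the paper leaves implicit, and it is exactly the right point to check so that Lemma~\ref{i=p-1} applies at each stage.
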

\begin{proof}
Follows from Lemma \ref{i=p-1} by induction on $n$.
\end{proof}
For all $a_1,a_2,\ldots, a_{l}\in S$, where $l\ge 1$, let us define a {\it new product}:
\begin{equation}\label{a1an}
[[a_1,a_2,\dots,a_l]]=a_1'a_2'\cdots a_{l-1}'a_l+a_1'a_2'\cdots a_{l-2}'a_{l-1}a_l'+\cdots +a_1a_2'\cdots a_l'.
\end{equation}
\begin{Lemma}\label{a{2^n}}
$\big\{[[a_1,a_2,\dots,a_{2^n}]]\, \big|\, a_1,a_2,\dots,a_{2^n}\in S\big\}\subseteq S^{[[n]]}$ for all $n\ge 0$.
\end{Lemma}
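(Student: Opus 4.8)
The plan is to argue by induction on $n$. The cases $n=0$ and $n=1$ are immediate: $[[a_1]]=a_1\in S=S^{[[0]]}$, and $[[a_1,a_2]]=a_1'a_2+a_1a_2'=a_1\diamond a_2\in S\diamond S=S^{[[1]]}$.

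The computational fact I would isolate first is that, differentiating any of the ``one unprimed factor'' monomials occurring in $[[a_1,\dots,a_k]]$, every already-primed factor is annihilated because $a_i''=0$ for $a_i\in S$ (recall the Leibnitz rule gives $(bc)'=b'c+bc'$, hence $(c_1\cdots c_k)'=\sum_i c_1\cdots c_i'\cdots c_k$). Therefore, for all $a_1,\dots,a_k\in S$,
\begin{equation*}
\big([[a_1,\dots,a_k]]\big)'=\sum_{j=1}^{k}a_1'\cdots a_{j-1}'\,a_j'\,a_{j+1}'\cdots a_k'=k\cdot a_1'a_2'\cdots a_k'.
\end{equation*}
In particular $(a_1'\cdots a_k')'=0$, so every summand of $[[a_1,\dots,a_k]]$ already lies in $S$ (consistent with Lemma~\ref{a'b}).

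For the inductive step I would put $k=2^{n-1}$, $u=[[a_1,\dots,a_k]]$ and $w=[[a_{k+1},\dots,a_{2^n}]]$; by the induction hypothesis $u,w\in S^{[[n-1]]}$. Using the formula above for $u'$ and $w'$, expand
\begin{equation*}
u\diamond w=u'w+uw'=k\Big((a_1'\cdots a_k')\,[[a_{k+1},\dots,a_{2^n}]]+[[a_1,\dots,a_k]]\,(a_{k+1}'\cdots a_{2^n}')\Big).
\end{equation*}
The first product inside the parentheses is exactly the sum of those $2^n$-fold monomials $a_1'\cdots a_j\cdots a_{2^n}'$ in which the unique unprimed factor has index $>k$, and the second is the sum of those in which it has index $\le k$; together they give precisely $[[a_1,\dots,a_{2^n}]]$. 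Hence $u\diamond w=2^{n-1}\,[[a_1,\dots,a_{2^n}]]$.

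Finally, since we are inside the proof of Proposition~\ref{SpSp}, $\ch K$ is odd, so $2^{n-1}$ is invertible in $K$; as $S^{[[n]]}$ is a linear subspace of $S$, we obtain $[[a_1,\dots,a_{2^n}]]=(2^{n-1})^{-1}(u\diamond w)\in S^{[[n-1]]}\diamond S^{[[n-1]]}=S^{[[n]]}$, completing the induction. The only point that needs care is the bookkeeping in the last expansion, namely checking that the two halves reassemble into the full $[[a_1,\dots,a_{2^n}]]$ with a single common scalar $k=2^{n-1}$, together with the harmless but essential use of odd characteristic to cancel that scalar; I do not foresee any further obstacle.
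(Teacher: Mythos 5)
Your proof is correct and follows essentially the same route as the paper: both arguments rest on the identity $\big([[a_1,\dots,a_k]]\big)'=k\,a_1'\cdots a_k'$ (a consequence of $a_i''=0$), then recognize $[[a_1,\dots,a_{2^n}]]$ as $2^{n-1}$ times the $\diamond$-product of the two half-length brackets, and cancel the scalar using $\ch K\ne 2$. The only difference is cosmetic — you split a length-$2^n$ bracket into two halves while the paper glues two length-$2^n$ brackets into one of length $2^{n+1}$.
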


\begin{proof}
We proceed by induction on $n$.
Consider $n=0$, we have $[[a_1]]=a_1\in S=S^{[[0]]}$.
Consider $n=1$, we have $[[a_1,a_2]]=a_1'a_2+a_1a_2'=a_1\diamond a_2\in S\diamond S=S^{[[1]]}$.

Assume that the assertion is valid for $n\ge 1$ and consider $n+1$.
Note that, since $a_1,a_2,\dots,a_{2^n}\in S$,  we have
$[[a_1,a_2,\dots,a_{2^n}]]'=2^na_1'a_2'\cdots a_{2^n}'.$
Hence,
\begin{align*}
&[[a_1,a_2,\dots,a_{2^n}]]\diamond[[b_1,b_2,\dots,b_{2^n}]]\\
&\quad=2^n\Big(a_1'a_2'\cdots a_{2^n}'[[b_1,b_2,\dots,b_{2^n}]]+[[a_1,a_2,\dots,a_{2^n}]]b_1'b_2'\cdots b_{2^n}'\Big)\\
&\quad=2^n[[a_1,a_2,\dots,a_{2^n},b_1,b_2,\dots,b_{2^n}]].
\end{align*}
By induction hypothesis, $[[a_1,a_2,\dots,a_{2^n}]],[[b_1,b_2,\dots,b_{2^n}]]\in S^{[[n]]}.$
Since $p\ne 2$, we get
\begin{equation*}
[[a_1,a_2,\dots,a_{2^n},b_1,b_2,\dots,b_{2^n}]]\in S^{[[n]]}\diamond S^{[[n]]}=S^{[[n+1]]}.
\qedhere
\end{equation*}
\end{proof}

\begin{Lemma}\label{Ldelta_n}
For all $a_1,a_2,\dots,a_{2^n}\in S$ we have
$\SSS(x) [[a_1,a_2,\dots,a_{2^n}]]\subseteq \delta_n(R)$, where $n\geq 0$.
\end{Lemma}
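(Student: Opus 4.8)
The statement follows immediately by combining the two preceding lemmas, so the plan is short. First I would invoke Lemma~\ref{a{2^n}}, which says that for any $a_1,\dots,a_{2^n}\in S$ the element $[[a_1,\dots,a_{2^n}]]$ lies in $S^{[[n]]}$, the $n$-th term of the auxiliary ``derived series'' of $S$ built from the operation $\diamond$. Then I would apply Lemma~\ref{<x>S{{n}}}, which asserts $\SSS(x)\,S^{[[n]]}\subseteq\delta_n(R)$. Since $[[a_1,\dots,a_{2^n}]]\in S^{[[n]]}$, multiplying on the left by $\SSS(x)$ lands inside $\delta_n(R)$, which is exactly the claim.

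There is essentially no obstacle here: both ingredients were already established by induction on $n$ (Lemma~\ref{a{2^n}} uses $p\neq 2$ in order to divide by the factor $2^n$ produced when one differentiates a product of $2^n$ elements of $S$, and Lemma~\ref{<x>S{{n}}} follows from Lemma~\ref{i=p-1} by induction). Thus the present lemma is just the bookkeeping step that ties the abstract object $S^{[[n]]}$ to the concrete expression $[[a_1,\dots,a_{2^n}]]$ that will later be matched against the evaluation of $\delta_s$. One only has to note that the membership in Lemma~\ref{a{2^n}} holds for an arbitrary tuple and that left multiplication by $\SSS(x)$ respects the inclusions, both of which are clear.

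Alternatively, if one preferred not to route through Lemma~\ref{a{2^n}}, the statement could be proved directly by induction on $n$, repeating that lemma's argument inside the present one: the cases $n=0,1$ are immediate from Lemma~\ref{<x>S{{n}}} together with Lemma~\ref{i=p-1}, and the inductive step uses the identities $[[a_1,\dots,a_{2^n}]]'=2^n a_1'a_2'\cdots a_{2^n}'$ and $[[a_1,\dots,a_{2^n}]]\diamond[[b_1,\dots,b_{2^n}]]=2^n[[a_1,\dots,a_{2^{n+1}}]]$ combined with Lemma~\ref{i=p-1}. But the two-line combination of Lemma~\ref{a{2^n}} and Lemma~\ref{<x>S{{n}}} is cleaner and is the route I would take.
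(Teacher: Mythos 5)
Your proof is correct and is exactly the paper's argument: the paper also deduces the lemma immediately by combining Lemma~\ref{a{2^n}} with Lemma~\ref{<x>S{{n}}}. Nothing further is needed.
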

\begin{proof}
Follows form Lemma~\ref{<x>S{{n}}} and Lemma~\ref{a{2^n}}.
\end{proof}

\begin{proof} {\it of Proposition~\ref{SpSp}}.
Since $\dim [x,\Delta]=\infty$,
we can choose $\{a_n \,|\, n\in\N \}\subseteq\Delta$ such that $\{a_n' \,|\, n\in\N\}\subseteq L^2$
is a linearly independent set.
One checks that $\{a_n, a_n'\,|\, n\in\N \}$ is linearly independent as well.
Since $\SSS(\Delta)$ is a truncated polynomial ring, monomials in~\eqref{a1an} are
linearly independent.
Therefore, $[[a_1,a_2,\dots,a_{2^n}]]\neq0$
and $\delta_n(R)\neq0$ for $n\ge 0$ by Lemma~\ref{Ldelta_n}.
So, $\SSS(L)$ is not solvable.
\end{proof}

{\bf The case B:} $\lambda\neq0$. By Corollary~\ref{Ceigen}, we have the eigenspace
$V_\lambda$ with $\dim V_\lambda=\infty$.
Consider a subalgebra $\langle x\rangle \oplus V_\lambda\subseteq L$,
by taking $\tilde x=\frac 1\lambda x$, we can assume that $\lambda=1$.
The case is reduced to the following statement.

\begin{Pro}\label{SpSp2}
Let $L$ be a Lie algebra over a field of odd characteristic such that
\begin{enumerate}
\item $L=\langle x\rangle\oplus\Delta$, where $\Delta$ is an abelian ideal;
\item $[x,v]=v$, for all $v\in\Delta$, and $\dim\Delta=\infty$;
\end{enumerate}
Then $\SSS(L)$ is not solvable.
\end{Pro}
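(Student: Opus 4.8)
\textbf{Proof proposal for Proposition~\ref{SpSp2}.}

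The plan is to reuse the machinery developed for Proposition~\ref{SpSp}, but with the subspace $S$ there replaced by the whole of $\SSS(\Delta)$, and with an explicit nonvanishing argument in place of the linear‑independence step at the end. As before put $R=\SSS(L)=\SSS(x)\SSS(\Delta)$ and $a'=\{x,a\}$ for $a\in\SSS(\Delta)$. The feature that makes this case qualitatively different from Case~A is that now $[x,v]=v$ for all $v\in\Delta$, so the derivation $a\mapsto a'$ of $\SSS(\Delta)$ is exactly the Euler (degree) derivation: $m'=(\deg m)\,m$ on every monomial $m$ in the basis of $\Delta$, while $\{v_i,v_j\}=0$; in particular the bracket formula of Lemma~\ref{axibxj} reads $\{x^i m,x^j n\}=x^{i+j-1}\bigl(i(\deg n)-j(\deg m)\bigr)mn$ for monomials $m,n$.

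First I would record that Lemma~\ref{axibxj} and Lemma~\ref{i=p-1}, and hence the inclusion $\SSS(x)S^{[[n]]}\subseteq\delta_n(R)$ of Lemma~\ref{<x>S{{n}}}, remain valid verbatim when $S$ is taken to be all of $\SSS(\Delta)$: their proofs use only the Leibnitz rule, the congruence $-(p-1)\equiv1\pmod p$, and the fact that $p\nmid i$ for $1\le i\le p-1$, and never the hypothesis $a''=0$. Here the ``substitute derived series'' is transparent, since $a\diamond b=a'b+ab'=(ab)'$, so on monomials $m\diamond n=(\deg m+\deg n)\,mn$, a scalar multiple of the ordinary product. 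Using this I would prove, by induction on $n$, that for any $2^{n}$ pairwise distinct basis vectors of $\Delta$ their squarefree product $\mu$ lies in $S^{[[n]]}$: split $\mu=\mu_1\mu_2$ into two squarefree monomials of degree $2^{n-1}$ in disjoint variables, so by induction $\mu_1,\mu_2\in S^{[[n-1]]}$ and $\mu_1\diamond\mu_2=(2^{n-1}+2^{n-1})\mu=2^{n}\mu$, a nonzero multiple of $\mu$ because $p$ is odd. Consequently $0\ne\mu\in S^{[[n]]}$, and by (the $S=\SSS(\Delta)$ version of) Lemma~\ref{<x>S{{n}}} we get $\mu\in\SSS(x)\,S^{[[n]]}\subseteq\delta_n(R)$; thus $\delta_n(R)\ne0$ for all $n\ge0$, i.e., $\SSS(L)$ is not solvable.

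The one genuine discrepancy with Case~A, which I expect to be the main (though mild) obstacle, is that the proof of Proposition~\ref{SpSp} relies on the identity $[[a_1,\dots,a_{2^n}]]'=2^n a_1'\cdots a_{2^n}'$, which uses $a_i''=0$ and fails here since now $a''=(\deg a)^2 a\neq0$ in general. This is exactly why I abandon the symmetrized element $[[a_1,\dots,a_{2^n}]]$ and the linear‑independence argument, and instead verify directly that the concrete squarefree monomial $\mu$ lies in $S^{[[n]]}$, exploiting that $\diamond$ acts diagonally on monomials via $m\diamond n=(\deg m+\deg n)mn$. The hypothesis $\dim\Delta=\infty$ is used precisely to supply $2^n$ distinct basis vectors at stage $n$ (consistent with the fact that for finite‑dimensional $\Delta$ the algebra $\SSS(L)$ is solvable), and oddness of $p$ is used both to invert the scalars $2^n$ and, inside Lemma~\ref{i=p-1}, to produce the top term $x^{p-1}(a'b+ab')$ via $-(p-1)\equiv1\pmod p$.
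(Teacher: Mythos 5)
Your proof is correct, but it takes a genuinely different route from the paper's. The paper proves Proposition~\ref{SpSp2} directly, without the $S$, $\diamond$, $S^{[[n]]}$ apparatus of Case~A: it constructs three interleaved families $e_n,h_n,f_n$ of multilinear monomials in a countable independent subset of $\Delta$, of lengths $2^n-1$, $2^n$, $2^n+1$, and shows by induction that $xe_n,xh_n,xf_n\in\delta_{n-1}(\SSS(L))$ using only the identity $\{xu,xw\}=(|w|-|u|)\,xuw$; the point there is that the pairwise degree \emph{differences} $1,1,2$ stay invertible for odd $p$. You instead port the Case~A machinery to $S=\SSS(\Delta)$, correctly observing that Lemma~\ref{axibxj}, Lemma~\ref{i=p-1} and the inclusion $\SSS(x)S^{[[n]]}\subseteq\delta_n(R)$ never use the hypothesis $a''=0$, and you exploit that $\diamond$ multiplies a product of monomials by its total degree, so the degree \emph{sum} $2^{n-1}+2^{n-1}=2^n$ is the relevant invertible scalar and a single squarefree monomial of degree $2^n$ witnesses $\delta_n(R)\ne0$. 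Your route buys a unified treatment of Cases~A and~B with one family of witnesses, and you correctly diagnose why the symmetrized products $[[a_1,\dots,a_{2^n}]]$ cannot be reused here (now $a''=(\deg a)^2a\ne0$ in general); the paper's route is more self-contained and needs no re-audit of the Case~A lemmas. Both arguments break at exactly the same place in characteristic $2$ (the scalar $2$), consistent with Lemma~\ref{L22}. One cosmetic slip in your commentary: oddness of $p$ plays no role in the $i=p-1$ step, since $-(p-1)\equiv 1\pmod p$ in every characteristic; its only essential use in your argument is the invertibility of $2^n$.
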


\begin{proof}
We choose a linearly independent set $\Xi=\{v_i\,|\, i\in\N\}\subseteq \Delta $.
Next, we construct sets of multilinear monomials in $\Xi$ of type $v=v_{i_1}\cdots v_{i_k}\in \SSS(\Delta)$,
denote their lengths as $|v|=k$.
Observe that $v'=\{x,v\}=|v|v$.
We start with $e_1=v_{i_1}$, $h_1=v_{i_1}v_{i_2}$, $f_1=v_{i_1}v_{i_2}v_{i_3}$, $i_1<i_2<i_3$,
where these symbols denote sets of all multilinear monomials in $\Xi$ of lengths 1,2,3, respectively.
Next, define recursively sets of multilinear monomials in $\Xi$:
\begin{equation*}
e_{n+1}=e_nh_n,\quad
h_{n+1}=e_nf_n,\quad
f_{n+1}=h_nf_n,\quad n\ge 1.
\end{equation*}
where we include these products in our new sets only when they are multilinear.
One checks by induction that
\begin{equation*}
|e_{n}|=2^{n}-1,\quad
|h_{n}|=2^n,\quad
|f_{n}|=2^n+1,\quad n\ge 1.
\end{equation*}
Thus, $e_n$, $h_n$, $f_n$ consist of all multilinear monomials in $\Xi$ of the respective lengths.
Let us prove by induction on $n$ that $xe_n,xh_n,xf_n\in \delta_{n-1}(\SSS(L))$, $n\ge 0$.
The base of induction $n=0$ is trivial.
Assume the claim for $k=n$.
We have the following relations
\begin{align*}
\{xe_n,xh_n \}=x(e_nh_n'-e_n'h_n)=x(|h_n|-|e_n|)e_nh_n=xe_nh_n;\\
\{xh_n,xf_n \}=x(h_nf_n'-h_n'f_n)=x(|f_n|-|h_n|)h_nf_n=xh_nf_n;\\
\{xe_n,xf_n \}=x(e_nf_n'-e_n'f_n)=x(|f_n|-|e_n|)e_nf_n=2xe_nf_n.
\end{align*}
Considering only multilinear products above,
we get the inductive step: $xe_{n+1}, xh_{n+1}, xf_{n+1}\in \delta_n(\SSS(L))$.
Therefore, $\delta_n(\SSS(L))\ne 0$ for all $n\ge 0$.
We proved that $\SSS(L)$ is not solvable.
\end{proof}
\begin{proof}{\it of Theorem~\ref{Tsolv} in case $p\ne 2$}. In this section, the proof was reduced  to
two cases settled in Proposition~\ref{SpSp} and Proposition~\ref{SpSp2}.
They lead to a contradiction, which implies that $L=\Delta$ and $\dim L^2=\dim \Delta^2<\infty$.
\end{proof}

\section{Solvability of truncated symmetric algebras $\SSS(L)$, $\ch K=2$}

In this section we consider the case $\ch K=2$.
We finish the proof of a part of Theorem~\ref{Tsolv} on the strong solvability of $\SSS(L)$ and show
that the question of the solvability of $\SSS(L)$ is different to other characteristics.
Namely, we obtain two examples of the truncated symmetric algebras that are solvable but not strongly solvable.
These examples also show that Proposition~\ref{SpSp} and Proposition~\ref{SpSp2} are no longer valid in case $\ch K=2$.

\begin{Lemma}\label{L22}
Let $L=\langle x, y_i \,|\,  [x,y_i]=y_i,\, i\in\N\rangle_K$, $\ch K=2$,
the remaining commutators being trivial. Then
\begin{enumerate}
\item $L^2=\Delta(L)=\Delta_1(L)=\langle y_i\,|\,  i\in\N\rangle$;
\item $\SSS(L)$ is solvable of length 3;
\item $\SSS(L)$ is not strongly solvable.
\end{enumerate}
\end{Lemma}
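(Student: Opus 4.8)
The plan is to treat the three claims in order, since each builds on the previous one. For Claim~1, the point is that $L^2=\langle [x,y_i], [y_i,y_j]\rangle_K=\langle y_i\mid i\in\N\rangle_K$ because all brackets among the $y_i$ vanish. To identify the delta-sets, first observe $[L,y_i]=\langle y_i\rangle_K$ has dimension $1$, so $y_i\in\Delta_1(L)$ for every $i$; hence $\langle y_i\mid i\in\N\rangle\subseteq\Delta_1(L)$. Conversely $[L,x]=L^2$ is infinite-dimensional, so $x\notin\Delta(L)$, and using Lemma~\ref{PropDel}(2) any element involving a nonzero $x$-component lies outside $\Delta(L)$; thus $\Delta(L)=\Delta_1(L)=\langle y_i\mid i\in\N\rangle_K=L^2$. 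This also shows $L$ is solvable of length $2$ and $\dim L^2=\infty$, so by Theorem~\ref{Tsolv} (the part $p=2$ equivalence of 1) and 3), which fails here) $\SSS(L)$ is \emph{not} strongly solvable — but we will instead prove Claim~3 directly by exhibiting nonzero iterated upper derived terms, which is cleaner and gives the required independence.

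For Claim~2, I would compute in $R=\SSS(L)=\SSS(x)\SSS(\Delta)$, where $\SSS(x)=\langle 1,x\rangle_K$ since $x^2=0$, and $\Delta=\langle y_i\rangle_K$ abelian with $y_i^2=0$. The key bracket identity, as in Lemma~\ref{axibxj}, is $\{x a, x b\}=x\{a,b\}+ \{x,b\}\,xa+\{x,a\}\,xb$ wait — more carefully, using $\{x^i a, x^j b\}$ with $i,j\in\{0,1\}$: since $\{x,a\}=a'$ where $a'=\{x,a\}$ and for $a\in\SSS(\Delta)$ one has $a'\in\SSS(\Delta)$ (it is a derivation-type action, and in fact for a monomial $y_{i_1}\cdots y_{i_k}$ one gets $a'=k\,a$ in characteristic $2$, so $a'=0$ if $k$ even, $a'=a$ if $k$ odd). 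The upshot is that $\{R,R\}\subseteq R$, then $\delta_2(R)=\{\{R,R\},\{R,R\}\}$ should land inside $\SSS(x)\cdot(\text{something})$, and a short computation with the structure above should show $\delta_3(R)=0$ while $\delta_2(R)\neq 0$. I would organize this by first showing $\{R,R\}\subseteq x\,\SSS(\Delta)+\SSS(\Delta)$, then tracking how many $y$-factors and how the parity-of-length phenomenon forces vanishing after three steps; the finite bound on length (all monomials live in the truncated ring) is what kills $\delta_3$. The precise bookkeeping of which monomials survive is the step I would be most careful about.

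For Claim~3, the plan is to adapt the construction of Proposition~\ref{SpSp2} but with eigenvalue $\lambda=1$ in characteristic $2$, where the scalar $2$ that appeared there now vanishes — so one must choose the combinatorics of multilinear monomials so that the surviving coefficient is a \emph{sum} of length-differences that is odd. Concretely, take a linearly independent family $\{v_i\mid i\in\N\}\subseteq\Delta$ and build nested families of multilinear monomials $e_n,h_n,f_n$ of lengths $2^n-1,2^n,2^n+1$ exactly as in Proposition~\ref{SpSp2}; the relations $\{xe_n,xh_n\}=x(|h_n|-|e_n|)e_nh_n=xe_nh_n$ and $\{xh_n,xf_n\}=xh_nf_n$ still have coefficient $1$ in characteristic $2$ (the problematic relation $\{xe_n,xf_n\}$ had coefficient $|f_n|-|e_n|=2\equiv 0$, so I would simply not use it). Then $xe_{n+1}=xe_nh_n$ and $xh_{n+1}=xe_nf_n$, $xf_{n+1}=xh_nf_n$ give, by induction, $xe_n,xh_n,xf_n\in\tilde\delta_{n-1}(\SSS(L))$ — here using the \emph{upper} derived series $\tilde\delta$ since we want to defeat strong solvability, and noting that $\tilde\delta_{n}(R)=\{\tilde\delta_{n-1}(R),\tilde\delta_{n-1}(R)\}\cdot R$ so multiplying by further monomials to make products multilinear is legitimate. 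Since $\SSS(\Delta)$ is a truncated polynomial ring, distinct multilinear monomials are linearly independent, so $xe_{n+1}\neq 0$, whence $\tilde\delta_n(\SSS(L))\neq 0$ for all $n$; thus $\SSS(L)$ is not strongly solvable. The main obstacle is precisely verifying that in characteristic $2$ enough of the recursive relations retain a nonzero (i.e.\ odd) coefficient to propagate nonvanishing monomials at every level — that is why the three-family scheme, rather than a single family, is needed.
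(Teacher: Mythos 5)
Your overall strategy matches the paper's: Claim 1 by direct computation, Claim 2 by classifying basis monomials into ``first type'' $y_{i_1}\cdots y_{i_k}$ and ``second type'' $xy_{i_1}\cdots y_{i_k}$ and tracking parities of lengths, and Claim 3 by propagating nonzero multilinear monomials through $\tilde\delta_n$, using the freedom to multiply by an extra $y_s$ inside the upper derived series to repair the even-length case where the coefficient $|f_n|-|e_n|=2$ vanishes. You also correctly avoid the circularity of deducing Claim 3 from Theorem~\ref{Tsolv}, whose $p=2$ case is itself proved via this lemma.

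The genuine gap is in Claim 2, which you defer rather than prove, and the one mechanism you do commit to is wrong: you attribute the vanishing of $\delta_3$ to ``the finite bound on length (all monomials live in the truncated ring),'' but there is no such bound --- the truncation only forces each $y_i$ to appear at most once, and since there are infinitely many $y_i$ the monomials of $\SSS(L)$ have unbounded length. What actually kills $\delta_3$ is the parity bookkeeping you gesture at but do not carry out. Concretely: $\{xy_{i_1}\cdots y_{i_m},\,y_{j_1}\cdots y_{j_k}\}=k\,y_{i_1}\cdots y_{i_m}y_{j_1}\cdots y_{j_k}$ and $\{xy_{i_1}\cdots y_{i_m},\,xy_{j_1}\cdots y_{j_n}\}=(n-m)\,xy_{i_1}\cdots y_{i_m}y_{j_1}\cdots y_{j_n}$, so in characteristic $2$ the second bracket survives only when $m,n$ have opposite parities, i.e.\ the resulting second-type monomial has \emph{odd} length. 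Hence $\delta_1(\SSS(L))$ is spanned by first-type monomials together with second-type monomials of odd length; bracketing two odd-length second-type monomials gives coefficient $n-m$ even, hence zero, so $\delta_2(\SSS(L))$ consists of first-type monomials only; and $\delta_3(\SSS(L))=0$ because $\SSS(H)$, $H=\langle y_i\mid i\in\N\rangle$, is Poisson-abelian. (One should also exhibit a nonzero element of $\delta_2$, e.g.\ $\{\{xy_1,xy_2y_3\},y_4\}=y_1y_2y_3y_4\ne 0$, to get length exactly $3$.) With this replacement for your ``finite length'' argument, the proof is complete and coincides with the paper's.
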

\begin{proof}
The first claim is checked directly.

Denote $H=\langle y_i\,|\,  i\in\N\rangle$.
The basis elements of $\SSS(L)$ are of the first type $y_{i_1}\cdots y_{i_k}\in \SSS(H)$
and the second type $x y_{i_1}\cdots y_{i_k}$,
where $i_1<\cdots <i_k$, and  $k\ge 0$ will be referred to as the length.
Now, $\SSS(H)$ is abelian and the remaining commutators of the basis monomials are as follows:
\begin{align}
\{x y_{i_1}\cdots y_{i_m},y_{j_1}\cdots y_{j_k}\}&=k  y_{i_1}\cdots y_{i_m}y_{j_1}\cdots y_{j_k};\nonumber\\
\{x y_{i_1}\cdots y_{i_m}, x y_{j_1}\cdots y_{j_n}\}&=(n-m) x y_{i_1}\cdots y_{i_m}y_{j_1}\cdots y_{j_n}.
\label{monom2}
\end{align}
Commutators~\eqref{monom2} are non-zero only when $m,n$ have different parities.
Thus, $\delta_1(\SSS(L))=\{\SSS(L),\SSS(L)\}$
contains monomials of the first type and of the second type of odd length.
Similarly, $\delta_2(\SSS(L))$ contains monomials of the first type only.
Finally, we get $\delta_3(\SSS(L))=0$.

To prove the last claim, one checks by induction on $k$ that
$\tilde\delta_k(\SSS(L))$, $k\ge 0$, contain infinitely many monomials
of the second type for both, even and odd, lengths.
Indeed, use ~\eqref{monom2} and the fact that we can additionally multiply the commutators by $y_s$
to make the resulting length even.
\end{proof}

\begin{Lemma}\label{L22b}
Let
$L=\langle x, y_i,z_i \,|\,  [x,y_i]=z_i,\, i\in\N\rangle_K$, $\ch K=2$,
the remaining commutators being trivial.
Then
\begin{enumerate}
\item $\Delta(L)=\Delta_1(L)=\langle y_i, z_i\,|\,  i\in\N\rangle$
      and $L^2=\langle z_i\,|\,  i\in\N\rangle$;
\item $\SSS(L)$ is solvable of length 3;
\item $\SSS(L)$ is not strongly solvable.
\end{enumerate}
\end{Lemma}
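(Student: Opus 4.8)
The plan is to repeat the analysis of Lemma~\ref{L22}, the new feature being that the operator $\{x,-\}$ on the ``Hamiltonian part'' is no longer the degree derivation but a square‑zero one. Put $H=\langle y_i,z_i\mid i\in\N\rangle$, so $H$ is an abelian ideal, $\SSS(H)$ is an abelian Poisson subalgebra, and $\SSS(L)=\SSS(H)\oplus x\,\SSS(H)$ as a vector space (here $x^2=0$). Write $D$ for the restriction of $\{x,-\}$ to $\SSS(H)$: it is the derivation with $D(y_i)=z_i$, $D(z_i)=0$. The first thing I would record is that, because $\ch K=2$, the map $D^2$ is again a derivation and kills all generators, hence $D^2=0$; this is the structural fact everything rests on. A Leibnitz‑rule computation then gives, for $a_1,a_2,b_1,b_2\in\SSS(H)$,
\begin{equation*}
\{a_1+xb_1,\,a_2+xb_2\}=\bigl(b_1D(a_2)+b_2D(a_1)\bigr)+x\,D(b_1b_2),
\end{equation*}
and all the commutator bookkeeping below is immediate from this identity together with $D^2=0$.

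Claim~1 is a direct check: $L^2$ is spanned by the $[x,y_i]=z_i$, and for $w=\lambda x+(\text{element of }H)$ one has $[L,w]\supseteq\langle\lambda z_i\mid i\in\N\rangle$, which is infinite dimensional unless $\lambda=0$, while for $\lambda=0$ one gets $\dim[L,w]\le1$. For Claim~2 I would first prove $\delta_3(\SSS(L))=0$: by the displayed formula the ``$x$‑part'' of $\delta_1(\SSS(L))$ lies in $x\,D(\SSS(H))\subseteq x\ker D$, and feeding two such elements back into the formula, the ``$x$‑part'' of $\delta_2(\SSS(L))$ has the shape $x\,D(b_1b_2)$ with $b_1,b_2\in\ker D$, hence vanishes; so $\delta_2(\SSS(L))\subseteq\SSS(H)$, which is abelian, giving $\delta_3(\SSS(L))=0$. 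For the matching lower bound one exhibits a nonzero element of $\delta_2$: e.g.\ $\{xy_2,y_1\}=y_2z_1$ and $\{xy_3,x\}=xz_3$ lie in $\delta_1$, while $\{y_2z_1,xz_3\}=z_1z_2z_3\neq0$ lies in $\delta_2$. Hence $\SSS(L)$ is solvable of length exactly~$3$.

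The real work is Claim~3, and the obstacle is already visible in the displayed formula: a bracket of two ``$x$‑elements'' always lies in $x\,D(\SSS(H))\subseteq x\ker D$, and a bracket of two such ``closed'' elements is zero, so a naive iteration collapses after one step; when one multiplies back by $\SSS(H)$ to destroy closedness, later brackets are prone to characteristic‑$2$ cancellations. I would get around this with the following induction: for every $n\ge1$ and every $j\ge1$, the term $\tilde\delta_n(\SSS(L))$ contains an element $x\,c^{(j)}_n y_{s^{(j)}_n}$ with $c^{(j)}_n\in\SSS(H)$ nonzero and \emph{closed} ($D(c^{(j)}_n)=0$), with $s^{(j)}_n$ an index outside the set of variables occurring in $c^{(j)}_n$, and with the variable sets of the $c^{(j)}_n y_{s^{(j)}_n}$ pairwise disjoint in $j$. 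The base $n=1$ is witnessed by $xz_{2j-1}y_{2j}$, using $\{xy_{2j-1},x\}=xz_{2j-1}\in\delta_1$. For the step, bracket the $(2j-1)$st and $(2j)$th level‑$n$ elements (which have disjoint variable sets); by the displayed formula this yields $x\,D\bigl(c\,y_sy_{s'}\bigr)=x\,c\,(z_sy_{s'}+y_sz_{s'})$ with $c=c^{(2j-1)}_n c^{(2j)}_n$. This element is closed (it lies in the image of $D$), it is nonzero because $c\neq0$ and the two monomial families $c\,z_sy_{s'}$ and $c\,y_sz_{s'}$ have disjoint supports ($s,s'$ being fresh), and multiplying by one more fresh $y$ produces the required level‑$(n+1)$ datum; choosing fresh indices anew for each $j$ keeps the families disjoint. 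Therefore $\tilde\delta_n(\SSS(L))\neq0$ for all $n$, so $\SSS(L)$ is not strongly solvable.

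The main obstacle, as flagged, is entirely in Claim~3: one must guess the right inductive invariant — ``(nonzero closed element)$\,\times\,$(fresh $y$)'' — so that it both survives one bracketing step and stays visibly nonzero after the unavoidable characteristic‑$2$ cancellations; granted that, everything reduces to the displayed commutator formula and the identity $D^2=0$. Incidentally, the same construction will also show that Proposition~\ref{SpSp} and Proposition~\ref{SpSp2} genuinely break down when $\ch K=2$.
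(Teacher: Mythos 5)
Your proposal is correct and follows essentially the same route as the paper: your identity $\{a_1+xb_1,a_2+xb_2\}=(b_1D(a_2)+b_2D(a_1))+xD(b_1b_2)$ together with $D^2=0$ is exactly the paper's observation that $a''=0$ for all $a\in\SSS(H)$, which yields $\delta_2(\SSS(L))\subseteq\SSS(H)$ and $\delta_3(\SSS(L))=0$, and your non-strong-solvability witness (closed element of $x\,\SSS(H)$ multiplied by fresh generators, re-bracketed) is the same construction the paper runs starting from $\{xy_1,xy_2\}y_5$. Your version is merely a little more explicit about the disjoint-support argument guaranteeing no characteristic-$2$ cancellation, and about exhibiting a nonzero element of $\delta_2$ to pin the solvability length at exactly $3$.
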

\begin{proof}
The first claim is checked directly.

Put $H=\langle y_i,z_i\,|\,  i\in\N\rangle$.
For any $a\in \SSS(H)$ write $a'=\{x,a\}$.
Consider a basis monomial $w=v_1\cdots v_n\in \SSS(H)$, where
$v_i$ denote elements from the basis of $H$. Then
\begin{align}\label{vvv}
w''=(v_1\cdots v_n)''
=\sum_{i=1}^n v_1\cdots v_i''\cdots v_n+2\sum_{1\le i<j\le n} v_1\cdots v_i'\cdots v_j'\cdots v_n=0.
\end{align}
A basis monomial of $\SSS(L)$ without $x$ is of the first type,
otherwise of the second type.
Let $xa,xb,xc,xd\in \SSS(L)$ be all of the second type, where $a,b,c,d\in \SSS(H)$.
Using~\eqref{vvv},
\begin{align*}
\delta_2(xa,xb,xc,xd)&=\{ \{xa,xb\}, \{xc,xd\}\}
                      =\{x(a'b+ab'),x(c'd+cd') \}\\
&=x (a''b+2a'b'+ ab'')(c'd+cd')+x (a'b+ab')(c''d+2c'd'+cd'')=0.
\end{align*}
Let $w_1,w_2,w_3,w_4\in \SSS(L)$ be basis monomials where at least one is of the first type.
One checks that $\delta_2(w_1,w_2,w_3,w_4)$ can be of the first type only.
We proved that $\delta_2(\SSS(L))\subseteq \SSS(H)$.
Finally,  $\delta_3(\SSS(L))=0$.

Let us prove the last claim.
The following observation simplifies our computations below
$$
(y_1z_2+z_1y_2)'=y_1'z_2+y_1z_2'+z_1'y_2+z_1 y_2'=2z_1z_2=0.
$$
We use possibility to multiply additionally in the identity of the strong solvability
\begin{align*}
\{xy_1,xy_2\}&=x(y_1z_2+z_1y_2);\\
\Big\{ \{xy_1,xy_2\}y_5  , \{xy_3,xy_4\}y_6\Big\}
  &=\{x(y_1z_2+z_1y_2)y_5, x(y_3z_4+z_3y_4)y_6\}\\
  &=x (y_1z_2+z_1y_2)(y_3z_4+z_3y_4)(y_5z_6+z_5y_6).
\end{align*}
We continue computations, each time
inserting new variables, like $y_5,y_{6}$  above,  and conclude that
$\tilde \delta_n(\SSS(L))\ne 0$ for all $n\ge 0$.
Thus, $\SSS(L)$ is not strongly solvable.
\end{proof}
\begin{proof}{\it of Theorem~\ref{Tsolv} in case $p=2$.}
It remains to prove that the strong solvability implies that $\dim L^2<\infty$.
Recall that Lemma~\ref{ad} and its Corollary~\ref{Ceigen} remain valid in case $p=2$.
They reduced our proof above to the fact that the Lie algebras of
Proposition~\ref{SpSp} and Proposition~\ref{SpSp2} are not allowed.
The arguments further lead to
Lie algebras of Lemma~\ref{L22} and Lemma~\ref{L22b}, which are not strongly solvable in case $\ch K=2$.
\end{proof}

Two examples above are closely related to the following observation.
\begin{Lemma}\label{L23}
Consider the truncated Hamiltonian Poisson algebra $P=\mathbf{h}_2(K)$
(or the Hamiltonian  Poisson algebra $P=\mathbf{H}_2(K)$),  $\ch K=2$.
Then
\begin{enumerate}
\item $P$ is solvable of length 3.
\item $P$ is not strongly solvable.
\end{enumerate}
\end{Lemma}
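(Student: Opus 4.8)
The plan is to treat $P=\mathbf{h}_2(K)$ (and $\mathbf{H}_2(K)$) by direct computation, since these algebras are generated by two elements $X,Y$ with $\{X,Y\}=1$, and the Leibnitz rule reduces everything to derivatives. First I would record the basic bracket formula: for $f,g$ in $P$ one has $\{f,g\}=\partial_X f\,\partial_Y g-\partial_Y f\,\partial_X g$. In the truncated case work in the basis $\{X^iY^j\mid 0\le i,j\le p-1=1\}=\{1,X,Y,XY\}$, so $P$ is $4$-dimensional with $\{X,Y\}=1$, $\{X,XY\}=X$, $\{Y,XY\}=-Y=Y$, and all brackets involving the constant $1$ vanishing; in the non-truncated case $\mathbf{H}_2(K)=K[X,Y]$ is infinite-dimensional but the same formulas and the same grading-by-degree argument apply.

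For the solvability of length $3$ (Claim 1), I would exploit the $\mathbb{Z}$-grading of $P$ by total degree $\deg(X^iY^j)=i+j$: the bracket lowers total degree by exactly $2$, i.e. $\{P_m,P_n\}\subseteq P_{m+n-2}$, where $P_m=\langle X^iY^j\mid i+j=m\rangle$. Hence $\delta_1(P)=\{P,P\}\subseteq\bigoplus_{m\ge 0}P_m$ consists only of elements of degree $\ge 0$ obtained from degree $\ge 2$ inputs — more usefully, one computes $\delta_1(P)$ explicitly: it is spanned by $1$ (from $\{X,Y\}$), by $X,Y$ (from $\{X,XY\},\{Y,XY\}$) and their products with degree-$0$ elements, so in the truncated case $\delta_1(P)=\langle 1,X,Y\rangle$ (degree $\le 1$ part). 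Then $\delta_2(P)=\{\delta_1(P),\delta_1(P)\}\subseteq\langle 1\rangle$ since bracketing two elements of degree $\le 1$ lands in degree $\le 0$, i.e. in the span of the constant, which is central; therefore $\delta_3(P)=\{\delta_2(P),\delta_2(P)\}\subseteq\{\langle 1\rangle,\langle 1\rangle\}=0$. In the non-truncated case the same degree bookkeeping gives $\delta_1(P)\subseteq\bigoplus_{m\ge 0}P_m$, $\delta_2(P)\subseteq\bigoplus_{m\ge 0}P_m$ but with the parity obstruction from characteristic $2$ (see below) one again collapses after three steps; I would double-check this by the analogue of the computation in Lemma~\ref{L22}.

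For Claim 2, that $P$ is not strongly solvable, the key point is the mechanism already used in Lemma~\ref{L22} and Lemma~\ref{L22b}: in the upper derived series one is allowed to multiply commutators by arbitrary elements of $P$, and multiplication by $X$ or $Y$ shifts degree/parity so that one can never force the output into a central or zero part. Concretely I would show by induction that $\tilde\delta_n(P)$ contains a nonzero element for every $n$: starting from $\{X,XY\}=X$, multiply by $Y$ to get $XY\in\tilde\delta_1(P)$ (after the defining $\cdot P$), then $\{XY,\cdot\}$ produces $X$ or $Y$ again, and multiplying by the complementary variable returns to $XY$; so $\tilde\delta_n(P)\ni XY\ne 0$ (or a suitable monomial) for all $n$. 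For $\mathbf{H}_2(K)$ the same loop works using monomials $X^kY^k$ of unbounded degree. The main obstacle I anticipate is getting the degree-and-parity bookkeeping for $\delta_2$ exactly right in characteristic $2$ — in particular verifying that cross terms like $2a'b'$ vanish and that $\delta_2(P)$ really does land in the central/constant part — so I would model that step closely on the identity $(y_1z_2+z_1y_2)'=2z_1z_2=0$ and the computation $\delta_2(xa,xb,xc,xd)=0$ from the proof of Lemma~\ref{L22b}, which is essentially the same phenomenon viewed inside $\mathbf{h}_2(K)$.
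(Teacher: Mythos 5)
Your computation for $\mathbf{h}_2(K)$ and your non-strong-solvability loop are correct and are essentially the paper's argument: $\delta_1(P)=\langle 1,x,y\rangle$, $\delta_2(P)=\langle 1\rangle$, $\delta_3(P)=0$, and the cycle $\{x,xy\}=x$, $x\cdot y=xy$ gives $x,xy\in\tilde\delta_n(P)$ for all $n$ (the paper leaves exactly this check to the reader, and the same two elements work verbatim in $\mathbf{H}_2(K)$, so the monomials $X^kY^k$ are not needed).

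The one genuine gap is Claim 1 for $\mathbf{H}_2(K)$. The total-degree grading buys you nothing there: the containments $\delta_i(P)\subseteq\bigoplus_{m\ge 0}P_m$ are vacuous, degrees are unbounded, and ``one again collapses after three steps'' is asserted rather than proved; the computation $\delta_2(xa,xb,xc,xd)=0$ from Lemma~\ref{L22b} concerns a different algebra and does not substitute for it. What is actually needed is the mod-$2$ parity grading of the exponents: from $\{X^aY^b,X^cY^d\}=(ad-bc)\,X^{a+c-1}Y^{b+d-1}$ one checks that in characteristic $2$ the only nonzero brackets of monomial types are $\{X^{\bar 1}Y^{\bar 0},X^{\bar 0}Y^{\bar 1}\}=X^{\bar 0}Y^{\bar 0}$, $\{X^{\bar 1}Y^{\bar 1},X^{\bar 1}Y^{\bar 0}\}=X^{\bar 1}Y^{\bar 0}$, and $\{X^{\bar 1}Y^{\bar 1},X^{\bar 0}Y^{\bar 1}\}=X^{\bar 0}Y^{\bar 1}$. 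Hence $\delta_1(P)$ is spanned by monomials of the types $X^{\bar 0}Y^{\bar 0}$, $X^{\bar 1}Y^{\bar 0}$, $X^{\bar 0}Y^{\bar 1}$; among these only the first listed bracket survives, so $\delta_2(P)$ is spanned by monomials of type $X^{\bar 0}Y^{\bar 0}$, and such monomials pairwise Poisson-commute (both $ad$ and $bc$ are even), giving $\delta_3(P)=0$. With that classification supplied, your plan coincides with the paper's proof.
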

\begin{proof}
Let $P=\mathbf{h}_2(K)=K[X,Y]/(X^2,Y^2)=\langle 1,x,y,xy\rangle _K$, where $x,y$ denote the images of $X,Y$.
We have
$\delta_1(P)=\{P,P \}=\langle 1,x,y\rangle _K$,
$\delta_2(P)=\langle 1\rangle _K$, and
$\delta_3(P)=0$.
As above, one checks that $P$ is not strongly solvable.

Let $P=\mathbf{H}_2(K)=K[X,Y]$.
The Poisson brackets of monomials $X^nY^m$, $n,m\ge 0$ depend on parities of $n,m$ of multiplicands.
For simplicity, denote by $X^{\bar 0}Y^{\bar 1}$ all monomials $X^\a Y^\b\in K[X,Y]$ such that $\a$ is even and $\b$ odd, etc.
We get non-zero products only in the cases:
\begin{align*}
\{X^{\bar 1}Y^{\bar 0},X^{\bar 0}Y^{\bar 1}\} &= X^{\bar 0}Y^{\bar 0};\\
\{X^{\bar 1}Y^{\bar 1},X^{\bar 1}Y^{\bar 0}\} &= X^{\bar 1}Y^{\bar 0};\\
\{X^{\bar 1}Y^{\bar 1},X^{\bar 0}Y^{\bar 1}\} &= X^{\bar 0}Y^{\bar 1}.
\end{align*}
Thus, $\delta_1(P)$ is spanned by monomials of three types obtained above.
Consider their commutators, the first line yields that $\delta_2(P)$ is spanned by
monomials of type $Y^{\bar 0}Y^{\bar 0}$. Finally, $\delta_3(P)=0$.
\end{proof}
Thus, the Poisson algebras $\mathbf{h}_2(K)$,
$\mathbf{H}_2(K)$ in characteristic 2 behave similarly to the associative algebra $\mathrm{M}_2(K)$ of $2\times 2$ matrices
in case $\ch K=2$.
\section{Nilpotency and solvability of symmetric algebras $S(L)$}

Finally, in this section we prove Theorem~\ref{Pe16}
that generalizes the result of Shestakov (Theorem~\ref{TSh93}).

\begin{proof} {\it of Theorem~\ref{Pe16}}. Implications
1) $\Rightarrow$ 2) $\Rightarrow$ 3) $\Rightarrow$ 5)
and 1) $\Rightarrow$ 4) $\Rightarrow$ 5) are trivial.

It is sufficient to prove one nontrivial implication 5) $\Rightarrow$ 1).
(In case $p=2$ we prove 4) $\Rightarrow$ 1)).
Consider the case $\ch K\ne 0$, then we can take
the truncated symmetric algebra $\SSS(L)=S(L)/(v^p\,|\,v\in L)$, which is solvable (strongly solvable for $p=2$).
Applying Theorem~\ref{Tsolv}, we get $\dim L^2<\infty$.
By way of contradiction suppose that $L$ is not abelian.
Then there exist $x,y\in L$ such that $[x,y]\ne 0$.
We get a non-abelian finite dimensional subalgebra $\tilde L=\langle x,y\rangle_K+L^2\subseteq L$.

Consider the case $\ch K=0$.
By Theorem~\ref{TSymm0}, $L$ has an abelian ideal $A$ of finite codimension.
Take $x\in L\setminus A$, suppose that there exists $v\in A$ such that $[x,v]\ne 0$.
Consider a subalgebra $\tilde L=\langle x\rangle\oplus \langle (\ad x)^nv\,|\, n\ge 0\rangle \subseteq L$.
By Lemma~\ref{ad}, the action of $\ad x$ is algebraic, thus
$\tilde L$ is a finite dimensional non-abelian subalgebra.
The remaining case is that for any $x\in L\setminus A$ we have $[x,A]=0$.
Now one has $\dim L^2\le \dim L/A+(\dim L/A)^2<\infty$ and the arguments above again yield
a non-abelian finite dimensional subalgebra.

Thus, without loss of generality, we assume that $L$ is a finite dimensional non-abelian Lie algebra.
Since our identities are multilinear,
we also can consider that $K$ is algebraically closed.
It is well known that a finite dimensional non-abelian Lie algebra over an algebraically closed field either
contains the two-dimensional solvable Lie algebra or the three-dimensional nilpotent
Lie algebra (see~\cite{JacLie}, \cite[6.7.1]{Ba}).

Suppose that $L$ contains the two-dimensional solvable Lie algebra $H=\langle x,y\, |\, [x,y]=y\rangle_K$.
Introduce sequences of elements:
$e_{k}=xy^{2^k-1}$, $h_{k}=xy^{2^k}$, $f_{k}=xy^{2^k+1}$ for all $k\ge 1$.
Consider the case $\ch K\ne 2$.
We check by induction on $k\ge 1$ that $e_k,h_k,f_k\in \delta_{k-1}(S(H))$.
The base of induction $k=1$ is trivial.
Let $k>1$, the inductive step follows by relations
\begin{align*}
\delta_{k}(S(H))\ni \{e_{k}, h_{k}\} &=\{xy^{2^k-1},xy^{2^k}\}=x y^{2^{k+1}-1}=e_{k+1};\\
\delta_{k}(S(H))\ni\{h_{k}, f_{k}\} &=\{xy^{2^k}  ,xy^{2^k+1}\}=x y^{2^{k+1}+1}=f_{k+1};\\
\delta_{k}(S(H))\ni\{e_{k}, f_{k}\} &=\{xy^{2^k-1},xy^{2^k+1}\}=2 x y^{2^{k+1}}=2h_{k+1}.
\end{align*}
Thus, $\delta_k(S(H))\ne 0$ for all $k\ge 1$, a contradiction with solvability of $S(L)$.
In case $\ch K=2$, we check by induction that
$e_k,h_k,f_k\in \tilde\delta_{k-1}(S(H))$, $k\ge 1$, using relations above and
$$
\tilde \delta_{k}(S(H))\ni\{e_{k}, h_{k}\} y=\{xy^{2^k-1},xy^{2^k}\}y =x y^{2^{k+1}}=h_{k+1}.
$$
Thus, $S(H)$ is not strongly solvable, a contradiction.

Suppose that $L$ contains
the three-dimensional nilpotent Lie algebra $H=\langle x,y,z\, |\, [x,y]=z,[x,z]=[y,z]=0\rangle_K$.
Introduce sequences of elements:
$e_{k}=xy^{2^k}z^{2^k-1}$, $h_{k}=xy^{2^k+1}z^{2^k-1}$, $f_{k}=xy^{2^k+2}z^{2^k-1}$ for all $k\ge 1$.
Consider the case $\ch K\ne 2$.
We check by induction on $k\ge 1$ that $e_k,h_k,f_k\in \delta_{k-1}(S(H))$.
The base of induction $k=1$ is trivial.
Let $k>1$, the inductive step follows by relations
\begin{align*}
\delta_{k}(S(H))\ni \{e_{k}, h_{k}\} &=\{xy^{2^k}z^{2^k-1}, xy^{2^k+1}z^{2^k-1}\}=x y^{2^{k+1}}z^{2^{k+1}-1}=e_{k+1};\\
\delta_{k}(S(H))\ni\{h_{k}, f_{k}\} &=\{xy^{2^k+1}z^{2^k-1}  ,xy^{2^k+2}z^{2^k-1}\}=x y^{2^{k+1}+2}z^{2^{k+1}-1}=f_{k+1};\\
\delta_{k}(S(H))\ni\{e_{k}, f_{k}\} &=\{xy^{2^k}z^{2^k-1}, xy^{2^k+2}z^{2^k-1}\}=2 x y^{2^{k+1}+1}z^{2^{k+1}-1}=2h_{k+1}.
\end{align*}
In case $\ch K=2$, we check by induction that
$e_k,h_k,f_k\in \tilde\delta_{k-1}(S(H))$, $k\ge 1$, using relations above and
$$
\tilde\delta_{k}(S(H))\ni \{e_{k}, h_{k}\}y =\{xy^{2^k}z^{2^k-1}, xy^{2^k+1}z^{2^k-1}\}y=x y^{2^{k+1}+1}z^{2^{k+1}-1}=h_{k+1}.
$$
Thus, $S(H)$ is not strongly solvable, a contradiction, which proves that $L$ is abelian.
\end{proof}

The question of the solvability of the symmetric algebra  $S(L)$ in case $\ch K=2$
is more complicated as shown below.
The same Lie algebras of Lemma~\ref{L22} and Lemma~\ref{L22b},
also yield solvable symmetric algebras, of course, they are not strongly solvable by that Lemmas.

\begin{Lemma}\label{LS2solvA}
Let $L=\langle x,y_i\,|\,[x,y_i]=y_i,i\in\N\rangle_K$,
other commutators being trivial, $\ch K=2$.
The symmetric Poisson algebra $S(L)$ is solvable of length 3 and not strongly solvable.
\end{Lemma}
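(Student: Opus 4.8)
The plan is to imitate the proof of Lemma~\ref{L22}, working in the polynomial ring $S(L)=K[x]\otimes K[y_i\mid i\in\N]$ with its monomial basis $\{x^m y^\alpha\mid m\ge 0,\ y^\alpha=\prod_i y_i^{\alpha_i}\}$. The first step is to record the bracket of two basis monomials: since the subalgebra $K[y_i]$ carries the zero bracket and $\{x,y^\alpha\}=|\alpha|\,y^\alpha$ (with $|\alpha|=\sum_i\alpha_i$), the Leibnitz rule yields
\begin{equation*}
\{x^m y^\alpha,\;x^n y^\beta\}=\bigl(m|\beta|-n|\alpha|\bigr)\,x^{m+n-1}y^{\alpha+\beta},
\end{equation*}
the right-hand side being $0$ when $m=n=0$; for $\ch K=2$ the coefficient is $m|\beta|+n|\alpha|$ read $\bmod 2$. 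Everything that follows is bookkeeping with this formula and the two parities, of the $x$-degree and of the $y$-degree, of a monomial.

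To prove that $S(L)$ is solvable of length $3$, I would first note that, grading by $y$-degree, a bracket of two monomials of $y$-degree $0$ vanishes, so $\delta_1(S(L))$ is contained in the ideal $\langle x^m y^\gamma\mid |\gamma|\ge 1\rangle$. Next comes the refinement that, if a basis monomial $x^m y^\gamma$ occurs in $\delta_1(S(L))$ with $m$ odd, then $|\gamma|$ is odd: a bracket of two monomials has odd $x$-degree only when the two $x$-degrees have equal parity, and a two-line case split on that common parity shows the coefficient $m|\beta|+n|\alpha|$ is then even whenever $|\alpha|+|\beta|$ is even. Combining these, every monomial occurring in $\delta_2(S(L))=\{\delta_1,\delta_1\}$ has even $x$-degree and even $y$-degree; on the span $Z$ of such monomials the coefficient $m|\beta|+n|\alpha|$ is always even, so $\{Z,Z\}=0$ and hence $\delta_3(S(L))\subseteq\{Z,Z\}=0$. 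Since $xy_1=\{x,xy_1\}$ and $y_2=\{x,y_2\}$ both lie in $\delta_1(S(L))$ and $\{xy_1,y_2\}=y_1y_2\ne 0$, we get $\delta_2(S(L))\ne 0$, so the solvability length is exactly $3$.

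For ``not strongly solvable'' I would follow the last paragraph of the proof of Lemma~\ref{L22}. Call a monomial $xy_{i_1}\cdots y_{i_k}$ (one factor $x$, pairwise distinct $y$'s) of \emph{second type}; for disjoint index sets $I,J$ the bracket formula gives $\{xy_I,xy_J\}=(|J|-|I|)\,xy_{I\cup J}$, which is nonzero exactly when $|I|,|J|$ have different parity, and because $\tilde\delta_{k+1}(S(L))=\{\tilde\delta_k,\tilde\delta_k\}\cdot S(L)$ one may always multiply such a product by a fresh $y_s$ to flip its length parity. As $xy_i=\{x,xy_i\}\in\delta_1\subseteq\tilde\delta_1$ and $\tilde\delta_1$ absorbs multiplication by $S(L)$, $\tilde\delta_1$ contains every second-type monomial with at least one $y$; an easy induction on $k$ then shows $\tilde\delta_k$ contains second-type monomials of both length parities and of arbitrarily large length, so $\tilde\delta_k\ne 0$ for all $k$ and $S(L)$ is not strongly solvable.

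The one genuinely delicate point is the characteristic-$2$ bookkeeping in the solvability step: in odd characteristic the analogous algebras satisfy $\delta_n(S(L))\ne 0$ for all $n$ (Propositions~\ref{SpSp} and~\ref{SpSp2}), and it is only the interplay of the two parities that collapses $\delta_3$ to zero. So identifying precisely which monomials survive into $\delta_1$ and then into $\delta_2$, and verifying that those surviving into $\delta_2$ bracket trivially, is where the case analysis has to be carried out with care; the bracket formula itself and the explicit witnesses for $\delta_2\ne 0$ and $\tilde\delta_k\ne 0$ are routine.
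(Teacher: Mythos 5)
Your proof is correct and follows essentially the same route as the paper: the bracket formula $\{x^\a v,x^\b w\}=x^{\a+\b-1}(\a|w|+\b|v|)vw$ and the parity bookkeeping on the $x$-degree and $y$-degree are exactly the paper's computation, which likewise shows $\delta_1$ omits the type $x^{\bar 1}v^{\bar 0}$, $\delta_2$ lands in the span of $x^{\bar 0}v^{\bar 0}$, and $\delta_3=0$. The only cosmetic difference is in the non-strong-solvability: the paper just invokes Lemma~\ref{L22} via the Poisson surjection $S(L)\twoheadrightarrow\SSS(L)$, whereas you rerun that lemma's second-type-monomial induction directly inside $S(L)$; both are valid.
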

\begin{proof}
Denote $H=\langle y_i\,|\,i\in\N\rangle_K$.
For a monomial $v=y_{i_1}y_{i_2}\cdots y_{i_k}\in S(H)$ introduce its length $|v|=k$.
Then $v'=\{x,v\}=|v|v$.
A basis of $S(L)$ is formed by
$x^\a v$, $\a\ge 0$, where $v\in S(H)$ are respective basis monomials.
Consider products:
\begin{equation} \label{xaaxbb2}
\{ x^\a v,x^\b w\} =x^{\a+\b-1}(\a|w|+ \beta|v|) vw.
\end{equation}
These products depend on parities of $\a,\b,|v|,|w|$.
For simplicity, denote by $x^{\bar 0}v^{\bar 1}$ all monomials $x^\a v\in S(L)$ such that $\a$ is even and $|v|$ is odd, etc.
The only non-zero products~\eqref{xaaxbb2} are of types:
\begin{align*}
\{x^{\bar 1}v^{\bar 0},x^{\bar 0}v^{\bar 1}\} &= x^{\bar 0}v^{\bar 1};\\
\{x^{\bar 1}v^{\bar 1},x^{\bar 1}v^{\bar 0}\} &= x^{\bar 1}v^{\bar 1};\\
\{x^{\bar 1}v^{\bar 1},x^{\bar 0}v^{\bar 1}\} &= x^{\bar 0}v^{\bar 0}.
\end{align*}
Thus, $\delta_1(S(L))$ is spanned by monomials of three types obtained above.
Consider their commutators, the last line yields that $\delta_2(S(L))$ is spanned by
monomials of type $x^{\bar 0}v^{\bar 0}$. Finally, $\delta_3(S(L))=0$.
\end{proof}

\begin{Lemma}\label{LS2solvB}
Let $L=\langle x,y_i,z_i\,|\,[x,y_i]=z_i,i\in\N\rangle_K$, other commutators being trivial, $\ch K=2$.
The symmetric Poisson algebra $S(L)$ is solvable of length 3 and not strongly solvable.
\end{Lemma}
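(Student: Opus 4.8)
The plan is to adapt the proof of Lemma~\ref{L22b} to the non-truncated algebra; the one genuinely new feature is that the exponent of $x$ is no longer bounded. Write $H=\langle y_i,z_i\mid i\in\N\rangle_K$, which is an abelian ideal with $L=\langle x\rangle_K\oplus H$, so that $S(L)=K[x,y_i,z_i]$ and $S(H)=K[y_i,z_i]$ is Poisson-abelian. Each $z_i=[x,y_i]$ is central in $L$, hence Poisson-central in $S(L)$. For $a\in S(H)$ put $a'=\{x,a\}$; then $y_i'=z_i$ and $z_i'=0$, so $'$ is the locally nilpotent derivation $\sum_i z_i\,\partial/\partial y_i$ of $S(H)$, and the computation of \eqref{vvv} gives $a''=0$ for every $a\in S(H)$. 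The Leibnitz rule yields the basic formula
\begin{equation*}
\{x^{\alpha}u,x^{\beta}v\}=x^{\alpha+\beta-1}(\alpha\,uv'-\beta\,u'v),\qquad u,v\in S(H),\ \alpha,\beta\ge 0,
\end{equation*}
so in characteristic $2$ the bracket depends on $\alpha,\beta$ only modulo $2$, and moreover $uv'-u'v=(uv)'$.

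First I would prove that $S(L)$ is solvable of length $3$. Decompose $S(L)=A_0\oplus A_1$ by the parity of the $x$-degree, $A_0=K[x^2]\,S(H)$, $A_1=xK[x^2]\,S(H)$, and set $C_1=xK[x^2]\cdot S(H)'$ where $S(H)'=\{h'\mid h\in S(H)\}$. The formula immediately gives $\{A_0,A_0\}=0$, since a bracket $\{x^{2k}a,x^{2l}b\}$ has both coefficients $2k,2l$ equal to $0$ in $K$ (and $\{S(H),S(H)\}=0$). Writing general elements as $f=\sum x^{\alpha}f_{\alpha}$, $g=\sum x^{\beta}g_{\beta}$, the summands of $\{f,g\}$ with $\alpha,\beta$ both odd collect to $\sum x^{\alpha+\beta-1}(f_{\alpha}g_{\beta})'\in C_1$, while every other summand has even $x$-degree and lies in $A_0$; hence $\delta_1(S(L))\subseteq A_0+C_1$. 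Now $\{A_0,C_1\}\subseteq A_0$ directly, and $\{C_1,C_1\}=0$ because $\{x^{2k+1}h_1',x^{2l+1}h_2'\}=x^{2k+2l+1}(h_1'h_2')'=0$ using $h''=0$. Therefore $\delta_2(S(L))\subseteq A_0$ and $\delta_3(S(L))\subseteq\{A_0,A_0\}=0$. Finally $\delta_2(S(L))\neq 0$: from $x^2y_2z_1=\{x^2y_1,xy_2\}\in\delta_1$ and $x(y_3z_4+z_3y_4)=\{xy_3,xy_4\}\in\delta_1$ one computes $\{x^2y_2z_1,\,x(y_3z_4+z_3y_4)\}=x^2z_1z_2(y_3z_4+z_3y_4)\neq 0$. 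Hence $S(L)$ is solvable of length exactly $3$.

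For the failure of strong solvability I would transcribe the argument of Lemma~\ref{L22b} using that each factor $w=y_az_b+z_ay_b$ satisfies $w'=2z_az_b=0$. One proves by induction on $n$ that $\tilde{\delta}_n(S(L))$ contains a nonzero monomial $x\cdot w_{(1)}\cdots w_{(k)}\cdot y_c$ in pairwise distinct variables: the base case is $\{xy_1,xy_2\}\,y_5=x(y_1z_2+z_1y_2)y_5\in\tilde{\delta}_1$, and the inductive step uses $\{xw_Ay_a,\,xw_By_b\}\,y_c=x\,w_Aw_B(y_az_b+z_ay_b)\,y_c$, which follows from $(w_Ay_aw_By_b)'=w_Aw_B(y_az_b+z_ay_b)$ since $w_A'=w_B'=0$. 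Thus $\tilde{\delta}_n(S(L))\neq 0$ for all $n$, so $S(L)$ is not strongly solvable. The only point that differs substantially from Lemma~\ref{L22b} is the control of arbitrarily large powers of $x$; this is precisely what the parity decomposition $S(L)=A_0\oplus A_1$, together with the vanishing in characteristic $2$ of the coefficients $\alpha,\beta$ whenever they are even, achieves, and everything else is routine bookkeeping.
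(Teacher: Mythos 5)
Your proposal is correct and follows essentially the same route as the paper: the identities $\{x^{\alpha}u,x^{\beta}v\}=x^{\alpha+\beta-1}(\alpha uv'+\beta u'v)$ and $a''=0$ for $a\in S(H)$, together with the parity of the $x$-exponent, give $\delta_3(S(L))=0$, and the failure of strong solvability is the same multilinear computation as in Lemma~\ref{L22b} (which the paper invokes via the quotient $S(L)\twoheadrightarrow\SSS(L)$ rather than redoing in $S(L)$). Your packaging of the parity argument through the subspaces $A_0$ and $C_1$, and your explicit witness $x^2z_1z_2(y_3z_4+z_3y_4)\in\delta_2(S(L))$ showing the length is exactly $3$, are welcome additions but not a different method.
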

\begin{proof}
Set $H=\langle y_i,z_i\,|\,i\in\N\rangle_K$.
As above, denote $a'=\{x,a\}$ for $a\in S(H)$.
Consider basis elements $x^\a a,x^\b b,x^\gamma c,x^\delta d\in S(L)$, where
$a,b,c,d\in S(H)$ are basis monomials and $\a,\b,\gamma,\delta\ge 0$. Then
\begin{align}
\label{xaaxbb}
\{ x^\a a,x^\b b \}        &=x^{\a+\b-1}(\a ab'+\beta  a'b);\\
\{ x^\gamma c,x^\delta d\} &=x^{\gamma+\delta-1}(\gamma cd'+\delta  c'd).\nonumber
\end{align}
We use these computations and the observation above~\eqref{vvv}, that $a''=0$ for any $a\in S(H)$.
\begin{align*}
&\delta_2(x^\a a,x^\b b, x^\gamma c,x^\delta d)
=\Big\{x^{\a+\b-1}(\a ab'+\beta  a'b), x^{\gamma+\delta-1}(\gamma cd'+\delta  c'd) \Big \}\\
&=x^{\a+\b+\gamma+\delta-3}\Big((\a+\b-1)(\gamma+\delta)(\a ab'+\beta  a'b)c'd'
+(\gamma+\delta-1)(\a+\b)a'b'(\gamma cd'+\delta  c'd)\Big).
\end{align*}
The first summand is nonzero only in the case $\a+\b$ is even and $\gamma+\delta$ odd,
while the second one is non-zero in the opposite case.
Both cases yield that $\a+\b+\gamma+\delta-3$ is even.
Thus, $\delta_2(S(L))$ is spanned by monomials of type $x^\a a $, $a\in S(H)$, and $\a$ even.
Finally, by~\eqref{xaaxbb}, $\delta_3(S(L))=0$.
\end{proof}

\subsection*{Acknowledgments}
The authors are grateful to Raimundo Bastos, Alexei Krasilnikov,  Plamen Koshlukov, and Ivan Shestakov for useful discussions.
The authors are especially grateful to Alexei Krasilnikov for explaining
the mystery of products of commutators in associative algebras.

\end{document}